\newtheorem{theorem}{Theorem}[section]
\newtheorem{lemma}[theorem]{Lemma}
\newtheorem{proposition}[theorem]{Proposition}
\newtheorem{corollary}[theorem]{Corollary}
\theoremstyle{definition}
\newtheorem{remark}[theorem]{Remark}
\newtheorem*{problem}{Problem}
\numberwithin{equation}{section}
\newcommand{\R}{\mathbf{R}}
\newcommand{\cB}{\mathcal{B}}
\newcommand{\cD}{\mathcal{D}}
\newcommand{\cI}{\mathcal{I}}
\newcommand{\cJ}{\mathcal{J}}
\newcommand{\cK}{\mathcal{K}}
\newcommand{\cE}{\mathcal{E}}
\newcommand{\cR}{\mathcal{R}}
\newcommand{\eps}{\varepsilon}
\newcommand{\loc}{\textrm{loc}}
\newcommand{\weak}{\textrm{weak}}
\DeclareMathOperator{\supp}{\operatorname{supp}}
\DeclareMathOperator{\card}{\operatorname{card}}
\begin{document}

\title[Blow-up of the critical norm]{Blow-up of the 
critical norm for a supercritical semilinear heat equation}

\author[H. Miura]{Hideyuki Miura}
\address{Department of Mathematics, 
Tokyo Institute of Technology, Tokyo 152-8551, Japan}
\email{hideyuki@math.titech.ac.jp}

\author[J. Takahashi]{Jin Takahashi}
\address{Department of Mathematical and Computing Science, 
Tokyo Institute of Technology, Tokyo 152-8552, Japan}
\email[Corresponding author]{takahashi@c.titech.ac.jp}

\subjclass[2020]{Primary 35K58; 
Secondary 35A21, 
35B33, 
35B44, 
35B65}

\keywords{Semilinear heat equation, 
blow-up, critical norm, $\eps$-regularity}

\begin{abstract}
We consider the scaling critical Lebesgue norm of blow-up solutions 
to the semilinear heat equation  
$u_t=\Delta u+|u|^{p-1}u$ in an arbitrary
$C^{2+\alpha}$ domain of $\R^n$. 
In the range $p>p_S:=(n+2)/(n-2)$, 
we show that the critical norm must be unbounded near the blow-up time, 
where the type I blow-up condition is not imposed. 
The range $p>p_S$ is optimal in view of 
the existence of type II blow-up solutions 
with bounded critical norm for $p=p_S$. 
\end{abstract}

\maketitle

\tableofcontents

\section{Introduction}
\subsection{Background}
We study blow-up solutions of the following semilinear heat equation: 
\begin{equation}\label{eq:main}
\left\{ 
\begin{aligned}
	&u_t=\Delta u+|u|^{p-1}u, &&x\in \Omega, \  t>0, \\
	&u(x,t)=0, &&x\in \partial \Omega, \  t>0, \\
	&u(x,0)=u_0(x), &&x\in \Omega. 
\end{aligned}
\right.
\end{equation}
Here $p>1$, $\Omega$ is a domain in $\R^n$ with $n\geq1$
and $u_0\in L^q(\Omega)$ with $q\geq1$.  
The boundary condition is not present if $\Omega=\R^n$.

The equation in \eqref{eq:main} has attracted much attention 
as one of the simplest model 
for scaling invariant nonlinear parabolic equations. 
For each solution $u$, the rescaled function 
$u_\lambda(x,t):=\lambda^{2/(p-1)}u(\lambda x,\lambda^2 t)$ 
($\lambda>0$) 
also satisfies the equation. 
This implies that $L^{q_c}(\Omega)$ with 
\[
	q_c:=\frac{n(p-1)}{2} 
\]
is the scaling critical Lebesgue space for \eqref{eq:main}. 
The critical space plays a crucial role in well-posedness. 
If $u_0\in L^{q_c}(\Omega)$ and $q_c>1$, 
it is well-known \cite{BC96,We79,We80} that 
there exists a unique classical $L^{q_c}$-solution $u$ 
of \eqref{eq:main} with the maximal existence time $T\in(0,\infty]$. 
For the definition of the classical $L^q$-solution, 
see Remark \ref{rem:defsol}. 
The solution $u$ is smooth for $t\in(0,T)$, belongs to 
$C([0,T); L^{q_c}(\Omega))\cap C( (0,T); L^\infty(\Omega))$ 
and admits the following blow-up criterion: 
If $T<\infty$, then 
$\lim_{t\to T}\|u(\cdot,t)\|_{L^\infty(\Omega)}=\infty$. 
This leads to the critical norm blow-up problem. 

\begin{problem}
If $T<\infty$, does the following property hold?
\begin{equation}\label{eq:CNB}\tag{CNB}
	\lim_{t\to T}\|u(\cdot,t)\|_{L^{q_c}(\Omega)}=\infty. 
\end{equation}
\end{problem}

The problem is stated in Brezis and Cazenave \cite[Open problem 7]{BC96} 
and also a variant can be found in
Quittner and Souplet \cite[OP 2.1, Section 55]{QSbook2} 
which asks the existence of blow-up solutions with bounded $L^{q_c}$ norm. 
Many sufficient conditions for \eqref{eq:CNB} are known, see 
\cite{Brun,FHV00,GK87,MM04,MM09,Ma98,MS19,We86} and 
\cite[Section 16]{QSbook2}. 
Recently, a significant progress was made by 
Mizoguchi and Souplet \cite{MS19}, where they proved 
that \eqref{eq:CNB} holds whenever the blow-up is type I. 
Here the blow-up of $u$ is called type I 
if the blow-up rate is bounded by 
a spatially homogeneous solution up to the coefficient, that is, 
$\limsup_{t\to T} (T-t)^{1/(p-1)}\|u(\cdot,t)\|_{L^\infty(\Omega)}<\infty$. 
This rate is natural in view of the scaling. 
Note that the blow-up is called type II if it is not of type I. 
In the celebrated work of Giga and Kohn \cite{GK87},
they showed that 
the blow-up is always type I 
if $\Omega$ is convex and 
either $p<p_S$ and $u$ is nonnegative, 
or $p<(3n+8)/(3n-8)$.  
Here $p_S$ is the Sobolev critical exponent given by 
\[
	p_S:=
	\left\{ 
	\begin{aligned}
	&\frac{n+2}{n-2} &&\mbox{ for }n\geq3, \\
	&\infty &&\mbox{ for }n=1,2.    \\
	\end{aligned}
	\right.
\]
Up to now, it has been proved that the blow-up is type I 
under the conditions that either $p<p_S$ and $\Omega$ is convex \cite{GMS04,GMS04s}, 
or $p<p_S$ and $u$ is nonnegative \cite{Qu21}. 
Therefore, under such conditions, (CNB)  holds for any blow-up solutions
in the subcritical range $p<p_S$.
For related results concerning sufficient conditions 
for type I blow-up, we refer 
\cite{CMR17,CMR17c,CRS19,FM85,GMS04,GMS04s,IM03,MM04,MM11,MRS20,Mi11,
PQS07,Qu16,Qu21,We85} and \cite[Section 23]{QSbook2}.

The situation is different in the case $p \geq  p_S$.
Type II blow-up solutions were constructed 
in a number of papers, 
see Remark \ref{rem:type2} for a brief review.  
In particular, the recent development 
\cite{dPMW19,dPMWZZpre,dPMWZ20,Ha20,LSWpre,Sc12} 
provides type II blow-up solutions satisfying 
\[
	\sup_{0<t<T}\|u(\cdot,t)\|_{L^{q_c}(\Omega)}<\infty
\]
for $p=p_S$ and $3 \le n \le 5$, 
see \cite[Section 4]{MS19} 
for computations of the $L^{q_c}$ norm 
of the solutions constructed in \cite{dPMW19,Sc12}. 
This demonstrates that 
type I assumption in \cite{MS19} is indeed necessary 
if $p=p_S$ and $3\le n\le 5$. 
For $p=p_S$ and $n\geq 6$, no counter-examples are known.
Moreover, for $n\geq7$, it was shown \cite{WW21pre} that
\eqref{eq:CNB} holds for interior blow-up solutions 
with $u \ge 0$ and either $\Omega=\R^n$ or $\Omega$ bounded.
In contrast with the case $p=p_S$, 
all the known type II blow-up solutions  
do satisfy \eqref{eq:CNB} in the range $p>p_S$. 
Taking the above results into account, 
we expect that 
\eqref{eq:CNB} holds whenever $p>p_S$. 

\subsection{Main theorem}
Our main result shows that, 
in the optimal range $p>p_S$, 
the critical norm of all finite time blow-up solutions  
must be unbounded without assuming 
nonnegativity, monotonicity, symmetry, convexity or the type of blow-up.

\begin{theorem}\label{th:main}
Let $n\geq3$, $p>p_S$, $\Omega$ be any 
$C^{2+\alpha}$ domain in $\R^n$ with $0<\alpha<1$
and $u$ be a classical $L^{q_c}$-solution of \eqref{eq:main} 
with $u_0\in L^{q_c}(\Omega)$. 
If the maximal existence time $T>0$ is finite, then 
\[
	\limsup_{t\to T}\|u(\cdot,t)\|_{L^{q_c}(\Omega)}=\infty. 
\]
\end{theorem}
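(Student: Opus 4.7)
I would argue by contradiction, setting $M_0:=\sup_{0<t<T}\|u(\cdot,t)\|_{L^{q_c}(\Omega)}<\infty$ and aiming for a contradiction with the $L^\infty$ blow-up criterion. The driving tool should be a parabolic $\varepsilon$-regularity statement: there exists $\varepsilon_0=\varepsilon_0(n,p)>0$ such that if $\sup_{t\in(t_0-r^2,t_0)}\|u(\cdot,t)\|_{L^{q_c}(B_r(x_0)\cap\Omega)}<\varepsilon_0$, then $u$ is uniformly bounded on $B_{r/2}(x_0)\cap\Omega\times(t_0-r^2/4,t_0)$. Such a result is proved by rewriting the equation as a perturbation of the heat equation in Duhamel form and running a Kato-type fixed-point argument in a scaling-invariant function space; the boundary version follows from the same contraction argument with the Dirichlet heat semigroup and a flattening of $\partial\Omega$, which is why the $C^{2+\alpha}$ regularity of $\Omega$ enters.

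Given the $\varepsilon$-regularity, the critical norm bound $M_0$ forces the set
\[
	\cS:=\Bigl\{x_0\in\overline{\Omega}\,:\,\limsup_{t\to T}\sup_{0<r<r_0}\|u(\cdot,t)\|_{L^{q_c}(B_r(x_0)\cap\Omega)}\geq\varepsilon_0\Bigr\}
\]
to be finite, with $\card\,\cS\leq(M_0/\varepsilon_0)^{q_c}$, and $u$ to remain uniformly bounded on compact subsets of $\overline{\Omega}\setminus\cS$ as $t\to T$. Hence the blow-up must occur at some $x_0\in\cS$. At such a point, I would choose sequences $t_k\to T$ and $x_k\to x_0$ with $M_k:=|u(x_k,t_k)|\to\infty$ of maximal size, set $\lambda_k:=M_k^{-(p-1)/2}$, and rescale
\[
	v_k(y,s):=\lambda_k^{2/(p-1)}u(x_k+\lambda_k y,t_k+\lambda_k^2 s).
\]
Because $L^{q_c}$ is scaling invariant, $\|v_k(\cdot,s)\|_{L^{q_c}}\leq M_0$ uniformly, while $|v_k(0,0)|=1$. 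The $\varepsilon$-regularity applied to $v_k$ yields uniform $C^{2,1}_{\loc}$ bounds away from possible concentration points, so a subsequence converges to an ancient classical solution $V$ of $V_s=\Delta V+|V|^{p-1}V$ on either $\R^n\times(-\infty,0]$ or a half-space with Dirichlet data, satisfying $\sup_s\|V(\cdot,s)\|_{L^{q_c}}\leq M_0$ and $|V(0,0)|=1$.

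The core of the argument is then a Liouville-type statement: for $p>p_S$, the only such $V$ is the zero solution. This would be proved by combining the Giga--Kohn energy functional in backward self-similar variables (which is well-defined and monotone thanks to the critical-norm bound, not a type I hypothesis) with a classification of nontrivial self-similar profiles. The critical integrability $V\in L^\infty_s L^{q_c}_y$ rules out the constant profile $\kappa=(p-1)^{-1/(p-1)}$ and the singular steady state $c_p|x|^{-2/(p-1)}$ (both of which have infinite $L^{q_c}(\R^n)$ norm precisely at the scaling threshold), and a Pohozaev-type identity in the range $p>p_S$ eliminates the remaining candidate profiles without requiring positivity or symmetry. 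On the half-space, a reflection/moving-plane type reduction gives the same conclusion. Once $V\equiv0$ is established, the $L^{q_c}$ norm of $v_k$ on any fixed parabolic cylinder centered at $(0,0)$ is strictly smaller than $\varepsilon_0$ for $k$ large, so a final application of $\varepsilon$-regularity yields $|v_k(0,0)|\to 0$, contradicting $|v_k(0,0)|=1$.

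The main obstacle is the Liouville theorem in the fourth step: for $p>p_S$ one cannot rely on classical subcritical Liouville results, and the scaling-critical space admits just enough room that the classification of ancient solutions with bounded critical norm has to be pushed through carefully, particularly in the sign-changing case and near the boundary. Everything else---the $\varepsilon$-regularity, the finiteness of $\cS$, and the passage to an ancient limit---is standard once the critical norm is assumed bounded.
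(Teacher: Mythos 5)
Your proposal is a natural ``rescale to an ancient solution and apply Liouville'' scheme, but it diverges from the paper at exactly the point where the problem is hard, and the Liouville theorem you invoke is an open problem, so the gap cannot be closed within your framework. A preliminary issue is your $\eps$-regularity statement in terms of local smallness of $\sup_t\|u(\cdot,t)\|_{L^{q_c}}$: this is not obtained by a Kato-type fixed-point argument, because in the critical space $L^{q_c}$ Weissler's contraction runs in an auxiliary norm $\sup_t t^{\sigma}\|u(t)\|_{L^q}$ with $q>q_c$, and the resulting existence time depends on the profile of the data, not only on its $L^{q_c}$ norm; uniform $L^\infty$ bounds do not follow from smallness of the critical norm by that route. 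Indeed, the authors emphasize that they could not use the $L^{q_c}$-concentration criterion of \cite{MS19} (which requires type I) and built instead an energy-based $\eps$-regularity (Theorem \ref{th:epsreg}), stated in terms of $\delta^{4/(p-1)-n}\iint_{Q_\delta}(|\nabla u|^2+|u|^{p+1})$, whose concentration is controllable through the quasi-monotonicity of a localized Giga--Kohn energy (Proposition \ref{pro:31}).

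The essential gap is the Liouville step. Rescaling by the $L^\infty$ peak $\lambda_k=M_k^{-(p-1)/2}$ yields an ancient solution $V$ on $\R^n\times(-\infty,0]$ (or a half-space) with $|V(0,0)|=1$ and bounded $L^{q_c}$ norm, and the nonexistence of such $V$ for $p>p_S$, in the sign-changing case, is explicitly flagged in the paper's Remark on ancient solutions as an open problem. Your outline (Pohozaev identity, classification of self-similar profiles, moving planes) tacitly assumes $V$ is self-similar, which is unknown a priori for a general ancient limit, and moving planes require positivity, which the theorem deliberately does not assume. The paper sidesteps this entirely by a different rescaling: it extracts spatial scales $\delta_k\to0$ from the failure of the energy $\eps$-regularity at the blow-up point, and rescales by $\delta_k$ rather than by the height. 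The limit $\overline{u}$ then lives on the finite cylinder $\R^n_+\times(-1,0)$, where the quasi-monotonicity error of Proposition \ref{pro:31} vanishes, making the Giga--Kohn energy of $\overline{u}$ exactly monotone (Lemma \ref{lem:monoubar}); a Struwe-type argument then forces $\overline{u}$ to be self-similar (Lemma \ref{lem:SS}), with an elliptic profile having a finite singular set (Lemma \ref{lem:parreg}). Because the limit reaches the final time $t=0$, one shows $\overline{u}(\cdot,0)=0$ and that $\overline{u}$ is bounded outside a compact set, so backward uniqueness and elliptic unique continuation give $\overline{u}\equiv0$, contradicting the nondegeneracy \eqref{est:lower}. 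That chain---energy $\eps$-regularity, monotonicity in the limit, self-similarity, backward uniqueness, unique continuation---is precisely what replaces the Liouville theorem you need, and is the core contribution of the paper.
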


The theorem immediately shows the nonexistence 
of blow-up solutions with bounded $L^{q_c}$ norm, and so 
this resolves the open problem \cite[OP 2.1, Section 55]{QSbook2} for the supercritical case. 
We now give comments on the proof, and then 
we list remarks concerning the statement of this theorem 
and related results including other scaling invariant nonlinear evolution equations. 

As in \cite{ESS03,  Wa08} for related equations,
our proof of Theorem \ref{th:main} consists of two parts: 
(i) the blow-up (rescaling and compactness) procedure and 
(ii) the analysis of the blow-up limit.
However, several additional difficulties appear 
from the differences of the nonlinear structure,
in  particular, the lack of the coercivity of the energy
and the absence of the derivative in the nonlinear term.
Compared with the earlier work \cite{MS19},
there are also some novelties in the proof.
In (i), a concentration theorem of the $L^{q_c}$ norm
near a blow-up point plays a crucial role 
for the nondegeneracy of the blow-up limit in \cite{MS19}.
Unfortunately, we could not rely on their concentration theorem due to the absence of the type I assumption. 
In order to circumvent this difficulty, we will prove 
a new $\eps$-regularity theorem (Theorem \ref{th:epsreg}), 
which guarantees the energy concentration near a 
blow-up point.
Theorem \ref{th:epsreg} is motivated by similar 
$\eps$-regularity theorems in \cite{CDZ07}.
In comparison, we do not need to 
assume that the solution is globally defined in time 
or has a certain lower bound of the energy. 
In (ii), unlike the case of \cite{MS19}, 
the smoothness of our blow-up limit 
is no longer clear even before the final time. 
To overcome this issue, we invoke 
a monotonicity estimate similar to \cite{GK87}, 
which plays a key role to identify the blow-up limit. 
We note that the uses of the $\eps$-regularity theorem
 and the monotonicity estimate are partially 
 inspired by the related works \cite{St88, Wa08} 
for the harmonic map heat flow, but 
several modifications are needed for adapting
to our problem as explained above.

\begin{remark}[Classical $L^q$-solution]\label{rem:defsol}
For $1\leq q\leq \infty$, 
the definition of the classical $L^q$-solution is as follows 
(see \cite[Definition 15.1]{QSbook2}). 
Let $u_0\in L^q(\Omega)$ and $T\in(0,\infty]$. 
We say that $u\in C([0,T); L^q(\Omega))$ is 
a classical $L^q$-solution of \eqref{eq:main} 
if $u\in C^{2,1}(\Omega\times(0,T))\cap C(\overline{\Omega}\times (0,T))$, 
$u(\cdot,0)=u_0$ and 
$u$ is a classical solution of \eqref{eq:main} for $t\in (0,T)$. 
If $\Omega$ is unbounded, we also impose 
$u\in L^\infty_\loc((0,T);L^\infty(\Omega))$. 
If $q=\infty$, then $u\in C([0,T); L^q(\Omega))$ is replaced with 
$u\in C((0,T); L^\infty(\Omega))$ and 
$\lim_{t\to0}\|u(\cdot,t)-e^{t\Delta} u_0\|_{L^\infty(\Omega)}=0$, 
where $e^{t\Delta}$ is the Dirichlet heat semigroup in $\Omega$. 
We note that the maximal existence time 
can be defined for classical $L^q$-solutions, 
see \cite[Proposition 16.1 (i), (ii)]{QSbook2} 
for the definition. 
\end{remark}

\begin{remark}[Uniqueness]
By \cite{We79,We80} and \cite[Theorem 1]{BC96}, 
a unique classical $L^{q_c}(\Omega)$-solution of \eqref{eq:main} exists 
for each $u_0\in L^{q_c}(\Omega)$ with $q_c>1$. 
Moreover, if we further assume $q_c>p$, 
the unconditional uniqueness \cite[Theorem 4]{BC96} 
holds for mild solutions, that is, 
solutions of the corresponding integral equation 
\[
	u(\cdot,t)=e^{t\Delta}u_0
	+\int_0^t e^{(t-s)\Delta}|u(\cdot,s)|^{p-1}u(\cdot,s) ds
	\quad \mbox{ for }t\in (0,T)
\]
in $C([0,T);L^{q_c}(\Omega))$. 
Note that $q_c>p$ is equivalent to $p>n/(n-2)$ 
and is satisfied for $p>p_S$. 
Hence, under the assumption of Theorem \ref{th:main}, 
the condition $u_0\in L^{q_c}(\Omega)$ 
implies the existence of a unique mild solution 
in $C([0,T);L^{q_c}(\Omega))$. 
This solution is also a classical $L^{q_c}$-solution, since $q_c>1$. 
\end{remark}

\begin{remark}[Other Lebesgue spaces]
We recall the case $u_0\in L^q(\Omega)$ with $q\neq q_c$. 
For $1\leq q<q_c$, there are results of 
the nonexistence and nonuniqueness of solutions, 
see \cite{Ba83,CDNW20,HW82,We80} and \cite[Section 6]{PQS07}. 
By \cite[Theorem 2.4]{FM85}, there exist solutions such that 
the maximal existence time $T$ is finite and 
\[
	\sup_{0<t<T}\|u(\cdot,t)\|_{L^q(\Omega)}<\infty. 
\] 
For $q_c<q\leq \infty $ with $q\geq1$, it is known 
that \eqref{eq:main} has a unique classical $L^q$-solution $u$ 
(see \cite[Theorem 15.2, Proposition 51.40]{QSbook2} for instance). 
By \cite[Corollary 13]{BC96}, it is also known that 
if $T<\infty$, then 
$\lim_{t\to T}\|u(\cdot,t)\|_{L^q(\Omega)}=\infty$. 
More precisely, the lower estimate of the blow-up rate 
\[
	\|u(\cdot,t)\|_{L^q(\Omega)}\geq C(T-t)^{-\frac{n}{2}(\frac{1}{q_c}-\frac{1}{q})} 
\]
holds for some constant $C>0$, see \cite[Section 6]{We81} and 
\cite[Remark 16.2 (iii), Proposition 23.1]{QSbook2}.
\end{remark}

\begin{remark}[Application to backward self-similar solutions]
Let $\Omega=\R^n$. 
We call the solution $u$ of \eqref{eq:main} backward self-similar 
if it is of the form
$u(x,t)=(T-t)^{-1/(p-1)}U(x/\sqrt{T-t})$ 
for some $T>0$ and some profile function $U \in C^2(\R^n)$. 
Theorem \ref{th:main} immediately 
shows the following corollary on the Liouville type theorem 
for backward self-similar solutions $u$ of \eqref{eq:main} with $p>p_S$.
It recovers \cite[Corollary 2]{MS19} in the case $p>p_S$.
\end{remark}

\begin{corollary}
Let $n\geq3$, $p>p_S$ 
and $u$ be a backward self-similar solution of \eqref{eq:main}. 
If the profile function $U$ of $u$ 
belongs to $L^{q_c}(\R^n)$, then $u\equiv0$. 
\end{corollary}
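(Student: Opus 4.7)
The strategy is to apply Theorem \ref{th:main} directly, after observing that the backward self-similar ansatz freezes the critical norm. Concretely, for $u(x,t) = (T-t)^{-1/(p-1)}U(x/\sqrt{T-t})$, the substitution $y = x/\sqrt{T-t}$ together with the identity $q_c/(p-1) = n/2$ gives
\begin{equation*}
\|u(\cdot,t)\|_{L^{q_c}(\R^n)}^{q_c}
= (T-t)^{-q_c/(p-1)+n/2}\|U\|_{L^{q_c}(\R^n)}^{q_c}
= \|U\|_{L^{q_c}(\R^n)}^{q_c}
\end{equation*}
for every $t\in[0,T)$. Hence, assuming $U\in L^{q_c}(\R^n)$, the critical norm of $u$ is constant in time and therefore bounded.

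Next, I would argue that if $U\not\equiv 0$, then $u$ qualifies as a classical $L^{q_c}$-solution of \eqref{eq:main} on $\R^n\times [0,T)$ in the sense of Remark \ref{rem:defsol}, with initial datum $u_0(x)=T^{-1/(p-1)}U(x/\sqrt{T})\in L^{q_c}(\R^n)$, and that its maximal existence time is exactly $T$. The $C^{2,1}$ regularity on $\R^n\times(0,T)$ follows from $U\in C^2(\R^n)$; the continuity in $L^{q_c}(\R^n)$ is a routine dominated convergence argument using the scaling identity above; and the finite-time blow-up follows from the pointwise formula once one knows $U$ is bounded. The boundedness of $U$ is the only nontrivial point: since $U\in L^{q_c}(\R^n)\cap C^2(\R^n)$ satisfies the elliptic self-similar profile equation
\begin{equation*}
\Delta U - \tfrac{1}{2}y\cdot\nabla U - \tfrac{1}{p-1}U + |U|^{p-1}U = 0
\quad\text{in }\R^n,
\end{equation*}
a standard bootstrap using local $L^p$--$L^q$ estimates for the Ornstein--Uhlenbeck operator (or, equivalently, parabolic regularity applied to $u$ itself on slices $\R^n\times[t,(T+t)/2]$) upgrades $U\in L^{q_c}$ to $U\in L^\infty(\R^n)$, which also yields $u\in L^\infty_\loc((0,T);L^\infty(\R^n))$. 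Consequently $\|u(\cdot,t)\|_{L^\infty}=(T-t)^{-1/(p-1)}\|U\|_{L^\infty}\to\infty$ as $t\to T$, so $T$ is indeed the maximal existence time.

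With these verifications in hand, Theorem \ref{th:main} applies and forces $\limsup_{t\to T}\|u(\cdot,t)\|_{L^{q_c}(\R^n)}=\infty$, which contradicts the constancy established in the first step. Thus $U\equiv 0$ and hence $u\equiv 0$.

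The only genuine obstacle is the verification that the self-similar ansatz fits the classical $L^{q_c}$-solution framework; once the $L^\infty$-boundedness of $U$ is in place the rest is an immediate scaling computation combined with Theorem \ref{th:main}.
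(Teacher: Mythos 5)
Your proof is correct and takes the same route the paper intends: the paper asserts the corollary follows ``immediately'' from Theorem \ref{th:main}, and the scaling identity $\|u(\cdot,t)\|_{L^{q_c}(\R^n)}=\|U\|_{L^{q_c}(\R^n)}$ that you compute is precisely the observation that makes it immediate. One simplification worth noting: the elliptic-bootstrap step you flag as the only nontrivial obstacle is actually superfluous, since the paper's definition of a classical $L^{q_c}$-solution on an unbounded domain (Remark \ref{rem:defsol}) already imposes $u\in L^\infty_\loc((0,T);L^\infty(\R^n))$, which for the self-similar ansatz forces $U\in L^\infty(\R^n)$ with no PDE argument, and then $\|u(\cdot,t)\|_{L^\infty}=(T-t)^{-1/(p-1)}\|U\|_{L^\infty}\to\infty$ identifies $T$ as the maximal existence time whenever $U\not\equiv 0$.
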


\begin{remark}[Brief review of type II blow-up]\label{rem:type2}
Type II blow-up solutions were first found by 
Herrero and Vel\'azquez \cite{HVun,HV94}
in the Joseph--Lundgren supercritical range 
$p> p_{JL}:=(n-2\sqrt{n-1})/(n-4-2\sqrt{n-1}) \, (>p_S)$ with $n\geq11$. 
For a refined construction, see \cite{Mi04,MS21}. 
See also \cite{MM11,Mi05,Mi07} 
for the Lepin supercritical range 
$p>p_L:=(n-4)/(n-10) \,(>p_{JL})$. 
The critical cases $p=p_{JL}$ and $p=p_L$ were handled in \cite{Se18,Se20}. 
In \cite{NS07}, the case where $p=p_S$, $n=3$ and 
a suitably shrinking $\Omega=\Omega(t)$ was handled. 
Remark that the above type II blow-up solutions 
are radially symmetric. 
In the range $p_S<p<p_{JL}$, 
it was proved \cite{MM04} 
that all radially symmetric blow-up solutions 
are of type I if either $\Omega$ is a ball, 
or $\Omega=\R^n$ with assumptions on intersection properties, 
see also \cite{MM09,Mi11}. 
For the existence of non-radial type II blow-up solutions, 
see \cite{Co17,CMR20} for some $p>p_{JL}$, 
\cite{dPMW21} for $p=p_2:=(n+1)/(n-3)$ and $n\geq7$, 
and \cite{dPLMWZpre} for $p=p_{n-3}:=3$ and $5\leq n\leq 7$. 
We note that $p_2$ and $p_{n-3}$ are 
the so-called second critical exponent 
(after \cite{dPMP10}) and $(n-3)$-th critical exponent (after \cite{dPLMWZpre}). 
They satisfy $p_S<p_2<p_{JL}$ for $n\geq 4$ 
and $p_2\leq p_{n-3}$ for $n\geq 5$, 
where $p_{JL}:=\infty$ for $n\leq 10$. 
One of the reasons why such exponents appear 
is explained in \cite[Subsection 1.4]{CMR20}.

In the Sobolev critical case $p=p_S$, 
type II blow-up solutions were formally found 
by Filippas, Herrero and Vel\'azquez \cite{FHV00} for $3\leq n\leq 6$
and were rigorously constructed in \cite{Sc12} for $n=4$. 
The recent development of 
the inner-outer gluing method refined the construction, 
see \cite{dPMWZZpre} for $n=3$, 
\cite{dPMWZ20,LSWpre} for $n=4$, 
\cite{dPMW19,Ha20} for $n=5$ 
and \cite{Ha20s} for $n=6$. 
On the other hand, 
it was proved \cite{CMR17} for $n\geq7$ that 
there is no type II blow-up solution 
if $u_0$ is close to the Aubin-Talenti function. 
Recently, it was also shown \cite{WW21pre} that 
all interior blow-up solutions are of type I 
provided that $n\geq7$, $u\geq0$ and either $\Omega=\R^n$ 
or $\Omega$ is bounded. 
\end{remark}

\begin{remark}[Ancient solutions]
We say that $u$ is an ancient solution if it satisfies 
$u_t-\Delta u=|u|^{p-1} u$ for $t\in (-\infty,T)$ with some $T<\infty$. 
Classification results for such solutions were obtained in 
\cite{MZ98} for $p<p_S$ and 
\cite{PQ21} for $p_S<p<p_{JL}$ and $p>p_L$, 
see also the references given there. 
In our context, as far as the authors know, 
the following question is open: 
Does there exist a nontrivial solution satisfying 
$\sup_{-\infty<t<T}\|u(\cdot,t)\|_{L^{q_c}(\Omega)}<\infty$ 
for $p>p_S$? 
\end{remark}

\begin{remark}[Infinite time blow-up]
Infinite time blow-up (or grow-up) solutions, that is, global-in-time 
solutions satisfying $\lim_{t\to \infty}\|u(\cdot,t)\|_{L^\infty(\Omega)}=\infty$, 
were constructed for $p\geq p_S$, 
see \cite{GK03} for $p=p_S$, \cite{PY14} for $p_S<p<p_{JL}$
and \cite{PY03} for $p\geq p_{JL}$. 
For $p=p_S$, possible asymptotic behavior was conjectured 
by Fila and King \cite{FK12}. 
Recently, the conjecture was confirmed 
by \cite{dPMW20} for $n=3$, \cite{WZZ22pre} for $n=4$ 
and \cite{LZZWpre} for $n=5$. 
Although this paper focuses on finite time blow-up solutions, 
it may be interesting to study the behavior of 
the critical norm 
for infinite time blow-up solutions. 
\end{remark}

\begin{remark}[Critical norm blow-up for the Navier-Stokes equations]
Theorem \ref{th:main} corresponds to the pioneering work of 
Escauriaza, Seregin and \v{S}ver\'{a}k \cite{ESS03} 
for the three-dimensional Navier-Stokes equations. 
They showed the blow-up of the critical norm in the sense that 
if the maximal existence time $T$ is finite, then 
\begin{equation}\label{eq:qualibc}
	\limsup_{t\to T}\|u(\cdot,t)\|_{L^3} =\infty. 
\end{equation}
The limit superior condition was later improved 
to the limit condition in \cite{Se12}.
In the case of the domains with boundary, the condition 
\eqref{eq:qualibc} was verified for the flat boundary \cite{Se05}
and for general cases \cite{MS06}.  
These results were also refined to the limit condition 
for the flat case \cite{BS17, MMP19} 
and for general cases \cite{AB20}.  
On the other hand, the $L^3$ norm in \eqref{eq:qualibc} 
was further refined to the Lorentz norm \cite{Ph15} and 
the Besov norm \cite{Al18,GKP16}. 
Actually, our norm in Theorem \ref{th:main} can be replaced by the Lorentz norm
$L^{q_c,r}$ with $r<\infty$, but we do not pursue this issue here. 
We also refer \cite{DD09,DW20} for the critical norm blow-up
in higher dimensions $n \ge 4$. 

Recently, Tao \cite{Ta21} proved that if $T<\infty$, then 
\[
	\limsup_{t\to T}\frac{\|u(\cdot,t)\|_{L^3}}
	{(\log\log\log(1/(T-t)))^c} =\infty 
\]
for some constant $c>0$. See \cite{BP21,Pa21} for further developments
in this subject.  
By analogy, it is expected that there is a 
general quantitative blow-up criterion 
for the semilinear heat equation \eqref{eq:main} with $p>p_S$. 
This direction seems interesting and also challenging.
Moreover, it remains an open problem 
whether the limit superior condition in Theorem \ref{th:main} 
can be replaced with the limit condition. 
We note that the result of \cite{MS19} for $p > p_S$ 
under the type I blow-up assumption 
is not a consequence of Theorem \ref{th:main}.
\end{remark}

\begin{remark}[Critical norm blow-up for other equations]
Wang \cite{Wa08} studied the critical norm of the harmonic map heat flow
between compact Riemannian manifolds without boundaries
in the energy supercritical dimension 
$n \ge 3$. 
It was shown that if the maximal existence time $T$ is finite, then 
\[
	\limsup_{t\rightarrow T} \|\nabla u(\cdot,t)\|_{L^n}=\infty.
\]
One of the key ideas in the proof 
is the monotonicity formula of Struwe \cite{St88}.

For nonlinear dispersive equations with power nonlinearities, 
there are also many works on the blow-up of 
the critical norm of the form
\[
	\limsup_{t\to T} \|u(\cdot, t)\|_{\dot{H}^{s_c}} =\infty. 
\]
Here $s_c$ is the scaling critical exponent for each of the equations.
Kenig and Merle \cite{KM10} 
showed that blow-up solutions of 
the cubic defocusing Schr\"{o}dinger equation in $\R^3$ 
must satisfy the above condition with $s_c=1/2$. 
Their method is based on the 
concentration compactness procedure and the rigidity theorem. 
A similar method is applicable to the defocusing supercritical 
nonlinearity, see \cite{KV10}.
In the radial case, Merle and Rapha\"el \cite{MR08} 
gave an explicit 
lower bound of the critical norm in some energy subcritical range.
See also a recent result \cite{Bu20} for the lower bound of the critical norm 
in the radial supercritical case.
Similar results were also obtained 
for the nonlinear wave equation starting from \cite{KM11}.
For the focusing case, see \cite{DKM14,DY18}.
\end{remark}

\begin{remark}[Related results for supercritical elliptic equations]
In the proof of Theorem \ref{th:main}, 
the obtained blow-up limit $\overline{u}$ 
is a weak solution of the semilinear heat equation and 
satisfies a monotonicity estimate. 
In addition, the singular set of $\overline{u}(\cdot,t)$ 
consists of finitely many points for each $t$. 
A similar situation can be found for 
the so-called stationary solutions of 
the semilinear elliptic equation $-\Delta u=|u|^{p-1}u$ 
for $p>p_S$, see \cite{Du14,Pa93,Pa94,WW15}. 
In this context, 
a weak solution $u$ of the elliptic equation 
is called a stationary solution 
if $u$ is a critical point of the corresponding energy functional 
with respect to domain variations. 
\end{remark}

\subsection{Organization of this paper}
This paper is organized as follows. 
In Section \ref{sec:grad}, we derive a key gradient estimate. 
In Section \ref{sec:pre}, we define 
a localized weighted energy and prove its quasi-monotonicity. 
In Section \ref{sec:epreg}, we prove an $\eps$-regularity theorem 
by analyzing the energy. 
In Section \ref{sec:Blim},  
we construct and examine a blow-up limit with the aid of the $\eps$-regularity, 
and then we prove Theorem \ref{th:main}. 
In Appendix \ref{sec:reges}, we give regularity estimates 
used in Section \ref{sec:Blim}. 
In Appendix \ref{sec:cpt}, 
we recall an Aubin-Lions type compactness result 
also used in Section \ref{sec:Blim}. 

\subsection{Notation}
For $x\in\R^n$, we often write $x=(x',x_n)$ 
with $x'\in\R^{n-1}$ and $x_n\in\R$. 
Set $\R^n_+:=\{x\in\R^n; x'\in\R^{n-1}, x_n>0\}$. 
We denote by $\chi_A$ and $|A|$ the characteristic function 
and the Lebesgue measure of a measurable set $A$, respectively. 
For $r>0$ and $(x,t)\in\R^{n+1}$, we write 
\[
\begin{aligned}
	&B_r(x):= \{y\in \R^n; |x-y|<r\}, &&B_r:=B_r(0), \\
	&\Omega_r(x):= \Omega \cap B_r(x), &&\Omega_r:=\Omega_r(0), \\
	&P_r(x,t):= B_r(x)\times (t-r^2,t), &&P_r=P_r(0,0), \\
	&Q_r(x,t):= \Omega_r(x) \times (t-r^2,t), &&Q_r=Q_r(0,0). 
\end{aligned}
\]
For $\rho>0$ and $(x,t)\in\R^{n+1}$, we write 
\[
\begin{aligned}
	&B_\rho^+(x):=\R^n_+ \cap B_\rho(x), &&B_\rho^+:=\R^n_+ \cap B_\rho(0), \\
	&Q_\rho^+(x,t):= B_\rho^+(x)\times (t-\rho^2,t), 
	&&Q_\rho^+:= Q_\rho^+(0,0). 
\end{aligned}
\]
We denote by $G_\Omega=G_\Omega(x,y,t)$ 
the Dirichlet heat kernel in $\Omega$. 
Set 
\[
	K_{(\tilde x,\tilde {t})}(x,t):= 
	(\tilde t-t)^{-\frac{n}{2}} 
	e^{-\frac{|x-\tilde x|^2}{4(\tilde t-t)}}
\]
for $x,\tilde x\in\R^n$ and $t<\tilde t$. 
The critical exponents are defined by
\[
	p_S:=\frac{n+2}{n-2}, 
	\quad 
	q_c:= \frac{n(p-1)}{2}, 
	\quad 
	q_*:= \frac{n(p-1)}{p+1}. 
\]
Note that each of the conditions $q_c>p+1$ and $q_*>2$ is equivalent to $p>p_S$. 
In what follows, we always assume $n\geq3$ and $p>p_S$.

\section{Gradient estimate}\label{sec:grad}
Let $R>0$ and $\Omega$ be any $C^{2+\alpha}$ 
domain in $\R^n$ 
with $0\in \overline{\Omega}$. 
As will be seen in Section \ref{sec:Blim}, 
the proof of Theorem \ref{th:main} 
is based on the study of the localized problem
\begin{equation}\label{eq:fujitaeq}
\left\{ 
\begin{aligned}
	&u_t=\Delta u+|u|^{p-1}u
	&&\mbox{ in }\Omega_R\times(-1,0), \\
	&u=0
	&&\mbox{ on }(\partial \Omega \cap B_R)\times(-1,0), \\
	&u\mbox{ is }C^{2,1}
	&&\mbox{ on }\overline{\Omega_R}\times(-1,0), 
\end{aligned}
\right.
\end{equation}
under the assumption that there exists $M>0$ satisfying 
\begin{equation}\label{eq:Mdef2}
	\sup_{-1<t<0 } \| u(\cdot,t)\|_{L^{q_c}(\Omega_R)} \leq M. 
\end{equation}
Here the boundary condition in \eqref{eq:fujitaeq} 
is ignored if $\partial \Omega \cap B_R=\emptyset$.

In this section, we show a gradient estimate in the 
Lorentz space $L^{q_*,\infty}$ with $q_*:= n(p-1)/(p+1)$, 
which is our key tool to bound a weighted energy 
defined in Section \ref{sec:pre}. 
The method to estimate a term from $|u|^{p-1}u$ 
is based on the idea due to Meyer \cite[Theorem 18.1]{Me97} 
(see also \cite[Proposition 1.5]{Te02}).

\begin{proposition}\label{pro:gradoricri}
If $u$ satisfies \eqref{eq:fujitaeq} and \eqref{eq:Mdef2}, 
then there exists a constant $C>0$ depending on $R$ such that 
\[
	\sup_{-3/4<t<0}\|\nabla u(\cdot,t)\|_{L^{q_*,\infty}(\Omega_{3R/4})}
	\leq  C (M+M^p). 
\]
\end{proposition}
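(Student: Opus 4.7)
The plan is to localize $u$ via a spatial cutoff, apply Duhamel with the Dirichlet heat semigroup on $\Omega$, and then close the estimate at the critical scaling by a Lorentz-space argument in the spirit of Meyer \cite{Me97}.

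First I would fix a cutoff $\phi\in C^\infty_c(B_{7R/8})$ with $\phi\equiv 1$ on $B_{13R/16}$ and $|\nabla^k\phi|\leq C_R$ for $k=1,2$, and set $v:=\phi u$, extended by zero outside $\Omega\cap B_{7R/8}$. The Dirichlet boundary condition in \eqref{eq:fujitaeq} ensures the extended $v$ vanishes on $\partial\Omega$. Rewriting $-2\nabla\phi\cdot\nabla u=-2\nabla\cdot(u\nabla\phi)+2u\Delta\phi$ so that no explicit $\nabla u$ appears on the right-hand side gives
\[v_t-\Delta v=\phi|u|^{p-1}u+u\Delta\phi-2\nabla\cdot(u\nabla\phi)\quad\text{in }\Omega\times(-1,0).\]
Since $\phi\equiv 1$ on $B_{13R/16}\supset B_{3R/4}$, it will suffice to bound $\|\nabla v(\cdot,t)\|_{L^{q_*,\infty}(\Omega)}$ uniformly for $t\in(-3/4,0)$.

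Next, I fix $t_0:=-7/8$ so that $t-t_0\in[1/8,7/8]$ for $t\in(-3/4,0)$, and apply Duhamel with the Dirichlet heat semigroup $e^{\tau\Delta_D}$, whose kernel $G_\Omega$ satisfies a Gaussian upper bound, to obtain
\[\nabla v(t)=\nabla e^{(t-t_0)\Delta_D}v(t_0)+\int_{t_0}^t\nabla e^{(t-s)\Delta_D}[\phi|u|^{p-1}u+u\Delta\phi](s)\,ds-2\int_{t_0}^t\nabla^2 e^{(t-s)\Delta_D}(u\nabla\phi)(s)\,ds.\]
The initial contribution is handled by heat-kernel smoothing: since $t-t_0\geq 1/8$ and $\|v(t_0)\|_{L^{q_c}(\Omega)}\leq M$, one obtains an $L^\infty(\Omega)$-bound of order $M$, which embeds into $L^{q_*,\infty}(\Omega_{3R/4})$ via the finite measure of the latter. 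The lower-order terms $u\Delta\phi$ and $u\nabla\phi$ lie in $L^\infty_t L^{q_c}_x$ with norm of order $M$ and are supported in the annulus $B_{7R/8}\setminus B_{13R/16}$, whose distance to $B_{3R/4}$ is $\geq R/16$; consequently the off-diagonal Gaussian decay of $\nabla e^{(t-s)\Delta_D}$ and $\nabla^2 e^{(t-s)\Delta_D}$ kills their time singularities at $s=t$ when evaluated on $B_{3R/4}$, giving an $L^\infty(\Omega_{3R/4})$-bound of order $M$, and hence an $L^{q_*,\infty}$-bound of the same order.

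The main obstacle is the nonlinear term $\phi|u|^{p-1}u\in L^\infty((t_0,0);L^{q_c/p}(\Omega))$, with norm of order $M^p$. The identity $p/q_c-1/q_*=1/n$ forces the critical heat-kernel estimate
\[\|\nabla e^{\tau\Delta_D}f\|_{L^{q_*}(\Omega)}\leq C\tau^{-1}\|f\|_{L^{q_c/p}(\Omega)},\]
whose time integral $\int_{t_0}^t(t-s)^{-1}\,ds$ diverges logarithmically, so the bound cannot be closed in $L^{q_*}$. To absorb this singularity I would invoke the Meyer Lorentz-space trick \cite[Theorem 18.1]{Me97}, further developed in \cite[Proposition 1.5]{Te02}: the singular kernel $\tau^{-1}$ belongs to $L^{1,\infty}((0,1))$, and real interpolation upgrades the operator bound on $\nabla e^{\tau\Delta_D}$ to the Lorentz scale $L^{q_c/p}\to L^{q_*,\infty}$, so that an O'Neil-type Young convolution estimate in Lorentz scales yields $\bigl\|\int_{t_0}^t\nabla e^{(t-s)\Delta_D}(\phi|u|^{p-1}u)(s)\,ds\bigr\|_{L^{q_*,\infty}(\Omega)}\leq CM^p$. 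Combining the three contributions produces $\|\nabla u(\cdot,t)\|_{L^{q_*,\infty}(\Omega_{3R/4})}\leq C(M+M^p)$ uniformly in $t\in(-3/4,0)$, as claimed; the essential difficulty is precisely this last step, where the critical time singularity rules out any direct $L^{q_*}$ estimate and forces the passage to the weak Lorentz space $L^{q_*,\infty}$.
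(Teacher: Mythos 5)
Your proposal is essentially identical to the paper's proof: the same cutoff--Duhamel decomposition (with the derivative transferred off $u$ onto the kernel by the divergence structure), the same $L^\infty$ treatment of the initial-data and annulus terms via far-field decay, and the same invocation of Meyer's Lorentz-space trick for the critical nonlinear term. One small caveat: the description of the key step as ``real interpolation upgrades the spatial bound \ldots so that an O'Neil-type Young convolution in time yields the $L^{q_*,\infty}$ bound'' is imprecise---a pure time-convolution inequality of $L^{1,\infty}*L^\infty$ type does not close, since that endpoint fails; what actually works (and what Meyer's Theorem 18.1 does) is a joint optimization, splitting the time integral at a level-dependent cut $\tau(\lambda)$, estimating the tail in $L^\infty_x$ and the near part in $L^{q_c/p,\infty}_x$, and then using Chebyshev/H\"older in Lorentz scales. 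Since you cite the correct references and intend precisely this argument, the gap is one of phrasing rather than substance.
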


\begin{proof}
In the same spirit of \cite[Proposition A.1]{Fu66}, 
we derive a localized integral equation, 
and then we estimate each of the terms. 
Let $\phi\in C^\infty_0(\R^n)$ satisfy 
$0\leq \phi\leq 1$ in $\R^n$, 
$\phi=0$ in $\R^n\setminus B_{15R/16}$ and  
$\phi=1$ in $B_{7R/8}$. 
Set $v(x,t):=u(x,t)\phi(x)$. 
Then $v$ belongs to $C^{2,1}(\overline{\Omega}\times(-1,0))$ and satisfies 
\[
\left\{ 
\begin{aligned}
	&v_t - \Delta v
	= \phi |u|^{p-1} u -2\nabla \phi \cdot \nabla u
	-u\Delta \phi 
	&&\mbox{ in }\Omega\times (-1,0), \\
	&v=0
	&&\mbox{ on }\partial \Omega\times (-1,0).
\end{aligned}
\right. 
\]
Thus, 
\begin{equation}\label{eq:uGompu}
\begin{aligned}
	u(x,t)  
	&=\int_{\Omega} G_\Omega(x,y,t+7/8) \phi(y) u(y,-7/8) dy  \\
	&\quad 
	+ \int_{-7/8}^t \int_{\Omega} G_\Omega(x,y,t-s) 
	\phi(y) |u(y,s)|^{p-1} u(y,s)  dyds \\
	&\quad 
	- \int_{-7/8}^t \int_{\Omega} G_\Omega(x,y,t-s) 
	(2\nabla \phi \cdot \nabla u+u\Delta \phi) dyds
\end{aligned}
\end{equation}
for $x\in \Omega_{3R/4}$ and $-7/8<t<0$, 
where  $G_\Omega=G_\Omega(x,y,t)$ 
is the Dirichlet heat kernel in $\Omega$. 
Since $G_\Omega(x,y,t)=0$ for $y\in \partial \Omega$ and 
$u\nabla \phi=0$ on $\partial \Omega$, 
integrating by parts in the third term 
in the right-hand side yields 
\begin{equation}\label{eq:uint78}
\begin{aligned}
	u(x,t)  
	&=\int_{\Omega} G_\Omega(x,y,t+7/8) \phi(y) u(y,-7/8) dy  \\
	&\quad 
	+ \int_{-7/8}^t \int_{\Omega} G_\Omega(x,y,t-s) 
	( \phi |u|^{p-1} u + u\Delta \phi) dyds \\
	&\quad 
	+2 \int_{-7/8}^t \int_{\Omega} \nabla_y G_\Omega(x,y,t-s) \cdot 
	\nabla \phi(y) u(y,s) dyds
\end{aligned}
\end{equation}
for $x\in \Omega_{3R/4}$ and $-7/8<t<0$.

Since $\Omega$ is $C^{2+\alpha}$, 
the following estimate holds 
(see \cite[Theorem IV.16.3]{LSUbook} for instance): 
There exists a constant $C>0$ such that 
\begin{equation}\label{eq:derivKjes}
	|\nabla_x^j G_\Omega(x,y,t)|\leq C K_j(x-y,t)\quad  (j=0,1,2) 
\end{equation}
for $x,y\in\Omega$ and $0<t<1$, where 
\begin{equation}\label{eq:Kjdef}
	K_j(x,t) :=t^{-\frac{n}{2}-\frac{j}{2}} e^{-\frac{|x|^2}{C t}} 
	\quad (j=0,1,2). 
\end{equation}
Remark that the constant in \eqref{eq:derivKjes} 
and \eqref{eq:Kjdef} depends only on $n$, $\Omega$ and 
the length of the time interval $(0,1)$. 
We prepare an estimate of 
$\partial_{x_i}\partial_{y_j} G_\Omega(x,y,t)$. 
By the semigroup property and \eqref{eq:derivKjes}, we have 
\[
\begin{aligned}
	|\partial_{x_i}\partial_{y_j} G_\Omega(x,y,t)|
	&= 
	\left| \int_\Omega \partial_{x_i} G_\Omega(x,z,t/2) 
	\partial_{y_j} G_\Omega(z,y,t/2) dz \right| \\
	&\leq 
	C \int_\Omega 
	K_1(x-z,t/2)
	K_1(z-y,t/2) dz \\
	&\leq 
	C t^{-1} \int_{\R^n} 
	G(x-z,Ct/8) G(z-y,Ct/8) dz \\
	&= 
	C t^{-1} G(x-y,Ct/4)
	\leq CK_2(x-y,t), 
\end{aligned}
\]
where $G(x,t):=(4\pi t)^{-n/2}e^{-|x|^2/(4t)}$ and 
we changed the constant $C$ in \eqref{eq:Kjdef}. 
Then by differentiating the integral equation \eqref{eq:uint78} and 
using $K_1(x,t)\leq K_2(x,t)$ for $x\in \Omega$ and $0<t<1$, 
we see that 
\begin{equation}\label{eq:U123def}
\begin{aligned}
	|\nabla u(x,t)|
	&\leq C \int_{\R^n} K_1(x-y,t+7/8) |u(y,-7/8)| 
	\chi_{\Omega_R}(y) dy \\
	&\quad 
	+ C \int_{-7/8}^t \int_{\Omega_R} K_1(x-y,t-s) |u(y,s)|^p dy ds \\
	&\quad 
	+ C \int_{-7/8}^t \int_{\R^n} K_2(x-y,t-s) 
	|u| \chi_{\overline{\Omega_{15R/16}} \setminus \Omega_{7R/8} }  dyds \\
	&=:C U_1(x,t) + C U_2(x,t)+ C U_3(x,t)
\end{aligned}
\end{equation}
for $x\in \Omega_{3R/4}$ and $-7/8<t<0$. 
Remark that each $U_i$ is defined for all $x\in\R^n$ and $-7/8<t<0$.

For $U_1$, from $q_*<q_c$ and the same argument to prove 
the $L^{q_c}$-$L^{q_c}$ estimate for the heat semigroup 
(see \cite[Section 1.1.3]{GGSbook} for instance), it follows that
\[
\begin{aligned}
	\|U_1(\cdot,t)\|_{L^{q_*,\infty}(\Omega_{3R/4})}
	&\leq C\|U_1(\cdot,t)\|_{L^{q_c}(\R^n)} \\
	&\leq C(t+7/8)^{-1/2} 
	\|u(\cdot,-7/8)\chi_{\Omega_R}\|_{L^{q_c}(\R^n)} \\
	&\leq C\|u(\cdot,-7/8)\|_{L^{q_c}(\Omega_R)}
	\leq CM 
\end{aligned}
\]
for $-3/4<t<0$. 
Then, 
\begin{equation}\label{eq:U1concl}
	\sup_{-3/4<t<0} 
	\|U_1(\cdot,t)\|_{L^{q_*,\infty}(\Omega_{3R/4})}
	\leq CM. 
\end{equation}
We consider $U_3$. 
Since $|x-y|\geq R/8$ for $x\in \Omega_{3R/4}$ 
and $y\in \overline{\Omega_{15R/16}} \setminus \Omega_{7R/8}$, 
we have 
\[
	\begin{aligned}
	K_2(x-y,t-s) 
	\chi_{\overline{\Omega_{15R/16}} \setminus \Omega_{7R/8} }(y)
	&\leq 
	C \chi_{\Omega_R}(y) \sup_{s<t} (t-s)^{-\frac{n}{2}-1} e^{-\frac{R^2}{C(t-s)}}  \\
	&\leq C\chi_{\Omega_R}(y)
	\end{aligned}
\]
for $x\in \Omega_{3R/4}$, $y\in \R^n$ and $-7/8<s<t<0$. Therefore, 
\[
	U_3(x,t)
	\leq 
	C \int_{-7/8}^t \int_{\Omega_R} |u| dyds
	\leq 
	CM 
\]
for $x\in \Omega_{3R/4}$ and $-7/8<t<0$. Thus, 
\begin{equation}\label{eq:U3concl}
	\sup_{-3/4<t<0} 
	\|U_3(\cdot,t)\|_{L^{q_*,\infty}(\Omega_{3R/4})}
	\leq CM. 
\end{equation}

We estimate $U_2$. This part is a modification 
of \cite[Theorem 18.1]{Me97}. 
Let $-7/8<t<0$. 
By the change of variables, we have 
\[
\begin{aligned}
	&U_2(x,t)\leq \tilde U(x;t), \quad 
	\tilde U=\tilde U(x;t):=\int_0^\infty S(x,s;t) ds, \\
	&S=S (x,s;t):= 
	\chi_{(0,t+7/8)}(s) \int_{\Omega_R} K_1(x-y,s) |u(y,t-s)|^p dy. 
\end{aligned}
\]
For $\lambda>0$ and $\tau>0$, 
define $E:=\{x\in\Omega_R; \tilde U(x)>\lambda\}$ and 
\[
	\tilde U(x)=\left( \int_0^\tau + \int_\tau^\infty\right) S(x,s) ds 
	=: \tilde U_1(x)+\tilde U_2(x). 
\]
We estimate the Lebesgue measure $|E|$ of $E$. 
By the same argument to prove the $L^{q_c/p}$-$L^\infty$ estimate 
for the heat semigroup, we have 
\[
\begin{aligned}
	\|S(\cdot,s)\|_{L^\infty(\Omega_R)} 
	&\leq C s^{-\frac{np}{2q_c}-\frac{1}{2}} \chi_{(0,t+7/8)}(s)
	\||u(\cdot,t-s)|^p\|_{L^{q_c/p}(\Omega_R)} \\
	&\leq C M^p s^{-\frac{np}{2q_c}-\frac{1}{2}}
\end{aligned}
\]
for any $s>0$, where $C>0$ is independent of $t$. Then, 
\[
	\tilde U_2(x) \leq \int_\tau^\infty S(x,s) ds 
	\leq C M^p \int_\tau^\infty s^{-\frac{np}{2q_c}-\frac{1}{2}} ds 
	=C' M^p \tau^{-\frac{p+1}{2(p-1)}}, 
\]
where $C'>0$ is also independent of $t$.  
For $\lambda>0$, we choose $\tau$ such that 
\begin{equation}\label{eq:tausig}
	C' M^p \tau^{-\frac{p+1}{2(p-1)}} = \frac{\lambda}{2}. 
\end{equation}
Then $\tilde U_2\leq \lambda/2$. 
By setting $E_1:=\{x\in\Omega_R; \tilde U_1(x) >\lambda/2\}$ 
and using $\tilde U_2\leq \lambda/2$ and $\tilde U=\tilde U_1+\tilde U_2$, 
we see that $E\subset E_1$.

From the same argument to prove the $L^{q_c/p}$-$L^{q_c/p}$ estimate, 
it follows that 
\[
\begin{aligned}
	\|S(\cdot,s)\|_{L^{\frac{q_c}{p},\infty}(\Omega_R)}
	&\leq \|S(\cdot,s)\|_{L^\frac{q_c}{p}(\Omega_R)} \\
	&\leq C s^{-\frac{1}{2}} \chi_{(0,t+7/8)}(s) 
	\||u(\cdot,t-s)|^p\|_{L^\frac{q_c}{p}(\Omega_R)} 
	\leq C M^p s^{-\frac{1}{2}}
\end{aligned}
\]
for any $s>0$. Thus, 
\[
	\|\tilde U_1\|_{L^{\frac{q_c}{p},\infty}(\Omega_R)} 
	\leq \int_0^\tau \|S(\cdot,s)\|_{L^{\frac{q_c}{p},\infty}(\Omega_R)} ds 
	\leq C M^p \tau^\frac{1}{2}. 
\]
This together with the H\"older inequality 
for the Lorentz spaces (see \cite[Proposition 2.1]{KY99} for instance) 
shows that
\[
	\int_{E_1} \tilde U_1(x) dx 
	\leq C \| \chi_{E_1} \|_{L^{\frac{q_c}{q_c-p},1}(\Omega_R)} 
	\|\tilde U_1\|_{L^{\frac{q_c}{p},\infty}(\Omega_R)} 
	\leq C M^p |E_1|^{1-\frac{p}{q_c}} \tau^\frac{1}{2}. 
\]
On the other hand, the definition of $E_1$ gives 
$\int_{E_1} \tilde U_1(x) dx\geq (\lambda/2) |E_1|$. 
By $E\subset E_1$ and \eqref{eq:tausig}, we obtain 
\[
	|E|\leq |E_1|\leq C \lambda^{-\frac{q_c}{p}} 
	M^{q_c} \tau^\frac{q_c}{2p} 
	=C M^{p q_*} \lambda^{-q_*}, 
\]
and so $\lambda  |\{ x\in\Omega_R; \tilde U(x)>\lambda\}|^{1/q_*} \leq C M^p$
for any $\lambda>0$. 
This implies that $\|\tilde U(\cdot;t)\|_{L^{q_*,\infty}(\Omega_R)} \leq CM^p$ 
for $-7/8<t<0$, where $C>0$ is independent of $t$. 
Hence by the definition of $\tilde U$ and $M^p$, we see that  
\[
	\sup_{-3/4<t<0}
	\|U_2(\cdot,t)\|_{L^{q_*,\infty}(\Omega_{3R/4})} 
	\leq 
	\sup_{-3/4<t<0} \|\tilde U(\cdot;t)\|_{L^{q_*,\infty}(\Omega_R)}
	\leq CM^p. 
\]
Combining this inequality, \eqref{eq:U123def}, 
\eqref{eq:U1concl} and \eqref{eq:U3concl}, we  obtain 
the desired inequality. 
\end{proof}

\section{Localized weighted energy}\label{sec:pre}
Let $u$ be a solution of \eqref{eq:fujitaeq} satisfying 
the bound \eqref{eq:Mdef2}. 
In this section, 
we define a localized weighted energy of $u$ 
analogous to Giga, Matsui and Sasayama \cite{GMS04,GMS04s}
and prove its quasi-monotonicity 
without assuming the convexity of $\Omega$. 
Our computations to prove quasi-monotonicity are
in the same spirit of Chou and Du \cite{CD10}, 
but the details are different. 
Among the results in this section, 
we will refer only Lemma \ref{lem:Ebdd} 
and Proposition \ref{pro:31} in the subsequent sections.

Let $n\geq3$, $p>p_S$ and 
$\Omega$ be any $C^{2+\alpha}$ domain 
in $\R^n$ with $0\in \overline{\Omega}$. 
We fix $R>0$ as one of the following: 
\begin{align}
	&\label{eq:R1def}
	\mbox{In the case $0\in \Omega$, we fix 
	$0<R<1/2$ such that $\overline{B_R} \subset \Omega$. }\\
	&\label{eq:R2def}
	\left\{ 
	\begin{aligned}
	&\mbox{In the case $\partial\Omega\neq \emptyset$ 
	and $0\in \partial \Omega$, 
	we fix $0<R<1/2$ such that}\\
	&\mbox{there exists 
	$f\in C^{2+\alpha}_0(\R^{n-1})$ satisfying 
	$f(0)=0$, $\nabla'f(0)=0$, }\\
	&\mbox{$\|\nabla' f\|_{L^\infty(\R^{n-1})}\leq 1/2$ and $\Omega_R
	=\{x\in B_R; x_n>f(x')\}$}\\
	&\mbox{by relabeling and reorienting 
	the coordinates axes if necessary. }
	\end{aligned}
	\right.
\end{align}
Here $\nabla'f$ is the gradient on $\R^{n-1}$. 
Remark that the existence of $f$ in \eqref{eq:R2def} 
is guaranteed by the smoothness of $\Omega$.

\subsection{Definition and change of variables}\label{subsec:dcv}
We define a localized weighted energy $E$ 
and show its boundedness by using Proposition \ref{pro:gradoricri}. 
To obtain quasi-monotonicity, 
we locally straighten the boundary. 
After that, we introduce backward similarity variables and 
derive the corresponding representation of $E$.

Let $\varphi\in C^\infty([0,\infty))$ satisfy 
$\varphi(z)=1$ for $0\leq z\leq 1/2$, 
$0< \varphi(z)< 1$ for $1/2< z< 1$, 
$\varphi(z)=0$ for $z\geq 1$ and 
$\varphi'(z)\leq 0$ for $z\geq0$. 
For $x,\tilde x\in \R^n$, $\tilde t>t$ and $r>0$, we set 
$\phi_r=\phi_{\tilde x,r}(x):=\varphi(|x-\tilde x|/r)$ and 
\[
	K=K_{(\tilde x,\tilde {t})}(x,t):= 
	(\tilde t-t)^{-\frac{n}{2}} 
	e^{-\frac{|x-\tilde x|^2}{4(\tilde t-t)}}. 
\]
For $\tilde x\in \overline{\Omega_{R/4}}$
and $-1<t<\tilde t\leq 0$, 
define a localized weighted energy by 
\begin{equation}\label{eq:Edeft}
\begin{aligned}
	&E(t) = E_{(\tilde x,\tilde t)}(t;\phi_{\tilde x,R/4})  \\
	&:= (\tilde t-t)^\frac{p+1}{p-1}
	\int_{\Omega_R} \left( \frac{|\nabla u(x,t)|^2}{2} 
	- \frac{|u(x,t)|^{p+1}}{p+1} 
	+ \frac{|u(x,t)|^2}{2(p-1)(\tilde t-t)}
	\right) \\
	&\qquad \times 
	K_{(\tilde x,\tilde {t})}(x,t) \phi_{\tilde x,R/4}^2(x) dx. 
\end{aligned}
\end{equation}
Remark that $u$ is defined on $(\overline{\Omega_R}\cap B_R)\times (-1,0)$, 
but we mainly consider the time interval $(-1/2,0)$ to 
apply Proposition \ref{pro:gradoricri}. 
Note that 
\begin{equation}\label{eq:tilphisupp}
	\supp \phi_{\tilde x,R/4} 
	\subset B_{R/2}
	\quad\mbox{ for }
	\tilde x\in \overline{\Omega_{R/4}}. 
\end{equation}

The following lemma guarantees the boundedness of $E$. 

\begin{lemma}\label{lem:Ebdd}
There exists $C>0$ such that 
\[
	|E_{(\tilde x,\tilde t)}(t;\phi_{\tilde x,R/4})|
	\leq C(M+M^p)^2 
\]
for any $\tilde x\in \overline{\Omega_{R/4}}$ 
and $-1/2<t<\tilde t\leq 0$. 
\end{lemma}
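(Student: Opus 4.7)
The plan is to split $E(t)$ into its three natural pieces and bound each by a scaling-matched Hölder estimate, using Proposition \ref{pro:gradoricri} for the gradient term and the hypothesis \eqref{eq:Mdef2} for the other two. The key point throughout is that the power of $(\tilde t-t)$ obtained from the $L^r$ (or $L^{r,1}$) norm of the Gaussian kernel $K_{(\tilde x,\tilde t)}$ exactly cancels the prefactor $(\tilde t-t)^{(p+1)/(p-1)}$ in front of $E$, so no extra time-weight survives. Because $\tilde x\in\overline{\Omega_{R/4}}$, the cutoff $\phi_{\tilde x,R/4}^2$ is supported in $B_{R/2}(\tilde x)\subset B_{3R/4}$ by \eqref{eq:tilphisupp}, so all spatial norms may be taken over $\Omega_{3R/4}$.

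First I would handle the zeroth-order term. Hölder with exponents $q_c/2$ and $q_c/(q_c-2)$ (note $q_c>p+1>2$ since $p>p_S$) yields
\[
\int_{\Omega_R} |u|^2 K\phi^2\,dx \le \|u\|_{L^{q_c}(\Omega_R)}^2 \bigl\|K\phi^2\bigr\|_{L^{q_c/(q_c-2)}(\R^n)}.
\]
A direct computation on the Gaussian gives $\|K_{(\tilde x,\tilde t)}(\cdot,t)\|_{L^r(\R^n)} = C(\tilde t-t)^{-\frac{n}{2}(1-\frac{1}{r})}$, so with $r=q_c/(q_c-2)$ the right-hand side equals $C M^2(\tilde t-t)^{-n/q_c} = CM^2(\tilde t-t)^{-2/(p-1)}$. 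Multiplied by $(\tilde t-t)^{(p+1)/(p-1)-1} = (\tilde t-t)^{2/(p-1)}$, this gives $CM^2$.

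Next I would treat the nonlinear term using Hölder with exponents $q_c/(p+1)$ and $q_c/(q_c-p-1)$, again legitimate since $q_c>p+1$. The same Gaussian computation with $r=q_c/(q_c-p-1)$ gives $\|K\|_{L^r} = C(\tilde t-t)^{-(p+1)/(p-1)}$, and so
\[
(\tilde t-t)^{\frac{p+1}{p-1}}\!\int_{\Omega_R}|u|^{p+1}K\phi^2\,dx \le CM^{p+1}.
\]
For the gradient term, which is the main obstacle, the natural $L^2$-bound on $\nabla u$ is unavailable; instead I apply the Lorentz-space Hölder inequality with the weak Lebesgue estimate of Proposition \ref{pro:gradoricri}. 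Since $\||\nabla u|^2\|_{L^{q_*/2,\infty}} = \|\nabla u\|_{L^{q_*,\infty}}^2 \le C(M+M^p)^2$ and $q_*>2$, this gives
\[
\int_{\Omega_R}|\nabla u|^2 K\phi^2\,dx \le C\|\nabla u\|_{L^{q_*,\infty}(\Omega_{3R/4})}^2 \,\bigl\|K\phi^2\bigr\|_{L^{q_*/(q_*-2),1}(\R^n)}.
\]
The remaining task is to estimate the Lorentz $L^{s,1}$-norm of the Gaussian $K$ for $s=q_*/(q_*-2)$. Writing $\tau=\tilde t-t$, the distribution function $\mu_K(\lambda) = C\tau^{n/2}\bigl[\log(\tau^{-n/2}/\lambda)\bigr]^{n/2}$ for $\lambda<\tau^{-n/2}$ gives the decreasing rearrangement $K^*(\mu) = \tau^{-n/2}\exp(-C(\mu/\tau^{n/2})^{2/n})$; changing variables in $\|K\|_{L^{s,1}} = \int_0^\infty \mu^{1/s-1}K^*(\mu)\,d\mu$ produces the same scaling as in the Lebesgue case, namely $C\tau^{-n/q_*} = C(\tilde t-t)^{-(p+1)/(p-1)}$. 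Combined with the prefactor, this bounds the gradient contribution by $C(M+M^p)^2$.

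Summing the three estimates yields the claimed bound $C(M+M^p)^2$ uniformly in $\tilde x\in\overline{\Omega_{R/4}}$ and $-1/2<t<\tilde t\le 0$. The only delicate step is the Lorentz-norm computation for the gradient term; the other two contributions are controlled by essentially classical heat-kernel Hölder arguments, and the supercriticality assumption $p>p_S$ enters solely through the two strict inequalities $q_c>p+1$ and $q_*>2$ that make the Hölder exponents admissible.
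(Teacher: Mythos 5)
Your proposal is correct and follows essentially the same route as the paper: split $E$ into its three terms, bound the quadratic and $|u|^{p+1}$ terms by Hölder against $\|u\|_{L^{q_c}}$, and treat the gradient term via Lorentz–Hölder with Proposition \ref{pro:gradoricri}, matching the Gaussian's time-weight against the prefactor $(\tilde t-t)^{(p+1)/(p-1)}$. The only cosmetic difference is the Lorentz pairing for the gradient term (you place $|\nabla u|^2\in L^{q_*/2,\infty}$ against $K\phi^2\in L^{q_*/(q_*-2),1}$, while the paper factors out $(\tilde t-t)^{-n/2}$ and pairs $\nabla u\in L^{q_*,\infty}$ with the Gaussian in $L^{2q_*/(q_*-2),2}$ before squaring); both give the same scaling and the same bound.
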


\begin{proof}
From the H\"older inequality 
for the Lorentz spaces (see \cite[Proposition 2.1]{KY99}), 
\eqref{eq:tilphisupp}, Proposition \ref{pro:gradoricri} 
and a direct computation, 
it follows that  
\begin{equation}\label{eq:nabucalpp}
\begin{aligned}
	&\int_{\Omega_R} 
	|\nabla u|^2 K_{(\tilde x,\tilde t)} \phi_{\tilde x,R/4}^2 dx \\
	&\leq C (\tilde t -t)^{-\frac{n}{2}} 
	\| \nabla u(\cdot,t) \|_{L^{q_*,\infty}(\Omega_{R/2})}^2  
	\left\| e^{-\frac{|\cdot -\tilde x|^2}{8(\tilde t-t)}} 
	\right\|_{L^{\frac{2q_*}{q_*-2},2} (\R^n)}^2 \\
	&\leq 
	C(M+M^p)^2 (\tilde t-t)^{-\frac{p+1}{p-1}}
\end{aligned}
\end{equation}
for $-1/2<t<\tilde t\leq 0$.
The H\"older inequality and \eqref{eq:Mdef2} show that 
\[
\begin{aligned}
	&
	\begin{aligned}
	\int_{\Omega_R} |u|^{p+1} K_{(\tilde x,\tilde t)} 
	\phi_{\tilde x,R/4}^2 dx 
	&\leq 
	(\tilde t-t)^{-\frac{n}{2}} 
	\| u \|_{L^{q_c}(\Omega_{R/2})}^{p+1} 
	\left( \int_{\R^n} e^{-\frac{|x-\tilde x|^2}{C(\tilde t-t)}} dx 
	\right)^{1-\frac{p+1}{q_c}} \\
	&\leq CM^{p+1} (\tilde t-t)^{-\frac{p+1}{p-1}}, 
	\end{aligned} \\
	&\begin{aligned}
	\int_{\Omega_R} |u|^2 
	K_{(\tilde x,\tilde t)} \phi_{\tilde x,R/4}^2 dx 
	\leq CM^2 (\tilde t-t)^{-\frac{2}{p-1}}, 
	\end{aligned}
\end{aligned}
\]
for $-1/2<t<\tilde t\leq0$. 
The lemma follows from the above estimates. 
\end{proof}

We locally straighten the boundary. 
In the case \eqref{eq:R2def}, 
we define $C^{2+\alpha}$ maps 
$\Phi=(\Phi_1,\ldots,\Phi_n)$ and 
$\Psi=(\Psi_1,\ldots,\Psi_n)$ by 
\[
\left\{
\begin{aligned}
	&\xi_i = x_i =: \Phi_i(x), \\
	&\xi_n = x_n - f(x') =: \Phi_n(x), 
\end{aligned}
\right.  
\quad 
\left\{
\begin{aligned}
	&x_i = \xi_i =: \Psi_i(\xi), \\
	&x_n = \xi_n + f(\xi') =: \Psi_n(\xi), 
\end{aligned}
\right. 
\]
for $i=1,\ldots, n-1$. 
To handle the case \eqref{eq:R1def} in a unified way, 
we also set 
\[
\left\{
\begin{aligned}
	&\xi_i = x_i =: \Phi_i(x), \\
	&\xi_n = x_n =: \Phi_n(x), 
\end{aligned}
\right.  
\quad 
\left\{
\begin{aligned}
	&x_i = \xi_i =: \Psi_i(\xi), \\
	&x_n = \xi_n =: \Psi_n(\xi), 
\end{aligned}
\right. 
\]
for \eqref{eq:R1def}. 
We note that the maps $\Phi$ and $\Psi$ 
for \eqref{eq:R1def} are identity maps 
and they are obtained by setting $f\equiv0$ 
in the definitions of $\Phi$ and $\Psi$ for \eqref{eq:R2def}.

We write $\Phi(x)=\xi$ and $\Psi(\xi)=x$. 
Set 
\begin{equation}\label{eq:uhatdef}
	\hat u(\xi,t):= u(\Psi (\xi),t). 
\end{equation}
Then, direct computations show that 
\begin{equation}\label{eq:nabxihatu}
\begin{aligned}
	\nabla_x u(x,t) 
	&= (
	\partial_{\xi_1}\hat u 
	- (\partial_{\xi_n}\hat u) \partial_{\xi_1} f,
	\ldots, 
	\partial_{\xi_{n-1}}\hat u 
	- (\partial_{\xi_n}\hat u) \partial_{\xi_{n-1}} f, 
	\partial_{\xi_n}\hat u ) \\
	&=
	(\nabla'\hat u - (\partial_{\xi_n}\hat u) \nabla' f, \partial_{\xi_n}\hat u)
\end{aligned}
\end{equation}
and that 
\[
\begin{aligned}
	\Delta_x u(x,t)
	&= \Delta_\xi \hat u
	-2\sum_{i=1}^{n-1} (\partial_{\xi_i}\partial_{\xi_n} \hat u) 
	\partial_{\xi_i} f 
	+\partial_{\xi_n}^2 \hat u \sum_{i=1}^{n-1} (\partial_{\xi_i} f)^2  
	-\partial_{\xi_n} \hat u \sum_{i=1}^{n-1} \partial_{\xi_i}^2 f 
	\\
	&= \Delta_{\xi} \hat u 
	- 2\nabla_{\xi}' (\partial_{\xi_n} \hat u )
	\cdot \nabla_{\xi}' f 
	+(\partial_{\xi_n}^2 \hat u) |\nabla_{\xi}' f|^2 
	-(\partial_{\xi_n} \hat u) \Delta_{\xi}' f, 
\end{aligned}
\]
where $\nabla'$ and $\Delta'$ are 
the gradient and Laplacian on $\R^{n-1}$ 
with respect to the first $(n-1)$ components, respectively. 
Remark that 
the terms in the right-hand sides are evaluated at 
$(\xi,t)=(\Phi(x),t)$. 
Since $u$ satisfies \eqref{eq:fujitaeq}, 
we see that $\hat u$ satisfies 
\begin{equation}\label{eq:hatueq}
\left\{ 
\begin{aligned}
	&\hat u_t - \hat A \hat u 
	= |\hat u|^{p-1} \hat u 
	&&\mbox{ in } 
	\Phi(\Omega_R)\times (-1,0),   \\
	&\hat u=0
	&&\mbox{ on }\Phi(\partial\Omega\cap B_R)\times (-1,0). 
\end{aligned}
\right. 
\end{equation}
Here, by abbreviations of the subscripts $\xi$ and $\xi_n$, we set 
\[
	\hat A\hat u:=
	\Delta \hat u 
	- 2\nabla' (\partial_n \hat u )
	\cdot \nabla' f 
	+(\partial_n^2 \hat u) |\nabla' f|^2 
	-(\partial_n \hat u) \Delta' f. 
\]
For $\tilde x\in \overline{\Omega_{R/4}}$, 
we write $\tilde \xi :=\Phi(\tilde x)$. 
Note that $\tilde x=\Psi(\tilde \xi)$. 
We perform the change of variables $x=\Psi(\xi)$ 
in \eqref{eq:Edeft} by using the relation \eqref{eq:nabxihatu}, 
the definition of $K$ and 
the fact that the Jacobian determinant equals $1$ from the definition of $\Psi$, 
that is, $dx=d\xi$. 
Then $E$ satisfies
\begin{equation}\label{eq:Etstr}
\begin{aligned}
	E(t) &=(\tilde t-t)^{\frac{p+1}{p-1} -\frac{n}{2}}
	\int_{\Phi(\Omega_R)} 
	\left( 
	\frac{ |\widehat \nabla \hat u|^2}{2}
	-\frac{|\hat u|^{p+1}}{p+1} +\frac{\hat u^2}{2(p-1)(\tilde t-t)} \right) \\
	&\qquad \times 
	e^{-\frac{|\Psi(\xi)-\Psi(\tilde \xi)|^2}{4(\tilde t-t)} } 
	\phi^2_{\Psi(\tilde \xi),R/4}(\Psi(\xi)) d\xi, 
\end{aligned}
\end{equation} 
where 
$\widehat \nabla \hat u(\xi,t)
:= (\nabla'\hat u - (\partial_n\hat u) \nabla' f, \partial_n\hat u)$.

We introduce the backward similarity variables 
\[
	\eta :=\frac{\xi-\tilde \xi}{(\tilde t-t)^{1/2}}, 
	\quad \tau:=-\log(\tilde t-t). 
\]
Then the rescaled functions are given by 
\begin{align} 
	&\label{eq:backw}
	w(\eta,\tau):= e^{-\frac{1}{p-1}\tau} 
	\hat u( 
	\tilde \xi+e^{-\frac{1}{2}\tau} \eta, \tilde t-e^{-\tau}), \\
	&\label{eq:backg}
	g(\eta',\tau):= 
	e^{\frac{1}{2}\tau} f( \tilde \xi'+e^{-\frac{1}{2}\tau} \eta' ). 
\end{align}
Note that 
\[
\begin{aligned}
	&\xi=\tilde \xi+e^{-\frac{1}{2}\tau} \eta, 
	\quad \tilde t-t=e^{-\tau}, \\
	&\hat u(\xi,t) 
	=(\tilde t-t)^{-\frac{1}{p-1}} 
	w( (\tilde t-t)^{-\frac{1}{2}} (\xi-\tilde \xi), -\log (\tilde t-t) ). 
\end{aligned}
\]

Since $\hat u$ satisfies \eqref{eq:hatueq}, 
we see that $w$ solves \begin{equation}\label{eq:weq}
	\left\{ 
	\begin{aligned}
	&w_\tau + \frac{1}{2} \eta\cdot \nabla w
	+\frac{1}{p-1} w -Aw - |w|^{p-1}w =0, \\
	&\quad \eta\in \Omega(\tau), \  
	\tau\in  (-\log(\tilde t+1/2),\infty), \\
	&w=0, \quad \eta\in e^{\tau/2}(\Phi(\partial\Omega\cap B_R)-\tilde \xi), 
	\end{aligned}
	\right.
\end{equation}
where the time interval $(-\log(\tilde t+1), \infty)$ 
has been shortened to $(-\log(\tilde t+1/2), \infty)$ 
for using Proposition \ref{pro:gradoricri} safely and 
\[
	Aw:=
	\Delta w 
	- 2\nabla' (\partial_n w )
	\cdot \nabla' g 
	+(\partial_n^2 w) |\nabla' g|^2 
	-(\partial_n w) \Delta' g  \\
\]
by abbreviations of the subscripts $\eta$ and $\eta_n$. 
In addition, 
\[
	\Omega(\tau):=\{\eta\in \R^n; \tilde \xi+e^{-\tau/2} \eta
	\in\Phi(\Omega_R)\}
	= e^{\tau/2}(\Phi(\Omega_R)-\tilde \xi). 
\]

By using the backward similarity variables, 
\eqref{eq:Etstr} can be written as 
\[
\begin{aligned}
	E(t) &=
	\int_{\Omega(\tau)} 
	\left( 
	\frac{ |\widehat \nabla w|^2}{2}
	-\frac{|w|^{p+1}}{p+1} 
	+\frac{w^2}{2(p-1)} \right)  \\
	&\qquad 
	\times \exp\left( -\frac{e^\tau}{4} |\Psi(\tilde \xi+e^{-\tau/2}\eta)
	-\Psi(\tilde \xi)|^2 \right) \\
	&\qquad \times 
	\varphi^2\left( \frac{4}{R} 
	|\Psi(\tilde \xi+e^{-\tau/2}\eta)-\Psi(\tilde \xi)|
	\right) d\eta, 
\end{aligned}
\]
where $\widehat \nabla w
:=(\nabla' w-(\partial_n w) \nabla' g, \partial_n w)$. 
We observe that 
\begin{equation}\label{eq:Psieta}
\begin{aligned}
	&|\Psi(\tilde \xi+e^{-\tau/2}\eta)-\Psi(\tilde \xi)|^2 \\
	&= e^{-\tau}
	|(\eta',\eta_n+g(\eta',\tau)-g(0,\tau))|^2 \\
	&= e^{-\tau}
	( |\eta|^2 + 2(g(\eta',\tau)-g(0,\tau))\eta_n + (g(\eta',\tau)-g(0,\tau))^2 ). 
\end{aligned}
\end{equation}
Then by setting 
\begin{align}
	&\label{eq:cEdef}
	\cE(\tau):= 
	\int_{\Omega(\tau)} 
	\left( 
	\frac{ |\widehat \nabla w|^2}{2}
	-\frac{|w|^{p+1}}{p+1} 
	+\frac{w^2}{2(p-1)} \right) 
	\rho \psi^2 d\eta, \\
	&\label{eq:rhodef}
	\begin{aligned}
	\rho=\rho(\eta,\tau) &:=\exp\bigg( 
	-\frac{1}{4} 
	( |\eta|^2 + 2(g(\eta',\tau)-g(0,\tau))\eta_n \\
	&\quad + (g(\eta',\tau)-g(0,\tau))^2 ) \bigg), 
	\end{aligned} \\
	&\label{eq:psietadef}
	\psi=\psi(\eta,\tau):= 
	\varphi\left( 
	\frac{4}{R}
	e^{-\frac{\tau}{2}} |(\eta',\eta_n+g(\eta',\tau)-g(0,\tau))|  
	\right), 
\end{align}
we see that $E(t)=\cE(\tau)$ with $\tau=-\log(\tilde t-t)$. 
By direct computations, we note that 
\begin{equation}\label{eq:rhocalc}
\left\{ 
\begin{aligned}
	&\partial_i \rho 
	= - \frac{1}{2} (\eta_i +(\eta_n +g(\eta',\tau)-g(0,\tau)) \partial_i g) \rho 
	\quad (i=1,\ldots,n-1), \\
	&\begin{aligned}
	\partial_i^2 \rho &= 
	-\frac{1}{2} ( 1+ (\partial_i g)^2 
	+ (\eta_n +g(\eta',\tau)-g(0,\tau)) \partial_i^2 g )\rho \\
	&\quad 
	+\frac{1}{4} ( \eta_i^2 
	+ 2(\eta_n +g(\eta',\tau)-g(0,\tau))\eta_i \partial_i g )\rho \\
	&\quad 
	+\frac{1}{4} (\eta_n +g(\eta',\tau)-g(0,\tau))^2 (\partial_i g)^2 \rho 
	\quad (i=1,\ldots,n-1), 
	\end{aligned} \\
	&\partial_n \rho 
	=-\frac{1}{2} (\eta_n +g(\eta',\tau)-g(0,\tau)) \rho, \\
	&\partial_n^2 \rho 
	= -\frac{1}{2}\rho  +\frac{1}{4} (\eta_n +g(\eta',\tau)-g(0,\tau))^2 \rho
\end{aligned}
\right. 
\end{equation}
and that 
\begin{equation}\label{eq:rhotau}
	\rho_\tau 
	= -\frac{1}{2} \partial_\tau(g(\eta',\tau) - g(0,\tau)) 
	(\eta_n + g(\eta',\tau)-g(0,\tau)) \rho. 
\end{equation}

\subsection{Quasi-monotonicity}
We prove the quasi-monotonicity of $E$. 
This property plays a crucial role 
in the proof of the $\eps$-regularity theorem 
and also in the blow-up analysis.

\begin{proposition}\label{pro:31}
Fix $R>0$ such that either \eqref{eq:R1def} or \eqref{eq:R2def} holds. 
Let $u$ be a solution of \eqref{eq:fujitaeq} satisfying \eqref{eq:Mdef2}. 
Then there exists a constant $C>0$ depending only on $n$, $p$, $\Omega$ 
and $R$ such that 
\begin{equation}\label{eq:419}
\begin{aligned}
	&
	\begin{aligned}
	&E_{(\tilde x,\tilde{t})}(t;\phi_{\tilde x,R/4})
	+
	\frac{1}{2} \int_{t'}^t 
	(\tilde{t}-s)^{\frac{2}{p-1} -\frac{n}{2}-1} \\
	&\qquad \times 
	\int_{\Phi(\Omega_R)}
	\left| \frac{\hat u}{p-1}
	+\frac{(\xi-\Phi(\tilde x))\cdot\nabla \hat u}{2}
	- (\tilde t-s)\hat u_s \right|^2  \\
	&\qquad 
	\times 
	e^{-\frac{|\Psi(\xi)-\tilde x|^2}{4(\tilde t -s)}}
	\phi^2_{\tilde x,R/4}(\Psi(\xi)) d\xi ds 
	\end{aligned} \\
	&\leq E_{(\tilde x,\tilde{t})}(t';\phi_{\tilde x,R/4})
	+ C(M+M^p)^2 (\tilde{t}-t')^\frac{1}{2} 
\end{aligned}
\end{equation}
for any $\tilde x\in \overline{\Omega_{R/4}}$ and 
$-1/2< t'<t<\tilde t\leq 0$. 
\end{proposition}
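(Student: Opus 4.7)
The plan is to carry out the computation in the backward similarity variables $(\eta,\tau)$ introduced in \eqref{eq:backw}--\eqref{eq:backg}, where $E(t)=\cE(\tau)$ with $\tau=-\log(\tilde t-t)$. A direct chain-rule computation from \eqref{eq:backw} gives
\[
w_\tau(\eta,\tau)=-e^{-\tau/(p-1)}\Bigl[\tfrac{\hat u}{p-1}+\tfrac12(\xi-\tilde\xi)\cdot\nabla\hat u-(\tilde t-s)\hat u_s\Bigr](\xi,s),
\]
so that the substitution $\xi=\tilde\xi+e^{-\tau/2}\eta$, $s=\tilde t-e^{-\tau}$, together with \eqref{eq:Psieta} for the Gaussian $\rho$ and the identification $\psi=\phi_{\tilde x,R/4}(\Psi(\xi))$, converts $\int_{\tau'}^{\tau}\!\int w_\sigma^2\rho\psi^2\,d\eta\,d\sigma$ exactly into the dissipation integral of \eqref{eq:419} (the Jacobian $d\eta=e^{n\tau/2}d\xi$ and the factor $e^{-2\tau/(p-1)}$ from $w_\tau^2$ combine with $d\sigma=ds/(\tilde t-s)$ to supply the claimed power $(\tilde t-s)^{2/(p-1)-n/2-1}$). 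The proposition therefore reduces to showing the quasi-monotonicity
\[
\cE(\tau)+\int_{\tau'}^{\tau}\!\int_{\Omega(\sigma)} w_\sigma^2\rho\psi^2\,d\eta\,d\sigma\le \cE(\tau')+C(M+M^p)^2 e^{-\tau'/2}.
\]

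To prove this, I differentiate \eqref{eq:cEdef} in $\tau$, use \eqref{eq:weq} to substitute for $Aw+|w|^{p-1}w-\tfrac{1}{p-1}w-\tfrac12\eta\cdot\nabla w$, and integrate by parts in $\eta$, handling the straightened Dirichlet boundary $\{\eta_n=-g(\eta',\tau)\}$ on which $w=0$. The derivatives of $\rho$ listed in \eqref{eq:rhocalc}--\eqref{eq:rhotau} are arranged precisely so that the drift term $\tfrac12\eta\cdot\nabla w$ combines with the boundary produced by integrating $\Delta w$ by parts to yield the Giga--Kohn square. The outcome is a master identity
\[
\frac{d\cE}{d\tau}=-\int_{\Omega(\tau)} w_\tau^2\,\rho\psi^2\,d\eta+I_\psi(\tau)+I_g(\tau),
\]
where $I_\psi$ collects every term containing a derivative of the cutoff $\psi$, and $I_g$ collects every term that would vanish in the interior/flat case $g\equiv 0$.

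For $I_\psi$, the supports of $\nabla\psi$ and $\partial_\tau\psi$ lie in the annular region $\{R/8\le |\Psi(\tilde\xi+e^{-\tau/2}\eta)-\tilde x|\le R/4\}$, on which $\rho\le\exp(-cR^2 e^\tau)$; combining this with Lemma \ref{lem:Ebdd} and the Lorentz gradient bound of Proposition \ref{pro:gradoricri} yields $|I_\psi(\tau)|\le C(M+M^p)^2 e^{-c R^2 e^\tau}$, which is harmless after integration. For $I_g$, the structural smallness of the boundary-straightening is used: from $f(0)=0$, $\nabla' f(0)=0$, $\|\nabla' f\|_\infty\le 1/2$ and $f\in C^{2+\alpha}$, one obtains $|g(\eta',\tau)-g(0,\tau)|\le C|\eta'|$ and $e^{\tau/2}|\nabla'^2 g|+|\partial_\tau(g-g(0,\tau))|\le C e^{-\tau/2}|\eta'|^2$, while $\nabla' g$ is only $O(1)$ in $L^\infty$. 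Every residual term in $I_g$ either carries a second-derivative factor in $g$ (gaining $e^{-\tau/2}$) or pairs $\nabla' g$ with an extra $|\eta'|$ that the Gaussian $\rho$ absorbs (also yielding $e^{-\tau/2}$ after integration). Applying Lemma \ref{lem:Ebdd} on the quadratic-in-$w$ contributions and H\"older with \eqref{eq:Mdef2} on the $|w|^{p+1}$ contribution gives $|I_g(\tau)|\le C(M+M^p)^2 e^{-\tau/2}$. Integrating in $\tau$ and using $\int_{\tau'}^{\infty} e^{-\sigma/2}\,d\sigma=2e^{-\tau'/2}=2(\tilde t-t')^{1/2}$ closes the argument.

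The main obstacle is the bookkeeping in $I_g$. Because $\hat A$ contains three $g$-dependent correction terms and $|\widehat\nabla w|^2$ two more, $d\cE/d\tau$ produces a long list of cross terms of the form $\int(\nabla w)^2\cdot(\text{function of }g)\cdot\rho\psi^2\,d\eta$, and one must verify that \emph{every} such piece inherits at least one factor of $e^{-\tau/2}$, so that no residual copy of $\int|\nabla w|^2\rho\psi^2\,d\eta$ (which is only bounded by Lemma \ref{lem:Ebdd}, not small) survives with an $O(1)$ coefficient. This is precisely where the vanishing $\nabla' f(0)=0$ and the confinement $\tilde x\in\overline{\Omega_{R/4}}$ (forcing $\tilde\xi$ to lie close to the origin, where $\nabla' f$ is small) are genuinely used.
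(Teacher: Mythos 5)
Your strategy is the same as the paper's (pass to $(\eta,\tau)$ variables, compute $d\cE/d\tau$, identify the Giga--Kohn dissipation $-\int w_\tau^2\rho\psi^2$, and dominate the error terms), and your Jacobian bookkeeping for converting $\int\!\!\int w_\sigma^2\rho\psi^2$ back into the dissipation in \eqref{eq:419} is correct. However, there is a genuine gap in your master identity: when you integrate $\int w_\tau(\Delta w)\rho\psi^2$ (and the analogous $g$-dependent pieces of $\hat A w$) by parts twice, the Dirichlet condition $w=0$ kills only the boundary terms in which $w$ itself appears; it does not kill the term $\int_{\partial\Omega(\tau)}w_\tau(\partial_\nu w)\rho\psi^2\,dS$. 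Using $w_\tau=-\tfrac12(\partial_\nu w)(\nu\cdot\eta)$ on the boundary, this becomes a genuine boundary integral $\cB(\tau)$ with $(\partial_\nu w)^2(\nu\cdot\eta)$, which neither vanishes in the flat case $g\equiv 0$ nor falls into your $I_\psi$ or $I_g$ classes. So the correct identity is $d\cE/d\tau=-\int w_\tau^2\rho\psi^2-\cB(\tau)+I_\psi+I_g$, and you must prove $\cB(\tau)\ge 0$ to drop it. That sign is the whole reason the argument works without convexity, and it relies on the specific setup: $\Phi(\partial\Omega\cap B_R)\subset\{\xi_n=0\}$, $\tilde\xi_n=\tilde x_n-f(\tilde x')\ge 0$ for $\tilde x\in\overline{\Omega_{R/4}}$ (giving $\nu\cdot\eta\ge 0$), together with the algebraic identity
$1-2\nu_n\partial_{\nu'}g+|\nabla'g|^2\nu_n^2=(1-(\partial_{\nu'}g)\nu_n)^2+(|\nabla'g|^2-(\partial_{\nu'}g)^2)\nu_n^2\ge 0$.
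Without this you have no control over $\cB$.

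A secondary issue: your claimed pointwise bound $|I_\psi(\tau)|\le C(M+M^p)^2 e^{-cR^2e^\tau}$ cannot hold uniformly. $I_\psi$ contains cross terms such as $\int w_\tau\rho\psi\,\nabla w\cdot\nabla\psi$ that are \emph{linear} in $w_\tau$, and there is no a priori bound on $\|w_\tau\|_{L^2(\rho)}$ at a fixed $\tau$ — establishing that $\int\!\!\int w_\sigma^2\rho\psi^2$ is finite is the content of the proposition. These cross terms must be handled by Cauchy's inequality, which absorbs $\tfrac12\int w_\tau^2\rho\psi^2$ on the right and leaves only $\tfrac12\int w_\tau^2\rho\psi^2$ of the dissipation; this is exactly why \eqref{eq:419} has a factor $\tfrac12$ rather than the coefficient $1$ in your reduced quasi-monotonicity. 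The remainder after Cauchy is then controlled as you describe, by the Gaussian decay on $\supp\nabla\psi$, the $e^{-\tau/2}$ gain from $\nabla'^2 g,\nabla'g_\tau,\rho_\tau$, together with Proposition \ref{pro:gradoricri} and \eqref{eq:Mdef2}.
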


To prove this, we compute the derivative of $\cE$. 

\begin{lemma}\label{lem:cederivR2}
The derivative of $\cE(\tau)$ satisfies 
\begin{equation}
\label{eq:derivE}
	\frac{d}{d\tau} 
	\cE(\tau) 
	=
	-\int w_\tau^2 \rho \psi^2  
	-\cB(\tau) + \cR(\tau). 
\end{equation}
Here 
\[
\begin{aligned}
	\cB(\tau) &:= \frac{1}{4} \int_{e^{\tau/2} 
	( \Phi(\partial\Omega\cap B_R)-\tilde \xi )} 
	(\partial_\nu w)^2 \rho \psi^2  \\
	&\qquad \times 
	(\nu\cdot \eta) 
	( 1-2\nu_n \partial_{\nu'}g  +|\nabla'g|^2  \nu_n^2 )dS(\eta), 
\end{aligned}
\]
$\nu=(\nu',\nu_n)$ is the outward unit normal, 
$\partial_\nu w:=\nabla w\cdot \nu$, 
$\partial_{\nu'} g:=\nabla' g\cdot \nu'$, 
$dS$ is the surface area element and 
\[
\begin{aligned}
	\cR(\tau)&:= 
	\int  \frac{w^2}{p-1} \rho \psi \psi_\tau  
	- \int \frac{2|w|^{p+1}}{p+1} \rho \psi \psi_\tau 
	-2 \int w_\tau \rho  \psi \nabla w \cdot \nabla \psi \\
	&\quad 
	+ \int |\nabla w|^2 \rho \psi \psi_\tau 
	+ 2 \int w_\tau \rho \psi (\partial_n \psi) \nabla'w\cdot \nabla'g \\
	&\quad 
	- 2\int (\partial_n w) \rho \psi \psi_\tau \nabla' w \cdot \nabla'g 
	+2\int  w_\tau (\partial_n w) \rho \psi \nabla'\psi \cdot \nabla'g \\
	&\quad 
	- 2 \int w_\tau (\partial_n w) \rho \psi (\partial_n \psi) |\nabla' g|^2 
	+ \int  (\partial_n w)^2 \rho \psi \psi_\tau  |\nabla' g|^2 \\
	&\quad 
	-\int (\partial_n w) \rho \psi^2 \nabla' w \cdot \nabla'g_\tau 
	+\int (\partial_n w)^2 \rho \psi^2 \nabla' g\cdot \nabla' g_\tau \\
	&\quad 
	+\frac{1}{2}\int w_\tau (\partial_n w ) \rho \psi^2 
	(  g(\eta',\tau)-g(0,\tau)-\eta'\cdot \nabla' g )  \\ 
	&\quad + 
	\int \left( 
	\frac{ |\widehat \nabla w|^2}{2}
	-\frac{|w|^{p+1}}{p+1} 
	+\frac{w^2}{2(p-1)} \right) 
	\rho_\tau \psi^2 
\end{aligned}
\]
with the abbreviation $\int(\cdots)=\int_{\Omega(\tau)}(\cdots) d\eta$. 
\end{lemma}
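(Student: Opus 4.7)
The plan is to compute $\frac{d\cE}{d\tau}$ by Leibniz's rule for the moving domain $\Omega(\tau)=e^{\tau/2}(\Phi(\Omega_R)-\tilde\xi)$, which expands with velocity $V=\eta/2$. This produces a bulk integral plus a boundary contribution $\tfrac12\int_{\partial\Omega(\tau)}(\text{density})(\eta\cdot\nu)\,dS$. Only the part of $\partial\Omega(\tau)$ inside $\supp\psi$ matters, and that part lies in $e^{\tau/2}(\Phi(\partial\Omega\cap B_R)-\tilde\xi)$, where $w=0$; the remainder sits outside $\supp\psi$ and contributes nothing. A second boundary contribution will arise from the integration by parts in the bulk, and the two will combine into $-\cB(\tau)$.

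For the bulk derivative, $\partial_\tau\rho$ (via \eqref{eq:rhotau}) directly supplies the last line of $\cR$, while $\partial_\tau(\psi^2)=2\psi\psi_\tau$, after expanding $|\widehat\nabla w|^2=|\nabla' w|^2+(\partial_n w)^2-2(\partial_n w)\nabla' w\cdot\nabla' g+(\partial_n w)^2|\nabla' g|^2$, yields the four $\psi_\tau$-integrals at the top of $\cR$. The identity $\partial_\tau\widehat\nabla w=\widehat\nabla w_\tau-(\partial_n w)(\nabla' g_\tau,0)$ contributes the two $g_\tau$-integrals in $\cR$. The remaining piece $\int\widehat\nabla w\cdot\widehat\nabla w_\tau\,\rho\psi^2$ is handled by integrating by parts in $\eta$, transferring the derivative in $\widehat\nabla w_\tau$ onto $\rho\psi^2\widehat\nabla w$. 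Using the definition of $A$ and the drift formulas \eqref{eq:rhocalc} for $\partial_i\rho$, a direct calculation yields the interior identity
\[
\int\widehat\nabla w\cdot\widehat\nabla w_\tau\,\rho\psi^2 = -\int w_\tau\rho\psi^2\,Aw + \tfrac12\int w_\tau\rho\psi^2(\eta\cdot\nabla w) + \tfrac12\int w_\tau(\partial_n w)\rho\psi^2\bigl(g(\eta',\tau)-g(0,\tau)-\eta'\cdot\nabla' g\bigr)
\]
plus four $\nabla\psi$-integrals matching exactly the four $\nabla\psi$-terms in $\cR$, plus a boundary term from the $\partial_n$-integration by parts.

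Substituting the equation \eqref{eq:weq}, namely $Aw = w_\tau + \tfrac12\eta\cdot\nabla w + \tfrac{1}{p-1}w - |w|^{p-1}w$, and combining with the $w_\tau$-linear parts of the derivatives of $-|w|^{p+1}/(p+1)$ and $w^2/(2(p-1))$, the $\eta\cdot\nabla w$ contribution cancels, the $|w|^{p-1}ww_\tau$ and $ww_\tau/(p-1)$ pieces cancel pairwise, and the interior collapses to $-\int w_\tau^2\rho\psi^2$ together with the distinctive $\tfrac12\int w_\tau(\partial_n w)\rho\psi^2(g(\eta',\tau)-g(0,\tau)-\eta'\cdot\nabla' g)$ residue in $\cR$. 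For the boundary, $w=0$ on $\partial\Omega(\tau)$ forces $\nabla w=(\partial_\nu w)\nu$, and differentiating $w(\eta(\tau),\tau)=0$ along the moving boundary point $\eta(\tau)=e^{\tau/2}(\xi-\tilde\xi)$ yields $w_\tau=-\tfrac12(\partial_\nu w)(\eta\cdot\nu)$. Together with the algebraic identity $|\widehat\nabla w|^2=(\partial_\nu w)^2(1-2\nu_n\partial_{\nu'}g+\nu_n^2|\nabla' g|^2)$, derived by substituting $\nabla' w=(\partial_\nu w)\nu'$ and $\partial_n w=(\partial_\nu w)\nu_n$ into the formula for $\widehat\nabla w$, the integration-by-parts boundary term and the Leibniz boundary term assemble precisely into $-\cB(\tau)$.

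The main obstacle is the algebraic bookkeeping: every cross term induced by the metric $g$ must be routed to its exact slot in $\cR(\tau)$, and the interior identity above requires delicate cancellations between the non-divergence form of $A$ and the drifts supplied by $\partial_i\rho$. In particular, the residue $\tfrac12\int w_\tau(\partial_n w)\rho\psi^2(g(\eta',\tau)-g(0,\tau)-\eta'\cdot\nabla' g)$ is exactly what remains when the first-order drifts of $A$ fail to match the Gaussian drifts of $\rho$, and is the signature of the nonflat straightening.
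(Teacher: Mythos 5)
Your proposal is correct and takes a genuinely different route from the paper. The paper sidesteps the moving domain entirely: it first rewrites each of the pieces $I_1,I_2,I_3$ of $\cE$ by integration by parts so that every integrand carries a factor $w\psi$ or $w\psi^2$ that vanishes on $\partial\Omega(\tau)$; it then exchanges $\frac{d}{d\tau}$ with the integral (no Reynolds transport term appears), and finally undoes the integrations by parts, with the boundary terms \eqref{eq:I1bou}, \eqref{eq:I2bou}, \eqref{eq:I3bou} assembling to $-\cB$. You instead apply Reynolds transport directly. The transport term carries $\tfrac12\int_{\partial\Omega(\tau)}f\,(\eta\cdot\nu)\,dS$ with density $f=\tfrac12|\widehat\nabla w|^2\rho\psi^2$ on the Dirichlet boundary (the other pieces vanish with $w$), giving $+\cB$ via the algebraic identity $|\widehat\nabla w|^2=(\partial_\nu w)^2(1-2\nu_n\partial_{\nu'}g+\nu_n^2|\nabla'g|^2)$; a single integration by parts of $\int\widehat\nabla w\cdot\widehat\nabla w_\tau\,\rho\psi^2$ produces the boundary contribution $\int_\partial w_\tau\rho\psi^2\bigl[\partial_\nu w-\nu_n\nabla'w\cdot\nabla'g-(\partial_n w)\partial_{\nu'}g+(\partial_n w)\nu_n|\nabla'g|^2\bigr]dS=-2\cB$ upon substituting $w_\tau=-\tfrac12(\partial_\nu w)(\eta\cdot\nu)$, so the boundary total is $-\cB$ as required. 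Your interior identity is also correct: expanding $-\int w_\tau\nabla\cdot(\rho\psi^2\nabla w)$ and the three metric terms and inserting the formulas \eqref{eq:rhocalc} for $\nabla\rho$ produces $-\int w_\tau\rho\psi^2\,Aw$, the $\tfrac12\int w_\tau\rho\psi^2(\eta\cdot\nabla w)$ drift that cancels the $\eta\cdot\nabla w$ part of $Aw$ after substituting \eqref{eq:weq}, the four $\nabla\psi$-integrals of $\cR$, and the residue $\tfrac12\int w_\tau(\partial_n w)\rho\psi^2(g(\eta')-g(0)-\eta'\cdot\nabla'g)$. The only tiny slip is counting: there are five $\psi_\tau$-integrals in $\cR$, not four. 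Your approach buys a single, transparent accounting of all boundary contributions, at the cost of invoking the transport theorem; the paper's approach is more elementary but requires several nested integrations by parts and a careful reassembly.
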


\begin{proof}
In what follows, we will perform integration by parts several times, and so we need to observe the boundary value 
of $w\psi$. 
We claim that 
\begin{equation}\label{eq:wpsi0}
	w\psi=0 \quad \mbox{ on }\partial \Omega(\tau). 
\end{equation}
To prove this, we note that 
\[
	\partial \Omega(\tau) = 
	e^{\tau/2} ( \Phi(\partial\Omega\cap B_R)-\tilde \xi )
	\cup e^{\tau/2} ( \Phi(\Omega\cap \partial B_R)-\tilde \xi ). 
\]
For $\eta \in e^{\tau/2} ( \Phi(\partial \Omega\cap B_R)-\tilde \xi )$, 
by the boundary condition in \eqref{eq:weq}, 
we obtain $w(\eta,\tau)=0$.
On the other hand, 
for $\eta\in e^{\tau/2} ( \Phi(\Omega\cap \partial B_R)-\tilde \xi )$, 
we have $\Phi(x) = \xi = e^{-\tau/2}\eta + \tilde \xi 
\in \Phi (\Omega\cap \partial B_R)$.  
Thus $x\in \Omega\cap \partial B_R$ and $|x|=R$. 
This together with $\tilde x\in \overline{\Omega_{R/4}}$ gives 
$|x-\tilde x|\geq 3R/4$. 
Therefore $\psi(\eta,\tau)=\varphi(4|x-\tilde x|/R)=0$ for 
$\eta\in e^{\tau/2} ( \Phi(\Omega\cap \partial B_R)-\tilde \xi )$. 
Hence \eqref{eq:wpsi0} holds.

For simplicity, we write 
$\int(\cdots)=\int_{\Omega(\tau)}(\cdots) d\eta$ 
when no confusion can arise. 
By \eqref{eq:wpsi0}, we see that 
\begin{equation}\label{eq:I0cal}
\begin{aligned}
	&\frac{d}{d\tau} 
	\int \left( 
	- \frac{|w|^{p+1}}{p+1} + \frac{w^2}{2(p-1)}
	\right) \rho \psi^2 \\
	&=
	\int \frac{w w_\tau}{p-1} \rho \psi^2  
	- \int w w_\tau |w|^{p-1} \rho \psi^2 \\
	&\quad + \int \left( 
	- \frac{|w|^{p+1}}{p+1} + \frac{w^2}{2(p-1)}
	\right) \rho_\tau \psi^2 
	+ \cR_0,  
\end{aligned}
\end{equation}
where 
\begin{equation}\label{eq:R0cal}
	\cR_0:= \int  \frac{w^2}{p-1} \rho \psi \psi_\tau  
	- \int \frac{2|w|^{p+1}}{p+1} \rho \psi \psi_\tau. 
\end{equation}
On the other hand, by taking the computation
\begin{equation}\label{eq:hatwc}
\begin{aligned}
	|\widehat \nabla w|^2 
	&= 
	|\nabla w-(\partial_n w \nabla' g,0)|^2 \\
	&= |\nabla w|^2 - 2 (\partial_n w) \nabla'w\cdot \nabla'g 
	+ (\partial_n w)^2 |\nabla'g|^2 
\end{aligned}
\end{equation}
into account, we set 
\begin{equation}\label{eq:I123def}
\begin{aligned}
	\frac{d}{d\tau} 
	\int \frac{ |\widehat \nabla w|^2}{2} \rho \psi^2 
	&= \frac{1}{2} \frac{d}{d\tau}
	\int |\nabla w|^2 \rho \psi^2 
	- \frac{d}{d\tau} 
	\int (\partial_n w) \rho \psi^2 \nabla' w \cdot \nabla' g  \\
	&\quad 
	+ \frac{1}{2} \frac{d}{d\tau} 
	\int (\partial_n w)^2 \rho \psi^2 |\nabla' g|^2  \\
	&=:\frac{1}{2} \frac{d I_1}{d\tau} - \frac{d I_2}{d\tau} 
	+ \frac{1}{2} \frac{d I_3}{d\tau}. 
\end{aligned}
\end{equation}
We compute the derivatives of $I_1$, $I_2$ and $I_3$ 
in the following way: 
\begin{enumerate}
\item
If a term contains the derivative of $\psi$, keep the term as it is. 
\item
If a term contains the spatial derivative(s) of $w_\tau$, 
perform integration by parts to remove the spatial derivative(s). 
\item
If a term does not contain $w_\tau$ but contains 
the second order spatial derivatives of $w$, 
perform integration by parts for lowering the order. 
\end{enumerate}

For $I_1$, integration by parts and \eqref{eq:wpsi0} show that 
\[
\begin{aligned}
	\frac{1}{2}\frac{d I_1}{d\tau}  
	&= -\frac{d}{d\tau} \left( 
	\int (w\psi \nabla w \cdot \nabla \psi) \rho
	+ 
	\frac{1}{2}\int w \psi^2 \nabla \cdot (\rho \nabla w)
	\right) \\
	&= -\int (w\psi \nabla w \cdot \nabla \psi)_\tau \rho
	-\frac{1}{2}\int w_\tau \psi^2 \nabla \cdot (\rho\nabla w) \\
	&\quad 
	-\int w\psi \psi_\tau \nabla \cdot (\rho\nabla w) 
	-\frac{1}{2}\int w \psi^2 \nabla \cdot (\rho\nabla w_\tau) \\
	&\quad 
	-\frac{1}{2}\int w \psi^2 \nabla\cdot (\rho_\tau \nabla w) 
	-\int w\rho_\tau \psi \nabla w \cdot \nabla \psi. 
\end{aligned}
\]
Integrating by parts twice and \eqref{eq:wpsi0} yield 
\[
\begin{aligned}
	- \frac{1}{2} \int w \psi^2 \nabla \cdot (\rho\nabla w_\tau) 
	&= -\frac{1}{2}\int w_\tau \psi^2 \nabla \cdot(\rho\nabla w) 
	- \int w_\tau \rho \psi \nabla w \cdot \nabla \psi \\
	&\quad + \int w \rho \psi \nabla w_\tau \cdot \nabla \psi  
	+ \frac{1}{2} \int_{\partial\Omega(\tau)} 
	w_\tau (\partial_\nu w)\rho \psi^2  dS. 
\end{aligned}
\]
In addition, we see that 
\[
	-\frac{1}{2}\int w \psi^2 \nabla\cdot (\rho_\tau \nabla w) 
	=\frac{1}{2} \int |\nabla w|^2 \rho_\tau \psi^2 
	+ \int w \rho_\tau \psi \nabla w \cdot \nabla \psi. 
\]
The above computations imply that 
\begin{equation}\label{eq:I1esb}
\begin{aligned}
	\frac{1}{2}\frac{d I_1}{d\tau}  
	&= 
	-\int w_\tau \psi^2 \nabla \cdot (\rho\nabla w)
	+\frac{1}{2} \int_{\partial\Omega(\tau)} w_\tau \rho \psi^2 
	\nabla w\cdot \nu dS \\
	&\quad 
	+ \frac{1}{2} \int |\nabla w|^2 \rho_\tau \psi^2 
	+ \cR_1, 
\end{aligned}
\end{equation}
where 
\[
\begin{aligned}
	\cR_1 &:= 
	-\int (w\psi \nabla w \cdot \nabla \psi)_\tau \rho 
	-\int w\psi \psi_\tau \nabla \cdot (\rho\nabla w) \\
	&\quad  
	- \int w_\tau \rho \psi \nabla w \cdot \nabla \psi 
	+ \int w \rho \psi \nabla w_\tau \cdot \nabla \psi. 
\end{aligned}
\]
For $\cR_1$, 
expanding the first term 
and integrating by parts in the second term, 
we obtain  
\begin{equation}\label{eq:R1cal}
	\cR_1 = 
	-2 \int w_\tau \rho  \psi \nabla w \cdot \nabla \psi 
	+ \int |\nabla w|^2 \rho \psi \psi_\tau.  
\end{equation}

By the argument of \cite[Proposition 2.1]{GK87} 
with $\Phi(\partial \Omega \cap B_R)\subset \R^n_+$, 
we can see that 
\begin{equation}\label{eq:I1bou}
\begin{aligned}
	&\frac{1}{2}\int_{\partial\Omega(\tau)} w_\tau 
	(\partial_\nu w) \rho \psi^2 dS \\
	&= -\frac{1}{4}\int_{
	e^{\tau/2} ( \Phi(\partial\Omega\cap B_R)-\tilde \xi )} 
	(\partial_\nu w)^2 \rho \psi^2  (\nu\cdot \eta) dS. 
\end{aligned}
\end{equation}
Indeed, 
from the boundary conditions in \eqref{eq:hatueq} and \eqref{eq:weq}, 
it follows that 
\[
	0=\hat u_t= (\tilde t-t)^{-\frac{1}{p-1}-1} 
	\left( \nabla w\cdot \frac{y}{2}+w_\tau \right), 
\]
and so $w_\tau=-\nabla w\cdot (\eta/2)$ 
on $e^{\tau/2} ( \Phi(\partial\Omega\cap B_R)-\tilde \xi )$. 
By the boundary condition in \eqref{eq:weq}, we also have 
$\nabla w=(\nabla w\cdot \nu)\nu= (\partial_\nu w) \nu$. 
Thus,  
\begin{equation}\label{eq:wtauboun}
	w_\tau = -\frac{1}{2} (\partial_\nu w) (\nu\cdot \eta), \quad 
	\eta\in e^{\tau/2} ( \Phi(\partial\Omega\cap B_R)-\tilde \xi ). 
\end{equation}
Recall that $\psi(\eta,\tau)=0$ for 
$\eta\in e^{\tau/2} ( \Phi(\Omega\cap \partial B_R)-\tilde \xi )$
by the proof of \eqref{eq:wpsi0}. 
Then \eqref{eq:I1bou} follows. 
For later use, we note that, on the boundary,  
\begin{equation}\label{eq:wpro}
	\nabla' w=(\partial_\nu w)\nu', \quad 
	\partial_n w=(\partial_\nu w)\nu_n, 
\end{equation}
which follow from $\nabla w=(\partial_\nu w)\nu$.

We next consider $I_2$. Since 
\[
	-I_2 = 
	\int w \psi^2 \nabla'(\rho \partial_n w) \cdot \nabla' g 
	+2\int w(\partial_n w)\rho \psi \nabla'\psi\cdot \nabla'g 
	+\int w(\partial_n w)\rho \psi^2 \Delta'g, 
\]
we have 
\[
\begin{aligned}
	-\frac{d I_2}{d\tau} 
	&=
	\int w_\tau \psi^2  \nabla'(\rho\partial_n w)\cdot \nabla'g 
	+\int w_\tau (\partial_n w) \rho \psi^2 \Delta'g \\
	&\quad 
	+\int w \psi^2 \nabla'(\rho \partial_n w_\tau)\cdot \nabla'g 
	+\int w (\partial_n w_\tau) \rho \psi^2 \Delta'g \\
	&\quad 
	+\int w \psi^2 \nabla'(\rho_\tau \partial_n w)\cdot \nabla'g 
	+\int w (\partial_n w) \rho_\tau \psi^2 \Delta'g \\
	&\quad 
	+\int w \psi^2 \nabla'(\rho \partial_n w)\cdot \nabla'g_\tau 
	+\int w (\partial_n w) \rho \psi^2 \Delta'g_\tau \\
	&\quad 
	+2\int w \psi \psi_\tau \nabla'(\rho\partial_n w) \cdot \nabla'g
	+2\int ( w (\partial_n w) \rho \psi \nabla'\psi \cdot \nabla'g)_\tau \\
	&\quad 
	+2\int w (\partial_n w) \rho \psi \psi_\tau \Delta'g,
\end{aligned}
\]
where the 8th term in the right-hand side requires 
the 3rd derivative of $f\in C^{2+\alpha}_0 (\R^{n-1})$. 
But the computations here and below can be justified 
by the standard approximation procedure.
Again by integrating by parts twice, we can see that 
\begin{equation}\label{eq:inttwi}
\begin{aligned}
	&\int w \psi^2 \nabla'(\rho \partial_n w_\tau)\cdot \nabla'g 
	+\int w (\partial_n w_\tau) \rho \psi^2 \Delta'g \\
	&= 
	\int w_\tau \psi^2 \partial_n (\rho \nabla' w)\cdot \nabla'g 
	-\int_{\partial \Omega(\tau)}
	w_\tau \rho \psi^2 \nu_n \nabla'w\cdot \nabla'g dS \\
	&\quad 
	-2\int w(\partial_n w_\tau) \rho \psi \nabla'\psi\cdot \nabla'g
	+ 2 \int w_\tau \rho \psi (\partial_n \psi) \nabla'w\cdot \nabla'g. 
\end{aligned}
\end{equation}
Moreover, we have 
\[
\begin{aligned}
	&\int w \psi^2 \nabla'(\rho_\tau \partial_n w)\cdot \nabla'g 
	+\int w (\partial_n w) \rho_\tau \psi^2 \Delta'g \\
	&\quad 
	+\int w \psi^2 \nabla'(\rho \partial_n w)\cdot \nabla'g_\tau 
	+\int w (\partial_n w) \rho \psi^2 \Delta'g_\tau \\
	&= 
	-\int (\partial_n w) \rho_\tau \psi^2 \nabla' w \cdot \nabla'g 
	-2 \int w(\partial_n w)\rho_\tau \psi \nabla'\psi \cdot \nabla'g \\
	&\quad 
	 -\int (\partial_n w) \rho \psi^2 \nabla' w \cdot \nabla'g_\tau 
	-2 \int w(\partial_n w)\rho \psi \nabla'\psi \cdot \nabla'g_\tau. 
\end{aligned}
\]
These computations show that 
\begin{equation}\label{eq:I2esb}
\begin{aligned}
	-\frac{d I_2}{d\tau} &= 
	\int w_\tau \psi^2 \nabla'(\rho\partial_n w)\cdot \nabla'g  
	+ \int w_\tau \psi^2 \partial_n( \rho\nabla'w) \cdot \nabla'g  \\
	&\quad +\int w_\tau (\partial_n w) \rho \psi^2 \Delta'g 
	-\int_{\partial \Omega(\tau)}
	w_\tau \rho \psi^2 \nu_n \nabla'w\cdot \nabla'g dS \\
	&\quad 
	-\int (\partial_n w) \rho_\tau \psi^2 \nabla' w \cdot \nabla'g 
	+ \cR_2, 
\end{aligned}
\end{equation}
where 
\[
\begin{aligned}
	\cR_2&:= 
	- 2\int w(\partial_n w_\tau) \rho \psi \nabla'\psi\cdot \nabla'g
	+ 2 \int w_\tau \rho \psi (\partial_n \psi) \nabla'w\cdot \nabla'g \\
	&\quad 
	-2 \int w(\partial_n w)\rho_\tau \psi \nabla'\psi \cdot \nabla'g 
	 -\int (\partial_n w) \rho \psi^2 \nabla' w \cdot \nabla'g_\tau \\
	&\quad 
	-2 \int w(\partial_n w)\rho \psi \nabla'\psi \cdot \nabla'g_\tau 
	+2\int w \psi \psi_\tau \nabla'(\rho\partial_n w) \cdot \nabla'g \\
	&\quad 
	+2\int ( w (\partial_n w) \rho \psi \nabla'\psi \cdot \nabla'g)_\tau 
	+2\int w (\partial_n w) \rho \psi \psi_\tau \Delta'g. 
\end{aligned}
\]
For $\cR_2$, 
integrating by parts in the sixth term and 
expanding the seventh term, we obtain 
\begin{equation}\label{eq:R2cal}
\begin{aligned}
	\cR_2 &= 
	2 \int w_\tau \rho \psi (\partial_n \psi) \nabla'w\cdot \nabla'g 
	-\int (\partial_n w) \rho \psi^2 \nabla' w \cdot \nabla'g_\tau \\
	&\quad 
	- 2\int (\partial_n w) \rho \psi \psi_\tau \nabla' w \cdot \nabla'g 
	+2\int  w_\tau (\partial_n w) \rho \psi \nabla'\psi \cdot \nabla'g. 
\end{aligned}
\end{equation}
From the same computations as in the proof of \eqref{eq:I1bou}, 
it follows that 
\begin{equation}\label{eq:I2bou}
\begin{aligned}
	&-\int_{\partial \Omega(\tau)}
	 w_\tau \rho \psi^2 \nu_n \nabla'w\cdot \nabla'g dS  \\
	&= \frac{1}{2} 
	\int_{e^{\tau/2} ( \Phi(\partial\Omega\cap B_R)-\tilde \xi )}
	(\partial_\nu w)^2 \rho \psi^2 (\nu\cdot \eta) \nu_n \nabla'g \cdot \nu' dS. 
\end{aligned}
\end{equation}
Indeed, by \eqref{eq:wpro}, 
we have 
$\nabla'w \cdot \nabla'g =(\partial_\nu w)(\partial_{\nu'} g)$ 
on $e^{\tau/2} ( \Phi(\partial\Omega\cap B_R)-\tilde \xi )$. 
This together with \eqref{eq:wtauboun} shows the above relation.

We examine $I_3$. Again by integration by parts, we have 
\[
\begin{aligned}
	\frac{1}{2}\frac{d I_3}{d\tau}  
	&= 
	-\frac{d}{d\tau} \left( 
	\frac{1}{2} \int w \psi^2 \partial_n (\rho\partial_n w) |\nabla' g|^2 
	+\int w (\partial_n w) \rho \psi (\partial_n \psi) |\nabla' g|^2 
	\right) \\
	&= 
	-\frac{1}{2} \int w_\tau \psi^2 \partial_n(\rho \partial_n w) |\nabla' g|^2 
	-\frac{1}{2} \int w \psi^2 \partial_n(\rho \partial_n w_\tau) |\nabla' g|^2 \\
	&\quad 
	-\frac{1}{2} \int w \psi^2 \partial_n(\rho_\tau \partial_n w) |\nabla' g|^2 
	-\int w \psi^2 \partial_n(\rho \partial_n w) \nabla' g\cdot \nabla' g_\tau \\
	&\quad 
	-\int w \psi \psi_\tau  \partial_n(\rho\partial_n w)|\nabla' g|^2
	-\int (w (\partial_n w) \rho \psi (\partial_n \psi)  |\nabla' g|^2)_\tau. 
\end{aligned}
\]
In the same manner as in \eqref{eq:inttwi}, we see that 
\[
\begin{aligned}
	& -\frac{1}{2} \int w_\tau \psi^2 \partial_n(\rho \partial_n w) |\nabla' g|^2 
	-\frac{1}{2} \int w \psi^2 \partial_n(\rho \partial_n w_\tau) |\nabla' g|^2 \\
	&= 
	- \int w_\tau \psi^2 \partial_n(\rho \partial_n w) |\nabla' g|^2 
	+\frac{1}{2} \int_{\partial \Omega(\tau)} 
	 w_\tau(\partial_n w) \rho \psi^2 |\nabla' g|^2 \nu_n dS \\
	&\quad 
	- \int w_\tau (\partial_n w) \rho \psi (\partial_n \psi) |\nabla' g|^2 
	+ \int w(\partial_n w_\tau) \rho \psi (\partial_n \psi) |\nabla' g|^2 
\end{aligned}
\]
and 
\[
\begin{aligned}
	& -\frac{1}{2} \int w \psi^2 \partial_n(\rho_\tau \partial_n w) |\nabla' g|^2 
	-\int w \psi^2 \partial_n(\rho \partial_n w) \nabla' g\cdot \nabla' g_\tau \\
	& = 
	\frac{1}{2} \int (\partial_n w)^2 \rho_\tau \psi^2 |\nabla' g|^2 
	+\int (\partial_n w)^2 \rho \psi^2 \nabla' g\cdot \nabla' g_\tau  \\
	&\quad 
	+\int w(\partial_n w) \rho_\tau \psi (\partial_n \psi) |\nabla' g|^2 
	+2 \int w(\partial_n w) \rho \psi (\partial_n \psi ) 
	\nabla' g \cdot \nabla' g_\tau. 
\end{aligned}
\]
Then we have 
\begin{equation}\label{eq:I3esb}
\begin{aligned}
	\frac{1}{2}\frac{d I_3}{d\tau}  
	&=
	- \int w_\tau \psi^2 \partial_n(\rho \partial_n w) |\nabla' g|^2 
	+ \frac{1}{2} \int (\partial_n w)^2 \rho_\tau \psi^2 |\nabla' g|^2 \\
	&\quad 
	+\frac{1}{2} \int_{\partial \Omega(\tau)} 
	w_\tau(\partial_n w) \rho \psi^2 |\nabla' g|^2 \nu_n dS 
	+\cR_3, 
\end{aligned}
\end{equation}
where 
\[
\begin{aligned}
	\cR_3&:= 
	- \int w_\tau (\partial_n w) \rho \psi (\partial_n \psi) |\nabla' g|^2 
	+ \int w(\partial_n w_\tau) \rho \psi (\partial_n \psi) |\nabla' g|^2  \\
	&\quad 
	+\int (\partial_n w)^2 \rho \psi^2 \nabla' g\cdot \nabla' g_\tau  
	+\int w(\partial_n w) \rho_\tau \psi (\partial_n \psi) |\nabla' g|^2 \\
	&\quad 
	+2 \int w(\partial_n w) \rho \psi (\partial_n \psi ) 
	\nabla' g \cdot \nabla' g_\tau 
	-\int w \psi \psi_\tau  \partial_n(\rho\partial_n w)|\nabla' g|^2 \\
	&\quad 
	-\int (w (\partial_n w) \rho \psi (\partial_n \psi)  |\nabla' g|^2)_\tau. 
\end{aligned}
\]
For $\cR_3$, 
integrating by parts in the sixth term and 
expanding the seventh term, we obtain 
\begin{equation}\label{eq:R3cal}
\begin{aligned}
	\cR_3 &= 
	- 2 \int w_\tau (\partial_n w) \rho \psi (\partial_n \psi) |\nabla' g|^2  
	+\int (\partial_n w)^2 \rho \psi^2 \nabla' g\cdot \nabla' g_\tau \\
	&\quad 
	+ \int  (\partial_n w)^2 \rho \psi \psi_\tau  |\nabla' g|^2. 
\end{aligned}
\end{equation}
By \eqref{eq:wtauboun} and \eqref{eq:wpro}, we see that 
\begin{equation}\label{eq:I3bou}
\begin{aligned}
	&\frac{1}{2} \int_{\partial \Omega(\tau)} 
	w_\tau (\partial_n w) \rho \psi^2 |\nabla' g|^2 \nu_n dS\\
	&= 
	-\frac{1}{4} 
	\int_{e^{\tau/2} ( \Phi(\partial\Omega\cap B_R)-\tilde \xi )}
	(\partial_\nu w)^2 \rho \psi^2 (\nu\cdot \eta) 
	\nu_n^2 |\nabla'g|^2  dS. 
\end{aligned}
\end{equation}

By combining \eqref{eq:I0cal}, 
\eqref{eq:I123def}, \eqref{eq:I1esb}, \eqref{eq:I1bou}, 
\eqref{eq:I2esb}, \eqref{eq:I2bou}, 
\eqref{eq:I3esb} and \eqref{eq:I3bou}, and then by \eqref{eq:hatwc}, 
we obtain 
\begin{equation}\label{eq:cEderimid}
	\frac{d}{d\tau} \cE(\tau) 
	= 
	\cJ - \cB + \sum_{i=0}^3 \cR_i 
	+ \int \left( \frac{ |\widehat \nabla w|^2}{2} 
	- \frac{|w|^{p+1}}{p+1} + \frac{w^2}{2(p-1)}
	\right) \rho_\tau \psi^2, 
\end{equation}
where $\cB$ is given in the statement of this lemma and 
\[
\begin{aligned}
	\cJ&:= \int w_\tau \psi^2 
	\bigg( \frac{w\rho }{p-1}  
	- w |w|^{p-1} \rho
	-\nabla \cdot (\rho\nabla w) 
	+\nabla'(\rho\partial_n w)\cdot \nabla'g  \\
	&\qquad  
	+ \partial_n( \rho\nabla'w) \cdot \nabla'g
	+ \rho \partial_n w \Delta'g
	- \partial_n(\rho \partial_n w) |\nabla' g|^2
	\bigg). 
\end{aligned}
\]
From \eqref{eq:weq} and $\nabla' (\partial_n w ) = \partial_n \nabla'w $, 
it follows that 
\[
\begin{aligned}
	\cJ &= 
	\int w_\tau \psi^2 \bigg( 
	-w_\tau \rho -\frac{1}{2}\rho \eta\cdot \nabla w
	-\nabla \rho\cdot \nabla w 
	+\partial_n \rho \nabla' w\cdot \nabla' g  \\
	&\qquad 
	+\partial_n w \nabla' \rho \cdot \nabla' g
	- \partial_n w (\partial_n \rho )|\nabla'g|^2
	\bigg). 
\end{aligned}
\]
By using \eqref{eq:rhocalc}, we have 
\[
\begin{aligned}
	&-\frac{1}{2}\rho \eta\cdot \nabla w
	-\nabla \rho\cdot \nabla w 
	+\partial_n \rho \nabla' w\cdot \nabla' g 
	= \frac{1}{2} (\partial_n w) \rho (g(\eta',\tau) -g(0,\tau)), \\
	&\partial_n w \nabla' \rho \cdot \nabla' g
	- \partial_n w (\partial_n \rho )|\nabla'g|^2
	= -\frac{1}{2} (\partial_n w) \rho \eta' \cdot \nabla'g. 
\end{aligned}
\]
Thus, 
\[
	\cJ =
	-\int w_\tau^2 \rho \psi^2 
	+\frac{1}{2}\int w_\tau (\partial_n w ) \rho \psi^2 
	(  g(\eta',\tau)-g(0,\tau)-\eta'\cdot \nabla' g ). 
\]
Substituting this into \eqref{eq:cEderimid} and 
combining \eqref{eq:R0cal}, \eqref{eq:R1cal}, \eqref{eq:R2cal} 
and \eqref{eq:R3cal} 
yield the desired equality. 
The proof is complete. 
\end{proof}

We next estimate the terms $\cB$ and $\cR$ in \eqref{eq:derivE}. 

\begin{lemma}\label{lem:boundary}
$\cB(\tau) \geq0$. 
\end{lemma}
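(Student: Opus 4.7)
The plan is to factor the integrand of $\cB(\tau)$ and verify that each factor is nonnegative using the explicit geometry of the boundary straightening. Since $(\partial_\nu w)^2 \rho \psi^2 \geq 0$ pointwise, it suffices to show that
\[
	(\nu\cdot\eta)\bigl(1 - 2\nu_n \partial_{\nu'} g + |\nabla' g|^2 \nu_n^2\bigr) \geq 0
\]
on the rescaled boundary $e^{\tau/2}(\Phi(\partial\Omega\cap B_R) - \tilde\xi)$.

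In case \eqref{eq:R1def}, we have $\overline{B_R}\subset\Omega$, so $\partial\Omega\cap B_R=\emptyset$ and the domain of integration is empty; hence $\cB(\tau)=0$. So only case \eqref{eq:R2def} requires work. There, by definition of $\Phi$, the function $\Phi_n(x)=x_n-f(x')$ vanishes precisely on $\partial\Omega\cap B_R$, so $\Phi(\partial\Omega\cap B_R)\subset\{\xi_n=0\}$, and $\Phi(\Omega_R)$ lies in the half-space $\{\xi_n>0\}$. Therefore the outward unit normal to $\Phi(\Omega_R)$ along this boundary piece is the constant vector $-e_n$; translating by $-\tilde\xi$ and scaling by $e^{\tau/2}$ do not change normal directions, so $\nu=-e_n$ on $e^{\tau/2}(\Phi(\partial\Omega\cap B_R)-\tilde\xi)$. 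In particular $\nu'=0$ and $\nu_n=-1$.

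With these values, $\partial_{\nu'}g=\nabla' g\cdot\nu'=0$, so
\[
	1 - 2\nu_n\partial_{\nu'}g + |\nabla' g|^2\nu_n^2 = 1 + |\nabla' g|^2 > 0.
\]
For the remaining factor $\nu\cdot\eta = -\eta_n$: a point $\eta$ on this boundary corresponds to $\xi\in\Phi(\partial\Omega\cap B_R)$, so $\xi_n=0$, giving $\eta_n = e^{\tau/2}(\xi_n - \tilde\xi_n) = -e^{\tau/2}\tilde\xi_n$. Since $\tilde x\in\overline{\Omega_{R/4}}$ satisfies $\tilde x_n\geq f(\tilde x')$, we have $\tilde\xi_n = \Phi_n(\tilde x) = \tilde x_n - f(\tilde x')\geq 0$, hence $\nu\cdot\eta = e^{\tau/2}\tilde\xi_n \geq 0$.

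Combining the two factors gives a nonnegative integrand, so $\cB(\tau)\geq 0$. The only real content is the geometric observation that, after straightening the boundary, the local region becomes a half-space and the classical Giga--Kohn convexity sign is recovered automatically; I do not anticipate any genuine obstacle beyond bookkeeping the change of variables from $x$ to $\xi$ to $\eta$ and confirming $\tilde\xi_n\geq 0$.
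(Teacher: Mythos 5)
Your proof is correct, and it takes a slightly different — and in fact cleaner — route than the paper's. You exploit the geometric fact that after straightening, $\Phi(\partial\Omega\cap B_R)$ lies in the hyperplane $\{\xi_n=0\}$ with $\Phi(\Omega_R)\subset\{\xi_n>0\}$, so the outward normal on the relevant boundary piece is the \emph{constant} vector $\nu=-e_n$, hence $\nu'=0$, $\partial_{\nu'}g=0$, and the quadratic factor collapses to $1+|\nabla'g|^2>0$. The paper never invokes this; instead it treats $\nu$ as an arbitrary unit vector and proves the pointwise inequality
\[
1-2\nu_n\partial_{\nu'}g+|\nabla'g|^2\nu_n^2=(1-(\partial_{\nu'}g)\nu_n)^2+\bigl(|\nabla'g|^2-(\partial_{\nu'}g)^2\bigr)\nu_n^2\geq 0,
\]
using Cauchy--Schwarz $|\partial_{\nu'}g|\le|\nabla'g||\nu'|\le|\nabla'g|$. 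Both approaches then handle the remaining factor $\nu\cdot\eta\ge0$ identically, via $\tilde\xi_n=\tilde x_n-f(\tilde x')\geq 0$. Your observation that $\nu=-e_n$ is sharper and renders several of the cross terms in $\cB$ identically zero; the paper's completion-of-squares argument is marginally more robust (it would survive if the straightening were only approximate), but for the exact change of variables used here your shortcut is perfectly valid.
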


\begin{proof}
In \eqref{eq:R1def}, 
we have  $\cB=0$, 
since the domain of integration is far from the boundary. 
Thus, we consider the case \eqref{eq:R2def}. 
Since $\Phi(\partial \Omega \cap B_R)\subset \R^n_+$ and 
$\tilde \xi_n = \Phi_n(\tilde x)
= \tilde x_n-f(\tilde x') \geq 0$ 
for $\tilde x\in \overline{\Omega_{R/4}}$
by the choice of $f$, we see that $\nu\cdot \eta\geq 0$ 
for $\eta\in e^{\tau/2} ( \Phi(\partial\Omega\cap B_R)-\tilde \xi )$. 
In addition, since $|\nu'|\leq 1$, 
we have 
\[
\begin{aligned}
	&1-2\nu_n \partial_{\nu'}g +|\nabla'g|^2  \nu_n^2 \\
	&=
	( 1-(\partial_{\nu'}g) \nu_n )^2  
	+( |\nabla'g|^2 - (\partial_{\nu'}g)^2 ) \nu_n^2 
	\geq0. 
\end{aligned}
\]
Then the lemma follows. 
\end{proof}

\begin{lemma}\label{lem:ceRmid}
There exists $C>0$ such that 
\[
	\cR(\tau)\leq 
	\frac{1}{2} \int w_\tau^2  \psi^2 \rho
	+ C\tilde \cR(\tau) 
\]
for $\tau=-\log(\tilde t-t)$ with $-1/2<t<\tilde t\leq 0$, where 
\[
	\tilde \cR(\tau):= 
	\int (w^2+|\nabla w|^2+|w|^{p+1}) 
	e^{-\frac{|\eta|^2}{32}} e^{-\frac{\tau}{2}} 
	\chi_{[0,1]}\left( 
	\frac{4}{R} |\Psi(\tilde \xi+e^{-\frac{\tau}{2}}\eta)-\Psi(\tilde \xi)|  \right). 
\]
\end{lemma}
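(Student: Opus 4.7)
The overall strategy is to estimate each of the thirteen summands of $\cR(\tau)$ individually, splitting them into three groups. Terms containing $w_\tau$ will be handled by Young's inequality $|w_\tau X| \leq \frac{\delta}{2}w_\tau^2 + \frac{1}{2\delta}X^2$ with $\delta$ chosen so that the accumulated $w_\tau^2$ contributions sum to at most $\frac{1}{2}\int w_\tau^2\rho\psi^2$. Terms containing $\psi_\tau$ but not $w_\tau$ will be controlled using the fact that $\psi_\tau$ is supported on an annular region where the Gaussian $\rho$ supplies the required $e^{-\tau/2}$ factor. Finally, the remaining terms --- those with $\nabla' g_\tau$ or with $\rho_\tau$ --- will be controlled using intrinsic $e^{-\tau/2}$ decay coming from the smoothness of $f$ at $\tilde\xi'$.

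Before touching any of the thirteen terms, I would collect a short list of elementary pointwise bounds. Using $f\in C^{2+\alpha}_0(\R^{n-1})$ with $f(0)=0$, $\nabla' f(0)=0$ and $\|\nabla' f\|_\infty\leq 1/2$, together with the mean value theorem and Taylor expansion around $\tilde\xi'$, the key estimates are
\[
|g(\eta',\tau)-g(0,\tau)|\leq \tfrac{1}{2}|\eta'|, \quad |\nabla'g|\leq \tfrac{1}{2}, \quad |\Delta'g|+|g_\tau(\eta',\tau)-g_\tau(0,\tau)|\leq C|\eta'|,
\]
together with the two cancellation-based bounds
\[
|\nabla'g_\tau(\eta',\tau)|\leq Ce^{-\tau/2}|\eta'|, \qquad |g(\eta',\tau)-g(0,\tau)-\eta'\cdot\nabla'g|\leq Ce^{-\tau/2}|\eta'|^2,
\]
and $|\partial_\tau(g(\eta',\tau)-g(0,\tau))|\leq Ce^{-\tau/2}|\eta'|^2$. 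Next, direct differentiation of \eqref{eq:psietadef} yields $|\psi_\tau|\leq C$ and $|\nabla\psi|+|\partial_n\psi|\leq Ce^{-\tau/2}$. The completed-square identity
\[
|\eta|^2+2(g-g(0))\eta_n+(g-g(0))^2 = |\eta'|^2+(\eta_n+g-g(0))^2 \geq \tfrac{1}{2}|\eta|^2
\]
implies $\rho\leq e^{-|\eta|^2/8}$, and \eqref{eq:rhotau} gives $|\rho_\tau|\leq Ce^{-\tau/2}|\eta|^3\rho$. The crucial observation for the $\psi_\tau$ terms is that on $\{\psi_\tau\neq 0\}$ one has $|\eta|^2\gtrsim e^\tau$, so $e^{-|\eta|^2/8}\leq Ce^{-|\eta|^2/32}e^{-\tau/2}$ holds on this annular region.

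With these in hand, each of the thirteen summands becomes routine. For a $w_\tau$ term of the form $\int w_\tau X\rho\psi^j$ with $X$ involving $\nabla\psi$, $\partial_n\psi$, or $g-g(0)-\eta'\cdot\nabla'g$, Young's inequality converts it into a small multiple of $\int w_\tau^2\rho\psi^2$ plus a contribution $C_\delta \int(w^2+|\nabla w|^2+|w|^{p+1})\cdot e^{-\tau}\cdot(\text{polynomial in }\eta)\cdot\rho$; the polynomial is absorbed into the Gaussian by going from $e^{-|\eta|^2/8}$ to $e^{-|\eta|^2/32}$, and $e^{-\tau}$ easily dominates the $e^{-\tau/2}$ required by $\tilde\cR$. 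Terms containing $\psi_\tau$ but not $w_\tau$ are estimated directly using the annular Gaussian bound on $\rho$. Terms containing $\nabla'g_\tau$ or $\rho_\tau$ have the $e^{-\tau/2}$ built in, and the extra polynomial factors in $\eta$ are again absorbed into the Gaussian.

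The main obstacle is essentially bookkeeping: tracking thirteen summands through Young's inequality with consistent coefficient choices so that the $w_\tau^2$ contributions sum to at most $\tfrac{1}{2}$, and verifying that the $e^{-\tau/2}$ factor in $\tilde\cR$ appears in each case --- either intrinsically (via $\nabla'g_\tau$, $\rho_\tau$, or the Taylor remainder) or via the annular-region Gaussian comparison (for $\psi_\tau$). The subtle conceptual point is that, although $\psi_\tau$ is only $O(1)$ pointwise on its support, that support is located where $|\eta|\gtrsim e^{\tau/2}$, so $\rho$ itself supplies the missing $e^{-\tau/2}$ via the Gaussian trade-off described above.
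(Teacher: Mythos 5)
Your proposal is correct and follows essentially the same route as the paper: apply Cauchy/Young to the $w_\tau$ terms, exploit the $e^{-\tau/2}$ decay built into $\nabla\psi$, $\nabla'g_\tau$, $\rho_\tau$, and the Taylor remainder $g-g(0)-\eta'\cdot\nabla'g$, and absorb the resulting polynomial factors in $\eta$ by trading $e^{-|\eta|^2/8}$ for $e^{-|\eta|^2/32}$. The one place where your mechanism differs is the treatment of $\psi_\tau$: you use the crude pointwise bound $|\psi_\tau|\le C\chi_{\text{annulus}}$ (which is in fact of the correct order) and recover the $e^{-\tau/2}$ from the Gaussian weight, observing that $|\eta|\gtrsim e^{\tau/2}$ on $\operatorname{supp}\psi_\tau$; the paper instead records $|\psi_\tau|\le C|\eta|^2 e^{-\tau/2}\chi_{(1/2,1)}$, reading the $e^{-\tau/2}$ off the chain-rule factor and absorbing $|\eta|^2$ into the Gaussian. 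These are two ways of saying the same thing (both ultimately rest on the geometry $|\eta|\gtrsim e^{\tau/2}$ on the annulus), so your alternative is valid.

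Two of your preliminary bounds are misstated, although neither is actually used in the argument: $|\Delta'g|\le C|\eta'|$ is false as written (the correct estimate is $|\Delta'g|\le Ce^{-\tau/2}$, and since $\nabla'f(\tilde\xi')$ need not vanish, $\Delta'g$ does not vanish at $\eta'=0$; in any case $\Delta'g$ does not occur in $\cR$), and $|g_\tau(\eta',\tau)-g_\tau(0,\tau)|\le C|\eta'|$ should read $\le Ce^{-\tau/2}|\eta'|^2$, which is the estimate your (correctly stated) bound $|\rho_\tau|\le Ce^{-\tau/2}|\eta|^3\rho$ actually relies on. The bounds you do invoke in the body of the argument --- $|\nabla'g_\tau|\le Ce^{-\tau/2}|\eta'|$, $|g-g(0)-\eta'\cdot\nabla'g|\le Ce^{-\tau/2}|\eta'|^2$, $\rho\le e^{-|\eta|^2/8}$, $|\nabla\psi|\le Ce^{-\tau/2}$ with support on the annulus --- are all correct, so the proof goes through.
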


\begin{proof}
We only consider the case \eqref{eq:R2def}, 
since the case \eqref{eq:R1def} is simpler. 
By \eqref{eq:backg} and the choice of $f$, we have 
\begin{equation}\label{eq:gderes}
\begin{aligned}
	&
	\|\nabla' g(\cdot,\tau)\|_{L^\infty(\R^{n-1})} \leq \frac{1}{2}, 
	\quad 
	\|(\nabla')^2 g(\cdot,\tau)\|_{L^\infty(\R^{n-1})} \leq C e^{-\frac{\tau}{2}}, \\
	&\| \nabla' g_\tau(\cdot,\tau) \|_{L^\infty(\R^{n-1})} \leq 
	C |\eta'| e^{-\frac{\tau}{2}}, 
\end{aligned}
\end{equation}
where $C>0$ is independent of $\tau$. 
By Cauchy's inequality and \eqref{eq:gderes}, we obtain 
\[
\begin{aligned}
	\cR&\leq 
	\frac{1}{2} \int w_\tau^2 \rho \psi^2  
	+C \int (w^2+|\nabla w|^2+|w|^{p+1}) 
	( \rho ( |\nabla \psi|^2 +|\psi_\tau|) + |\rho_\tau|\psi^2)  \\
	&\quad + C \int |\nabla w|^2 \rho \psi^2 
	(|\eta'| e^{-\frac{\tau}{2}} + |\eta'|^4 e^{-\tau}). 
\end{aligned}
\]
From \eqref{eq:rhodef} and \eqref{eq:gderes}, it follows that 
\begin{equation}\label{eq:rhoR2dec}
	\rho(\eta,\tau) 
	\leq \exp\left( 
	-\frac{1}{4} 
	\left(  |\eta'|^2 
	+\frac{1}{2} \eta_n^2  
	 - (g(\eta',\tau)-g(0,\tau))^2 \right) 
	\right) 
	\leq e^{-\frac{|\eta|^2}{8}}. 
\end{equation}
By \eqref{eq:rhotau} and \eqref{eq:gderes}, 
and then by \eqref{eq:rhoR2dec}, we also have 
\begin{equation}\label{eq:rhotauee}
	|\rho_\tau| 
	\leq C |\eta'|^2 e^{-\frac{\tau}{2}} 
	(|\eta_n|+|\eta'|) \rho 
	\leq C e^{-\frac{|\eta|^2}{16}} e^{-\frac{\tau}{2}}. 
\end{equation}
These inequalities together with 
$\tau> -\log(\tilde t+1/2)> 0$ show that 
\begin{equation}\label{eq:cRcSmid}
\begin{aligned}
	\cR 
	&\leq
	\frac{1}{2} \int w_\tau^2 \rho \psi^2  \\
	&\quad 
	+ C \int (w^2+|\nabla w|^2+|w|^{p+1}) 
	( |\nabla \psi|^2 +|\psi_\tau| + \psi^2 e^{-\frac{\tau}{2}} )
	e^{-\frac{|\eta|^2}{16}}. 
\end{aligned}
\end{equation}

From \eqref{eq:psietadef} and \eqref{eq:gderes}, 
it follows that 
\begin{equation}\label{eq:naps}
\begin{aligned}
	|\nabla \psi|
	&= \frac{4e^{-\tau/2}}{R} 
	\left| \varphi'\left( 
	\frac{4}{R} e^{-\frac{\tau}{2}}
	|(\eta',\eta_n+g(\eta',\tau)-g(0,\tau))|   \right)
	\right| \\
	&\quad 
	\times \frac{
	|(\eta'+(\eta_n+g(\eta')-g(0))\nabla'g, \eta_n+g(\eta')-g(0))|
	}{|(\eta',\eta_n+g(\eta')-g(0))|} \\
	&\leq 
	Ce^{-\frac{\tau}{2}} 
	\chi_{(\frac{1}{2},1)}\left(
	\frac{4}{R} e^{-\frac{\tau}{2}}|(\eta',\eta_n+g(\eta',\tau)-g(0,\tau))|  \right)
\end{aligned}
\end{equation}
and 
\[
\begin{aligned}
	|\psi_\tau| 
	&= \frac{4e^{-\tau/2}}{R} 
	\left| \varphi'\left( 
	\frac{4}{R} e^{-\frac{\tau}{2}}
	|(\eta',\eta_n+g(\eta',\tau)-g(0,\tau))|   \right)
	\right| \\
	&\quad 
	\times \left| 
	-\frac{1}{2} |(\eta',\eta_n+g(\eta')-g(0))| 
	+\partial_\tau ( |(\eta',\eta_n+g(\eta')-g(0))| )
	\right| \\
	&\leq 
	Ce^{-\frac{\tau}{2}} |\varphi'| 
	\left( 
	\frac{1}{2} (|\eta'|^2 + 2\eta_n^2 + 2(g(\eta')-g(0))^2 ) 
	+ |\eta'|^2 e^{-\frac{\tau}{2}} 
	\right) \\
	&\leq 
	C|\eta|^2 e^{-\frac{\tau}{2}}
	\chi_{(\frac{1}{2},1)}\left( 
	\frac{4}{R} e^{-\frac{\tau}{2}}|(\eta',\eta_n+g(\eta',\tau)-g(0,\tau))|  \right), 
\end{aligned}
\]
where $g(\eta'):=g(\eta',\tau)$ and $g(0):=g(0,\tau)$. 
Then by $e^{-\tau/2}|(\eta',\eta_n+g(\eta',\tau)-g(0,\tau))|
= |\Psi(\tilde \xi+e^{-\tau/2}\eta)-\Psi(\tilde \xi)|$ in \eqref{eq:Psieta}, 
we have 
\begin{equation}\label{eq:nabtaupsi}
\begin{aligned}
	& ( |\nabla \psi|^2 +|\psi_\tau| + \psi^2 e^{-\frac{\tau}{2}} )
	e^{-\frac{|\eta|^2}{16}} \\
	&\leq 
	C e^{-\frac{|\eta|^2}{32}} e^{-\frac{\tau}{2}} 
	\chi_{[0,1]}\left( 
	\frac{4}{R} |\Psi(\tilde \xi+e^{-\tau/2}\eta)-\Psi(\tilde \xi)|  \right). 
\end{aligned}
\end{equation}
The lemma follows from \eqref{eq:nabtaupsi} and \eqref{eq:cRcSmid}. 
\end{proof}

We are now in a position to prove Proposition \ref{pro:31}. 

\begin{proof}[Proof of Proposition \ref{pro:31}]
By Lemmas \ref{lem:cederivR2}, \ref{lem:boundary}, \ref{lem:ceRmid}, we see that 
\[
	\cE(\tau) 
	+ \frac{1}{2} \int_{\tau'}^\tau \int_{\Omega(\sigma)} 
	w_\sigma^2 \rho \psi^2 d\eta d\sigma 
	\leq 
	\cE(\tau') + C\int_{\tau'}^\tau \tilde \cR(\sigma) d\sigma 
\]
for $\tau'=-\log(\tilde t-t')$ and $\tau=-\log(\tilde t-t)$ with 
$-1/2<t'<t<\tilde t\leq 0$. 
Note that this inequality holds for both \eqref{eq:R1def} and \eqref{eq:R2def}. 
The change of variables and the same computations as in Lemma \ref{lem:Ebdd} yield  
\begin{equation}\label{eq:bwucal}
\begin{aligned}
	\int_{\tau'}^\tau \tilde \cR(\sigma) d\sigma  
	&= 
	\int_{t'}^t (\tilde t-s)^{\frac{p+1}{p-1}-\frac{1}{2}}  
	\int_{\Omega_R} 
	\left( \frac{u^2}{\tilde t-s} + |\nabla u|^2 + |u|^{p+1} \right) \\
	&\qquad \times 
	(\tilde t-s)^{-\frac{n}{2}}  
	e^{-\frac{|x-\tilde x|^2}{32(\tilde t-s)}} dx ds \\
	&\leq 
	C(M+M^p)^2 \int_{t'}^t (\tilde t-s)^{-\frac{1}{2}} ds \\
	&\leq 
	C(M+M^p)^2 (\tilde t-t')^\frac{1}{2}. 
\end{aligned}
\end{equation}
Thus, 
\begin{equation}\label{eq:31conch}
	\cE(\tau) 
	+ \frac{1}{2} \int_{\tau'}^\tau \int_{\Omega(\sigma)} 
	w_\sigma^2 \rho \psi^2 d\eta d\sigma 
	\leq 
	\cE(\tau') + C(M+M^p)^2 (\tilde t-t')^\frac{1}{2}. 
\end{equation}

On the other hand, by 
\[
	w_{\tau}= (\tilde{t}-t)^\frac{1}{p-1}
	\left(-\frac{\hat u}{p-1}-\frac{(\xi-\tilde{\xi})\cdot \nabla \hat u}{2}
	+(\tilde t-t) \hat u_t 
	\right) 
\]
and the change of variables, we can see that 
\[
\begin{aligned}
	& \int_{\tau'}^\tau \int_{\Omega(\sigma)} 
	w_\sigma^2 \rho \psi^2 d\eta d\sigma \\
	&= 
	\int_{t'}^t 
	(\tilde{t}-s)^{\frac{2}{p-1} -\frac{n}{2}-1} 
	\int_{\Phi(\Omega_R)}
	\left| \frac{\hat u}{p-1}
	+\frac{(\xi-\Phi(\tilde x))\cdot\nabla \hat u}{2}
	- (\tilde t-s)\hat u_s \right|^2  \\
	&\qquad 
	\times 
	e^{-\frac{|\Psi(\xi)-\tilde x|^2}{4(\tilde t -s)}}
	\phi^2_{\tilde x,R/4}(\Psi(\xi)) d\xi ds. 
\end{aligned}
\]
This together with \eqref{eq:31conch} and $\cE(\tau)=E(t)$ deduces 
the desired inequality. The proof is complete. 
\end{proof}

\section{$\eps$-regularity}\label{sec:epreg}
The purpose of this section is to show the following 
$\eps$-regularity theorem:

\begin{theorem}\label{th:epsreg}
Let $n\geq3$, $p>p_S$ and 
$\Omega$ be any $C^{2+\alpha}$ domain 
in $\R^n$ with $0\in \overline{\Omega}$. 
Fix $0<R<1/2$ such that either \eqref{eq:R1def} or \eqref{eq:R2def} holds. 
Let $u$ be a solution of \eqref{eq:fujitaeq} satisfying \eqref{eq:Mdef2}. 
Then there exist constants 
$\eps_0$, $\delta_0$ and $\theta_0$ with 
$0<\eps_0, \theta_0<1$ and $0<\delta_0<R$ 
depending only on $n$, $p$, $M$, $\Omega$ and $R$
such that the following holds: 
If there exists $0<\delta\leq \delta_0$ such that 
\begin{equation}\label{eq:1scale}
	\delta^{\frac{4}{p-1}-n}
	\iint_{Q_\delta} (|\nabla u|^2 + |u|^{p+1}) dxdt 
	\leq \eps_0,
\end{equation}
then 
\begin{equation}\label{eq:epbdd}
	\|u\|_{L^\infty(Q_{\theta_0\delta})}\leq C(\theta_0 \delta)^{-\frac{2}{p-1}}. 
\end{equation}
Here $C$ is a positive constant 
depending only on $n$, $p$, $M$, $\Omega$ and $R$ 
and independent of $\eps_0$, $\delta$, $\delta_0$ and $\theta_0$. 
\end{theorem}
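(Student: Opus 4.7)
My plan is to argue by contradiction via a blow-up/compactness argument combining the gradient estimate of Section \ref{sec:grad}, the quasi-monotonicity of Section \ref{sec:pre}, and standard parabolic regularity.

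\textbf{Rescaling.} For a solution $u$ of \eqref{eq:fujitaeq}--\eqref{eq:Mdef2}, the rescaling $v(x,t):=\delta^{2/(p-1)}u(\delta x,\delta^2 t)$ again solves \eqref{eq:fujitaeq} on a rescaled domain and preserves the critical-norm bound $\sup_t\|v(\cdot,t)\|_{L^{q_c}}\leq M$. Under this rescaling the hypothesis \eqref{eq:1scale} becomes $\iint_{Q_1}(|\nabla v|^2+|v|^{p+1})\,dxdt\leq\eps_0$ on the unit parabolic cylinder in the rescaled geometry, and the target conclusion \eqref{eq:epbdd} becomes $\|v\|_{L^\infty(Q_{\theta_0})}\leq C\theta_0^{-2/(p-1)}$.

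\textbf{Contradiction setup.} Fix $\theta_0\in(0,1)$ to be chosen later and assume the conclusion fails for every $C>0$. Then there exist sequences $\delta_k\to 0$, $\eps_k\to 0$ and solutions $u_k$ of \eqref{eq:fujitaeq}--\eqref{eq:Mdef2} whose rescalings $v_k$ satisfy the above unit-scale energy bound with $\eps_k$ in place of $\eps_0$ and the $L^{q_c}$ bound, but with $\|v_k\|_{L^\infty(Q_{\theta_0})}\to\infty$. Since $\delta_k\to 0$, the rescaled domains exhaust either $\R^n$ (interior case \eqref{eq:R1def}) or $\R^n_+$ (boundary case \eqref{eq:R2def}, after the flattening $\Phi$ of Subsection \ref{subsec:dcv}).

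\textbf{Compactness and limit.} Applying Proposition \ref{pro:gradoricri} to $v_k$ gives uniform control of $\nabla v_k$ in $L^\infty_tL^{q_\ast,\infty}_x$. Applying Proposition \ref{pro:31} at reference points $(\tilde x,\tilde t)$ in a compact subset of the rescaled geometry, and exploiting that $\eps_k\to 0$ while the error term $C(M+M^p)^2(\tilde t-t')^{1/2}$ in \eqref{eq:419} is bounded, furnishes a uniform Gaussian-weighted $L^2$ control on the scale-invariant combination $\tfrac{\hat v_k}{p-1}+\tfrac{1}{2}(\xi-\Phi(\tilde x))\cdot\nabla\hat v_k-(\tilde t-s)\partial_s\hat v_k$. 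Combining these with the regularity estimates of Appendix \ref{sec:reges} and the Aubin--Lions-type compactness of Appendix \ref{sec:cpt}, I would extract a subsequence converging to a limit $v_\infty$ strongly enough that $|v_k|^{p-1}v_k\to|v_\infty|^{p-1}v_\infty$ in a suitable sense. The vanishing of $\iint(|\nabla v_k|^2+|v_k|^{p+1})$ then forces $v_\infty\equiv 0$ on $Q_1$.

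\textbf{Bootstrap to contradiction.} Given $v_k\to 0$ strongly on $Q_1$, the Duhamel representation for $v_k$ on $Q_{\theta_0}$, together with standard $L^{q_c}$-to-$L^\infty$ smoothing for the Dirichlet heat semigroup (using $q_c>p+1$, which holds since $p>p_S$), produces a uniform bound $\|v_k\|_{L^\infty(Q_{\theta_0})}\leq C\theta_0^{-2/(p-1)}$, contradicting the blow-up. The principal obstacle is the compactness step: the hypothesis \eqref{eq:1scale} is only at a scale-invariant critical level, so the nonlinearity $|v_k|^{p-1}v_k$ is not \emph{a priori} controlled in any subcritical topology. Overcoming this requires interleaving the $L^{q_\ast,\infty}$ gradient bound with the weighted $L^2$ time-derivative bound coming from the quasi-monotonicity, in the spirit of \cite{CDZ07}.
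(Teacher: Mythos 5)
Your route---rescale to unit scale, extract a blow-up limit, show the limit vanishes, and then bootstrap a uniform $L^\infty$ bound---is genuinely different from the paper's proof, which is a direct quantitative chain: a preliminary $\eps$-regularity criterion (Lemma \ref{lem:epspre}) stated in terms of a parabolic Morrey norm is established via Blatt--Struwe and Giga--Kohn, and the Morrey condition is then verified from the single-scale hypothesis \eqref{eq:1scale} by a sequence of estimates on the quasi-monotone functional $E$ (Lemmas \ref{lem:32}--\ref{lem:enprefin}, culminating in Proposition \ref{pro:Irhtilh}). No compactness limit is taken inside the proof of Theorem \ref{th:epsreg}; the compactness machinery is reserved for Section \ref{sec:Blim}.

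The bootstrap step of your proposal contains a genuine gap, and your parenthetical appeal to $q_c>p+1$ does not close it. The Duhamel term $\int_0^t e^{(t-s)\Delta}|v_k|^{p-1}v_k\,ds$ cannot be bounded in $L^\infty$ using only $\sup_t\|v_k(\cdot,t)\|_{L^{q_c}}\leq M$: the smoothing $L^{q_c/p}\to L^\infty$ carries the factor $(t-s)^{-np/(2q_c)}=(t-s)^{-p/(p-1)}$, which is not integrable near $s=t$, while $L^{q_c/p}\to L^{q_c}$ carries $(t-s)^{-1}$, which is borderline. This is exactly the sense in which $L^{q_c}$ is scaling-critical, so an a priori $L^{q_c}$ bound alone yields no $L^\infty$ control, and the claimed contradiction never materializes. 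Likewise, $v_k\to 0$ strongly in $L^{p+1}(Q_1)$ is a single-scale statement: it does not propagate to the smallness at all interior parabolic cylinders that Lemma \ref{lem:epspre} demands, because the parabolic Morrey quantity $r^{4/(p-1)-n}\iint_{Q_r}|v_k|^{p+1}$ can concentrate along the sequence even though its value at $r=1$ tends to zero. Propagating the hypothesis across scales is precisely what the quasi-monotonicity of $E$ accomplishes quantitatively (Lemma \ref{lem:32}, Lemma \ref{lem:lowerpre}, Lemma \ref{lem:enprefin}), and a soft compactness argument cannot substitute for it since the target \eqref{eq:epbdd} is a bound that must be uniform in the sequence index.
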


Among the contents of this section, 
we will cite only Theorem \ref{th:epsreg}, Remark \ref{rem:wepreg} 
and Lemma \ref{lem:32} 
for proving our main result in Section \ref{sec:Blim}.

\begin{remark}
Chou, Du and Zheng \cite[Theorems 2, 2']{CDZ07} 
proved $\eps$-regularity theorems 
for global-in-time solutions 
(or borderline solutions) of \eqref{eq:fujitaeq} 
in bounded convex domains. 
The theorems were applied for showing 
the decay of borderline solutions as $t\to\infty$, 
see \cite{So17} for alternative approach.
In \cite[Theorem 2]{CD10}, the assumption on convexity was removed, 
see also \cite[Proposition 4.2]{Du19}. 
By the nontrivial modifications of \cite{CD10,CDZ07}, 
we show the $\eps$-regularity for local-in-time solutions.

We note that the proofs of the $\eps$-regularity theorems 
in \cite{CD10,CDZ07} are based on 
a preliminary $\eps$-regularity result \cite[Lemma 3]{CDZ07}, 
where the time at which the regularity of the solution is concerned with 
should be contained in the interior 
of the time interval of the solution. 
Thus, it seems difficult to apply their argument 
near the final time of local-in-time solutions.  
To overcome this issue, 
we give a variant of \cite[Lemma 3]{CDZ07} 
with the aid of Blatt and Struwe \cite[Proposition 4.1]{BS15} 
and Giga and Kohn \cite[Theorems 2.1, 2.5]{GK89} to require 
only the estimate of solutions shortly before the reference time, 
see Lemma \ref{lem:epspre}. 
\end{remark}

\begin{remark}\label{rem:wepreg}
Theorem \ref{th:epsreg} remains to hold for weak solutions
satisfying an estimate of the form \eqref{eq:32} in Lemma \ref{lem:32}.
In particular, we may apply it to the blow-up limit 
of certain rescaled solutions in Section \ref{sec:Blim}.
\end{remark}

In this section, unless otherwise stated, $C$ denotes a constant 
depending only on $n$, $p$, $M$, $\Omega$ and $R$. 
Each $C$ may have different values also within the same line. 
We always assume either \eqref{eq:R1def} or \eqref{eq:R2def} 
and deal with each of the cases in a unified way. 
In addition, we always assume that 
$u$ is a solution of \eqref{eq:fujitaeq} satisfying \eqref{eq:Mdef2}.

We state a preliminary $\eps$-regularity result.

\begin{lemma}\label{lem:epspre}
There exist $\eps_1>0$ and $C>0$ depending only on $n$, $p$ and $\Omega$ 
such that 
the following holds: 
If there exists $0<R_1<R/4$ such that 
\begin{equation}\label{eq:asspre}
	(r/2)^{\frac{4}{p-1}-n} 
	\int_{t_1-(r/2)^2}^{t_1-(r/4)^2} 
	\int_{\Omega_{r/2}(x_1)} 
	|u(x,t)|^{p+1} dxdt \leq \eps_1
\end{equation}
for any $(x_1,t_1)$ and $r>0$ satisfying 
$Q_r(x_1,t_1)\subset Q_{2R_1}$, 
then $\|u\|_{L^\infty(Q_{R_1/4})}\leq CR_1^{-2/(p-1)}$. 
\end{lemma}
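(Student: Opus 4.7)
My plan is a rescaling/contradiction argument in the spirit of Giga--Kohn \cite{GK89} and Blatt--Struwe \cite{BS15}. The hypothesis \eqref{eq:asspre}, the conclusion $\|u\|_{L^\infty(Q_{R_1/4})} \leq CR_1^{-2/(p-1)}$, and the equation itself are all invariant under the parabolic scaling $u(x,t) \mapsto \lambda^{2/(p-1)} u(\lambda x, \lambda^2 t)$, which suggests a blow-up contradiction.

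First I would introduce the weighted supremum
\[
N := \sup_{(x,t) \in Q_{R_1}} d(x,t)^{2/(p-1)} |u(x,t)|,
\]
where $d(x,t)$ denotes the parabolic distance from $(x,t)$ to the parabolic boundary of $Q_{2R_1}$. The desired $L^\infty$ bound is equivalent to a universal bound $N \leq C$. Suppose the contrary: there is a sequence of solutions $u_k$ satisfying \eqref{eq:asspre} for which $N_k \to \infty$. Choose an extremal point $(x_k^*, t_k^*)$ with $|u_k(x_k^*, t_k^*)| = K_k (d_k^*)^{-2/(p-1)}$ for $K_k \to \infty$, set $\lambda_k := |u_k(x_k^*, t_k^*)|^{-(p-1)/2} \ll d_k^*$, and rescale
\[
\tilde u_k(y, s) := \lambda_k^{2/(p-1)} u_k(x_k^* + \lambda_k y, t_k^* + \lambda_k^2 s).
\]
By near-extremality of $(x_k^*, t_k^*)$ with respect to $N_k$, the rescaled functions $\tilde u_k$ solve the same semilinear equation on parabolic cylinders of radius $L_k$ with $L_k \to \infty$, and satisfy $|\tilde u_k(0,0)| = 1$ together with a uniform pointwise bound $|\tilde u_k| \leq C$ on those cylinders.

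Next, by scale invariance of \eqref{eq:asspre}, the rescaled $L^{p+1}$ integral $\iint_{P_L} |\tilde u_k|^{p+1}$ is bounded by $\eps_1$ on any fixed $P_L$, for all $k$ large. Combining this smallness with the uniform $L^\infty$ bound on $\tilde u_k$, parabolic regularity via the Duhamel formula and Meyer-type Lorentz estimates (as in the argument of Proposition \ref{pro:gradoricri}) or the interior $\eps$-regularity of Blatt--Struwe \cite[Proposition 4.1]{BS15} applied at unit scale yields $|\tilde u_k(0,0)| \leq C \eps_1^{\beta}$ for some $\beta > 0$. Choosing $\eps_1$ sufficiently small then contradicts $|\tilde u_k(0,0)| = 1$, establishing the required bound on $N$.

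The main obstacle will be the boundary case: when $(x_k^*, t_k^*)$ lies close to $\partial\Omega$, the rescaled $\tilde u_k$ lives on a cylinder intersecting a nearly flat hyperplane on which it satisfies zero Dirichlet data, and the interior $\eps$-regularity of Blatt--Struwe does not directly apply. I would address this by first applying the boundary flattening $\Phi, \Psi$ of Section \ref{sec:pre}, noting that under the rescaling the flattening map converges to the identity (the $C^{2+\alpha}$ regularity of $\partial\Omega$ renders the deformation $O(\lambda_k^{\alpha})$ on $P_L$, mirroring the decay in $g$ from \eqref{eq:backg} used in the proof of Proposition \ref{pro:31}), and then invoking the Giga--Kohn boundary regularity \cite[Theorems 2.1, 2.5]{GK89}, which furnishes the half-space analog of the interior $\eps$-regularity used above. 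The contradiction then closes exactly as in the interior case.
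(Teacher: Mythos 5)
Your rescaling strategy is in the right family, but the contradiction formulation built on the weighted supremum
$N := \sup_{(x,t)\in Q_{R_1}} d(x,t)^{2/(p-1)}|u(x,t)|$
has a gap that the paper explicitly flags and designs its proof around. Since $u$ is only given as a classical solution on $\Omega\times(-1,0)$ and may in principle blow up as $t\to 0^-$, there is no a priori reason why $N$ should be finite; but the doubling argument needs a near-extremal point of a finite quantity in order to even define $\lambda_k$ and $(x_k^*,t_k^*)$. This is precisely the point noted in Remark 4.2 of the paper when explaining why the $\eps$-regularity arguments of Chou--Du--Zheng ``seem difficult to apply \emph{near the final time} of local-in-time solutions.'' The usual patch --- restricting the supremum to $t\le -\eta$ so it is finite and letting $\eta\to 0$ --- brings you right back to needing a uniform bound independent of $\eta$, and the near-extremal point may then sit arbitrarily close to the final time, where the bound $|\tilde u_k|\le C$ obtained from near-extremality only controls the past $s\le 0$ while the $\eps$-regularity needs a full cylinder around $(0,0)$. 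Your proposal does not address this.

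The paper avoids a contradiction entirely: it rescales once to $R_1=1$, then for \emph{each} interior point $(\tilde x,\tilde t)$ rescales by $\lambda=(-\tilde t)^{1/2}$, verifies (via the time-shift manipulation in \eqref{eq:r24p1}) that \eqref{eq:asspre} yields a small parabolic Morrey norm for the rescaled $\tilde v$, applies Blatt--Struwe to get $|v(\tilde x,\tilde t)|\le C\eps_1(-\tilde t)^{-1/(p-1)}$, and then --- crucially --- invokes Giga--Kohn \cite[Theorems 2.1, 2.5]{GK89} to upgrade this \emph{type-I rate bound with small constant} to a genuine $L^\infty$ bound up to the final time. You mischaracterize GK 2.1, 2.5 as ``the half-space analog of the interior $\eps$-regularity'': they are in fact \emph{non-degeneracy of blowup} theorems (if the similarity-rescaled size is small, blowup cannot occur there), they apply equally at interior points, and they play a logically indispensable role that Blatt--Struwe alone cannot cover --- Blatt--Struwe gives you the rate bound on each time slice $t<0$, but without a non-degeneracy argument you cannot pass from that to boundedness as $t\to 0^-$. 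Your proposal has no step serving that purpose, and deploys GK only for boundary flattening, where the paper in fact uses it for something else. Also worth noting: the ``rescaled $L^{p+1}$ integral is bounded by $\eps_1$ on $P_L$'' is a single-scale statement, whereas Blatt--Struwe needs a Morrey-norm smallness uniform over all smaller scales; that is obtained precisely from the fact that \eqref{eq:asspre} holds at every admissible scale, together with the time-shift in \eqref{eq:r24p1}.
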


\begin{proof}
Let $\eps_1>0$ be a constant chosen later. 
Throughout this proof, $C$ depends only on $n$, $p$ and $\Omega$. 
Set $v(y,s):= R_1^{2/(p-1)} u(R_1 y, R_1^2 s)$. 
Since $R/R_1>4$ and $-1/R_1^2<-16$, we can check that $v$ satisfies 
$v_t=\Delta v+|v|^{p-1}v$ in $Q_4'$, where 
$\Omega':=R_1^{-1}\Omega$, 
$Q_r'(x_1,t_1):=(\Omega'\cap B_r(x_1))\times(t_1-r^2,t_1)$ and $Q_r':=Q_r'(0,0)$. 
From \eqref{eq:asspre} and the change of variables, it follows that  
if $Q_r'(x_1,t_1)\subset Q_2'$ and $(r/4)^2\leq -t_1$, then 
$Q_r'(x_1,t_1+(r/4)^2)\subset Q_2'$ and 
\begin{equation}\label{eq:r24p1}
\begin{aligned}
	&(r/2)^{\frac{4}{p-1}-n} \int_{t_1-(r/4)^2}^{t_1} 
	\int_{\Omega'\cap B_{r/2}(x_1)} |v(y,s)|^{p+1} dyds \\
	&\leq 
	(r/2)^{\frac{4}{p-1}-n} \int_{t_1+(r/4)^2-(r/2)^2}^{t_1+(r/4)^2-(r/4)^2} 
	\int_{\Omega'\cap B_{r/2}(x_1)} |v(y,s)|^{p+1} dyds \leq \eps_1. 
\end{aligned}
\end{equation}

Let $(\tilde x,\tilde t)\in Q'_{1/2}$. 
Set $\lambda:=(-\tilde{t})^{1/2}$ and
$\Omega'':=\lambda^{-1}(\Omega'-\tilde x)$. 
Then the rescaled function $\tilde v(x,t)
:=\lambda^{2/(p-1)} v(\lambda x+\tilde x, \lambda^2 t+\tilde{t})$ satisfies 
$\tilde v_t=\Delta \tilde v+|\tilde v|^{p-1} \tilde v$ 
in $(\Omega''\cap B_{2/\lambda}( -\tilde x/\lambda))
\times ( -4/\lambda^2, 0)$. 
If $B_{r/\lambda}( (x_1-\tilde x)/\lambda)\subset B_2$ 
and $((t_1-r^2)/\lambda^2, t_1/\lambda^2)\subset ( -4, 0)$, then 
$Q_r'(x_1, t_1 + \tilde t)\subset Q_2'$ and 
$(r/4)^2\leq -(t_1 + \tilde t)$. 
Therefore, \eqref{eq:r24p1} shows that 
\[
\begin{aligned}
	&(r/4\lambda)^{\frac{4}{p-1}-n}
	\int_{(t_1-(r/4)^2)/\lambda^2}^{t_1/\lambda^2} 
	\int_{ \Omega''\cap B_{r/4\lambda}((x_1-\tilde x)/\lambda) }
	|\tilde{v}(x,t)|^{p+1} dxdt \\
	&=(r/4)^{\frac{4}{p-1}-n} 
	\int_{t_1+\tilde t-(r/4)^2}^{t_1+\tilde t}
	\int_{\Omega' \cap B_{r/4}(x_1)}
	|v(y, s)|^{p+1} dyds 
	\leq C \eps_1. 
\end{aligned}
\]
Replacing $(x_1,t_1)$ and $r$ with $(\lambda x_1+\tilde x,\lambda^2 t_1)$ and $\lambda r$, 
respectively, we see that 
\[
	(r/4)^{\frac{4}{p-1}-n}
	\iint_{Q_{r/4}''(x_1,t_1)} |\tilde{v}(x,t)|^{p+1} dxdt 
	\leq C \eps_1
\]
for any $Q_r''(x_1,t_1) \subset Q_2''$, where 
$Q_r''(x_1,t_1):= (\Omega''\cap B_r(x_1))\times (t_1-r^2,t_1)$ 
and $Q_r'':=Q_r''(0,0)$. 
Hence $\|\tilde{v}\|_{M^{p+1,\mu_c}(Q_{1/2}'')} \leq C \eps_1$ 
with $\mu_c:=2(p+1)/(p-1)$, 
where $\|\cdot\|_{M^{p+1,\mu_c}(Q_{1/2}'')}$ 
is the parabolic Morrey norm on $Q_{1/2}''$.

Taking $\eps_1$ sufficiently small,
we may apply \cite[Proposition 4.1]{BS15} to see that $\|\tilde v\|_{L^\infty(Q_{1/4}'')} 
\le C \|\tilde v \|_{M^{p+1,\mu_c}(Q_{1/2}'')} \leq C\eps_1$. 
In particular, 
$\lambda^{2/(p-1)}|v(\lambda x+\tilde x, \lambda^2 t+\tilde{t})| 
\leq C\eps_1$ for $(x,t)\in Q_{1/4}''$. 
Letting $(x,t)\to (0,0)$ gives 
\[
	|v(\tilde x, \tilde t )| 
	\leq C\eps_1 \lambda^{-\frac{2}{p-1}} 
	= C\eps_1 (-\tilde t)^{-\frac{1}{p-1}} 
\]
for $(\tilde x,\tilde t)\in Q'_{1/2}$. 
Since $\Omega'$ is $C^{2+\alpha}$, by
applying \cite[Theorems 2.1, 2.5]{GK89} with 
$\eps_1$ replaced by a smaller constant if necessary, we obtain  
$\|v\|_{L^\infty(Q_{1/4}')} \leq C$, and hence 
$|u(x, t)| \leq C R_1^{-2/(p-1)}$ for $(x,t)\in Q_{R_1/4}$. 
The proof is complete. 
\end{proof}

In the rest of this section, we prove Theorem \ref{th:epsreg} 
by using Lemma \ref{lem:epspre} 
and estimates of $E$. 
First of all, note that we may replace $\delta$ with $A\delta$ 
in the assumption \eqref{eq:1scale}, 
where $A>1$ is a large constant and 
$0<\delta<1$ is a small constant depending on $A$. 
More precisely, we may assume 
\begin{equation}\label{eq:asKdel}
	\delta^{\frac{4}{p-1}-n}
	\iint_{Q_{A\delta}} (|\nabla u|^2 + |u|^{p+1}) dxdt 
	\leq A^{n-\frac{4}{p-1}} \eps_0. 
\end{equation}
Here we  take the constants $A$, $\eps_0$, $\delta$ and $\theta_0$ as 
\begin{equation}\label{eq:Kdelthsmall}
	A>3, \quad 0<\eps_0<1, \quad 
	0<\delta<\frac{R}{16A}<\frac{1}{16}, 
	\quad 0<\theta_0<\frac{1}{32A},
\end{equation}
which will be specified later, 
see \eqref{eq:choiceKedelt}. 
Set 
\[
	I_r=I_r(x_1,t_1):=
	(r/2)^{\frac{4}{p-1}-n}
	\int_{t_1-(r/2)^2}^{t_1-(r/4)^2} \int_{\Omega_{r/2}(x_1)}  
	|u|^{p+1} dxdt.
\]
In order to show Theorem  \ref{th:epsreg}, 
it suffices to check the following statement: 
\begin{equation}\label{eq:x1t1e1}
	I_r(x_1,t_1) \leq \eps_1
	\quad 
	\mbox{ for any }
	Q_r(x_1,t_1)\subset Q_{8A\theta_0 \delta}, 
\end{equation}
where $\eps_1$ is given in Lemma \ref{lem:epspre}. 
Note that 
\[
	0<r<8A\theta_0\delta<\frac{1}{4}\delta<\delta <
	\frac{R}{16}<\frac{1}{16}. 
\]
Indeed, 
once \eqref{eq:x1t1e1} is proved, 
Lemma \ref{lem:epspre} guarantees 
the desired $L^\infty$ bound of $u$: 
$\|u\|_{L^\infty(Q_{A\theta_0\delta})}\leq C(A\theta_0 \delta)^{-2/(p-1)}$. 
Therefore our temporary task is to estimate $I_r$. 

\begin{proposition}\label{pro:Irhtilh}
There exists a constant $C>0$ 
depending only on $n$, $p$, $M$, $\Omega$ and $R$ 
such that 
\[
\begin{aligned}
	I_r(x_1,t_1) 
	&\leq 
	C \overline{h} \left( 
	e^{-A^2/C} 
	+A^{n-\frac{4}{p-1}} \eps_0
	+ \delta  \right) \\
\end{aligned}
\]
for any $Q_r(x_1,t_1)\subset Q_{8A\theta_0 \delta}$, 
where $\overline{h}(s):=s+s^{1/(p+1)}$ for $s\geq0$. 
\end{proposition}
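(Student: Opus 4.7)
The plan is to combine the quasi-monotonicity of $E$ from Proposition \ref{pro:31} with a Pohozaev-type identity obtained by testing the equation against $uK\phi_{\tilde x,R/4}^2$, reducing the estimate on $I_r$ to a bound on the weighted energy at the relevant scales, which at the large scale is controlled by the hypothesis \eqref{eq:asKdel} up to a Gaussian tail. To start, given $(x_1,t_1)$ with $Q_r(x_1,t_1)\subset Q_{8A\theta_0\delta}$, set $\tilde x:=x_1$ and choose $\tilde t$ slightly above $t_1$ with $\tilde t\leq 0$ and $\tilde t-(r/2)^2>-1/2$. On the integration region of $I_r$ we have $\tilde t-t\asymp r^2$, $|x-\tilde x|\leq r/2$, and $r<R/16$, so $K_{(\tilde x,\tilde t)}(x,t)\geq cr^{-n}$ and $\phi_{\tilde x,R/4}(x)=1$. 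This yields
\[
I_r(x_1,t_1)\leq \frac{C}{r^2}\int_{t_1-(r/2)^2}^{t_1-(r/4)^2}\tilde E_0(t)\,dt,\qquad \tilde E_0(t):=(\tilde t-t)^{(p+1)/(p-1)}\int|u|^{p+1}K\phi^2\,dx,
\]
so it remains to bound $\tilde E_0$.

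Next, I would test the equation against $uK\phi^2$ and integrate by parts, using $u|_{\partial\Omega}=0$ and the backward heat equation $\partial_t K+\Delta K=0$, to obtain
\[
\int|u|^{p+1}K\phi^2\,dx=\int|\nabla u|^2K\phi^2\,dx+\frac{1}{2}\frac{d}{dt}\int u^2K\phi^2\,dx+\mathcal R(t),
\]
where $\mathcal R$ is supported in $\{R/8\leq|x-\tilde x|\leq R/2\}$, hence exponentially small by the Gaussian decay of $K$. Combining with the definition of $E$ rewrites $\tilde E_0(t)$ as a linear combination of $E(t)$, the time-derivative term $(\tilde t-t)\partial_t\tilde E_2(t)$ with $\tilde E_2(t):=\int u^2K\phi^2\,dx$, and a small remainder. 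Integrating in $t$ and then by parts on the $\partial_t\tilde E_2$ term, boundary values of $(\tilde t-t)\tilde E_2$ appear; H\"older's inequality in $x$ between $L^{p+1}$ and the measure bounds these by $C\tilde E_0^{2/(p+1)}r^2$, which, after dividing by $r^2$ and absorbing via Young's inequality, produces the sublinear branch $s^{1/(p+1)}$ in the final $\overline h$.

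To control $E(t)$ I would then use Proposition \ref{pro:31}: choosing a reference time $t'$ with $\tilde t-t'\asymp\delta^2$ gives $E(t)\leq E(t')+C\delta$. For $E(t')$, average $t'$ over a short interval so that the pointwise spatial integral of energy density becomes the spacetime integral appearing in \eqref{eq:asKdel}, then split the spatial domain into $|x-\tilde x|\leq A\delta$ and its complement. The complement gives the tail $e^{-A^2/C}$ from the Gaussian decay of $K(\cdot,t')$ paired with the global bound \eqref{eq:Mdef2}; the main region sits in $Q_{A\delta}$, so \eqref{eq:asKdel} supplies the factor $A^{n-4/(p-1)}\eps_0$, once the $|u|^2$ piece in the energy density is controlled via H\"older against $\|u\|_{L^{q_c}}\leq M$.

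The main obstacle will be the careful bookkeeping that combines the three error sources---$e^{-A^2/C}$ from the Gaussian tail, $A^{n-4/(p-1)}\eps_0$ from the hypothesis, and $\delta$ from the quasi-monotonicity---into the single form $\overline h(\cdot)$, together with the H\"older interpolation step producing the sublinear $s^{1/(p+1)}$ branch that must be absorbed back into $I_r$. Extra care is needed in choosing $\tilde t$ when $t_1$ is very close to $0$ (so as to stay inside the interval where $u$ is defined) and in keeping the mean-value-in-$t'$ argument inside the monotonicity's allowed range $-1/2<t'<t$.
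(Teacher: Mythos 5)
Your overall framework---reduce $I_r$ to a weighted $|u|^{p+1}$ integral, relate that to the energy $E$ via the equation, pull $E$ back to the $\delta$-scale by quasi-monotonicity, and then convert $E(-4\delta^2)$ into the spacetime quantity controlled by \eqref{eq:asKdel} plus a Gaussian tail---is the same as the paper's, and your treatment of the last stage (averaging $t'$, inner/outer split in $B_{A\delta}$, Gaussian tail giving $e^{-A^2/C}$, H\"older on the $|u|^2$ part giving $\tilde h$) matches Lemmas~\ref{lem:EJres}--\ref{lem:enprefin}. The gap is in the middle stage, where you replace the paper's Lemma~\ref{lem:32} with a direct Pohozaev identity plus integration by parts, and this does not close.

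Concretely, after testing against $uK\phi^2$ and substituting, one finds in original variables
\[
\tilde E_0(t) = \frac{2(p+1)}{p-1}E(t) + \frac{p+1}{2(p-1)}(\tilde t - t)^{\frac{p+1}{p-1}}\frac{d\tilde E_2}{dt}(t) - \frac{p+1}{(p-1)^2}(\tilde t - t)^{\frac{2}{p-1}}\tilde E_2(t) + \mathcal{R}(t),
\]
with $\tilde E_2 = \int u^2 K\phi^2$. Integrating over $(t_1-r^2/4,\,t_1-r^2/16)$ and integrating the middle term by parts produces (i) boundary values $(\tilde t - t)^{\frac{p+1}{p-1}}\tilde E_2$ at the two endpoints, and (ii) a residual integral $c\int(\tilde t - s)^{\frac{2}{p-1}}\tilde E_2\,ds$ with a \emph{positive} coefficient. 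For (ii), the only a priori bound you have pointwise is $\tilde E_2 \lesssim M^2(\tilde t-s)^{-2/(p-1)}$ from \eqref{eq:Mdef2}, so $\frac{1}{r^2}\int(\tilde t-s)^{2/(p-1)}\tilde E_2\,ds = O(M^2)$, an $O(1)$ quantity that is not small; bounding $\tilde E_2$ by H\"older against $|u|^{p+1}$ instead gives $\tilde E_0^{2/(p+1)}$, but for (i) this is evaluated at a \emph{single fixed time} $t_* = t_1 - r^2/16$, which is not comparable to the spacetime average $I_r$, so Young's inequality cannot absorb it. Your proposed absorption is therefore not available in the form stated.

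What the paper does instead, in Lemma~\ref{lem:32}, is never integrate by parts: it controls the full change $\cK(\tau)-\cK(\tau') = \int_{\tau'}^\tau \frac{d\cK}{d\sigma}\,d\sigma$ (which packages (i) and (ii) together) by Cauchy--Schwarz between $w$ and the self-similar time derivative $w_\sigma$, namely $\iint |w||w_\sigma|\rho\psi^2 \leq (\iint w^2\rho\psi^2)^{1/2}(\iint w_\sigma^2\rho\psi^2)^{1/2}$. The first factor is bounded by $C\log\frac{\tilde t - t'}{\tilde t - t}$ purely from \eqref{eq:Mdef2} (so it is $O(1)$ but harmless), while the second is bounded by $2(E(t')-E(t)+C(\tilde t-t')^{1/2})$ using the \emph{dissipative} term in the quasi-monotonicity \eqref{eq:31conch}; this is the genuinely small factor, and it produces the $(\cdot)^{1/2}$ branch of $h$, which, composed with the $(\cdot)^{2/(p+1)}$ branch of $\tilde h$ from Lemma~\ref{lem:EJres}, yields the $(\cdot)^{1/(p+1)}$ branch of $\overline h$. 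Your sketch never uses $\iint w_\tau^2\rho\psi^2$ at all, which is precisely the ingredient that makes the sublinear estimate close. Finally, once you are in the $(E(t')-E(t)+\dots)^{1/2}$ form, you also need the lower bound $E(t_1-r^2/16) \geq -C\delta$ (Lemma~\ref{lem:lowerpre}, proved by an ODE blow-up contradiction) to keep the difference bounded above; your sketch does not supply this either.
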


We prove this proposition by means of several lemmas.

\begin{lemma}
There exists $C>0$ such that  
\begin{equation}\label{eq:Ires1}
	I_r \leq 
	C \int_{t_1-(r/2)^2}^{ t_1-(r/4)^2} 
	(t_1-s)^\frac{2}{p-1} 
	\int_{\Omega_R}
	|u|^{p+1} K_{(x_1,t_1)}(x,s) \phi^2_{x_1,R/8} dxds 
\end{equation}
for any $Q_r(x_1,t_1)\subset Q_{8A\theta_0 \delta}$. 
\end{lemma}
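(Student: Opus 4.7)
The plan is to prove the inequality by directly comparing integrands: bound the right-hand side from below on the region $\Omega_{r/2}(x_1) \times (t_1 - (r/2)^2, t_1 - (r/4)^2)$, where $|u|^{p+1}$ is being integrated on the left-hand side, and show that the weight $(t_1-s)^{2/(p-1)} K_{(x_1,t_1)}(x,s) \phi^2_{x_1,R/8}(x)$ is bounded below there by a constant multiple of $(r/2)^{4/(p-1)-n}$.

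First I will exploit the time window: for $s \in (t_1 - (r/2)^2, t_1 - (r/4)^2)$, we have $(r/4)^2 \le t_1 - s \le (r/2)^2$, so $(t_1-s)^{2/(p-1)} \ge 2^{-4/(p-1)} (r/2)^{4/(p-1)}$ and $(t_1-s)^{-n/2} \ge (r/2)^{-n}$. Next, for $x \in \Omega_{r/2}(x_1)$ we have $|x-x_1| \le r/2$, so
\[
	\frac{|x-x_1|^2}{4(t_1-s)} \le \frac{(r/2)^2}{4(r/4)^2} = 1,
\]
which gives $K_{(x_1,t_1)}(x,s) \ge e^{-1}(t_1-s)^{-n/2} \ge e^{-1}(r/2)^{-n}$.

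It remains to check that $\phi_{x_1,R/8}(x) = 1$ on $\Omega_{r/2}(x_1)$. Here I will use the smallness constraints on $\theta_0$ and $\delta$ from \eqref{eq:Kdelthsmall}, which yield $r < 8A\theta_0 \delta < R/64$, so $|x-x_1| < r/2 < R/128 \le R/16$, i.e.\ $|x-x_1|/(R/8) \le 1/2$, and hence $\varphi(|x-x_1|/(R/8)) = 1$ by the choice of $\varphi$. (The containment $\Omega_{r/2}(x_1) \subset \Omega_R$ needed to extend the integral, as well as $Q_r(x_1,t_1) \subset Q_{8A\theta_0\delta} \subset Q_R$, also follows from the same smallness.)

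Combining these lower bounds, the integrand on the right-hand side dominates $C^{-1} (r/2)^{4/(p-1)-n} |u|^{p+1}$ on $\Omega_{r/2}(x_1) \times (t_1-(r/2)^2, t_1-(r/4)^2)$, while it is nonnegative elsewhere. Integrating and comparing with the definition of $I_r$ yields the claim. There is no real obstacle in this step; it is essentially a calibration of constants so that the localization $\phi_{x_1,R/8}$ is harmless on the relevant scale and the Gaussian factor in $K_{(x_1,t_1)}$ is bounded below by a universal constant.
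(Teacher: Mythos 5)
Your proof is correct and matches the paper's argument essentially step for step: both derive a pointwise lower bound $K_{(x_1,t_1)}(x,s)\,\phi^2_{x_1,R/8}(x) \ge C r^{-n}$ and $(t_1-s)^{2/(p-1)} \ge C r^{4/(p-1)}$ on $\Omega_{r/2}(x_1)\times(t_1-(r/2)^2, t_1-(r/4)^2)$ using the time window, the resulting bound on the Gaussian exponent, and the fact that $\phi_{x_1,R/8}\equiv 1$ there thanks to the smallness constraints in \eqref{eq:Kdelthsmall}. There is no meaningful difference from the paper's proof.
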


\begin{proof}
For $(x,s)\in 
B_{r/2}(x_1) \times (t_1-(r/2)^2, t_1-(r/4)^2)$, we have 
$|x-x_1|\leq r/2<R/32$ and 
$t_1-(r^2/4)<s<t_1-(r^2/16)$. Thus, 
\[
\begin{aligned}
	K_{(x_1, t_1)}(x,s) \phi^2_{x_1,R/8}(x)
	&= ( t_1-s)^{-\frac{n}{2}} 
	e^{-\frac{|x-x_1|^2}{4( t_1-s)}} 
	\varphi^2 \left( \frac{8|x-x_1|}{R} \right) \\
	&\geq 
	(1/4)^{-\frac{n}{2}} r^{-n} 
	e^{-1} 
	\varphi^2 (1/4)
	\geq 
	C r^{-n}. 
\end{aligned}
\]
By $(t_1-s)^{2/(p-1)} \geq  16^{-2/(p-1)} r^{4/(p-1)}$ 
for $t_1-r^2/4< s<t_1-(r^2/16)$, the lemma follows. 
\end{proof}

To estimate the right-hand side of \eqref{eq:Ires1}, 
we prepare the following lemma 
by using Proposition \ref{pro:31}.

\begin{lemma}\label{lem:32}
There exists $C>0$ such that 
\begin{equation}\label{eq:32}
\begin{aligned}
	&\int_{t'}^t (\tilde t-s)^\frac{2}{p-1} 
	\int_{\Omega_R} |u(x,s)|^{p+1} 
	K_{(\tilde x, \tilde t)}(x,s) \phi_{\tilde x,\frac{R}{4}}^2(x) dxds  \\
	&\leq 
	C \left( \log\frac{\tilde t-t'}{\tilde t-t} \right)^\frac{1}{2}
	\left(  E_{(\tilde x,\tilde t)}(t';\phi_{\tilde x,\frac{R}{4}}) 
	- E_{(\tilde x,\tilde t)}(t;\phi_{\tilde x,\frac{R}{4}}) 
	+ C(\tilde t-t')^\frac{1}{2} \right)^\frac{1}{2} \\
	&\quad 
	+ C_p \left( 
	E_{(\tilde x,\tilde t)}(t';\phi_{\tilde x,\frac{R}{4}}) 
	+ C (\tilde t-t')^\frac{1}{2} \right) 
	\log\frac{\tilde t-t'}{\tilde t-t}  
	+ C (\tilde t-t')^\frac{1}{2}
\end{aligned}
\end{equation}
for any $\tilde x\in \overline{\Omega_{R/4}}$ and 
$-1/2<t'<t<\tilde t\leq 0$, 
where $C_p:=2(p+1)/(p-1)$. 
\end{lemma}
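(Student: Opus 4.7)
The strategy is to derive a Pohozaev-type identity by testing the similarity-variable equation against $w\rho\psi^2$, expressing $\int|w|^{p+1}\rho\psi^2\,d\eta$ as a linear combination of $\cE(\sigma)$ and $\int w w_\sigma\rho\psi^2\,d\eta$ plus negligible errors. Combined with the quasi-monotonicity of Proposition~\ref{pro:31} and one application of Cauchy--Schwarz, this produces the three terms on the right-hand side of \eqref{eq:32}. Concretely, I first pass to the backward similarity variables of Section~\ref{subsec:dcv}: with $\sigma = -\log(\tilde t-s)$, the identity $(\tilde t-s)^{(p+1)/(p-1)}\int_{\Omega_R}|u|^{p+1}K\phi_{\tilde x,R/4}^2\,dx = \int_{\Omega(\sigma)}|w|^{p+1}\rho\psi^2\,d\eta$ rewrites the left-hand side of \eqref{eq:32} as $\int_{\tau'}^{\tau}\int_{\Omega(\sigma)}|w|^{p+1}\rho\psi^2\,d\eta\,d\sigma$ with $\tau' = -\log(\tilde t-t')$ and $\tau = -\log(\tilde t-t)$.

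Next I multiply equation \eqref{eq:weq} by $w\rho\psi^2$ and integrate over $\Omega(\sigma)$. All boundary contributions vanish: $w=0$ on $e^{\sigma/2}(\Phi(\partial\Omega\cap B_R)-\tilde\xi)$ and $\psi=0$ on the remaining part of $\partial\Omega(\sigma)$, as in the proof of \eqref{eq:wpsi0}. Integration by parts and the explicit formulas \eqref{eq:rhocalc} for $\nabla\rho$ and $\Delta\rho$ reproduce the classical Pohozaev cancellation of the quadratic-in-$\eta$ terms; in the curved-boundary case \eqref{eq:R2def} the deviations of $A$ from $\Delta$ and of $\rho$ from the standard Gaussian produce additional corrections controlled by \eqref{eq:gderes}. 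Eliminating $\int|\widehat\nabla w|^2\rho\psi^2$ between the resulting identity and the relation $\int\frac{|w|^{p+1}}{p+1}\rho\psi^2\,d\eta = \int\frac{|\widehat\nabla w|^2}{2}\rho\psi^2\,d\eta + \int\frac{w^2}{2(p-1)}\rho\psi^2\,d\eta - \cE(\sigma)$ obtained directly from \eqref{eq:cEdef} yields
\[
\int_{\Omega(\sigma)}|w|^{p+1}\rho\psi^2\,d\eta
= C_p\,\cE(\sigma) + \frac{p+1}{p-1}\int_{\Omega(\sigma)} w w_\sigma \rho\psi^2\,d\eta + \mathrm{err}(\sigma),
\]
the constant $C_p = 2(p+1)/(p-1)$ being forced by the algebra and $\mathrm{err}(\sigma)$ collecting all lower-order contributions carrying derivatives of $\psi$ or of $g$.

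Integrating this identity in $\sigma\in[\tau',\tau]$ produces the three summands of \eqref{eq:32}. The quasi-monotonicity gives $\cE(\sigma) \leq \cE(\tau') + C(\tilde t-t')^{1/2} = E(t') + C(\tilde t-t')^{1/2}$ for $\sigma\in[\tau',\tau]$, so
\[
C_p\int_{\tau'}^{\tau} \cE(\sigma)\,d\sigma \leq C_p\bigl(E(t') + C(\tilde t-t')^{1/2}\bigr)\log\frac{\tilde t-t'}{\tilde t-t},
\]
which is the middle term. For the cross term I apply Cauchy--Schwarz together with $\int w^2\rho\psi^2\,d\eta \leq CM^2$ (the computation for the $u^2$-integral in the proof of Lemma~\ref{lem:Ebdd}) and the dissipation bound
\[
\int_{\tau'}^{\tau}\int_{\Omega(\sigma)} w_\sigma^2 \rho\psi^2\,d\eta\,d\sigma
\leq 2\bigl(E(t') - E(t) + C(\tilde t-t')^{1/2}\bigr)
\]
supplied by Proposition~\ref{pro:31}; this yields the leading $(\log)^{1/2}\cdot(E(t')-E(t)+C\sqrt{\cdot})^{1/2}$ term. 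For the error, the $\nabla\psi,\Delta\psi$ contributions are supported where $\rho\leq\exp(-cR^2 e^\sigma)$ is super-exponentially small (cf.\ \eqref{eq:rhoR2dec}), and the $g$-dependent corrections carry factors of order $e^{-\sigma/2}$ by \eqref{eq:gderes}; arguing as for \eqref{eq:bwucal} gives $\int_{\tau'}^{\tau}|\mathrm{err}(\sigma)|\,d\sigma \leq C(\tilde t-t')^{1/2}$, the third summand.

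The main obstacle will be the Pohozaev computation itself: carefully tracking all correction terms generated by $\psi^2$ and, in the case \eqref{eq:R2def}, by the curvature of $\partial\Omega$ entering through $A$ and $\rho$, and verifying that the Gaussian cancellation survives up to terms that can be absorbed into $\mathrm{err}(\sigma)$ with the required $O((\tilde t-t')^{1/2})$ time integrability. Structurally this is parallel to, but algebraically distinct from, the derivation of $d\cE/d\tau$ in Lemma~\ref{lem:cederivR2}: the present identity is obtained by testing against $w\rho\psi^2$ rather than against $w_\sigma\rho\psi^2$, so the pairing $\int w w_\sigma\rho\psi^2$ that appears on the right is precisely the quantity controlled by Cauchy--Schwarz using the dissipation in Proposition~\ref{pro:31}.
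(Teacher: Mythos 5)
Your proposal is correct and reproduces the paper's argument essentially verbatim: the paper derives the same identity by differentiating $\cK(\tau)=\int w^2\rho\psi^2$ (which is the same as testing \eqref{eq:weq} against $w\rho\psi^2$), uses the definition \eqref{eq:cEdef} of $\cE$ to extract $\frac{p-1}{p+1}\int|w|^{p+1}\rho\psi^2$, bounds $\cE(\sigma)$ by $E(t')+C(\tilde t-t')^{1/2}$ via Proposition \ref{pro:31}, and controls the $\int ww_\sigma\rho\psi^2$ term with Cauchy--Schwarz against the dissipation $\iint w_\sigma^2\rho\psi^2\leq 2(E(t')-E(t)+C(\tilde t-t')^{1/2})$, with all cutoff- and curvature-errors collected into $\tilde\cR$ and estimated as in \eqref{eq:bwucal}. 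The only superficial difference is that the paper carries the full derivative $d\cK/d\tau$ through the $\sigma$-integration and decomposes $\cK(\tau)-\cK(\tau')$ afterwards, whereas you decompose before integrating; the algebra and the constant $C_p=2(p+1)/(p-1)$ are identical.
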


\begin{proof}
Define $w$, $\cE$, $\rho$ and $\psi$ 
by \eqref{eq:backw}, \eqref{eq:cEdef}, 
\eqref{eq:rhodef} and \eqref{eq:psietadef}, respectively. 
Then the left-hand side of the desired inequality equals 
$\iint |w|^{p+1} \rho \psi^2 $, where 
we write $\int(\cdots)=\int_{\Omega(\tau)}(\cdots) d\eta$ 
and $\iint (\cdots)
=\int_{\tau'}^\tau \int_{\Omega(\sigma)} (\cdots) d\eta d\sigma$
unless otherwise stated. 
From \eqref{eq:weq} and \eqref{eq:cEdef}, it follows that 
\[
\begin{aligned}
	&\frac{1}{2}\frac{d}{d\tau} \int w^2 \rho \psi^2 
	=
	\int w w_\tau \rho \psi^2 
	+\frac{1}{2} \int w^2 \rho_\tau \psi^2 
	+\int w^2 \rho \psi \psi_\tau 
	-2\cE(\tau) + 2\cE(\tau) \\
	&=-2\cE(\tau) +\frac{p-1}{p+1} \int |w|^{p+1} \rho \psi^2 
	-\frac{1}{2} \int w \rho \psi^2 \nabla w \cdot \eta 
	+\int w \rho \psi^2 \Delta w \\
	&\quad 
	-2\int w \rho \psi^2 \nabla'(\partial_n w)\cdot \nabla'g 
	+\int w (\partial_n^2 w) \rho  \psi^2 |\nabla' g|^2 \\
	&\quad 
	-\int w (\partial_n w) \rho \psi^2 \Delta' g 
	+\int |\nabla w|^2 \rho \psi^2 
	-2 \int (\partial_n w) \rho \psi^2 \nabla'w\cdot \nabla'g \\
	&\quad 
	+\int (\partial_n w)^2 \rho \psi^2  |\nabla'g|^2 
	+\frac{1}{2} \int w^2 \rho_\tau \psi^2
	+\int w^2 \rho \psi\psi_\tau. 
\end{aligned}
\]

Integrating by parts gives 
\[
	\frac{1}{2}\frac{d}{d\tau} \int w^2 \rho \psi^2 
	=-2\cE(\tau) +\frac{p-1}{p+1} \int |w|^{p+1} \rho \psi^2 
	+\tilde \cR_1+\tilde \cR_2, 
\]
where 
\[
\begin{aligned}
	&\begin{aligned}
	\tilde \cR_1&:= 
	-\frac{1}{2}\int w \rho \psi^2 \nabla w \cdot \eta
	-\int w \psi^2 \nabla w \cdot \nabla \rho 
	+2\int w (\partial_n w) \psi^2 \nabla'\rho \cdot \nabla' g \\
	&\quad 
	+\int w (\partial_n w) \rho \psi^2 \Delta' g
	-\int w(\partial_n w) (\partial_n \rho) \psi^2  |\nabla'g|^2
	+\frac{1}{2} \int w^2 \rho_\tau \psi^2, 
	\end{aligned} \\
	&\begin{aligned}
	\tilde \cR_2 &:= 
	-2\int w \rho \psi \nabla w \cdot \nabla \psi 
	+4 \int w (\partial_n w) \rho \psi \nabla'\psi \cdot \nabla'g \\
	&\quad 
	-2\int w (\partial_n w) \rho  \psi (\partial_n \psi)  |\nabla' g|^2 
	+\int w^2 \rho \psi \psi_\tau. 
	\end{aligned}
\end{aligned}
\]

Since $w\nabla w=\nabla (w^2)/2$ and $w\partial_n w=\partial_n(w^2)/2$, 
integrating by parts again shows that 
\[
\begin{aligned}
	\tilde \cR_1 &=
	\int w^2 \psi^2 \bigg( \frac{\eta}{4}\cdot \nabla\rho 
	+\frac{n}{4} \rho + \frac{1}{2} \Delta \rho 
	-\partial_n(\nabla' \rho)\cdot \nabla'g \\ 
	&\qquad 
	-\frac{1}{2} (\partial_n \rho) \Delta'g 
	+\frac{1}{2} (\partial_n^2 \rho) |\nabla' g|^2 \bigg) 
	+\frac{1}{2} \int w^2 \rho \psi \nabla \psi \cdot\eta \\
	&\quad 
	+\int w^2 \psi \nabla \psi \cdot \nabla \rho 
	-2 \int w^2 \psi (\partial_n \psi) \nabla' \rho \cdot \nabla'g 
	-\int w^2 \rho \psi (\partial_n \psi)  \Delta'g \\
	&\quad 
	+\int w^2 (\partial_n \rho) \psi(\partial_n \psi ) |\nabla'g|^2 
	+\frac{1}{2} \int w^2 \rho_\tau \psi^2. 
\end{aligned}
\]
From \eqref{eq:rhocalc}, \eqref{eq:gderes}, \eqref{eq:rhoR2dec} 
and direct computations, it follows that 
\[
\begin{aligned}
	& \frac{\eta}{4}\cdot \nabla\rho 
	+\frac{n}{4} \rho + \frac{1}{2} \Delta \rho 
	-\partial_n(\nabla' \rho)\cdot \nabla'g 
	-\frac{1}{2} (\partial_n \rho) \Delta'g 
	+\frac{1}{2} (\partial_n^2 \rho) |\nabla' g|^2 \\
	&=\frac{1}{8} (\eta_n + g(\eta')-g(0))
	(g(\eta')-g(0)-\eta'\cdot \nabla' g) \rho \\
	&\quad 
	+\frac{1}{8} (\eta_n+g(\eta')-g(0))^2 (1-2|\nabla'g|^2) \rho \\
	&\geq 
	- \frac{1}{8} |\eta_n + g(\eta')-g(0)|
	|g(\eta')-g(0)-\eta'\cdot \nabla' g| \rho \\
	&\geq 
	- C |\eta|^3 e^{-\frac{\tau}{2}} e^{-\frac{|\eta|^2}{8}}
	\geq 
	- C e^{-\frac{\tau}{2}} e^{-\frac{|\eta|^2}{16}}, 
\end{aligned}
\]
where $g(\eta'):=g(\eta',\tau)$ and $g(0):=g(0,\tau)$. 
The remainder terms in $\tilde \cR_1$ can be estimated 
by using \eqref{eq:rhocalc}, \eqref{eq:gderes}, \eqref{eq:rhoR2dec} 
and \eqref{eq:rhotauee}. 
Then by $\tau> -\log(\tilde t+1/2)> 0$, we obtain 
\[
\begin{aligned}
	\tilde \cR_1 & \geq 
	-C \int w^2 \psi^2 
	e^{-\frac{\tau}{2}} e^{-\frac{|\eta|^2}{16}}
	-C \int w^2 |\nabla \psi| e^{-\frac{|\eta|^2}{16}} (1+e^{-\frac{\tau}{2}}) \\
	&\geq 
	-C \int w^2 ( |\nabla \psi| + \psi^2 e^{-\frac{\tau}{2}}) 
	e^{-\frac{|\eta|^2}{16}}. 
\end{aligned}
\]
On the other hand, by Cauchy's inequality, \eqref{eq:gderes} 
and \eqref{eq:rhoR2dec}, we see that 
\[
	\tilde \cR_2  \geq 
	-C \int (w^2+|\nabla w|^2) (|\nabla \psi|+|\psi_\tau|) 
	e^{-\frac{|\eta|^2}{8}}, 
\]
and so by \eqref{eq:nabtaupsi} with \eqref{eq:naps}, 
we obtain 
\[
	\tilde \cR_1 + \tilde \cR_2
	\geq 
	-C \int (w^2+|\nabla w|^2) 
	( |\nabla \psi| + |\psi_\tau|+ \psi^2 e^{-\frac{\tau}{2}}) 
	e^{-\frac{|\eta|^2}{16}} 
	\geq 
	-C \tilde \cR, 
\]
where $\tilde \cR=\tilde \cR(\tau)$ 
is given in Lemma \ref{lem:ceRmid}. 

By $\cE(\tau)=E(t)$ and  Proposition \ref{pro:31}, 
we obtain 
\[
\begin{aligned}
	\frac{1}{2}\frac{d}{d\tau} \int w^2 \rho \psi^2 
	&\geq -2\cE(\tau) +\frac{p-1}{p+1} \int |w|^{p+1} \rho \psi^2 
	-C\tilde \cR \\
	&\geq 
	-2(E(t')+C (\tilde t-t')^\frac{1}{2})
	+\frac{p-1}{p+1} \int |w|^{p+1} \rho \psi^2 
	-C\tilde \cR. 
\end{aligned}
\]
Set $C_p:=2(p+1)/(p-1)$. Then, 
\[
\begin{aligned}
	\int |w|^{p+1} \rho \psi^2 
	\leq 
	\frac{C_p}{4}\frac{d}{d\tau} \int w^2 \rho \psi^2 
	+ C_p (E(t')+C(\tilde t-t')^\frac{1}{2})
	+C\tilde \cR. 
\end{aligned}
\]
Integrating this inequality over $\sigma\in (\tau',\tau)$ 
with $\tau'=-\log(\tilde t-t')$ and $\tau=-\log(\tilde t-t)$,
we have  
\begin{equation}\label{eq:wpkappa}
\begin{aligned}
	\iint |w|^{p+1} \rho \psi^2   
	&\leq 
	\frac{C_p}{4} (\cK(\tau)-\cK(\tau')) \\
	&\quad 
	+ C_p (E(t')+C (\tilde t-t')^\frac{1}{2}) 
	\log\frac{\tilde t-t'}{\tilde t-t} 
	+C\int_{\tau'}^\tau \tilde \cR d\sigma, 
\end{aligned}
\end{equation}
where 
\[
	\cK(\tau):=\int w^2(\eta,\tau) \rho(\eta,\tau) \psi^2(\eta,\tau) d\eta. 
\]

We estimate $|\cK(\tau)-\cK(\tau')|$. 
By \eqref{eq:rhoR2dec} and \eqref{eq:rhotauee}, we have 
\[
\begin{aligned}
	|\cK(\tau)-\cK(\tau')| 
	&=\left| \int_{\tau'}^\tau \frac{d \cK}{d\sigma} d\sigma \right| 
	=\left| \iint 
	( 2ww_\sigma \rho \psi^2  
	+ w^2 \rho_\sigma \psi^2
	+ 2w^2 \rho \psi \psi_\sigma)  \right|  \\
	&\leq 
	2 \iint |w||w_\sigma| \rho \psi^2  
	+C\int_{\tau'}^\tau \tilde \cR(\sigma)d\sigma. 
\end{aligned}
\]
The H\"older inequality and \eqref{eq:31conch} with $\cE(\tau)=E(t)$ yield 
\[
\begin{aligned}
	\iint |w||w_\sigma| \rho \psi^2 
	&\leq
	\left( \iint w^2 \rho \psi^2  \right)^\frac{1}{2} 
	\left( \iint w_\sigma^2 \rho \psi^2  \right)^\frac{1}{2} \\
	&\leq 
	\sqrt{2} \left( \iint w^2 \rho \psi^2 \right)^\frac{1}{2} 
	( E(t') - E(t) + C (\tilde t-t')^\frac{1}{2} )^\frac{1}{2}. 
\end{aligned}
\]
Computations similar to \eqref{eq:bwucal} give  
\[
\begin{aligned}
	\iint w^2 \rho \psi^2  
	&\leq 
	\int_{t'}^t (\tilde t-s)^{\frac{2}{p-1}-1} 
	\int_{\Omega_R}  |u|^2 
	(\tilde t-s)^{-\frac{n}{2}}
	e^{-\frac{|x-\tilde x|^2}{8(\tilde t-s)}} \phi_{\tilde x,R/4}^2 
	dxds \\
	&\leq 
	C \int_{t'}^t (\tilde t-s)^{-1} ds
	\leq 
	C \log\frac{\tilde t-t'}{\tilde t-t}. 
\end{aligned} 
\]
Thus, 
\[
	|\cK(\tau)-\cK(\tau')| \leq 
	C \left( \log\frac{\tilde t-t'}{\tilde t-t} \right)^\frac{1}{2}
	(  E(t') - E(t) + C (\tilde t-t')^\frac{1}{2} )^\frac{1}{2}
	+ C\int_{\tau'}^\tau \tilde \cR(\sigma)d\sigma. 
\]

The above estimates show that 
\[
\begin{aligned}
	\iint |w|^{p+1} \rho \psi^2   
	&\leq 
	C \left( \log\frac{\tilde t-t'}{\tilde t-t} \right)^\frac{1}{2}
	(  E(t') - E(t) + C (\tilde t-t')^\frac{1}{2} )^\frac{1}{2} \\
	&\quad 
	+ C_p (E(t')+C (\tilde t-t')^\frac{1}{2}) 
	\log\frac{\tilde t-t'}{\tilde t-t} 
	+C\int_{\tau'}^\tau \tilde \cR (\sigma)d\sigma, 
\end{aligned}
\]
From \eqref{eq:bwucal}, 
it follows that 
$\iint |w|^{p+1} \rho \psi^2 d\eta d\sigma$
is bounded by the right-hand side of the desired inequality. 
Then the lemma follows. 
\end{proof}

We estimate the right-hand side of \eqref{eq:Ires1}. 
Assume $Q_r(x_1,t_1)\subset Q_{8A\theta_0 \delta}$. 
Then by $x_1\in \overline{\Omega_{R/4}}$ and 
$-1/2<t_1-(r^2/4)<t_1-(r^2/16)<t_1\leq 0$, 
we see that Lemma \ref{lem:32} 
(with $\phi_{x_1,R/4}$ replaced by $\phi_{x_1,R/8}$) 
gives 
\begin{equation}\label{eq:Irlemg}
\begin{aligned}
	I_r
	&\leq 
	C 
	\left(  E_{(x_1,t_1)} \left( t_1-\frac{r^2}{4} \right) 
	- E_{(x_1,t_1)} \left( t_1-\frac{r^2}{16} \right)  
	+ C r \right)^\frac{1}{2} \\
	&\quad 
	+ C_p \left( 
	E_{(x_1,t_1)}\left(t_1-\frac{r^2}{4};\phi_{x_1,R/8} \right) 
	+ C r \right) \log 4 
	+ C r 
\end{aligned}
\end{equation}
for $Q_r(x_1,t_1)\subset Q_{8A\theta_0 \delta}$. 
We derive an upper bound of $E_{(x_1,t_1)}$
and a lower bound of $E_{(x_1,t_1)}$.
 
\begin{lemma}\label{lem:upperpre}
There exists $C>0$ independent of $t_1$ such that 
\[
	E_{(x_1,t_1)}\left( 
	t_1-\frac{r^2}{4};\phi_{x_1,R/8} \right) 
	\leq 
	E_{(x_1,t_1)}\left( -4\delta^2;\phi_{x_1,R/8} \right) 
	+ C \delta 
\]
for $Q_r(x_1,t_1)\subset Q_{8A\theta_0 \delta}$. 
\end{lemma}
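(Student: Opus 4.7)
The plan is to reduce the claim to a direct application of the quasi-monotonicity estimate of Proposition \ref{pro:31}, with the identifications $\tilde x = x_1$, $\tilde t = t_1$, $t = t_1 - r^2/4$ and $t' = -4\delta^2$. Since the spacetime integral on the left-hand side of \eqref{eq:419} is nonnegative, it can simply be discarded, leaving a bare comparison of the two values of $E$.

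Before invoking the proposition I would verify its two parameter hypotheses: $x_1 \in \overline{\Omega_{R/4}}$, and $-1/2 < t' < t < \tilde t \leq 0$. The spatial inclusion is immediate from $Q_r(x_1,t_1) \subset Q_{8A\theta_0\delta}$ combined with the smallness conventions $\theta_0 < 1/(32A)$ and $\delta < R/(16A)$ in \eqref{eq:Kdelthsmall}, which place $x_1$ inside $\overline{\Omega_{R/4}}$. For the time ordering, I would use the crude bounds $-(8A\theta_0\delta)^2 < t_1 \leq 0$ and $r^2/4 < 16 A^2 \theta_0^2 \delta^2$ to obtain $t_1 - r^2/4 \geq -80 A^2 \theta_0^2 \delta^2$; the constraint $\theta_0 < 1/(32A)$ is precisely what ensures this quantity exceeds $-4\delta^2$, and $\delta < 1/16$ gives $-4\delta^2 > -1/2$.

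A small cosmetic point to address is that Proposition \ref{pro:31} is written with the cutoff $\phi_{\tilde x,R/4}$, whereas the present statement uses $\phi_{x_1,R/8}$. This is handled by applying the proposition with $R$ replaced by $R/2$: both hypotheses \eqref{eq:R1def} and \eqref{eq:R2def} persist at the scale $R/2 < R < 1/2$, and the resulting constant still depends only on $n,p,\Omega,R$. With this substitution, Proposition \ref{pro:31} yields
\[
E_{(x_1,t_1)}\!\left(t_1 - \tfrac{r^2}{4};\phi_{x_1,R/8}\right) \leq E_{(x_1,t_1)}\!\left(-4\delta^2;\phi_{x_1,R/8}\right) + C(M+M^p)^2 (t_1 + 4\delta^2)^{1/2}.
\]

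Finally, since $t_1 \leq 0$ we have $(t_1 + 4\delta^2)^{1/2} \leq 2\delta$, which turns the error into $C\delta$ and completes the argument. There is no real analytic obstacle here: the entire content of the lemma is the quasi-monotonicity of $E$, and the only mild book-keeping is checking the chain of inequalities among $\delta$, $\theta_0$, $A$, and $R$, which was precisely the purpose of fixing them as in \eqref{eq:Kdelthsmall}.
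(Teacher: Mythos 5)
Your proof is correct and follows essentially the same route as the paper: invoke Proposition~\ref{pro:31} with $\tilde x=x_1$, $\tilde t=t_1$, $t'=-4\delta^2$, $t=t_1-r^2/4$ and the smaller cutoff, discard the nonnegative integral term, and bound $(t_1+4\delta^2)^{1/2}\leq 2\delta$. The only difference is that you spell out the bookkeeping (the time ordering via $\theta_0<1/(32A)$ and the cutoff mismatch via rescaling $R\to R/2$) which the paper leaves implicit; both checks are sound.
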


\begin{proof}
By $x_1\in \overline{\Omega_{R/4}}$, 
$-1/2<-4\delta^2<t_1-r^2/4<t_1\leq 0$ and 
Proposition \ref{pro:31} 
(with $\phi_{x_1,R/4}$ replaced by $\phi_{x_1,R/8}$), 
we see that 
\[
	E_{(x_1,t_1)}\left( 
	t_1-\frac{r^2}{4};\phi_{x_1,R/8} \right) 
	\leq 
	E_{(x_1,t_1)}( -4\delta^2;\phi_{x_1,R/8} ) 
	+ C (t_1 +4\delta^2)^\frac{1}{2}. 
\]
This implies the desired inequality. 
\end{proof}

\begin{lemma}\label{lem:lowerpre}
There exists $C>0$ independent of $t_1$ such that 
\[
	E_{(x_1,t_1)}\left(t_1-\frac{r^2}{16};
	\phi_{x_1,R/8} \right)
	\geq 
	-C \delta 
\]
for $Q_r(x_1,t_1)\subset Q_{8A\theta_0 \delta}$. 
\end{lemma}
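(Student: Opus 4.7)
The plan is to combine the quasi-monotonicity of Proposition~\ref{pro:31} with the limit $E_{(x_1,t_1)}(t;\phi_{x_1,R/8})\to 0$ as $t\to t_1^-$, which follows from the $C^{2,1}$ regularity of $u$ on $\overline{\Omega_R}\times(-1,0)$ and the Gaussian concentration of $K_{(x_1,t_1)}$.

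First I will verify the zero limit for $t_1<0$. Since $u\in C^{2,1}(\overline{\Omega_R}\times(-1,0))$, the integrands $|\nabla u|^2$, $|u|^{p+1}$, $u^2$ are continuous on a neighborhood of $(x_1,t_1)$, and $u(x_1,t_1)=0$ in the boundary case $x_1\in\partial\Omega$. Because $K_{(x_1,t_1)}(\cdot,t)$ has constant total mass $(4\pi)^{n/2}$ and concentrates at $x_1$ as $t\to t_1^-$, each of the three weighted integrals $\int f\,K_{(x_1,t_1)}\phi_{x_1,R/8}^2\,dx$ tends to a finite limit; multiplying by the prefactor $(t_1-t)^{(p+1)/(p-1)}$ for the gradient and nonlinear terms and by $(t_1-t)^{2/(p-1)}$ for the $u^2$ term then forces $E_{(x_1,t_1)}(t;\phi_{x_1,R/8})\to 0$.

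Next I will apply Proposition~\ref{pro:31} with $\tilde x=x_1$, $\tilde t=t_1$, $t'=t_1-r^2/16$, and $t\in(t_1-r^2/16,\,t_1)$; the admissibility $-1/2<t'<t<\tilde t\leq 0$ and $\tilde x\in\overline{\Omega_{R/4}}$ are ensured by \eqref{eq:Kdelthsmall}. Dropping the nonnegative space-time integral on the left-hand side of \eqref{eq:419} and rearranging,
\[
E_{(x_1,t_1)}(t_1-r^2/16;\phi_{x_1,R/8})\geq E_{(x_1,t_1)}(t;\phi_{x_1,R/8})-C(M+M^p)^2\,\frac{r}{4}.
\]
Letting $t\to t_1^-$ and invoking the first step yields
\[
E_{(x_1,t_1)}(t_1-r^2/16;\phi_{x_1,R/8})\geq -C(M+M^p)^2\,\frac{r}{4}\geq -C\delta,
\]
where the last inequality uses $r\leq 8A\theta_0\delta$ together with $8A\theta_0<1/4$ from \eqref{eq:Kdelthsmall}. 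The endpoint case $t_1=0$ is handled by continuity in $t_1$: for fixed $r>0$ the evaluation time $-r^2/16$ is strictly negative and inside the smooth region, so $E_{(x_1,t_1)}(t_1-r^2/16;\phi_{x_1,R/8})$ depends continuously on $t_1$ and the bound passes to the limit $t_1\to 0^-$.

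The step requiring the most care is the verification of $E(t)\to 0$, which must be checked uniformly across interior and boundary base points $x_1\in\overline{\Omega_{R/4}}$; this is ultimately elementary once the $C^{2,1}$ regularity and Dirichlet condition are combined with Gaussian concentration. Notably, the $\eps_0$-smallness assumption \eqref{eq:asKdel} plays no role in this lemma: the factor $\delta$ arises solely from the quasi-monotonicity error $C(M+M^p)^2(\tilde t-t')^{1/2}$ evaluated at the small scale $r\leq 8A\theta_0\delta$.
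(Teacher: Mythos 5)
Your proof is correct, but it takes a genuinely different route from the paper's. The paper establishes the stronger claim that $E_{(\tilde x,\tilde t)}(t')+C'(\tilde t-t')^{1/2}\geq 0$ for all admissible $\tilde x\in\overline{\Omega_{R/4}}$ and $-1/2<t'<\tilde t\leq 0$, and does so by contradiction: if the energy were too negative, the intermediate inequality \eqref{eq:taukapp} (a byproduct of the proof of Lemma~\ref{lem:32}) would make the negative $\log$-term dominate $\mathcal{K}(\tau)$ for large $\tau$, and a H\"older estimate $\mathcal{K}(\sigma)^{(p+1)/2}\leq C\int|w|^{p+1}\rho\psi^2$ then turns \eqref{eq:taukapp} into a nonlinear Gronwall inequality of the form $F'(\tau)\gtrsim (F(\tau)+c)^{(p+1)/2}$, which must blow up at finite $\tau$ — contradicting that $\tau$ ranges over all of $(-\log(\tilde t_0-t_*),\infty)$. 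Your argument instead uses the pointwise $C^{2,1}$ regularity of $u$ on $\overline{\Omega_R}\times(-1,0)$ to show that the scaling factors $(t_1-t)^{(p+1)/(p-1)}$ and $(t_1-t)^{2/(p-1)}$ force $E_{(x_1,t_1)}(t)\to 0$ as $t\to t_1^-$ when $t_1<0$, then transports this zero limit backward to $t_1-r^2/16$ via Proposition~\ref{pro:31} with error $C(M+M^p)^2(r^2/16)^{1/2}\leq C\delta$, and finally handles $t_1=0$ (where $u$ is not regular at $t_1$) by continuity in the base time, since the evaluation time $t_1-r^2/16$ stays strictly inside the smooth region. Both arguments are valid; yours is more elementary for classical solutions, while the paper's relies only on the integral estimate and therefore transfers unchanged to the weak blow-up limit $\overline{u}$ of Section~\ref{sec:Blim}, which is the content of Remark~\ref{rem:wepreg} and the setting where the $\eps$-regularity theorem (and hence this lemma) is ultimately applied. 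If you want your argument to cover that weak case as well, you would need a substitute for the pointwise regularity at interior times — for instance, smoothness of $\overline{u}$ away from the finite singular set of Lemma~\ref{lem:parreg} — which introduces a circularity concern, since Lemma~\ref{lem:parreg} itself rests on Theorem~\ref{th:epsreg}.
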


\begin{proof}
By \eqref{eq:wpkappa} and \eqref{eq:bwucal}, 
there exists a constant $C'>0$ such that 
\begin{equation}\label{eq:taukapp}
\begin{aligned}
	&\int_{\tau'}^\tau \int_{\Omega(\sigma)} 
	|w|^{p+1} \rho \psi^2  d\eta d\sigma \\
	&\leq 
	\frac{C_p}{4} \cK(\tau) 
	+ C_p (E_{(\tilde x,\tilde t)}(t')+C' (\tilde t-t')^\frac{1}{2})  
	\log\frac{\tilde t-t'}{\tilde t-t} 
	+C' (\tilde t-t')^\frac{1}{2}
\end{aligned}
\end{equation}
for any $\tilde x\in \overline{\Omega_{R/4}}$, 
$\tau'=-\log(\tilde t-t')$ and $\tau=-\log(\tilde t-t)$ with 
$-1/2<t'<t<\tilde t\leq 0$. 
We claim that 
\[
	E_{(\tilde x,\tilde t)}(t')+C' (\tilde t-t')^\frac{1}{2} \geq 0
\]
for any $\tilde x\in \overline{\Omega_{R/4}}$ and $-1/2<t'<\tilde t\leq 0$. 
To obtain a contradiction, we suppose that 
there exist 
$\tilde x_0\in \overline{\Omega_{R/4}}$ and $-1/2<t_0'<\tilde t_0\leq 0$ such that 
$E_{(\tilde x_0,\tilde t_0)}(t_0')+C' (\tilde t_0-t_0')^{1/2}< 0$. 
Then by \eqref{eq:taukapp}, we have 
\[
\begin{aligned}
	&\int_{\tau_0'}^\tau \int_{\Omega(\sigma)} 
	|w|^{p+1} \rho \psi^2  d\eta d\sigma \\
	&\leq 
	\frac{C_p}{4} \cK(\tau) 
	+ C_p (E_{(\tilde x_0,\tilde t_0)}(t_0')+C' (\tilde t_0-t_0')^\frac{1}{2})  
	\log\frac{\tilde t_0-t_0'}{\tilde t_0-t} 
	+C' (\tilde t_0-t_0')^\frac{1}{2}
\end{aligned}
\]
for any $\tau=-\log(\tilde t_0-t)$ with $t_0'<t<\tilde t_0$, 
where $\tau_0':=-\log(\tilde t_0-t_0')$. 
Therefore, there exists $t_0'<t_*<\tilde t_0$ such that 
\[
	\int_{\tau_0'}^\tau \int_{\Omega(\sigma)} 
	|w|^{p+1} \rho \psi^2  d\eta d\sigma 
	\leq \frac{C_p}{4} \int_{\Omega(\tau)}  w^2 \rho \psi^2 d\eta - 1
\]
for any $\tau=-\log(\tilde t_0-t)$ with $t_*<t<\tilde t_0$. 
From the H\"older inequality and \eqref{eq:rhoR2dec}, 
it follows that 
\[
	\int_{\tau_0'}^\tau 
	\left( \int_{\Omega(\sigma)} w^2 \rho \psi^2 d\eta \right)^\frac{p+1}{2} d\sigma
	\leq 
	C \int_{\Omega(\tau)}  w^2 \rho \psi^2 d\eta - C^{-1}
\]
for any $-\log(\tilde t_0-t_*)<\tau<\infty$. 
This integral inequality contradicts the fact that $\tau$ varies from 
$-\log(\tilde t_0-t_*)$ to $\infty$. 
Hence the claim holds. 
Since $x_1\in \overline{\Omega_{R/4}}$ and $-1/2<t_1-(r^2/16)<t_1\leq 0$, 
the claim shows the desired inequality. 
\end{proof}

By using the above lemmas, we can estimate $I_r$ by using 
$E_{(x_1,t_1)}( -4\delta^2)$.

\begin{lemma}\label{lem:IrEH}
There exists $C>0$ such that 
\[
	I_r \leq 
	C h\left(
	E_{(x_1,t_1)}( -4\delta^2;\phi_{x_1,R/8} )
	+ C\delta \right)
\]
for any $Q_r(x_1,t_1)\subset Q_{8A\theta_0 \delta}$, 
where $h(s):=s+s^{1/2}$ for $s\geq0$. 
\end{lemma}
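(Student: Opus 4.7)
\textbf{Proof proposal for Lemma \ref{lem:IrEH}.} The plan is to chain the bound \eqref{eq:Irlemg} together with the one-sided estimates in Lemmas \ref{lem:upperpre} and \ref{lem:lowerpre}, and then absorb the leftover $Cr$ terms into $C\delta$ using the size constraint $r < 8A\theta_0\delta < \delta$ imposed by \eqref{eq:Kdelthsmall}. The remaining task is to re-package the resulting mixture of $s$ and $s^{1/2}$ factors into the desired $h(s)=s+s^{1/2}$ form, for $s$ a suitable nonnegative quantity comparable to $E_{(x_1,t_1)}(-4\delta^2;\phi_{x_1,R/8}) + C\delta$.

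First, I would apply Lemma \ref{lem:upperpre} to the quantity $E_{(x_1,t_1)}(t_1-r^2/4;\phi_{x_1,R/8})$ appearing (as part of a difference) in the square-root term and (on its own) in the linear term of \eqref{eq:Irlemg}, replacing it throughout by $E_{(x_1,t_1)}(-4\delta^2;\phi_{x_1,R/8}) + C\delta$. Next, by Lemma \ref{lem:lowerpre}, the term $-E_{(x_1,t_1)}(t_1-r^2/16;\phi_{x_1,R/8})$ inside the square root is bounded by $C\delta$. Using $Cr \leq C\delta$, all of \eqref{eq:Irlemg} is then dominated by
\[
I_r \;\leq\; C\bigl( \tilde E + C\delta \bigr)^{1/2} + C\bigl( \tilde E + C\delta \bigr) + C\delta,
\]
where I set $\tilde E := E_{(x_1,t_1)}(-4\delta^2;\phi_{x_1,R/8})$.

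To justify taking square roots, I would then note that $\tilde E + C\delta$ is nonnegative provided the constant $C$ is chosen sufficiently large. Indeed, repeating the argument in the proof of Lemma \ref{lem:lowerpre} with $\tilde t = t_1$ and $t' = -4\delta^2$, and using $(t_1+4\delta^2)^{1/2} \leq 2\delta$ (since $t_1 \leq 0$ and $Q_r(x_1,t_1)\subset Q_{8A\theta_0\delta}$ forces $|t_1|\ll \delta^2$), one obtains $\tilde E \geq -C'\delta$ for some $C'$ independent of $(x_1,t_1)$ and $r$. Enlarging the additive constant further if needed, I may assume $s := \tilde E + C\delta \geq \delta$, which ensures $s^{1/2}\geq \delta^{1/2}$ and $s \geq \delta$.

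Finally, using the subadditivity $(a+b)^{1/2} \leq a^{1/2}+b^{1/2}$ together with $s\geq \delta$, one gets $(s+C\delta)^{1/2} \leq s^{1/2} + (C\delta)^{1/2} \leq C\,s^{1/2}$ and $s+C\delta \leq Cs$, so that
\[
I_r \;\leq\; C s^{1/2} + C s \;=\; C\,h(s),
\]
which is exactly the claimed bound. This step is essentially bookkeeping; no new analytic input is needed beyond the three preparatory results. The mildly delicate point is the choice of the constant $C$ in $s$: it must simultaneously absorb the error from Lemma \ref{lem:upperpre}, the lower-bound constant from Lemma \ref{lem:lowerpre}, and the constants in $Cr \leq C\delta$, while keeping $s$ nonnegative (in fact, $\geq \delta$). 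I do not anticipate a genuine obstacle here—this is a short consolidation argument.
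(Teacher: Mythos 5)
Your proposal is correct and follows the same route as the paper's own proof: chain \eqref{eq:Irlemg} with Lemmas \ref{lem:upperpre} and \ref{lem:lowerpre}, then absorb $Cr$ via $r<\delta$, and you add a welcome explicit verification (via the claim in the proof of Lemma \ref{lem:lowerpre} applied at $t'=-4\delta^2$) that the argument of $h$ can be made nonnegative, a point the paper passes over silently. One small slip in the closing paragraph: you invoke $(s+C\delta)^{1/2}$, which does not actually occur in your intermediate bound $Cs^{1/2}+Cs+C\delta$; the direct absorption $C\delta\leq Cs$ (valid since you arranged $s\geq\delta$) already finishes the argument.
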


\begin{proof}
By combining \eqref{eq:Irlemg} and 
Lemmas \ref{lem:upperpre} and \ref{lem:lowerpre}, 
we see that 
\[
\begin{aligned}
	I_r&\leq 
	C \left(  E_{(x_1,t_1)}( -4\delta^2 ) 
	+ C  \delta + C r \right)^\frac{1}{2} \\
	&\quad 
	+ C_p \left( E_{(x_1,t_1)}( -4\delta^2 ) 
	+ C \delta + C r \right) \log 4
	+ C r  \\
	&\leq 
	Ch \left(  E_{(x_1,t_1)}( -4\delta^2 ) 
	+ 
	C (\delta + r ) 
	\right)
\end{aligned}
\]
for $Q_r(x_1,t_1)\subset Q_{8A\theta_0 \delta}$, 
where $h(s):=s+s^{1/2}$ for $s\geq0$. 
By using $r<\delta$, we obtain 
the desired inequality 
for any $Q_r(x_1,t_1)\subset Q_{8A\theta_0 \delta}$. 
\end{proof}

We have estimated the right-hand side of \eqref{eq:Ires1}, 
and then obtained Lemma \ref{lem:IrEH}. 
To prove Proposition \ref{pro:Irhtilh}, 
we estimate $E_{(x_1,t_1)}( -4\delta^2)$ 
by using a functional defined by  
\[
\begin{aligned}
	J_0(t)
	&=J_0(t; x_1,t_1,R) \\
	&:=
	\int_{\Omega_R}  (|\nabla u(x, t)|^2 + |u(x, t)|^{p+1}) 
	K_{(x_1,t_1)}(x, t) 
	\phi^2_{x_1,R/8}(x) dx. 
\end{aligned} 
\]

\begin{lemma}\label{lem:EJres}
There exists $C>0$ such that 
\[
	E_{(x_1,t_1)} ( -4\delta^2;\phi_{x_1,R/8} ) 
	\leq 
	C \tilde h( \delta^{\frac{4}{p-1}+2} J_0(t) ) 
	+ C \delta
\]
for any $-9\delta^2\leq t\leq -4\delta^2$ 
and $Q_r(x_1,t_1)\subset Q_{8A\theta_0 \delta}$, 
where $\tilde h(s):=s+s^{2/(p+1)}$ for $s\geq0$.
\end{lemma}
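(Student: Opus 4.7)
The plan is to combine the quasi-monotonicity of $E$ established in Proposition \ref{pro:31} with a direct expansion of $E$ in terms of $J_0$. First I will use Proposition \ref{pro:31} to push the evaluation time from $-4\delta^2$ backward to any earlier time $t\in[-9\delta^2,-4\delta^2]$, at the cost of an additive error $O(\delta)$; then I will expand the integrand of $E(t;\phi_{x_1,R/8})$ and estimate its three pieces against $J_0(t)$, using H\"older's inequality for the $u^2$ contribution.

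\textbf{Step 1 (quasi-monotonicity).} Since $(x_1,t_1)\in Q_{8A\theta_0\delta}$ with $\theta_0<1/(32A)$ and $\delta<R/(16A)$, the smallness conditions \eqref{eq:Kdelthsmall} ensure $x_1\in\overline{\Omega_{R/4}}$ and $-\delta^2/16<t_1\leq 0$. For $t\in[-9\delta^2,-4\delta^2]$, applying Proposition \ref{pro:31} with $\phi_{x_1,R/4}$ replaced by $\phi_{x_1,R/8}$ (as done in Lemmas \ref{lem:upperpre} and \ref{lem:lowerpre}) yields
\[
E_{(x_1,t_1)}(-4\delta^2;\phi_{x_1,R/8})\leq E_{(x_1,t_1)}(t;\phi_{x_1,R/8})+C(t_1-t)^{1/2}.
\]
Since $0<t_1-t\leq 9\delta^2$, the error term is at most $C\delta$, which matches the final $C\delta$ in the claim.

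\textbf{Step 2 (control of $E(t)$ by $J_0(t)$).} Expanding \eqref{eq:Edeft} and discarding the nonpositive $-|u|^{p+1}/(p+1)$ contribution gives
\[
E_{(x_1,t_1)}(t;\phi_{x_1,R/8})\leq \frac{(t_1-t)^{\frac{p+1}{p-1}}}{2}\int_{\Omega_R}|\nabla u|^2 K_{(x_1,t_1)}\phi_{x_1,R/8}^2\,dx+\frac{(t_1-t)^{\frac{2}{p-1}}}{2(p-1)}\int_{\Omega_R} u^2 K_{(x_1,t_1)}\phi_{x_1,R/8}^2\,dx.
\]
The first integral is trivially bounded by $J_0(t)$, and using $(t_1-t)\leq 9\delta^2$ together with the identity $\tfrac{2(p+1)}{p-1}=\tfrac{4}{p-1}+2$, this contribution is at most $C\delta^{\frac{4}{p-1}+2}J_0(t)$. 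For the $u^2$ integral, I apply H\"older's inequality with conjugate exponents $\tfrac{p+1}{2}$ and $\tfrac{p+1}{p-1}$ to the decomposition $u^2 K\phi^2=(|u|^{p+1}K\phi^2)^{2/(p+1)}(K\phi^2)^{(p-1)/(p+1)}$:
\[
\int u^2 K\phi_{x_1,R/8}^2\,dx\leq\left(\int|u|^{p+1}K\phi_{x_1,R/8}^2\,dx\right)^{2/(p+1)}\left(\int K\phi_{x_1,R/8}^2\,dx\right)^{(p-1)/(p+1)}\leq CJ_0(t)^{2/(p+1)},
\]
since $\int_{\R^n}K_{(x_1,t_1)}(x,t)\,dx=(4\pi)^{n/2}$ is a dimensional constant independent of $t_1-t$. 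Multiplying by $(t_1-t)^{2/(p-1)}\leq C\delta^{4/(p-1)}$ and using $\delta^{4/(p-1)}=(\delta^{4/(p-1)+2})^{2/(p+1)}$, this contribution is at most $C\bigl(\delta^{4/(p-1)+2}J_0(t)\bigr)^{2/(p+1)}$.

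Combining Steps 1 and 2 yields $E_{(x_1,t_1)}(-4\delta^2;\phi_{x_1,R/8})\leq C\delta^{4/(p-1)+2}J_0(t)+C\bigl(\delta^{4/(p-1)+2}J_0(t)\bigr)^{2/(p+1)}+C\delta=C\tilde h\bigl(\delta^{4/(p-1)+2}J_0(t)\bigr)+C\delta$, as claimed. The argument is essentially bookkeeping of exponents; the only mild subtlety is choosing the H\"older conjugate pair exactly as above so that the resulting power of $\delta$ matches the $s^{2/(p+1)}$ branch of $\tilde h$. Since the heavy analytic work (quasi-monotonicity of $E$) has already been carried out in Proposition \ref{pro:31}, I do not anticipate any substantive obstacle.
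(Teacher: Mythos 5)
Your proof is correct and follows essentially the same route as the paper's: first pushing the time back from $-4\delta^2$ to $t\in[-9\delta^2,-4\delta^2]$ via Proposition~\ref{pro:31} at cost $C\delta$, then discarding the $-|u|^{p+1}$ term, bounding the gradient integral directly by $J_0(t)$, and applying H\"older with exponents $\tfrac{p+1}{2}$, $\tfrac{p+1}{p-1}$ to the $u^2$ integral together with the identity $\delta^{4/(p-1)}=(\delta^{4/(p-1)+2})^{2/(p+1)}$. The only cosmetic difference is that you factor $u^2K\phi^2=(|u|^{p+1}K\phi^2)^{2/(p+1)}(K\phi^2)^{(p-1)/(p+1)}$ outright, whereas the paper first produces $\phi^{p+1}$ and then uses $\phi\leq1$.
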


\begin{proof}
We first claim that 
\[
	E_{(x_1,t_1)} ( -4\delta^2 )
	\leq 
	E_{(x_1,t_1)} ( t )
	+ C \delta
\]
for $-9\delta^2\leq t\leq -4\delta^2$. 
This inequality clearly holds for $t=-4\delta^2$,
and hence it suffices to consider the case 
$-9\delta^2 \leq t < -4\delta^2$. 
Since $x_1\in \overline{\Omega} \cap B_{R/4}$ and 
$-1/2< -9\delta^2\leq t<-4\delta^2<t_1\leq 0$, 
Proposition \ref{pro:31} shows the claim. Indeed, 
\[
	E_{(x_1,t_1)}(-4\delta^2) 
	\leq 
	E_{(x_1,t_1)}( t) 
	+ C (t_1- t)^\frac{1}{2} \leq 
	E_{(x_1,t_1)}( t) 
	+ C \delta. 
\]

From the definition of $E$, it follows that 
\[
\begin{aligned}
	E_{(x_1,t_1)}(t) 
	&\leq 
	\frac{1}{2} (t_1- t)^\frac{p+1}{p-1}
	\int_{\Omega_R} |\nabla u|^2 
	K_{(x_1,t_1)} \phi^2_{x_1,R/8} dx \\
	&\quad  
	+ (t_1- t)^\frac{2}{p-1}
	\int_{\Omega_R} |u|^2 K_{(x_1,t_1)}
	\phi^2_{x_1,R/8} dx \\
	&\leq 
	C \delta^\frac{2(p+1)}{p-1}
	\int_{\Omega_R} |\nabla u|^2 
	K_{(x_1,t_1)} \phi^2_{x_1,R/8} dx \\
	&\quad  
	+C \delta^\frac{4}{p-1}
	\int_{\Omega_R} |u|^2 K_{(x_1,t_1)}
	\phi^2_{x_1,R/8} dx. 
\end{aligned}
\]
By the H\"older inequality, $\phi_{x_1,R/8}\leq 1$ and 
$\int_{\R^n} K dx =(4\pi)^{n/2}$, 
we have 
\[
\begin{aligned}
	E_{(x_1,t_1)}(t) 
	&\leq 
	C \delta^{\frac{4}{p-1}+2}
	\int_{\Omega_R} |\nabla u|^2 
	K_{(x_1,t_1)} \phi^2_{x_1,R/8} dx \\
	&\quad  
	+C\left( \delta^{\frac{4}{p-1}+2} 
	\int_{\Omega_R} |u|^{p+1} 
	K_{(x_1,t_1)} 
	\phi^{p+1}_{x_1,R/8} dx \right)^\frac{2}{p+1} \\
	&\leq 
	C \tilde h\left( 
	\delta^{\frac{4}{p-1}+2} \int_{\Omega_R} 
	(|\nabla u|^2 + |u|^{p+1}) 
	K_{(x_1,t_1)}
	\phi^2_{x_1,R/8} dx
	\right), 
\end{aligned}
\]
where $\tilde h(s):=s+s^{2/(p+1)}$ for $s\geq0$. 
Hence the desired inequality follows. 
\end{proof}

We next estimate $J_0(t; x_1,t_1,R)$ uniformly for 
$x_1$ and $t_1$.

\begin{lemma}\label{lem:varK}
There exists $C>0$ such that 
\[
	J_0(t; x_1,t_1,R) 
	\leq
	C J_0(t; 0,20\delta^2,2R) 
\]
for any $-9\delta^2\leq t\leq -4\delta^2$ and 
$Q_r(x_1,t_1)\subset Q_{8A\theta_0 \delta}$. 
\end{lemma}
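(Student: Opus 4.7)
The plan is to establish a pointwise comparison between the integrands of the two instances of $J_0$, and then integrate. Since both integrands are non-negative and the right-hand side has the larger domain $\Omega_{2R} \supset \Omega_R$, it will suffice to prove
\[
  K_{(x_1,t_1)}(x,t)\, \phi_{x_1,R/8}^2(x)
  \;\le\; C\, K_{(0,20\delta^2)}(x,t)\, \phi_{0,R/4}^2(x)
\]
for every $x\in \Omega_R$ lying in $\mathrm{supp}\,\phi_{x_1,R/8}$, every $t\in[-9\delta^2,-4\delta^2]$, and every $(x_1,t_1)$ with $Q_r(x_1,t_1)\subset Q_{8A\theta_0\delta}$. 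Here $|x_1|\le 8A\theta_0\delta$ and $-(8A\theta_0\delta)^2\le t_1\le 0$, so by \eqref{eq:Kdelthsmall} we have $|x_1|<\delta/4$ and $|t_1|<\delta^2/16$.

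First I will compare the Gaussian prefactors. In the stated range, $3\delta^2 \le t_1-t \le 10\delta^2$ and $24\delta^2 \le 20\delta^2-t \le 29\delta^2$, so $(t_1-t)^{-n/2}$ and $(20\delta^2-t)^{-n/2}$ are comparable up to a constant depending only on $n$. Next I will handle the Gaussian factor. Writing $|x|^2\le 2|x-x_1|^2+2|x_1|^2$ gives
\[
  \frac{|x|^2}{4(20\delta^2-t)}
  \;\le\; \frac{|x-x_1|^2}{2(20\delta^2-t)}
  \;+\; \frac{|x_1|^2}{2(20\delta^2-t)}.
\]
The second term is bounded by $(8A\theta_0\delta)^2/(48\delta^2)$, which is an absolute constant. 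For the first term I need $2(t_1-t)\le 20\delta^2-t$, i.e.\ $t\ge 2t_1-20\delta^2$; this is immediate from $t_1\le 0$ and $t\ge -9\delta^2$. Consequently
\[
  e^{-|x|^2/(4(20\delta^2-t))} \;\ge\; c\, e^{-|x-x_1|^2/(4(t_1-t))}
\]
for a constant $c>0$ depending only on $n$, $A$ and $\theta_0$.

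Next I will compare the cutoffs. Since $\mathrm{supp}\,\phi_{x_1,R/8}\subset B_{R/8}(x_1)$ and $|x_1|<\delta/4< R/(64A)$, any $x$ in that support satisfies $|x|< R/8+R/64 = 9R/64$, hence $4|x|/R<9/16<1$. Because $\varphi$ is continuous and strictly positive on $[0,9/16]$, there is a constant $c'>0$ with $\phi_{0,R/4}(x)=\varphi(4|x|/R)\ge c'$ throughout $\mathrm{supp}\,\phi_{x_1,R/8}$; combined with $\phi_{x_1,R/8}\le 1$ this yields $\phi_{x_1,R/8}^2(x)\le (c')^{-2}\phi_{0,R/4}^2(x)$. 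Moreover $B_{R/8}(x_1)\subset B_{9R/64}\subset B_{2R}$, so $\Omega_R\cap \mathrm{supp}\,\phi_{x_1,R/8}\subset \Omega_{2R}$.

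Combining the three pointwise bounds gives the claimed pointwise inequality of integrands, and integrating over $\Omega_R$ (which is contained in $\Omega_{2R}$ where relevant) yields $J_0(t;x_1,t_1,R)\le C J_0(t;0,20\delta^2,2R)$ with $C$ depending only on $n$, $p$, $M$, $\Omega$, $R$ (through the dependence of $A$, $\theta_0$ on these parameters). There is no real obstacle here; the only point requiring care is to verify that the choice \eqref{eq:Kdelthsmall} of $A,\theta_0,\delta$ keeps $(x_1,t_1)$ in a regime where $(t_1-t)$ and $(20\delta^2-t)$ are both of order $\delta^2$ and where the support of $\phi_{x_1,R/8}$ stays well inside the region where $\phi_{0,R/4}$ is bounded below.
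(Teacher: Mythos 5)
Your proposal is correct and follows essentially the same strategy as the paper: reduce to the pointwise comparisons $K_{(x_1,t_1)}\le C\,K_{(0,20\delta^2)}$ and $\phi_{x_1,R/8}^2\le C\,\phi_{0,R/4}^2$ on $\Omega_R$, then integrate using $\Omega_R\subset\Omega_{2R}$ and nonnegativity. The only difference is in how the Gaussian exponents are compared: the paper splits into the cases $|x|\ge 10\delta$ and $|x|<10\delta$, whereas you absorb the shift via $|x|^2\le 2|x-x_1|^2+2|x_1|^2$ together with the elementary time inequality $2(t_1-t)\le 20\delta^2-t$; this is a modest simplification that gives the same constant-order bound without the case distinction.
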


\begin{proof}
By the definition of $J_0$, it suffices to show that 
\begin{align}
	&\label{eq:GGes}
	K_{(x_1,t_1)}(x,t) \leq C K_{(0,20\delta^2)}(x,t), \\
	&\label{eq:phiphies}
	\phi_{x_1,R/8}^2(x) \leq C \phi_{0,R/4}^2(x), 
\end{align}
for $x\in\Omega_R$ and $-9\delta^2\leq t\leq -4\delta^2$. 
Since $(x_1,t_1)\in Q_{8A\theta_0 \delta}$ and $A\theta_0<1/32$, 
we have 
\[
\begin{aligned}
	\frac{K_{(x_1,t_1)}(x,t)}{K_{(0,20\delta^2)}(x,t)}
	&=\left( 
	\frac{20\delta^2-t}{t_1-t}
	\right)^\frac{n}{2}
	\exp\left( 
	- \frac{|x-x_1|^2}{4(t_1-t)} 
	+ \frac{|x|^2}{4(20\delta^2-t)}
	\right) \\
	&\leq 
	\left( 
	\frac{20\delta^2-t}{-(8A\theta_0 \delta)^2-t}
	\right)^\frac{n}{2}
	\exp\left( 
	- \frac{|x-x_1|^2}{36\delta^2} 
	+ \frac{|x|^2}{96\delta^2}
	\right) \\
	&\leq 
	C  \exp\left( 
	- \frac{|x-x_1|^2}{36\delta^2} 
	+ \frac{|x|^2}{96\delta^2}
	\right). 
\end{aligned}
\]
If $|x|\geq 10\delta$, then 
\[
	|x-x_1| \geq |x| - |x_1|
	\geq 10\delta - 8A\theta_0 \delta \geq 10\delta - \delta =9\delta, 
\]
and so 
\[
	\frac{|x|}{|x-x_1|}
	\leq \frac{|x-x_1|+|x_1|}{|x-x_1|}
	\leq 1+\frac{8A\theta_0 \delta}{9\delta} \leq \frac{10}{9}. 
\]
Hence, if $|x|\geq 10\delta$, then 
\[
	\frac{K_{(x_1,t_1)}(x,t)}{K_{(0,20\delta^2)}(x,t)}
	\leq 
	C \exp\left( 
	- \frac{|x-x_1|^2}{36\delta^2} 
	+ \frac{25|x-x_1|^2}{24\cdot 81\delta^2}
	\right) \leq C. 
\]
On the other hand, if $|x|< 10\delta$, then 
\[
	\frac{K_{(x_1,t_1)}(x,t)}
	{K_{(0,20\delta^2)}(x,t)}
	\leq 
	C \exp\left( 
	- \frac{|x-x_1|^2}{36\delta^2} 
	+ \frac{25}{24}
	\right) 
	\leq Ce^\frac{25}{24}. 
\]
Therefore, we obtain \eqref{eq:GGes}.

To check \eqref{eq:phiphies}, we prove 
$\varphi(8|x-x_1|/R) \leq C \varphi(4|x|/R)$. 
In the case $|x-x_1|\leq R/8$, we have 
\[
	|x|\leq |x-x_1|+|x_1| \leq \frac{R}{8} + 8A\theta_0 \delta 
	\leq \frac{3}{16} R. 
\]
Then by the choice of $\varphi$ in the first part of 
Subsection \ref{subsec:dcv}, we see that
$\varphi(4|x|/R) \geq \varphi(3/4)>0$ for $|x-x_1|\leq R/8$. 
Thus, 
\[
	\varphi(8|x-x_1|/R) \leq 1 \leq 
	\frac{\varphi(4|x|/R)}{\varphi(3/4)}. 
\]
In the case $|x-x_1|\geq R/8$, we see that 
$\varphi(8|x-x_1|/R) = 0 \leq \varphi(4|x|/R)$. 
Hence \eqref{eq:phiphies} holds. 
\end{proof}

By Lemmas \ref{lem:EJres} and \ref{lem:varK}, 
to obtain a bound of $E_{(x_1,t_1)}(-4\delta^2)$, 
it suffices to estimate 
$J_0(t; 0,20\delta^2,2R)$ at some $t=\hat t$.

\begin{lemma}\label{lem:enprefin}
There exists 
$-9\delta^2 \leq \hat t \leq -4\delta^2$ 
such that 
\[
	\delta^{\frac{4}{p-1}+2} J_0(\hat t; 0,20\delta^2,2R) 
	\leq 
	C A^{n-\frac{4}{p-1}} \eps_0 + Ce^{-A^2/C}. 
\]
\end{lemma}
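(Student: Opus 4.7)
The plan is to integrate $J_0(t; 0, 20\delta^2, 2R)$ over $t \in [-9\delta^2, -4\delta^2]$, apply the mean value theorem for integrals to extract a good $\hat t$, and estimate the time integral by splitting the spatial integration at the scale $|x|=A\delta$. On the near region $|x| \leq A\delta$ the Gaussian weight is pointwise bounded by $C\delta^{-n}$ and the hypothesis \eqref{eq:asKdel} then controls everything; on the far region $|x| > A\delta$ the Gaussian supplies an exponential factor $e^{-A^2/C}$, and the remaining integral is of the same type as \eqref{eq:nabucalpp}.

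By the mean value theorem there exists $\hat t \in [-9\delta^2, -4\delta^2]$ with
\[
J_0(\hat t; 0, 20\delta^2, 2R) \leq \frac{1}{5\delta^2} \int_{-9\delta^2}^{-4\delta^2} J_0(t; 0, 20\delta^2, 2R)\, dt.
\]
Since $\supp \phi_{0, R/4} \subset B_{R/4}$, I would split the $x$-integration at $|x|=A\delta$. For $t$ in the relevant interval, $20\delta^2 - t \in [24\delta^2, 29\delta^2]$, so $K_{(0, 20\delta^2)}(x, t) \leq C\delta^{-n}$ on $B_{A\delta}$. The inclusion $\Omega_{A\delta} \times (-9\delta^2, -4\delta^2) \subset Q_{A\delta}$ (valid since $A > 3$) together with \eqref{eq:asKdel} then gives the near contribution bound
\[
C\delta^{-n} \iint_{Q_{A\delta}} (|\nabla u|^2 + |u|^{p+1})\, dx\, dt \leq C A^{n - \frac{4}{p-1}} \eps_0\, \delta^{-\frac{4}{p-1}}.
\]

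For the far region, introduce $\tilde K(x, t) := (20\delta^2 - t)^{-n/2} e^{-|x|^2/(8(20\delta^2 - t))}$. For $|x| > A\delta$ and $t$ in the time interval, the bound $|x|^2/(8(20\delta^2 - t)) \geq A^2/C$ yields $K_{(0, 20\delta^2)}(x, t) \leq C e^{-A^2/C}\,\tilde K(x, t)$. Observing that $\tilde K(x, t) = 2^{n/2} K_{(0, 40\delta^2 - t)}(x, t)$ with $(40\delta^2 - t) - t \in [48\delta^2, 58\delta^2]$, the Lorentz--H\"older argument behind \eqref{eq:nabucalpp} together with the analogous H\"older bound for $|u|^{p+1}$ gives
\[
\int (|\nabla u|^2 + |u|^{p+1})\, \tilde K\, \phi_{0, R/4}^2\, dx \leq C\,\delta^{-\frac{2(p+1)}{p-1}}
\]
uniformly in $t$. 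Multiplying by $Ce^{-A^2/C}$ and integrating over a time interval of length $5\delta^2$ (using $2 - 2(p+1)/(p-1) = -4/(p-1)$) yields a far contribution bounded by $Ce^{-A^2/C}\delta^{-4/(p-1)}$.

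Summing the two contributions, the mean value step then gives $J_0(\hat t; 0, 20\delta^2, 2R) \leq C(A^{n-\frac{4}{p-1}}\eps_0 + e^{-A^2/C})\,\delta^{-\frac{4}{p-1}-2}$, and multiplying by $\delta^{\frac{4}{p-1}+2}$ yields the claim. The main point requiring care is the far region: \eqref{eq:nabucalpp} was stated for $\tilde t \leq 0$, whereas here $\tilde t = 40\delta^2 - t$ is positive. However, an inspection of its proof shows it depends only on Proposition \ref{pro:gradoricri} (which requires $t \in (-3/4, 0)$) and a direct computation of the Gaussian Lorentz norm in terms of $\tilde t - t$; both remain available under the identification $\tilde K = 2^{n/2} K_{(0, 40\delta^2 - t)}$, so no new ingredient is required beyond matching the powers of $\delta$.
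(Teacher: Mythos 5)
Your proof is correct and follows essentially the same route as the paper: mean value theorem in time, spatial split at $|x|=A\delta$, \eqref{eq:asKdel} together with the pointwise bound $K_{(0,20\delta^2)}\leq C\delta^{-n}$ for the inner part, and a Gaussian comparison extracting $e^{-A^2/C}$ plus the uniform integral estimate of Lemma \ref{lem:Ebdd}/\eqref{eq:nabucalpp} type for the outer part. The only cosmetic difference is your choice of comparison kernel $\tilde K = 2^{n/2}K_{(0,40\delta^2-t)}$ in the far region, where the paper uses $K_{(0,21\delta^2)}$; both the paper and you implicitly rely on the same observation you flagged, namely that the uniform kernel estimate carries over to $\tilde t>0$ because it rests only on Proposition \ref{pro:gradoricri} and an explicit Gaussian computation.
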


\begin{proof}
Due to the mean value theorem, it suffices to prove 
\[
	\delta^{\frac{4}{p-1}}
	\int_{-9\delta^2}^{-4\delta^2} J_0(s; 0,20\delta^2,2R) ds 
	\leq 
	C A^{n-\frac{4}{p-1}} \eps_0 + Ce^{-A^2/C}.  
\]
To see this, we show that 
\begin{align}
	&\label{eq:innJ0}
	\begin{aligned}
	J_1
	&:=\delta^{\frac{4}{p-1}}
	\int_{-9\delta^2}^{-4\delta^2}
	\int_{\Omega_R\cap B_{A\delta}} (|\nabla u|^2 + |u|^{p+1}) 
	K_{(0,20\delta^2)} \phi_{0,R/4}^2 dx ds  \\
	&\leq 
	C A^{n-\frac{4}{p-1}} \eps_0, 
	\end{aligned}
	\\
	&\label{eq:outJ0}
	\begin{aligned}
	J_2
	&:=\delta^{\frac{4}{p-1}}
	\int_{-9\delta^2}^{-4\delta^2}
	\int_{\Omega_R\setminus B_{A\delta}} (|\nabla u|^2 + |u|^{p+1}) 
	K_{(0,20\delta^2)} \phi_{0,R/4}^2 dx ds  \\
	&\leq 
	Ce^{-A^2/C}. 
	\end{aligned}
\end{align}

As for \eqref{eq:innJ0}, we recall our assumption \eqref{eq:asKdel}, that is, 
\[
	\delta^{\frac{4}{p-1}-n}
	\int_{-(A\delta)^2}^0 
	\int_{\Omega_{A\delta}} (|\nabla u|^2 + |u|^{p+1}) dxds 
	\leq A^{n-\frac{4}{p-1}} \eps_0. 
\]
Since $A>3$, we have 
\[
	\delta^{\frac{4}{p-1}-n}
	\int_{-9\delta^2}^{-4\delta^2}
	\int_{\Omega_{A\delta}}
	(|\nabla u|^2 + |u|^{p+1}) dxds 
	\leq A^{n-\frac{4}{p-1}} \eps_0. 
\]
Therefore we have
\[
\begin{aligned}
	J_1
	&\leq 
	\delta^{\frac{4}{p-1}}
	\int_{-9\delta^2}^{-4\delta^2}
	\int_{\Omega_{A\delta}} (|\nabla u|^2 + |u|^{p+1}) 
	(20\delta^2-s)^{-\frac{n}{2}} 
	e^{-\frac{|x|^2}{4(20\delta^2-s)}} dxds \\
	&\leq 
	C \delta^{\frac{4}{p-1}-n}
	\int_{-9\delta^2}^{-4\delta^2}
	\int_{\Omega_{A\delta}} (|\nabla u|^2 + |u|^{p+1}) dxds 
	\leq 
	C A^{n-\frac{4}{p-1}} \eps_0,
\end{aligned}
\]
which proves \eqref{eq:innJ0}.
We next show \eqref{eq:outJ0}. 
For $x\in \Omega_R\setminus B_{A\delta}$ and 
$-9\delta^2\leq s\leq -4\delta^2$, 
we see that 
\[
\begin{aligned}
	\frac{K_{(0,20\delta^2)}(x,s)}
	{K_{(0,21\delta^2)}(x,s)}
	&= 
	\left( \frac{21\delta^2-s}{20\delta^2-s} 
	\right)^\frac{n}{2} 
	\exp\left( -\frac{|x|^2}{4} 
	\frac{\delta^2}{(20\delta^2-s)(21\delta^2-s)}\right) \\
	&\leq C e^{-A^2/C}. 
\end{aligned}
\]
This together with the 
same computations as in Lemma \ref{lem:Ebdd}, we obtain 
\[
\begin{aligned}
	J_2
	&\leq 
	C e^{-A^2/C} 
	\int_{-9\delta^2}^{-4\delta^2}
	\delta^{\frac{4}{p-1}}
	\int_{\Omega_R\setminus B_{A\delta}} (|\nabla u|^2 + |u|^{p+1}) \\
	&\qquad 
	\times K_{(0,21\delta^2)} \phi_{0,R/4}^2 dxds \\
	&\leq 
	C e^{-A^2/C} \delta^{\frac{4}{p-1}} 
	\int_{-9\delta^2}^{-4\delta^2}
	(21\delta^2 -s )^{-\frac{p+1}{p-1}} ds
	\leq C e^{-A^2/C}, 
\end{aligned}
\]
and so \eqref{eq:outJ0} follows. 
As stated in the beginning of the proof, 
the lemma follows. 
\end{proof}

We are now in a position to prove Proposition \ref{pro:Irhtilh}. 

\begin{proof}[Proof of Proposition \ref{pro:Irhtilh}]
By combining Lemmas \ref{lem:EJres}, \ref{lem:varK} and \ref{lem:enprefin}, 
we obtain 
\[
	E_{(x_1,t_1)} ( -4\delta^2;\phi_{x_1,R/8} ) 
	\leq 
	C \tilde h( A^{n-\frac{4}{p-1}} \eps_0 + e^{-A^2/C}  ) 
	+ C \delta. 
\]
for any $Q_r(x_1,t_1)\subset Q_{8A\theta_0 \delta}$.  
This together with Lemma \ref{lem:IrEH} shows 
\[
	I_r \leq 
	C h\left(
	C \tilde h( A^{n-\frac{4}{p-1}} \eps_0 + e^{-A^2/C}  ) 
	+ C\delta \right) 
\]
for any $Q_r(x_1,t_1)\subset Q_{8A\theta_0 \delta}$. 
Since $h\circ \tilde h + h \leq C \overline{h}$ 
with $\overline{h}(s):=s+s^{1/(p+1)}$ for $s\geq0$, 
the desired inequality follows. 
\end{proof}

Finally, we complete the proof of Theorem \ref{th:epsreg}. 

\begin{proof}[Proof of Theorem \ref{th:epsreg}]
As explained just before Proposition \ref{pro:Irhtilh}, 
it suffices to prove that the statement \eqref{eq:x1t1e1} holds 
under the assumption \eqref{eq:asKdel}. 
Let $A$, $\eps_0$, $\delta_0$ and $\theta_0$ satisfy \eqref{eq:Kdelthsmall} 
with $\delta$ replaced by $\delta_0$. 
Let $Q_r(x_1,t_1)\subset Q_{8A\theta_0 \delta_0}$. 
Then by Proposition \ref{pro:Irhtilh}, we have 
\[
	I_r(x_1,t_1) 
	\leq 
	C \overline{h} \left( 
	e^{-A^2/ C} \right) 
	+ C \overline{h} \left( 
	A^{n-\frac{4}{p-1}} \eps_0 \right)  
	+ C \overline{h} ( \delta_0), 
\]
where $C>0$ is a constant depending only on 
$n$, $p$, $M$, $\Omega$ and $R$. 
By taking \eqref{eq:Kdelthsmall} into account, 
we choose constants 
$A$, $\eps_0$, $\delta_0$ and $\theta_0$ satisfying 
the following conditions: 
\begin{equation}\label{eq:choiceKedelt}
\left\{ 
\begin{aligned}
	&A>3 
	&&\mbox{ with }
	C \overline{h}\left( e^{-A^2/C} \right)< \frac{\eps_1}{3}, \\
	&0<\eps_0<1
	&&\mbox{ with } 
	C \overline{h} \left( A^{n-\frac{4}{p-1}} \eps_0 \right) 
	<\frac{\eps_1}{3}, \\
	&0<\delta_0<\frac{R}{16A} 
	&&\mbox{ with }
	C \overline{h} ( \delta_0 )
	<\frac{\eps_1}{3}, \\
	&0<\theta_0<\frac{1}{32A}.  &&
\end{aligned}
\right.
\end{equation}
Here $\eps_1$ is given in Lemma \ref{lem:epspre}. 
Remark that $\eps_0$, $\delta_0$ and $\theta_0$ can be chosen independently. 

Finally, we assume that there exists $0<\delta\leq \delta_0$ such that 
\eqref{eq:asKdel} holds. 
Then we obtain $I_r(x_1,t_1)\leq \eps_1$ 
for any $Q_r(x_1,t_1)\subset Q_{8A\theta_0\delta}$. 
Hence the statement \eqref{eq:x1t1e1} holds 
under the assumption \eqref{eq:asKdel}, 
and so Lemma \ref{lem:epspre} shows that 
$\|u\|_{L^\infty(Q_{A\theta_0\delta})}\leq C(A\theta_0 \delta)^{-2/(p-1)}$. 
Then we can conclude 
that the original assumption \eqref{eq:1scale} implies \eqref{eq:epbdd}. 
The proof of Theorem \ref{th:epsreg} is complete. 
\end{proof}

\section{Proof of main theorem}\label{sec:Blim}
We first prove a localized statement of Theorem \ref{th:main}. 
At the level of strategy, 
the proof is based on \cite{ESS03} and 
\cite[Theorem 9.8]{Tsbook} 
for the Navier-Stokes equations, 
and \cite[Chapter 8]{LWbook} and 
\cite{St88,Wa08} for the harmonic map heat flow. 
Indeed, we use compactness, backward uniqueness and unique continuation.  
However, the analysis in each of the steps seems to be more involved.

\begin{theorem}[Localized statement]\label{th:critical}
Let $n\geq3$, $p>p_S$, $\Omega$ be any $C^{2+\alpha}$ 
domain in $\R^n$ 
and $u$ be a classical $L^{q_c}$-solution of \eqref{eq:main} 
with $u_0\in L^{q_c}(\Omega)$. 
If the maximal existence time $T>0$ is finite 
and $u$ has a blow-up point $a\in \overline{\Omega}$, 
then for any $r>0$, 
\[
	\limsup_{t\to T}\|u(\cdot,t)\|_{L^{q_c}(\Omega_r(a))}=\infty. 
\]
\end{theorem}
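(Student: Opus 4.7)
The proof proceeds by contradiction via blow-up rescaling, in the spirit of \cite{ESS03} for the Navier-Stokes equations and \cite{St88,Wa08} for the harmonic map heat flow, adapted to the semilinear heat equation via the tools built in Sections~\ref{sec:grad}--\ref{sec:epreg}. Suppose that $\limsup_{t\to T}\|u(\cdot,t)\|_{L^{q_c}(\Omega_r(a))}\le M<\infty$ for some $r>0$. After translating so that $a=0$ and $T=0$, shrinking $r$, and choosing $R>0$ satisfying \eqref{eq:R1def} or \eqref{eq:R2def}, the bound \eqref{eq:Mdef2} is in force on $\Omega_R\times(-1,0)$, so Propositions \ref{pro:gradoricri}, \ref{pro:31} and Theorem~\ref{th:epsreg} are at our disposal.

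The first step is to extract a quantitative energy concentration at the blow-up point. Since $(0,0)$ is a blow-up point, the contrapositive of Theorem~\ref{th:epsreg} yields
\[
    \delta^{\frac{4}{p-1}-n}\iint_{Q_\delta}(|\nabla u|^2+|u|^{p+1})\,dx\,dt>\eps_0
    \quad\text{for every }\delta\in(0,\delta_0].
\]
Next, pick $\lambda_k\to0^+$ and consider the scale-invariant rescalings $u_k(y,s):=\lambda_k^{2/(p-1)}u(\lambda_k y,\lambda_k^2 s)$, which solve the same equation on $\lambda_k^{-1}\Omega\times(-\lambda_k^{-2},0)$ and inherit uniform bounds on the $L^{q_c}$ norm, on the gradient in $L^{q_*,\infty}$ (Proposition~\ref{pro:gradoricri}), and on the localized energy (Lemma~\ref{lem:Ebdd}). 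Using the Aubin-Lions compactness of Appendix~\ref{sec:cpt}, extract a subsequence converging to a weak solution $\overline u$ on $\R^n\times(-\infty,0)$ (or $\R^n_+\times(-\infty,0)$) with $\|\overline u(\cdot,s)\|_{L^{q_c}}\le M$. By Remark~\ref{rem:wepreg}, $\eps$-regularity applies to $\overline u$, so $\overline u$ is smooth off a closed singular set $\Sigma$. Combining the rescaled nondegeneracy with strong $L^{p+1}_{\loc}$ convergence and with the $\eps$-regularity applied to the $u_k$ themselves, one concludes $(0,0)\in\Sigma$.

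Crucially, the remainder $C(M+M^p)^2(\tilde t-t')^{1/2}$ in Proposition~\ref{pro:31} picks up an overall factor $\lambda_k$ under the rescaling and vanishes in the limit, giving an exact Giga-Kohn monotonicity formula for $\overline u$. Following \cite{GK87}, this monotonicity---together with the uniform energy bound of Lemma~\ref{lem:Ebdd}---forces any tangent of $\overline u$ at $(0,0)$ obtained by a further rescaling $\mu_j\to0^+$ to be backward self-similar:
\[
    \overline u_\infty(y,s)=(-s)^{-\frac{1}{p-1}}W(y/\sqrt{-s}),
\]
where the profile $W\in L^{q_c}$ solves $\Delta W-\tfrac{1}{2}\eta\cdot\nabla W-\tfrac{1}{p-1}W+|W|^{p-1}W=0$.

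The main obstacle is the final Liouville-type step: showing that every $L^{q_c}$ solution $W$ of the self-similar profile equation vanishes when $p>p_S$, in spite of the existence of unbounded (Haraux-Weissler-type) self-similar solutions in this supercritical range. The intended route combines the Gaussian-weighted energy identity
\[
    \int\Bigl(|\nabla W|^2+\tfrac{W^2}{p-1}\Bigr)\rho=\int|W|^{p+1}\rho,
\]
the $\eps$-regularity applied to the backward self-similar solution (so that the singular set of $W$ is controlled), and a rigidity/unique-continuation argument exploiting the critical integrability $W\in L^{q_c}$. Once $W\equiv 0$ is established, the membership $(0,0)\in\Sigma$ for $\overline u$ is violated, completing the contradiction.
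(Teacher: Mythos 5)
Your proposal follows the same overall strategy as the paper (rescaling at blow-up scales, compactness, self-similarity via exact monotonicity, and a Liouville-type conclusion), but there is a genuine gap in the final Liouville step, which is precisely where the hard work lies.

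The paper's proof of the Liouville property does \emph{not} go through a Gaussian-weighted energy/Pohozaev identity plus "rigidity exploiting $W\in L^{q_c}$." Instead, it proceeds in three steps that your sketch omits entirely. First, the paper establishes that the blow-up limit vanishes at the final time, $\overline{u}(\cdot,0)=0$ a.e.\ in $\R^n_+$. This is not automatic: it uses that the original solution $u$ is weakly $L^{q_c}$-continuous up to $t=0$ (which in turn comes from Lemma~\ref{lem:parareg} and \eqref{eq:u0Mbdd}), together with the fact that the rescaled domains $B_{(1+|x_0|)\delta_k}$ shrink, so $\|u(\cdot,0)\|_{L^{q_c}}$ over them tends to zero. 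Second, using $\eps$-regularity to get boundedness on an exterior region and the backward uniqueness theorem of Escauriaza--Seregin--\v{S}ver\'ak \cite[Theorem 5.1]{ESS03}, the paper deduces $\overline{u}\equiv 0$ on $(\R^n_+\setminus B_{\tilde R})\times[-1/9,0]$. Third, only then is elliptic unique continuation applied to the profile $\overline{U}$, which vanishes on a nonempty open set and is smooth off the finite set $\Sigma(t)$ from Lemma~\ref{lem:parreg}. Your sketch (Gaussian energy identity plus an unspecified "rigidity" step) does not constitute a proof: the identity alone gives no conclusion, and unique continuation needs a seed region where $W$ already vanishes, which you do not produce. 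The vanishing $\overline{u}(\cdot,0)=0$ and the backward-uniqueness step are exactly the missing ideas.

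A secondary difference: you propose taking a further tangent rescaling of $\overline{u}$ at $(0,0)$ to obtain a self-similar solution. The paper instead proves (Lemma~\ref{lem:SS}) that $\overline{u}$ itself is already self-similar, because the dyadic averages $\Phi(r)=\int_{-4r^2}^{-r^2}\overline{E}_{(0,0)}(t)\,dt/(-t)$ are independent of $r$ and the monotonicity estimate of Lemma~\ref{lem:monoubar} then forces $\tfrac{\overline{u}}{p-1}+\tfrac{x\cdot\nabla\overline{u}}{2}+t\overline{u}_t=0$. Working with $\overline{u}$ directly is cleaner: the nondegeneracy \eqref{est:lowerbar} is then an immediate contradiction with $\overline{u}\equiv 0$, whereas in the tangent route you would additionally need to transfer nondegeneracy and the vanishing-at-final-time property to the tangent limit. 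This is not a fatal issue, but it does add bookkeeping that you have not carried out. The essential gap remains the Liouville step.
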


Note that if $\Omega$ is bounded, then 
Theorem \ref{th:main} immediately follows from this theorem. 
The unbounded case will be considered in Subsection \ref{subsec:pr}.

\subsection{Existence of blow-up limit}\label{subsec:4uni}
To prove Theorem \ref{th:critical}, we set up notation, 
and then we define rescaled solutions and take the limit. 
Let $u$ be a classical $L^{q_c}$-solution of \eqref{eq:main}. 
Assume that $a\in \overline{\Omega}$ is a blow-up point of $u$. 
By translation and scaling, we may assume that  $a=0$ 
and $u$ satisfies 
\[
\left\{ 
\begin{aligned}
	&u_t=\Delta u+|u|^{p-1}u
	&&\mbox{ in }\Omega_{R_1} \times(-1,0), \\
	&u=0
	&&\mbox{ on }(\partial \Omega \cap B_{R_1})\times(-1,0), 
\end{aligned}
\right.
\]
for some $0<R_1<1$. 
Here we assume that $t=0$ is the blow-up time. 
Suppose, contrary to Theorem \ref{th:critical}, that 
\begin{equation}\label{eq:M_1}
	\sup_{-1<t<0} \|u(\cdot, t)\|_{L^{q_c}(\Omega_{R_2})} \le M
\end{equation}
for some $M>0$ and $0<R_2<1$. 
Fix $0<R<\min\{R_1,R_2\}$ so small 
that \eqref{eq:R1def} and \eqref{eq:R2def} hold. 
Note that $u$ satisfies \eqref{eq:fujitaeq} and \eqref{eq:Mdef2}.

From the parabolic regularity estimates 
in Lemma \ref{lem:parareg}, 
it follows that 
\[
	\|u\|_{W^{1,l}(-1/4,0; L^r(\Omega_{R/2}))} 
	\le C(M+M^p)
\]
for $1\leq l<\infty$ and $1\leq r\le q_c/p$.
Hence after a redefinition of a zero set in the time interval,
we may assume $u \in C([-1/4,0];L^r(\Omega_{R/2}))$.
This together with the uniform bound \eqref{eq:Mdef2} gives 
\begin{equation}\label{eq:u0Mbdd}
	u \in C_\weak([-1/4,0];L^{q_c}(\Omega_{R/2})). 
\end{equation}
By the contraposition of Theorem \ref{th:epsreg}, there exist 
$\eps_0>0$ and $0<\delta_k<1/2$ 
with $\delta_k\to0$ as $k\to \infty$ such that 
\begin{equation}
\label{est:lower}
	{\delta_k}^{\frac{4}{p-1}-n}
	\iint_{Q_{\delta_k}} (|\nabla u|^2 + |u|^{p+1}) dxdt 
	> \eps_0.
\end{equation}

To take a blow-up limit of $u$, 
we define rescaled solutions and derive the corresponding equations. 
In the case \eqref{eq:R1def}, 
we define 
\[
	u_k(x,t):=\delta_k^\frac{2}{p-1} u(\delta_k x,\delta_k^2 t). 
\]
Then, 
\[
\left\{ 
\begin{aligned}
	&(u_k)_t - \Delta u_k 
	= |u_k|^{p-1} u_k 
	&&\mbox{ in } \delta_k^{-1} \Omega_R\times (-\delta_k^{-2},0),   \\
	&u_k=0&&\mbox{ on }
	\delta_k^{-1}(\partial\Omega\cap B_R)\times(-\delta_k^{-2},0). 
\end{aligned}
\right. 
\]
In the case \eqref{eq:R2def}, we define 
\begin{equation}\label{eq:u_k}
	u_k(x,t):=\delta_k^\frac{2}{p-1} \hat u(\delta_k x,\delta_k^2 t), 
	\quad 
	f_k(x'):=\delta_k^{-1} f(\delta_k x'), 
\end{equation}
where $\hat u$ is defined by \eqref{eq:uhatdef}. 
Then $u_k$ satisfies  
\begin{equation}\label{eq:ukresceq}
\left\{ 
\begin{aligned}
	&(u_k)_t - A_k u_k 
	= |u_k|^{p-1} u_k 
	&&\mbox{ in } \delta_k^{-1} \Phi(\Omega_R)\times (-\delta_k^{-2},0),   \\
	&u_k=0&&\mbox{ on }
	\delta_k^{-1}\Phi(\partial\Omega\cap B_R)\times(-\delta_k^{-2},0), 
\end{aligned}
\right. 
\end{equation}
where 
\[
	A_k u_k(x,t):=
	\Delta u_k - 2\nabla' (\partial_{x_n} u_k )
	\cdot \nabla' f_k 
	+(\partial_{x_n}^2 u_k) |\nabla' f_k|^2 
	-(\partial_{x_n}  u_k) \Delta' f_k. 
\]

In what follows, we take a limit of $u_k$ and study its properties. 
Since the case \eqref{eq:R1def} is easier to handle 
than the case \eqref{eq:R2def}, we focus on \eqref{eq:R2def}. 
For $\rho>0$, we set $B_\rho^+:=\R^n_+ \cap B_\rho$. 
To take a limit of $u_k$, 
we give estimates of rescaled solutions uniformly for $k\geq k_\rho$. 
Here $k_\rho\geq 1$ is taken so that 
$0<\delta_k<1/2$, 
$\Psi(B_{3\rho \delta_k}^+)\subset \Omega_{R/2}$ and
$\Psi(B_{3\rho \delta_k})\subset B_{R/2}$ 
for all $k\geq k_\rho$. 
Since $\nabla'f(0)=0$, $f\in C^{2+\alpha}_0(\R^{n-1})$ 
and $\delta_k\to0$ as $k\to\infty$, 
we have 
\begin{align}
	&\label{eq:nabfkuni}
	|\nabla' f_k(x')|
	= |\nabla' f(\delta_k x')|
	\to0, \\
	&\label{eq:delfkuni}
	\|\Delta' f_k\|_{L^\infty(\R^{n-1})}
	\leq 
	\delta_k \| \Delta' f\|_{L^\infty(\R^{n-1})} \to0. 
\end{align}

We first give uniform estimates of $u_k$ and $\nabla u_k$. 

\begin{lemma}\label{lem:unifuknabuk}
Assume \eqref{eq:R2def}. Let $\rho>0$. 
Then there exists $C>0$ such that 
the rescaled functions $u_k$ satisfy 
\[
\begin{aligned}
	&\sup_{-1<t<0}\|u_k(\cdot,t)\|_{L^{q_c}(B_\rho^+)} 
	\leq M, \\
	&\sup_{-1<t<0}
	\|\nabla u_k(\cdot,t)\|_{L^{q_*,\infty}(B_\rho^+)} 
	\leq  C(M+M^p), 
\end{aligned}
\]
for all $k\geq k_\rho$, 
where $C$ depends on $R$ and is independent of $\rho$ and $k$. 
\end{lemma}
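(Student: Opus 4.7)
Both bounds reduce to the scaling invariance of the relevant spaces combined with (i) the hypothesis \eqref{eq:M_1} for the $L^{q_c}$ estimate, and (ii) Proposition \ref{pro:gradoricri} applied to the original solution $u$ for the gradient estimate. The case \eqref{eq:R1def} is immediate since $\Psi$ is the identity and $\hat u = u$; I focus on \eqref{eq:R2def}.

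For the first bound, the identity $2q_c/(p-1) = n$ makes the spatial $L^{q_c}$ norm invariant under the rescaling $u \mapsto \delta^{2/(p-1)} u(\delta x, \delta^2 t)$, and the shear $\Psi$ has unit Jacobian by its explicit form. A change of variables $x = \Psi(\xi)$ therefore gives
\[
\|u_k(\cdot,t)\|_{L^{q_c}(B_\rho^+)}
= \|\hat u(\cdot,\delta_k^2 t)\|_{L^{q_c}(B_{\rho\delta_k}^+)}
= \|u(\cdot,\delta_k^2 t)\|_{L^{q_c}(\Psi(B_{\rho\delta_k}^+))}.
\]
The choice of $k_\rho$ ensures $\Psi(B_{\rho\delta_k}^+) \subset \Omega_{R/2} \subset \Omega_{R_2}$, while $t \in (-1,0)$ and $\delta_k < 1/2$ give $\delta_k^2 t \in (-1,0)$, so \eqref{eq:M_1} yields the bound by $M$.

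For the gradient estimate, the critical exponent for $\nabla u$ is precisely $q_* = n(p-1)/(p+1)$: since $(p+1)/(p-1) - n/q_* = 0$, a direct distribution-function computation shows the $L^{q_*,\infty}$ norm is preserved by the rescaling, so
\[
\|\nabla u_k(\cdot,t)\|_{L^{q_*,\infty}(B_\rho^+)}
= \|\nabla \hat u(\cdot,\delta_k^2 t)\|_{L^{q_*,\infty}(B_{\rho\delta_k}^+)}.
\]
Relation \eqref{eq:nabxihatu} combined with $\|\nabla' f\|_{L^\infty(\R^{n-1})} \leq 1/2$ makes the pointwise map $\nabla \hat u(\xi,t) \leftrightarrow \nabla u(\Psi(\xi),t)$ a linear isomorphism with bounds independent of $k$; together with the unit Jacobian of $\Psi$, this transfers the weak-$L^{q_*}$ norm to $\nabla u$ on $\Psi(B_{\rho\delta_k}^+) \subset \Omega_{3R/4}$. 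Proposition \ref{pro:gradoricri} then yields the desired $C(M+M^p)$ bound uniformly in $s = \delta_k^2 t \in (-3/4, 0)$. The only step beyond pure scaling is the bilipschitz transfer through $\Psi$, which is a routine linear-algebra estimate and presents no serious obstacle.
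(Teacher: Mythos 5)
Your proof is correct and follows the same route as the paper: for $u_k$ you invoke scaling invariance of $L^{q_c}$ together with the unit Jacobian of $\Psi$ and the bound \eqref{eq:M_1}; for $\nabla u_k$ you note that the exponent $2/(p-1)+1-n/q_*=0$ makes $L^{q_*,\infty}$ scale-invariant, then transfer the pointwise relation between $\nabla\hat u$ and $\nabla u\circ\Psi$ (losing only the factor $1+\|\nabla'f\|_{L^\infty}$), and finally apply Proposition~\ref{pro:gradoricri}. The paper's proof carries out exactly this chain of estimates.
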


\begin{proof}
By the change of variables and the choice of $k_\rho$, 
we can easily see the inequality for $u_k$. 
From the following computation
\[
\begin{aligned}
	|\nabla u_k(x,t)| &= 
	\delta_k^{\frac{2}{p-1}+1}
	| \nabla u(\Psi(\delta_k x),\delta_k^2 t)
	+\partial_{x_n} u(\Psi(\delta_k x),\delta_k^2 t)
	\nabla' f(\delta_k x') |\\
	&\leq 
	\delta_k^{\frac{2}{p-1}+1}
	( 1+\|\nabla' f\|_{L^\infty(\R^{n-1})} )
	| \nabla u(\Psi(\delta_k x),\delta_k^2 t) |
\end{aligned}
\]
and Proposition \ref{pro:gradoricri}, it follows that 
\[
\begin{aligned}
	\|\nabla u_k(\cdot,t)\|_{L^{q_*,\infty}(B_\rho^+)}
	&\leq 
	( 1+\|\nabla' f\|_{L^\infty(\R^{n-1})} )
	\| \nabla u(\cdot,\delta_k^2 t) \|_{L^{q_*,\infty}(\Omega_{R/2})}  \\
	&\leq 
	C(M+M^p)
\end{aligned}
\]
for $-1<t<0$. Then the lemma follows. 
\end{proof}

We next give a uniform parabolic regularity estimate.

\begin{lemma}\label{lem:unifnab2uk}
Assume \eqref{eq:R2def}. 
Let $1\leq l<\infty$, $1\leq r\leq q_c/p$ and $\rho>0$. 
Then there exists $C>0$ such that 
the rescaled functions $u_k$ satisfy 
\[
	\| (u_k)_t \|_{L^l(-1,0; L^r(B_\rho^+))} 
	+ \|\nabla^2 u_k\|_{L^l(-1,0; L^r(B_\rho^+))} 
	\leq  C(M+M^p)
\]
for all $k\geq k_\rho$, 
where $C$ depends on $\rho$ and $R$ and is independent of $k$. 
\end{lemma}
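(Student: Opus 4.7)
The plan is to localize the rescaled equation \eqref{eq:ukresceq} in space and time and then apply parabolic $L^l_t L^r_x$-maximal regularity for the perturbed heat operator $\partial_t - A_k$ on the half-space. The key point is that, by \eqref{eq:nabfkuni}, \eqref{eq:delfkuni}, and $f \in C^{2+\alpha}_0(\R^{n-1})$, the operator $A_k$ has uniformly $C^\alpha$-bounded coefficients; moreover a direct computation gives the quadratic form identity
\[
a_k^{ij}\xi_i\xi_j = |\xi' - \xi_n \nabla' f_k|^2 + \xi_n^2 \geq \tfrac{1}{2}|\xi|^2
\]
since $\|\nabla' f_k\|_{L^\infty} \leq 1/2$, so $A_k$ is uniformly elliptic. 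Consequently parabolic maximal regularity for $\partial_t - A_k$ is available with constants independent of $k$.

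I would fix a spatial cutoff $\chi \in C^\infty_c(B_{2\rho})$ with $\chi \equiv 1$ on $B_\rho$ and a temporal cutoff $\eta \in C^\infty(\R)$ with $\eta \equiv 1$ on $[-1, 0]$ and $\supp \eta \subset [-3/2, \infty)$. For $k$ large enough that $B_{2\rho}^+ \subset \delta_k^{-1}\Phi(\Omega_R)$ and $\delta_k^{-2} > 2$, the function $v_k := \eta \chi u_k$, extended by zero, is defined on $\overline{\R^n_+} \times \R$, vanishes on $\partial \R^n_+$ and for $t \leq -3/2$, and satisfies
\[
(v_k)_t - A_k v_k = \eta \chi |u_k|^{p-1} u_k + \eta \bigl(A_k(\chi u_k) - \chi A_k u_k\bigr) + \eta' \chi u_k =: F_k
\]
on $\R^n_+ \times \R$. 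The commutator $A_k(\chi u_k) - \chi A_k u_k$ is a first-order differential operator in $u_k$, supported in $B_{2\rho} \setminus B_\rho$, whose coefficients involve $\nabla \chi$, $\nabla^2 \chi$, $\nabla' f_k$, $\Delta' f_k$ and are uniformly bounded in $L^\infty$.

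I would then estimate $\|F_k\|_{L^l(\R;\, L^r(\R^n_+))}$: the nonlinear term is bounded by $\|u_k\|_{L^{rp}(B_{2\rho}^+)}^p \leq C\|u_k\|_{L^{q_c}(B_{2\rho}^+)}^p \leq CM^p$ via H\"older on $B_{2\rho}^+$ (using $rp \leq q_c$) and Lemma \ref{lem:unifuknabuk}, after $L^l$-time integration on the bounded interval $\supp \eta$; the commutator term is bounded using Lemma \ref{lem:unifuknabuk} via the embedding $L^{q_*,\infty}(B_{2\rho}^+) \hookrightarrow L^r(B_{2\rho}^+)$ (valid since $r \leq q_c/p < q_*$) for the $\nabla u_k$-part, and via the $L^\infty(L^{q_c})$ bound for the $u_k$-part; the term $\eta' \chi u_k$ is handled analogously. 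In total $\|F_k\|_{L^l L^r} \leq C(\rho)(M + M^p)$.

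Finally, I would apply $L^l L^r$-maximal regularity for the half-space Dirichlet problem for $\partial_t - A_k$ with zero initial data at $t = -3/2$, obtaining $\|(v_k)_t\|_{L^l L^r} + \|\nabla^2 v_k\|_{L^l L^r} \leq C\|F_k\|_{L^l L^r}$ with constant independent of $k$. Restricting to $B_\rho^+ \times (-1, 0)$, on which $v_k \equiv u_k$, yields the conclusion for $1 < l, r < \infty$; the endpoint cases $l = 1$ or $r = 1$ then follow from values $l', r' > 1$ close to $1$ via H\"older's inequality on the bounded time and space sets. The main technical obstacle is the $k$-uniformity of the maximal regularity constant, which rests on the uniform $C^\alpha$-bounds for the coefficients of $A_k$ inherited from $f \in C^{2+\alpha}_0(\R^{n-1})$ with $\delta_k \leq 1/2$, together with the classical Calder\'on--Zygmund theory for parabolic operators with H\"older-continuous coefficients on a half-space with Dirichlet boundary condition.
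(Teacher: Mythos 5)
Your proposal is correct, but it takes a genuinely different route from the paper. The paper's proof of this lemma is a two-line scaling computation: it writes
\[
\|(u_k)_t\|_{L^l(-1,0; L^r(B_\rho^+))} + \|\nabla^2 u_k\|_{L^l(-1,0; L^r(B_\rho^+))}
= \delta_k^{\frac{2}{p-1}+2-\frac{n}{r}-\frac{2}{l}}\bigl(\|\hat u_t\|_{L^l(-\delta_k^2,0; L^r(B_{\rho\delta_k}^+))} + \|\nabla^2 \hat u\|_{L^l(-\delta_k^2,0; L^r(B_{\rho\delta_k}^+))}\bigr)
\]
and then invokes Lemma \ref{lem:pararegd}, whose scaling factor $\delta_k^{\frac{2}{l}+n(\frac{1}{r}-\frac{p}{q_c})}$ exactly cancels the prefactor (using $n\,p/q_c = 2p/(p-1)$). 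All the analytic work is done in the original variables, where the operator is the plain Laplacian: Lemma \ref{lem:pararegd} itself is proved by a Duhamel/cutoff decomposition on a fixed bounded $C^{2+\alpha}$ domain $\cD$, using Dirichlet-heat-kernel bounds for the linear, boundary and cutoff terms, and constant-coefficient maximal $L^l$--$L^{q_c/p}$ regularity for the nonlinear term. You instead stay in the rescaled variables and apply maximal regularity directly for the variable-coefficient operator $\partial_t - A_k$ on the half-space. Your verification of uniform ellipticity via $a_k^{ij}\xi_i\xi_j = |\xi' - \xi_n\nabla' f_k|^2 + \xi_n^2 \ge \tfrac12|\xi|^2$ and the uniform $C^\alpha$ control of the coefficients (with seminorms $\delta_k^\alpha[\nabla' f]_{C^\alpha}$, $\delta_k\|\Delta' f\|_\infty$, etc.\ tending to zero) is correct, so $k$-uniform maximal regularity is indeed available. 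The trade-off is that the paper's route avoids any appeal to maximal regularity for variable-coefficient operators with H\"older coefficients, replacing it by a more hands-on kernel argument in the original geometry; your route is arguably cleaner since it works entirely at the rescaled level and makes the mechanism (uniform coefficients $\Rightarrow$ uniform constants) explicit, but it leans on a stronger black box.

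One small point you should make explicit: you choose $\eta$ with $\supp\eta\subset[-3/2,\infty)$, so the source term $F_k$ lives on $(-3/2,0)$, whereas Lemma \ref{lem:unifuknabuk} as stated only bounds $u_k$ and $\nabla u_k$ on $(-1,0)$. This is not a real obstruction --- the proof of Lemma \ref{lem:unifuknabuk} rests on Proposition \ref{pro:gradoricri}, which gives the gradient bound for $s\in(-3/4,0)$, and after rescaling this covers $t\in(-3/(4\delta_k^2),0)\supset(-3/2,0)$ once $\delta_k<1/2$ --- but you should either note this extension or shrink the support of $\eta$ accordingly (e.g.\ take $\supp\eta\subset[-5/4,\infty)$ and still $\eta\equiv 1$ on $[-1,0]$, noting the same remark applies).
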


\begin{proof}
By direct computations, we have 
\[
\begin{aligned}
	&\|(u_k)_t\|_{L^l(-1,0; L^r(B_\rho^+))} 
	+ \|\nabla^2 u_k\|_{L^l(-1,0; L^r(B_\rho^+))}  \\
	&= 
	\delta_k^{\frac{2}{p-1}+2-\frac{n}{r}-\frac{2}{l}}
	( \|\hat u_t\|_{L^l(-\delta_k^2,0; L^r(B_{\rho\delta_k}^+))} 
	+ \|\nabla^2 \hat{u}\|_{L^l(-\delta_k^2,0; L^r(B_{\rho\delta_k}^+))} ).
\end{aligned}
\]
This together with Lemma \ref{lem:pararegd} shows the lemma. 
\end{proof}

Finally in this subsection, 
we show the existence of a blow-up limit. 
Let $\rho>0$. 
To avoid technicalities due to the corner of $\partial B^+_{\rho}$, 
we prepare a smooth domain $\cB_\rho$ 
satisfying $B_{\rho/2}^+\subset \cB_\rho \subset B_{\rho}^+$.

\begin{lemma}\label{lem:exbul}
Assume \eqref{eq:R2def}. 
There exist a subsequence still denoted by $u_k$
and a function $\overline{u}$ defined on $\R^n_+\times[-1,0]$ 
satisfying the following statements: 
\begin{itemize}
\item[(i)]
$u_k \rightarrow \overline{u}$
strongly in $L^\infty(-1,0;W^{1,2}(\cB_\rho))$ for each $\rho>0$ as $k\to\infty$.
\item[(ii)]
$u_k \rightarrow \overline{u}$
strongly in $L^\infty(-1,0;L^{p+1}(\cB_\rho))$ for each $\rho>0$ as $k\to\infty$. 
\item[(iii)]
$\|\overline{u} \|_{L^\infty(-1,0;L^{q_c}(\R^n_+))} \le 
M$. 
\item[(iv)]
$\|\nabla \overline{u} \|_{L^\infty(-1,0;L^{q_*,\infty}(\R^n_+))} 
\leq  C(M+M^p)$. 
\end{itemize}
Here $M$ is the constant in \eqref{eq:M_1} 
and $C>0$ is a constant depending on $R$. 
\end{lemma}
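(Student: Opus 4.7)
The plan is a standard compactness extraction: the uniform estimates of Lemmas \ref{lem:unifuknabuk} and \ref{lem:unifnab2uk}, combined with the Aubin-Lions-type compactness result of Appendix \ref{sec:cpt}, will yield strong convergence on each bounded set $\cB_\rho$, and a diagonal argument over $\rho \to \infty$ will produce the limit $\overline{u}$ on $\R^n_+ \times [-1,0]$.

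First, I would fix $\rho > 0$ and assemble the uniform bounds on $\cB_\rho$. Since $p > p_S$ is equivalent to both $q_c > p+1$ and $q_* > 2$, Lemma \ref{lem:unifuknabuk} provides $\{u_k\}$ uniformly bounded in $L^\infty(-1,0; L^{p+1}(\cB_\rho))$ and $\{\nabla u_k\}$ uniformly bounded in $L^\infty(-1,0; L^2(\cB_\rho))$, via the embedding $L^{q_*,\infty}(\cB_\rho) \hookrightarrow L^2(\cB_\rho)$ valid on bounded sets. Lemma \ref{lem:unifnab2uk} supplies, for any $1 \leq l < \infty$ and $1 \leq r \leq q_c/p$, uniform $L^l(-1,0; L^r(\cB_\rho))$-bounds on $(u_k)_t$ and $\nabla^2 u_k$.

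Next, I would apply the Aubin-Lions-type compactness of Appendix \ref{sec:cpt} to the $L^l(W^{2,r})$ spatial bound together with the $L^l(L^r)$ bound on the time derivative; this extracts a subsequence converging strongly in $L^l(-1,0; W^{1,2}(\cB_\rho))$ and in $L^l(-1,0; L^{p+1}(\cB_\rho))$. To upgrade the time integrability to $L^\infty$, I would invoke equicontinuity: H\"older's inequality applied to $(u_k)_t \in L^l(L^r)$ yields
\begin{equation*}
	\|u_k(\cdot,t_1) - u_k(\cdot,t_2)\|_{L^r(\cB_\rho)} \leq C |t_1-t_2|^{1-1/l}
\end{equation*}
uniformly in $k$, so that after a further subsequence Ascoli-Arzela provides strong convergence in $C([-1,0]; L^r(\cB_\rho))$. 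Interpolating this against the uniform $L^\infty_t L^{q_c}_x$ and $L^\infty_t W^{1,2}_x$ bounds will then upgrade the convergence to the spaces of (i) and (ii). A standard diagonal argument over $\rho = 1, 2, \ldots$ furnishes $\overline{u}$ on $\R^n_+\times [-1,0]$.

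Assertions (iii) and (iv) will follow from lower semicontinuity. After extracting a further a.e.-convergent subsequence, Fatou's lemma on $B_\rho^+$ at each $t$ combined with Lemma \ref{lem:unifuknabuk} gives $\|\overline{u}(\cdot,t)\|_{L^{q_c}(B_\rho^+)} \leq M$, and monotone convergence as $\rho \to \infty$ yields (iii); weak-$*$ lower semicontinuity of the Lorentz norm $L^{q_*,\infty}$ against the same lemma gives (iv). The principal technical obstacle is the upgrade from $L^l$-in-time to $L^\infty$-in-time strong convergence: this requires carefully combining the spatial compactness from Aubin-Lions with the time-equicontinuity coming from the $(u_k)_t$ bound, and the Lorentz-space gradient estimate needs the side embedding $L^{q_*,\infty} \hookrightarrow L^2$ on bounded sets to reach the $W^{1,2}$ framework.
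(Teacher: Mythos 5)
Your overall strategy (compactness from Appendix B plus interpolation against the uniform $L^{q_c}$ and $L^{q_*,\infty}$ bounds, then lower semicontinuity for (iii)--(iv)) is the right one, and (ii)--(iv) can indeed be closed along the lines you sketch. However, your plan does not deliver part (i), the $L^\infty_t W^{1,2}_x$ convergence, and this is exactly where the paper's use of Lemma \ref{lem:WLWcom} is essential.

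The difficulty is the following. Your Ascoli--Arzel\`a step uses equicontinuity of $t \mapsto u_k(\cdot,t)$ in $L^r(\cB_\rho)$, together with the fact that $u_k(\cdot,t)$ is bounded in $W^{1,2}(\cB_\rho)$ for each $t$ (giving pointwise precompactness in $L^r$). This yields strong convergence of $u_k$ in $C([-1,0];L^r(\cB_\rho))$ but says nothing about the convergence of $\nabla u_k$, because you have no equicontinuity estimate for $\nabla u_k$ (that would require control of $\partial_t\nabla u_k$, which Lemma \ref{lem:unifnab2uk} does not provide). Your proposed fix --- interpolating the $C_t L^r_x$ convergence against the uniform $L^\infty_t W^{1,2}_x$ bound --- does not produce $L^\infty_t W^{1,2}_x$ convergence: interpolating between $L^r$ (strongly convergent) and $W^{1,2}$ (merely bounded) gives a fractional Sobolev or Besov space of order strictly less than one, not $W^{1,2}$ itself. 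The usual caricature is $u_k=\epsilon_k\sin(kx)$ with $\epsilon_k=1/k$, which converges in every $L^r$ and is bounded in $W^{1,2}$ but whose gradients do not converge strongly in $L^2$; the additional $L^l_t W^{2,r}_x$ bound of Lemma \ref{lem:unifnab2uk} does rule out this example, but only through an argument that actually uses the second-derivative control, and since that control is only $L^l$ in time, a naive Gagliardo--Nirenberg bound of $\nabla(u_k-\overline{u})$ by $u_k-\overline{u}$ and $\nabla^2(u_k-\overline{u})$ will not give a supremum over $t$.

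Also, your opening claim that Aubin--Lions ``extracts a subsequence converging strongly in $L^l(-1,0;W^{1,2}(\cB_\rho))$ and in $L^l(-1,0;L^{p+1}(\cB_\rho))$'' overreaches: with $E_0=L^r$, $E_1=W^{2,r}$ and $r<\min\{2,q_c/p\}$ small, the interpolation space $(L^r,W^{2,r})_{\theta,p_\theta}$ is a Besov space $B^{2\theta}_{r,p_\theta}$ that in general embeds into $W^{1,r}$, not into $W^{1,2}$ or $L^{p+1}$.

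What closes the gap, and what the paper does, is to invoke Lemma \ref{lem:WLWcom}: the compact embedding
\[
W^{1,5}(-1,0;L^r(\cB_\rho))\cap L^5(-1,0;W^{2,r}(\cB_\rho))\hookrightarrow\hookrightarrow C([-1,0];W^{1,r}(\cB_\rho))
\]
delivers strong convergence of $\nabla u_k$ in $L^\infty_t L^r_x$, not just of $u_k$. Once the gradient converges in $L^\infty_t L^r_x$ with $r<2<q_*$, one interpolates $\nabla(u_k-\overline{u})$ in $L^2$ between $L^r$ (strongly convergent) and $L^{q_*,\infty}$ (uniformly bounded, a pointwise-in-time bound by Lemma \ref{lem:unifuknabuk}), and this gives (i). The packaged form of the compactness result is precisely what supplies gradient convergence, which your Ascoli--Arzel\`a route misses.
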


\begin{proof}
Let $1<r<\min\{2,q_c/p\}$. 
By a consequence of an Aubin-Lions type compactness result 
(see Lemma \ref{lem:WLWcom}), we have 
\[
	W^{1,5}(-1,0;L^r(\cB_\rho)) \cap L^5(-1,0;W^{2,r}(\cB_\rho))
	\hookrightarrow 
	C([-1,0]; W^{1,r}(\cB_\rho))
\]
and this embedding is compact.
Therefore, the uniform bounds in Lemma \ref{lem:unifnab2uk} show that 
there exists a subsequence still denoted by $\{u_k \}$ satisfying 
$u_k\to\overline{u}$ in 
$C([-1,0];W^{1,r}(\cB_\rho))$ as $k\to\infty$. 
By Lemma \ref{lem:unifuknabuk}, 
the rescaled functions $u_k$ also satisfy the following uniform bounds: 
\begin{align}
	&\notag 
	\sup_{-1<t<0} 
	\|u_k(\cdot,t)\|_{L^{q_c}(\cB_\rho)} 
	\leq  M, \\
	&\label{eq:unifq*}
	\sup_{-1<t<0} \|\nabla u_k(\cdot,t)\|_{L^{q_*,\infty}(\cB_\rho)} 
	\leq  C(M+M^p), 
\end{align}
for all $k\geq k_\rho$, 
where $C$ depends on $R$ and is independent of $\rho$ and $k$. 
Hence the standard interpolations give
\[
\begin{aligned}
	&\begin{aligned}
	\|u_k-\overline{u}\|_{L^\infty(-1,0;L^{p+1}_x)} 
	&\le
	\|u_k-\overline{u}\|_{L^\infty(-1,0;L^{r}_x)}^{\theta_1}
	\|u_k-\overline{u}\|_{L^\infty(-1,0;L^{q_c}_x)}^{1-\theta_1}
	\\
	&\le
	CM\|u_k-\overline{u}\|_{L^\infty(-1,0;L^{r}_x)}^{\theta_1}, 
	\end{aligned} \\
	&\begin{aligned}
	\|\nabla(u_k-\overline{u})\|_{L^\infty(-1,0;L^{2}_x)} 
	&\le
	\|\nabla(u_k-\overline{u})\|_{L^\infty(-1,0;L^{r}_x)}^{\theta_2}
	\|\nabla(u_k-\overline{u})\|_{L^\infty(-1,0;L^{q_*,\infty}_x)}^{1-\theta_2}
	\\
	&\le
	C(M+M^p)\|\nabla(u_k-\overline{u})\|_{L^\infty(-1,0;L^{r}_x)}^{\theta_2}, 
	\end{aligned}
\end{aligned}
\]
where $L^r_x:=L^r(\cB_\rho)$ 
and $L^{q_*,\infty}_x:=L^{q_*,\infty}(\cB_\rho)$. 
These imply (i) and (ii).
By the lower semicontinuity of the weak limit, we see that 
\[
	\|\overline{u}(\cdot,t)\|_{L^{q_c}(\cB_\rho)} 
	\le \liminf_{k\to \infty} \|u_k(\cdot,t)\|_{L^{q_c}(\cB_\rho)} 
	\le M
\]
for $-1<t<0$. Thus, letting $\rho\to\infty$ gives (iii). 
Since the constant $C$ in \eqref{eq:unifq*} 
is independent of $\rho$, we also obtain  (iv). 
The proof is complete. 
\end{proof}

\begin{remark}\label{rem:weakt}
By Lemma \ref{lem:unifnab2uk}, up to subsequence still denoted by $u_k$, 
we can check that  $(u_k)_t \to \overline{u}_t$
weakly in $L^r(\cB_\rho\times (-1,0))$ for each $\rho>0$ as $k\to\infty$. 
\end{remark}

\subsection{Blow-up analysis}
We continue to study the case \eqref{eq:R2def}. 
For \eqref{eq:R1def}, see Remark \ref{rem:R1bu}. 
By using Lemma \ref{lem:exbul}, 
we show several properties of the blow-up limit $\overline{u}$. 
For $\tilde x \in \overline{\R^n_+}$ and $-1<t<\tilde t\leq 0$, 
define a global weighted energy by 
\[
	\overline{E}_{(\tilde x,\tilde t)}(t) :=
	(\tilde t-t)^{\frac{p+1}{p-1}} 
	\int_{\R^n_+} 
	\left( \frac{|\nabla \overline{u}|^2}{2}  
	-\frac{|\overline{u}|^{p+1}}{p+1} 
	+\frac{ \overline{u}^2}{2(p-1)(\tilde t-t) } \right) 
	K_{(\tilde x,\tilde t)}(x,t) dx. 
\]
Note that the same computations as in Lemma \ref{lem:Ebdd} 
together with Lemma \ref{lem:exbul} (iii) and (iv) yield 
\begin{equation}\label{eq:Ebarbdd}
	|\overline{E}_{(\tilde x,\tilde t)}(t)|
	\leq C(M+M^p)^2 
\end{equation} 
for any $\tilde x\in \overline{\R^n_+}$ and 
$-1<t<\tilde t\leq  0$.

We first show that $\overline{E}$ is a scaling limit of $E$.

\begin{lemma}\label{lem:monobul}
Assume \eqref{eq:R2def}. 
Then, for each 
$\tilde x \in \overline{\R^n_+}$ and $-1<t<\tilde t\leq  0$,
\[
	E_{(\delta_k\tilde x,\delta_k^2\tilde{t})}
	(\delta_k^2 t;\phi_{\delta_k\tilde x,R/4})
	\to 
	\overline{E}_{(\tilde x,\tilde t)}(t)
	\quad \mbox{ as }k\to \infty. 
\]
\end{lemma}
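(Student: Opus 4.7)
The plan is to rewrite the left-hand side as an integral over the rescaled domain in terms of $u_k$, by successively applying the change of variables $x = \Psi(\xi)$ and $\xi = \delta_k \eta$. The critical scaling causes all powers of $\delta_k$ to cancel, leaving an expression of the same form as $\overline{E}_{(\tilde x,\tilde t)}(t)$ up to modifications of the Gaussian kernel and the presence of a residual cutoff. Convergence is then obtained by combining the strong convergences from Lemma \ref{lem:exbul} on bounded sets with Gaussian decay to control the tails at infinity.

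Concretely, starting from \eqref{eq:Edeft}, I apply the change of variables $x = \Psi(\xi)$ as in \eqref{eq:Etstr}, and then substitute $\xi = \delta_k \eta$ together with $\hat u(\delta_k \eta, \delta_k^2 t) = \delta_k^{-2/(p-1)} u_k(\eta, t)$ and the analogous identity for $\widehat \nabla \hat u$ in terms of $\widehat \nabla_k u_k := (\nabla' u_k - (\partial_n u_k)\nabla' f_k, \partial_n u_k)$. All powers of $\delta_k$ cancel by critical scaling (since $q_c = n(p-1)/2$), yielding an integral over $\delta_k^{-1}\Phi(\Omega_R)$ of the same form as $\overline{E}_{(\tilde x,\tilde t)}(t)$ but with $u_k$ and $\widehat \nabla_k u_k$ in place of $\overline{u}$ and $\nabla \overline{u}$; an exponential factor $\exp(-|(\eta' - \tilde x',\, \eta_n - \tilde x_n + f_k(\eta') - f_k(\tilde x'))|^2/(4(\tilde t - t)))$ in place of $(\tilde t - t)^{n/2} K_{(\tilde x,\tilde t)}(\eta,t)$; and an additional cutoff $\varphi(4\delta_k|(\eta' - \tilde x',\, \eta_n - \tilde x_n + f_k(\eta') - f_k(\tilde x'))|/R)^2$. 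Since $f(0) = 0$ and $\nabla' f(0) = 0$, the relation $f_k(\eta') = \delta_k^{-1} f(\delta_k \eta')$ gives $f_k \to 0$ locally uniformly, together with \eqref{eq:nabfkuni}; consequently the modified kernel converges pointwise to $K_{(\tilde x,\tilde t)}(\eta,t)$, the cutoff to $1$, $\widehat \nabla_k$ to $\nabla$, and $\chi_{\delta_k^{-1}\Phi(\Omega_R)} \to \chi_{\R^n_+}$. Fixing $\rho > 0$ and restricting to $|\eta| \leq \rho$, the strong convergences $u_k \to \overline{u}$ in $L^\infty(-1,0;W^{1,2}(\cB_\rho))$ and in $L^\infty(-1,0;L^{p+1}(\cB_\rho))$ from Lemma \ref{lem:exbul} (i), (ii), combined with uniform boundedness and pointwise convergence of the weights, yield convergence of this portion to the corresponding portion of $\overline{E}_{(\tilde x,\tilde t)}(t)$ by dominated convergence.

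It remains to bound the tail contribution from $|\eta| > \rho$ uniformly in $k$. For the terms $|u_k|^{p+1}$ and $u_k^2/(\tilde t - t)$, the H\"older inequality combined with the uniform bound $\sup_t \|u_k(\cdot,t)\|_{L^{q_c}(\R^n_+)} \leq M$ from Lemma \ref{lem:unifuknabuk} and the Gaussian decay of the modified kernel, which is uniform in $k$ since $\|\nabla' f_k\|_\infty \leq 1/2$, gives a bound that tends to $0$ as $\rho \to \infty$. For the gradient term we only have $\nabla u_k$ uniformly bounded in $L^\infty(L^{q_*,\infty})$ by Lemma \ref{lem:unifuknabuk}, so the Lorentz-space H\"older inequality must be invoked in place of the ordinary one, mirroring the calculation in \eqref{eq:nabucalpp}. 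The analogous tail bounds for $\overline{u}$ follow from Lemma \ref{lem:exbul} (iii), (iv). Sending first $k \to \infty$ and then $\rho \to \infty$ completes the proof. The main technical point is the tail control of the gradient term, where the Lorentz setting requires the H\"older inequality for Lorentz spaces rather than its standard $L^p$ version.
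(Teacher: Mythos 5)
Your proof follows essentially the same route as the paper: change of variables to pass to $u_k$, an inner/outer decomposition with strong convergence on bounded regions from Lemma \ref{lem:exbul} (i)--(ii), and a tail bound via the uniform $L^{q_c}$ and $L^{q_*,\infty}$ estimates together with the Lorentz-space H\"older inequality and Gaussian decay. The paper organizes the difference more explicitly into four telescoping pieces ($J_1^k$ through $J_4^k$, separating the integrand, cutoff, kernel, and domain discrepancies) before performing the inner/outer split, but the underlying ideas and the identified technical point — Lorentz-space control of the gradient tail — are the same.
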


\begin{proof}
Fix $\tilde x \in \overline{\R^n_+}$ and $-1<t<\tilde t\leq  0$. 
We take $k$ so large that 
$\tilde x\in \delta_k^{-1} B_{R/4}$ 
and $-(1/2)\delta_k^{-2}<t<0$. 
By the definition of $E$ in \eqref{eq:Edeft} 
and the change of variables, we have 
\[
\begin{aligned}
	& E_{(\delta_k \tilde x,\delta_k^2 \tilde t)}
	(\delta_k^2 t; \phi_{\delta_k\tilde x,R/4}) \\
	&= 
	(\tilde t-t)^{\frac{p+1}{p-1}-\frac{n}{2}} 
	\int_{\delta_k^{-1} \Phi(\Omega_R)} 
	\left( \frac{|\widehat \nabla u_k|^2}{2}  
	-\frac{|u_k|^{p+1}}{p+1} 
	+\frac{|u_k(x,t)|^2}{2(p-1)(\tilde t-t)} \right) \\
	&\qquad \times 
	\exp\left( 
	- \frac{|\delta_k^{-1} \Psi(\delta_k x)-\tilde x|^2}{4(\tilde t-t)}
	\right)
	\varphi^2 \left( \frac{4}{R} 
	|\Psi(\delta_k x)-\delta_k \tilde x| \right)  dx, 
\end{aligned}
\]
where $\widehat \nabla u_k
:=(\nabla'u_k - (\partial_n u_k) \nabla' f_k, \partial_n u_k)$
and $f_k(x')=\delta_k^{-1} f(\delta_k x')$, see \eqref{eq:u_k}. 
In what follows, 
we focus on the convergence of the first term, since 
the other terms can be handled in the same manner.
Namely, we prove that 
\begin{equation}\label{eq:EdelkbarE}
\begin{aligned}
	&\int_{\delta_k^{-1} \Phi(\Omega_R)} 
	|\widehat \nabla u_k|^2 
	e^{ - \frac{|\delta_k^{-1} \Psi(\delta_k x)-\tilde x|^2}
	{4(\tilde t-t)} } \varphi_k^2(x) dx 
	- \int_{\R^n_+} 
	|\nabla \overline{u}|^2 
	e^{- \frac{|x-\tilde x|^2}{4(\tilde t-t)}} dx \\
	&=: 
	J_1^k + J_2^k + J_3^k + J_4^k\to 0 
\end{aligned}
\end{equation}
as $k\to\infty$, where 
$\varphi_k(x):=
\varphi(4\delta_k|\delta_k^{-1}\Psi(\delta_k x)-\tilde x|/R)$ and 
\[
\begin{aligned}
	&J_1^k: = 
	\int_{\delta_k^{-1} \Phi(\Omega_R)} 
	(|\widehat \nabla u_k|^2 - |\nabla \overline{u}|^2 )
	e^{ - \frac{|\delta_k^{-1} \Psi(\delta_k x)-\tilde x|^2}
	{4(\tilde t-t)} }
	\varphi_k^2 dx, \\
	&J_2^k: =  \int_{\delta_k^{-1} \Phi(\Omega_R)}
	|\nabla \overline{u}|^2 
	e^{ - \frac{|\delta_k^{-1} \Psi(\delta_k x)-\tilde x|^2}
	{4(\tilde t-t)} } 
	( \varphi_k^2 -1 ) dx, \\
	&J_3^k: =  \int_{\delta_k^{-1} \Phi(\Omega_R)}
	|\nabla \overline{u}|^2 
	\left( 
	e^{ - \frac{|\delta_k^{-1} \Psi(\delta_k x)-\tilde x|^2}
	{4(\tilde t-t)} } 
	- e^{- \frac{|x-\tilde x|^2}{4(\tilde t-t)}}  
	\right) dx, \\
	&J_4^k: = 
	- \int_{\R^n_+\setminus \delta_k^{-1} \Phi(\Omega_R)} 
	|\nabla \overline{u}|^2 
	e^{- \frac{|x-\tilde x|^2}{4(\tilde t-t)}} dx. 
\end{aligned}
\]
Remark that the resultant convergence in \eqref{eq:EdelkbarE} 
is pointwise for $\tilde x$, $t$ and $\tilde t$, 
but this does not cause any problems for the proof of this lemma. 

Let us estimate $J_1^k$. 
For $R_1'>0$, we estimate and set 
\[
\begin{aligned}
	|J_1^k| &\leq 
	\left(  \int_{B_{R_1'}} 
	+ \int_{\R_+^n \setminus B_{R_1'}} \right) 
	||\widehat \nabla u_k|^2 - |\nabla \overline{u}|^2|
	e^{ - \frac{|\delta_k^{-1} \Psi(\delta_k x)-\tilde x|^2}
	{4(\tilde t-t)} }
	\varphi_k^2 \chi_{\delta_k^{-1} \Phi(\Omega_R)} dx
	\\
	&=: J_{\text{in}}^k + J_{\text{out}}^k. 
\end{aligned}
\]
We consider $J_{\text{out}}^k$. 
From \eqref{eq:R2def} and 
$|f(\delta_k x')| \leq 2^{-1} \delta_k |x'|$, 
it follows that 
\begin{equation}\label{eq:dkmdkx}
	|\delta_k^{-1}\Psi(\delta_k x)-\tilde x|^2 
	= |x+(0,\delta_k^{-1}f(\delta_k x')) 
	-\tilde x|^2 \\
	\geq \frac{1}{8}|x|^2 - |\tilde x|^2. 
\end{equation}
Thus, 
\[
\begin{aligned}
	J_{\text{out}}^k &\leq 
	e^{ \frac{|\tilde x|^2}{8(\tilde t-t)} }
	e^{ - \frac{(R_1')^2}{64(\tilde t-t)} }
	\int_{\R_+^n} 
	(|\widehat \nabla u_k|^2 + |\nabla \overline{u}|^2)
	e^{ - \frac{|\delta_k^{-1} \Psi(\delta_k x)-\tilde x|^2}
	{8(\tilde t-t)} }
	\varphi_k^2 \chi_{\delta_k^{-1} \Phi(\Omega_R)} dx \\
	&\leq 
	e^{ \frac{|\tilde x|^2}{8(\tilde t-t)} }
	e^{ - \frac{(R_1')^2}{64} }
	\int_{\delta_k^{-1} \Phi(\Omega_R)} 
	|\widehat \nabla u_k|^2 
	e^{ - \frac{|\Psi(\delta_k x)-\delta_k\tilde x|^2}
	{8\delta_k^2 (\tilde t-t)} }
	\varphi_k^2 dx \\
	&\quad 
	+e^{ \frac{|\tilde x|^2}{4(\tilde t-t)} }
	e^{ - \frac{(R_1')^2}{64} }
	\int_{\R_+^n} |\nabla \overline{u}|^2 
	e^{ - \frac{|x|^2}{512(\tilde t-t)} } dx
\end{aligned}
\]
for $-1<t<\tilde t\leq  0$. 
The second term in the right-hand side can be estimated by 
computations similar to \eqref{eq:nabucalpp} 
with the aid of Lemma \ref{lem:exbul} (iv). 
As for the first term, 
we go back to the original variables. 
Then by $\supp \varphi^2 ( 4 |\cdot-\delta_k\tilde x| /R ) 
\subset B_{R/4}(\delta_k \tilde x)$, 
$\tilde x\in \delta_k^{-1} B_{R/4}$, 
$-(1/2)\delta_k^{-2}<t<\tilde t\leq 0$ 
and computations similar to \eqref{eq:nabucalpp} 
with \eqref{eq:M_1}, we see that 
\[
\begin{aligned}
	&\int_{\delta_k^{-1} \Phi(\Omega_R)} 
	|\widehat \nabla u_k(x,t)|^2 
	e^{ - \frac{|\Psi(\delta_k x)-\delta_k\tilde x|^2}
	{8\delta_k^2 (\tilde t-t)} }
	\varphi_k^2(x) dx \\
	&= \delta_k^{\frac{2(p+1)}{p-1}-n} \int_{\Omega_R} 
	|\nabla u(y, \delta_k^2 t)|^2 
	e^{ - \frac{|y-\delta_k\tilde x|^2}
	{8\delta_k^2 (\tilde t-t)} }
	\varphi^2 \left( \frac{4}{R} 
	|y-\delta_k\tilde x| \right) dy \\
	&\leq 
	\delta_k^{\frac{2(p+1)}{p-1}-n} \int_{\Omega_{R/2}} 
	|\nabla u(y, \delta_k^2 t)|^2 
	e^{ - \frac{|y-\delta_k\tilde x|^2}
	{8\delta_k^2 (\tilde t-t)} } dy 
	\leq 
	C(M+M^p)^2 (\tilde t-t)^{\frac{n}{2}-\frac{p+1}{p-1}}, 
\end{aligned}
\]
where $C>0$ is independent of $k$. 
Thus, 
\[
	J_{\text{out}}^k \leq 
	C e^{ \frac{|\tilde x|^2}{4(\tilde t-t)} }
	(\tilde t-t)^{\frac{n}{2}-\frac{p+1}{p-1}}
	e^{ - \frac{(R_1')^2}{64} }. 
\]

We examine $J_{\text{in}}^k$. 
By using  $\widehat \nabla u_k
=(\nabla'u_k - (\partial_n u_k) \nabla' f_k, \partial_n u_k)$ and 
$f_k(x')=\delta_k^{-1} f(\delta_k x')$, 
we have 
\[
\begin{aligned}
	&|\widehat \nabla u_k|
	=|\nabla u_k - 
	( (\partial_n u_k) \nabla' f(\delta_k x'), 0)|
	\leq (1+|\nabla' f(\delta_k x')|) |\nabla u_k|, \\ 
	&|\widehat \nabla u_k -\nabla \overline{u}|^2
	\leq C|\nabla u_k-\nabla \overline{u}|^2 
	+ C |\nabla u_k|^2 |\nabla' f(\delta_k x')|^2. 
\end{aligned}
\]
These estimates together with  the H\"older inequality show that 
\[
\begin{aligned}
	J_{\text{in}}^k 
	&\leq 
	\int_{B_{R_1'}} 
	|\widehat \nabla u_k + \nabla \overline{u}|
	|\widehat \nabla u_k - \nabla \overline{u}|
	\chi_{\delta_k^{-1} \Phi(\Omega_R)} dx \\ 
	&\leq 
	\left( \int_{B_{R_1'}} 
	( (1+|\nabla' f(\delta_k x')|) |\nabla u_k| + |\nabla \overline{u}| 
	)^2 \chi dx \right)^\frac{1}{2} \\
	&\quad \times 
	\left( C \int_{B_{R_1'}} 
	|\nabla u_k-\nabla \overline{u}|^2 \chi dx
	+ C \int_{B_{R_1'}}  |\nabla u_k|^2 |\nabla' f(\delta_k x')|^2
	\chi dx \right)^\frac{1}{2}
\end{aligned}
\]
where $\chi:=\chi_{\delta_k^{-1} \Phi(\Omega_R)}$. 
By $f\in C^{2+\alpha}_0(\R^{n-1})$, 
\eqref{eq:unifq*} and Lemma \ref{lem:exbul} (iv), 
the first integral in the right-hand side is 
bounded by a constant. 
In addition, Lemma \ref{lem:exbul} (i) guarantees that 
the second integral converges to $0$ as $k\to \infty$. 
Therefore,
from $\nabla'f(0)=0$ and 
the Lebesgue dominated convergence theorem, it follows that 
\[
	\limsup_{k\to\infty} J_{\text{in}}^k
	\leq 
	C \limsup_{k\to\infty} 
	\left( \int_{B_{R_1'}} |\nabla u_k|^2 
	|\nabla' f(\delta_k x')|^2
	\chi_{\delta_k^{-1} \Phi(\Omega_R)} dx \right)^\frac{1}{2}=0. 
\]
Hence the above estimates for $J_{\text{in}}^k$ 
and $J_{\text{out}}^k$ show that 
\[
	\limsup_{k\to\infty} |J_1^k| \leq 
	C e^{ \frac{|\tilde x|^2}{4(\tilde t-t)} }
	(\tilde t-t)^{\frac{n}{2}-\frac{p+1}{p-1}}
	e^{ - \frac{(R_1')^2}{64} } \to 0
\]
as $R_1'\to\infty$, and so $\lim_{k\to\infty} J_1^k=0$.

We consider $J_2^k$. 
For $R_2'>0$, we note that 
\[
	|\delta_k^{-1} \Psi(\delta_k x) - \tilde x|
	\leq |x| + \frac{1}{2} |x'| + |\tilde x| 
	\leq \frac{3}{2} R_2' + |\tilde x| 
\]
if $x\in B_{R_2'}$. 
This together with \eqref{eq:dkmdkx}, 
$0\leq \varphi_k\leq 1$ and $\varphi'\leq 0$ shows that 
\[
\begin{aligned}
	\limsup_{k\to\infty} |J_2^k| 
	&\leq 
	\limsup_{k\to\infty} 
	\left( 1- \varphi^2 \left( \frac{4\delta_k}{R} 
	\left(\frac{3}{2} R_2' + |\tilde x|\right) \right) \right)
	\int_{B_{R_2'}^+} 
	|\nabla \overline{u}|^2  dx \\
	&\quad 
	+ 
	e^\frac{|\tilde x|^2}{4(\tilde t-t)} 
	\int_{\R^n_+\setminus B_{R_2'}^+} 
	|\nabla \overline{u}|^2 
	e^{ - \frac{|x|^2}{32(\tilde t-t)} }  dx 
\end{aligned}
\]
for $-1<t<\tilde t\leq  0$. 
Hence the inner part converges to $0$ as $k \rightarrow \infty$, and 
then letting $R_2'\to\infty$ yields $\lim_{k\to\infty} J_2^k =0$.

As for $J_3^k$, by \eqref{eq:dkmdkx}, we have 
\[
\begin{aligned}
	|J_3^k| 
	&\leq 
	\int_{B_{R_3'}^+} 
	|\nabla \overline{u}|^2 
	\left| 
	e^{ - \frac{|\delta_k^{-1} \Psi(\delta_k x)-\tilde x|^2}
	{4(\tilde t-t)} } 
	- e^{- \frac{|x-\tilde x|^2}{4(\tilde t-t)}}  
	\right| dx \\
	&\quad 
	+ \int_{\R^n_+\setminus B_{R_3'}^+} 
	|\nabla \overline{u}|^2 
	\left( 
	e^{ - \frac{|x|^2}{32(\tilde t-t)} } 
	e^\frac{|\tilde x|^2}{4(\tilde t-t)} 
	+ e^{- \frac{|x|^2}{8(\tilde t-t)}}  
	e^\frac{|\tilde x|^2}{4(\tilde t-t)} 
	\right) dx. 
\end{aligned}
\]
The outer part can be handled in the same way as above. 
We can also check that 
the inner part converges to $0$ as $k\to\infty$ by 
\[
	\delta_k^{-1} \Psi(\delta_k x)
	= x+(0,\delta_k^{-1} f(\delta_k x'))
	= x+(0,\nabla' f(\theta_{x'} \delta_k x'))
	\to x 
\]
for some $0\leq \theta_{x'}\leq 1$ as $k\to\infty$. 
Therefore $\lim_{k\to\infty} J_3^k =0$. 
By Lemma \ref{lem:exbul} (iv), we can easily see that 
$\lim_{k\to\infty} J_4^k =0$. 
Hence we obtain \eqref{eq:EdelkbarE}. 
\end{proof}

We next estimate an integral concerning $|\overline{u}|^{p+1}$ 
by using $\overline{E}_{(\tilde x,\tilde t)}$. 
More precisely, we prove an analog of Lemma \ref{lem:32} 
for the blow-up limit $\overline{u}$.

\begin{lemma}
Assume \eqref{eq:R2def}. 
Then there exists $C>0$ such that 
\[
\begin{aligned}
	&\int_{t'}^t (\tilde t-s)^{\frac{p+1}{p-1}} 
	\int_{\R^n_+} |\overline{u}|^{p+1} K_{(\tilde x,\tilde t)}(x,s) dx ds \\
	&\leq 
	C \left( \log\frac{\tilde t-t'}{\tilde t-t} \right)^\frac{1}{2}
	(  \overline{E}_{(\tilde x,\tilde t)}(t') 
	- \overline{E}_{(\tilde x,\tilde t)}(t) 
	+ C(\tilde t-t')^\frac{1}{2} )^\frac{1}{2} \\
	&\quad 
	+ C_p (\overline{E}_{(\tilde x,\tilde t)}(t') 
	+ C(\tilde t-t')^\frac{1}{2}) 
	\log\frac{\tilde t-t'}{\tilde t-t}  
	+ C(\tilde t-t')^\frac{1}{2}
\end{aligned}
\]
for $\tilde x \in \overline{\R^n_+}$ and 
$-1<t'<t<\tilde t\leq 0$. 
Here $C_p:=2(p+1)/(p-1)$ and 
$h(s):=s+s^{1/2}$ for $s\geq 0$. 
\end{lemma}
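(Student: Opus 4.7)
The plan is to derive the estimate for $\overline u$ by applying Lemma \ref{lem:32} to $u$ at the rescaled points $(\delta_k\tilde x,\delta_k^2\tilde t)$, rewriting the inequality in terms of $u_k$ via the change of variables used to define $u_k$, and then passing to the limit along the subsequence of Lemma \ref{lem:exbul} using the energy convergence from Lemma \ref{lem:monobul}.

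First I would fix $\tilde x\in\overline{\R^n_+}$ and $-1<t'<t<\tilde t\le 0$. For all $k$ sufficiently large, $\delta_k\tilde x\in\overline{\Omega_{R/4}}$ and $-1/2<\delta_k^2 t'<\delta_k^2 t<\delta_k^2\tilde t\le 0$, so Lemma \ref{lem:32} applies at $(\delta_k\tilde x,\delta_k^2\tilde t)$ on the time window $(\delta_k^2 t',\delta_k^2 t)$. Performing the change of variables $x=\delta_k y$, $s=\delta_k^2\sigma$ and using $u_k(y,\sigma)=\delta_k^{2/(p-1)}\hat u(\delta_k y,\delta_k^2\sigma)$, the inequality becomes one in terms of $u_k$, the fixed weight $K_{(\tilde x,\tilde t)}$, the scale-invariant cutoff $\varphi(4|\Psi(\delta_k \cdot)-\delta_k\tilde x|/R)$, and the rescaled energies $E_{(\delta_k\tilde x,\delta_k^2\tilde t)}(\delta_k^2 t';\phi_{\delta_k\tilde x,R/4})$ and $E_{(\delta_k\tilde x,\delta_k^2\tilde t)}(\delta_k^2 t;\phi_{\delta_k\tilde x,R/4})$; the error term $C(\delta_k^2\tilde t-\delta_k^2 t')^{1/2}=C\delta_k(\tilde t-t')^{1/2}$ is absorbed into $C(\tilde t-t')^{1/2}$.

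Next I would pass to the limit term by term. The two energy factors on the right-hand side converge to $\overline E_{(\tilde x,\tilde t)}(t')$ and $\overline E_{(\tilde x,\tilde t)}(t)$ by Lemma \ref{lem:monobul}, so the right-hand side converges to the claimed expression. For the left-hand side, I would split the spatial integral into an inner part over $\cB_\rho$ (with $B_{\rho/2}^+\subset\cB_\rho\subset B_\rho^+$) and an outer part over $\R^n_+\setminus\cB_\rho$. On the inner part the cutoff equals $1$ for $k$ large, and Lemma \ref{lem:exbul}(ii) gives strong convergence $u_k\to\overline u$ in $L^\infty(-1,0;L^{p+1}(\cB_\rho))$; since $K_{(\tilde x,\tilde t)}(\cdot,s)$ is bounded uniformly in $s\in(t',t)$ (as $t<\tilde t$) and the prefactor $(\tilde t-s)^{(p+1)/(p-1)}$ is integrable in $s$, the inner integrals converge to $\int_{t'}^t(\tilde t-s)^{(p+1)/(p-1)}\int_{\cB_\rho}|\overline u|^{p+1} K\,dxds$.

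The main obstacle is obtaining a uniform-in-$k$ control of the tail over $\R^n_+\setminus\cB_\rho$. For this I would use the Gaussian decay of $K_{(\tilde x,\tilde t)}$ together with the uniform $L^{q_c}$ bounds on $u_k$ from Lemma \ref{lem:unifuknabuk} and on $\overline u$ from Lemma \ref{lem:exbul}(iii). Mimicking the estimate of the $|u|^{p+1}$-term in Lemma \ref{lem:Ebdd}, the H\"older inequality gives
\[
\int_{\{|y|>\rho\}\cap \R^n_+}|u_k(y,\sigma)|^{p+1} K_{(\tilde x,\tilde t)}(y,\sigma)\,dy
\le C M^{p+1}(\tilde t-\sigma)^{-\frac{p+1}{p-1}}e^{-\rho^2/(C(\tilde t-\sigma))},
\]
and analogously for $\overline u$. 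Multiplying by $(\tilde t-\sigma)^{(p+1)/(p-1)}$ and integrating over $\sigma\in(t',t)$ produces a tail bounded by $C M^{p+1} e^{-\rho^2/(C(\tilde t-t))}(\tilde t-t')$, which tends to $0$ as $\rho\to\infty$ uniformly in $k$. Letting first $k\to\infty$ and then $\rho\to\infty$ yields the desired inequality.
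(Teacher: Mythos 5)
Your proposal is correct in substance, but it takes a genuinely different route from the paper. The paper introduces an additional smooth cutoff $\tilde\varphi_{\tilde R}$ at scale $\tilde R$ in the rescaled variables, rewrites $u_k$ in backward self-similar variables, \emph{re-derives} the quasi-monotonicity of Section~3 for the rescaled equation with this extra cutoff (noting the constants are uniform in $k$ and $\tilde R$), obtains an analog of Lemma~\ref{lem:32} for $u_k$ with an $L^{p+1}$ integral over a bounded region, passes to the limit $k\to\infty$ using the strong $L^{p+1}$ convergence of Lemma~\ref{lem:exbul}(ii) (the extra cutoff keeps everything compactly supported, so no tail control is needed), and only then lets $\tilde R\to\infty$ by monotone/dominated convergence. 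You instead apply Lemma~\ref{lem:32} directly to $u$ at the rescaled base points $(\delta_k\tilde x,\delta_k^2\tilde t)$, rescale to an inequality for $u_k$, and pass to the limit in one step, handling the unbounded spatial integral via a uniform-in-$k$ Gaussian tail bound coming from the uniform $L^{q_c}$ bound and H\"older. What your route buys: you avoid re-running the lengthy quasi-monotonicity computation with an extra cutoff, and your error terms scale like $\delta_k(\tilde t-t')^{1/2}$ and thus vanish in the limit, so you actually prove the sharper inequality without the $C(\tilde t-t')^{1/2}$ remainders (this implies the stated one since $\log\frac{\tilde t-t'}{\tilde t-t}>0$ and $E_k(t')-E_k(t)+C\delta_k(\tilde t-t')^{1/2}\ge 0$ for all $k$, hence the limit of the square-root argument is nonnegative). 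What the paper's route buys: it avoids any tail estimate by working on compact sets. Two small caveats in your write-up: after the change of variables the kernel is not exactly $K_{(\tilde x,\tilde t)}$ in the case \eqref{eq:R2def} but rather involves $|\delta_k^{-1}\Psi(\delta_k\cdot)-\tilde x|$; it converges to $K_{(\tilde x,\tilde t)}$ and satisfies the same Gaussian upper bound uniformly in $k$ via \eqref{eq:dkmdkx}, so your tail argument goes through, but this should be stated. Also, the tail bound should read $e^{-\rho^2/(C(\tilde t-t'))}$ (the supremum over $\sigma\in(t',t)$ of $(\tilde t-\sigma)$ is $\tilde t-t'$), not $e^{-\rho^2/(C(\tilde t-t))}$; either way the tail vanishes as $\rho\to\infty$.
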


\begin{proof}
Define $\tilde \varphi\in C^\infty([0,\infty))$ so that  
$\tilde \varphi' \leq 0$, 
$0\leq \tilde \varphi\leq 1$, 
$\tilde \varphi(z)=0$ for $z>2$ and 
$\tilde \varphi(z)=1$ for $0\leq z<1$. 
For $\tilde R>1$, set $\tilde \varphi_{\tilde R}(z):= \tilde \varphi(z/\tilde R)$. 
For $\tilde x \in \overline{\R^n_+}$ and 
$-1<t<\tilde t\leq 0$, 
we define 
\[
\begin{aligned}
	E_k(t)&:= 
	(\tilde t-t)^{\frac{p+1}{p-1}-\frac{n}{2}} 
	\int_{\delta_k^{-1} \Phi(\Omega_R)} 
	\left( \frac{|\widehat \nabla u_k(\xi,t)|^2}{2}  
	-\frac{|u_k|^{p+1}}{p+1} 
	+\frac{(\tilde t-t)^{-1} |u_k|^2}{2(p-1)} \right) \\
	&\qquad \times 
	e^{ - \frac{|\Psi_k(\xi)-\Psi_k(\tilde \xi_k)|^2}{4(\tilde t-t)} }
	\varphi^2 \left( \frac{4\delta_k}{R} |\Psi_k(\xi)-\Psi_k(\tilde \xi_k)| \right) 
	\tilde \varphi_{\tilde R}^2(|\xi|) d\xi, 
\end{aligned}
\]
where 
$\Psi_k(\xi):= \delta_k^{-1} \Psi(\delta_k \xi)$, 
$\Phi_k(x):= \delta_k^{-1} \Phi(\delta_k x)$ and 
$\tilde \xi_k:=\Phi_k(\tilde x)$. 
We note that $\Psi_k(\tilde \xi_k)=\tilde x$ and that 
the right-hand side of this definition coincides with 
$E_{(\delta_k \tilde x,\delta_k^2 \tilde t)}
(\delta_k^2 t; \phi_{\delta_k\tilde x,R/4})$ 
if we replace $\tilde \varphi_{\tilde R}$ with $1$.

By using the backward similarity variables 
$\eta :=(\xi-\tilde \xi_k)/(\tilde t-t)^{1/2}$ and 
$\tau:=-\log(\tilde t-t)$, we define 
\[
\begin{aligned}
	&w_k(\eta,\tau):= e^{-\frac{1}{p-1}\tau} 
	u_k( \tilde \xi_k +e^{-\frac{1}{2}\tau} \eta, \tilde t-e^{-\tau}), \\
	&g_k(\eta',\tau):= 
	e^{\frac{1}{2}\tau} f_k( \tilde \xi_k'+e^{-\frac{1}{2}\tau} \eta' ). 
\end{aligned}
\]
Then $w_k$ satisfies  
\[
	\left\{ 
	\begin{aligned}
	&(w_k)_\tau + \frac{1}{2} \eta\cdot \nabla w_k
	+\frac{1}{p-1} w_k -A_k w_k - |w_k|^{p-1}w_k =0, \\
	&\quad \eta\in 
	\Omega_k(\tau):=e^{\tau/2}(\Phi_k(\delta_k^{-1}\Omega_R)-\tilde x), \ 
	\tau\in  (-\log(\tilde t+1), \infty), \\
	&w_k=0, \quad \eta\in e^{\tau/2}
	(\Phi_k(\delta_k^{-1}(\partial\Omega\cap B_R))-\tilde \xi_k), 
	\end{aligned}
	\right.
\]
where 
$A_k w_k:=
	\Delta w_k 
	- 2\nabla' (\partial_n w_k )
	\cdot \nabla' g_k 
	+(\partial_n^2 w_k) |\nabla' g_k|^2 
	-(\partial_n w_k) \Delta' g_k$. 
Setting $\widehat \nabla w_k
:=(\nabla' w_k-(\partial_n w_k) \nabla' g_k, \partial_n w_k)$ and 
\[
\begin{aligned}
	&\cE_k(\tau):= 
	\int_{\Omega_k(\tau)} 
	\left( 
	\frac{ |\widehat \nabla w_k|^2}{2}
	-\frac{|w_k|^{p+1}}{p+1} 
	+\frac{w_k^2}{2(p-1)} \right) 
	\rho_k \psi_k^2 d\eta, \\
	&\begin{aligned}
	&\rho_k=\rho_k(\eta,\tau) :=\exp\bigg( 
	-\frac{1}{4} 
	( |\eta|^2 + 2(g_k(\eta',\tau)-g_k(0,\tau))\eta_n \\
	&\qquad + (g_k(\eta',\tau)-g_k(0,\tau))^2 ) \bigg), 
	\end{aligned} \\
	&\psi_k=\psi_k(\eta,\tau):= 
	\varphi\left( 
	\frac{4|\Psi_k(\tilde \xi_k+e^{-\tau/2}\eta)-\Psi_k(\tilde \xi_k)|}{R} 
	\right) 
	\tilde \varphi_{\tilde R}(| \tilde \xi_k + e^{-\frac{\tau}{2}}\eta| ),
\end{aligned}
\]
we can check that $E_k(t)=\cE_k(\tau)$ with $\tau=-\log(\tilde t-t)$. 
In addition, we can observe that 
the above situation is almost the same as in Section \ref{sec:pre}, 
except for the appearance of $\tilde \varphi_{\tilde R}$.

By the choice of $\tilde \varphi_{\tilde R}$ and $\tilde R>1$, we have 
\[
\begin{aligned}
	&|\nabla ( \tilde \varphi_{\tilde R}(| \tilde \xi_k + e^{-\frac{\tau}{2}}\eta| )) |
	\leq 
	C|\tilde \varphi_{\tilde R}'(| \tilde \xi_k + e^{-\frac{\tau}{2}}\eta| ) | e^{-\frac{\tau}{2}}
	\leq 
	C e^{-\frac{\tau}{2}}, \\
	&| \partial_\tau( \tilde \varphi_{\tilde R}(| \tilde \xi_k + e^{-\frac{\tau}{2}}\eta| )) |
	\leq 
	C|\tilde \varphi_{\tilde R}'(| \tilde \xi_k + e^{-\frac{\tau}{2}}\eta| ) | |\eta| 
	e^{-\frac{\tau}{2}}
	\leq 
	C|\eta|e^{-\frac{\tau}{2}}, 
\end{aligned}
\]
where $C>0$ is independent of $k$ and $\tilde R$. 
Then the same computations 
as in Section \ref{sec:pre} show that 
there exists a constant $C>0$ independent of $k$ and $\tilde R$ satisfying 
\[
\begin{aligned}
	\cE_k(\tau) + \frac{1}{2} \int_{\tau'}^\tau \int_{\Omega_k(\sigma)} 
	(w_k)_\sigma^2 \rho_k \psi_k^2 d\eta d\sigma
	&\leq  \cE_k(\tau')
	+ C (\tilde t-t')^\frac{1}{2}. 
\end{aligned}
\]
Therefore, the same computations as in Lemma \ref{lem:32} yield 
\[
\begin{aligned}
	&\int_{t'}^t (\tilde t-s)^{\frac{p+1}{p-1}-\frac{n}{2}} 
	\int_{\delta_k^{-1} \Phi(\Omega_R)} |u_k|^{p+1}  
	\exp\left( 
	- \frac{|\delta_k^{-1} \Psi(\delta_k x)-\tilde x|^2}{4(\tilde t-s)}
	\right)
	\\
	&\qquad \times 
	\varphi^2 \left( \frac{4}{R} |\Psi(\delta_k x)-\delta_k \tilde x| \right) 
	\tilde \varphi^2_{\tilde R}(|x|) dx ds \\
	&\leq 
	C \left( \log\frac{\tilde t-t'}{\tilde t-t} \right)^\frac{1}{2}
	(  E_k(t') - E_k(t) 
	+ C (\tilde t-t')^\frac{1}{2} )^\frac{1}{2} \\
	&\quad 
	+ C_p (E_k(t') 
	+ C (\tilde t-t')^\frac{1}{2}) 
	\log\frac{\tilde t-t'}{\tilde t-t}  
	+ C (\tilde t-t')^\frac{1}{2}, 
\end{aligned}
\]
where $C_p:=2(p+1)/(p-1)$ and $C>0$ is independent of $k$ and $\tilde R$. 
By Lemma \ref{lem:exbul} (ii), 
the same argument as in Lemma \ref{lem:monobul} and letting $k\to\infty$, 
we have 
\[
\begin{aligned}
	&\int_{t'}^t (\tilde t-s)^{\frac{p+1}{p-1}} 
	\int_{\R^n_+} |\overline{u}|^{p+1} K_{(\tilde x,\tilde t)}(x,s)
	\tilde \varphi_{\tilde R}^2(|x|) dx ds \\
	&\leq 
	C \left( \log\frac{\tilde t-t'}{\tilde t-t} \right)^\frac{1}{2}
	(  \overline{E}_{\tilde \varphi_{\tilde R}}(t') 
	- \overline{E}_{\tilde \varphi_{\tilde R}}(t) 
	+ C(\tilde t-t')^\frac{1}{2} )^\frac{1}{2} \\
	&\quad 
	+ C_p (\overline{E}_{\tilde \varphi_{\tilde R}}(t') 
	+ C(\tilde t-t')^\frac{1}{2}) 
	\log\frac{\tilde t-t'}{\tilde t-t}  
	+ C (\tilde t-t')^\frac{1}{2}, 
\end{aligned}
\]
where 
\[
	\overline{E}_{\tilde \varphi_{\tilde R}}(t) \\
	:=
	(\tilde t-t)^{\frac{p+1}{p-1}} 
	\int_{\R^n_+} 
	\left( \frac{|\nabla \overline{u}|^2}{2}  
	-\frac{|\overline{u}|^{p+1}}{p+1} 
	+\frac{\overline{u}^2}{2(p-1)(\tilde t-t)} \right) 
	K_{(\tilde x,\tilde t)} \tilde \varphi_{\tilde R}^2  dx. 
\]
Letting $\tilde R\to\infty$ 
with the aid of the monotone convergence theorem to the left-hand side 
and the Lebesgue dominated convergence theorem to the right-hand side, 
we obtain the desired inequality. 
\end{proof}

\begin{remark}\label{rem:wepregv}
As stated in Remark \ref{rem:wepreg}, 
the $\eps$-regularity (Theorem \ref{th:epsreg}) is also valid 
for $\overline{u}$, 
since $\overline{u}$ satisfies 
the analog of Lemma \ref{lem:32}. 
\end{remark}

We next show a monotonicity estimate of $\overline{E}_{(0,0)}$. 

\begin{lemma}\label{lem:monoubar}
Assume \eqref{eq:R2def}. 
Then there exists $C>0$ such that 
\[
\begin{aligned}
	&\overline{E}_{(0,0)}(t')
	- \overline{E}_{(0,0)}(t) 
	\geq 
	C^{-1} \frac{ (-t)^{4/(p-1)}}{(-t')^{2p/(p-1)}} \\
	&\quad \times 
	\left( 
	\int_{t'}^t  
	\int_{\R^n_+}
	\left( 
	\frac{\overline{u}}{p-1}
	+\frac{x\cdot\nabla \overline{u}}{2} + s \overline{u}_s
	\right)  
	K_{(0,0)}(x,s) \zeta(x,s) dxds
	\right)^2 
\end{aligned}
\]
for any $-1<t'<t<0$ and 
$\zeta\in C^\infty_0(\R^n_+\times(-1,0))$ with $|\zeta|\leq 1$. 
\end{lemma}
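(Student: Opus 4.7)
The strategy is to pass the quasi-monotonicity of Proposition \ref{pro:31} to the blow-up limit $\overline u$ and then apply Cauchy--Schwarz. First, I apply Proposition \ref{pro:31} at $(\tilde x,\tilde t)=(0,0)$ with times $\delta_k^2 t'<\delta_k^2 t$, where $\delta_k\to 0$ are the scales from Subsection \ref{subsec:4uni}. The change of variables $s=\delta_k^2\sigma$, $\xi=\delta_k x$, together with the definition \eqref{eq:u_k} of $u_k$, converts the inequality into
\[
E_{(0,0)}(\delta_k^2 t;\phi_{0,R/4})+\tfrac{1}{2}D_k(t',t)\leq E_{(0,0)}(\delta_k^2 t';\phi_{0,R/4})+C\delta_k(-t')^{1/2},
\]
where, setting $A_k:=\tfrac{u_k}{p-1}+\tfrac{x\cdot\nabla u_k}{2}+\sigma(u_k)_\sigma$ and $\Psi_k(x):=\delta_k^{-1}\Psi(\delta_k x)$,
\[
D_k(t',t):=\int_{t'}^t(-\sigma)^{\frac{2}{p-1}-\frac{n}{2}-1}\!\!\int A_k^2\, e^{-|\Psi_k(x)|^2/(-4\sigma)}\phi_{0,R/4}^2(\Psi(\delta_k x))\,dxd\sigma.
\]
The crucial scaling gain is that the error term carries an extra factor $\delta_k$ and vanishes in the limit.

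Sending $k\to\infty$, Lemma \ref{lem:monobul} handles the boundary terms, so it remains to establish the lower-semicontinuity of $D_k$. The weight $\mu_k(x,\sigma):=(-\sigma)^{\frac{2}{p-1}-\frac{n}{2}-1}e^{-|\Psi_k(x)|^2/(-4\sigma)}\phi_{0,R/4}^2(\Psi(\delta_k x))$ converges uniformly on every compact subset of $\R^n_+\times(t',t)$ to $\mu_\infty(x,\sigma):=(-\sigma)^{\frac{2}{p-1}-\frac{n}{2}-1}e^{-|x|^2/(-4\sigma)}$, using $f_k\to 0$ and $\phi_{0,R/4}(0)=1$. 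For $A_k$, Lemma \ref{lem:exbul}(i) gives strong $L^\infty_t L^2_{loc}$ convergence of the pieces involving $u_k$ and $\nabla u_k$, while Remark \ref{rem:weakt} together with Lemma \ref{lem:unifnab2uk} gives weak $L^l_t L^r_{x,loc}$ convergence of $\sigma(u_k)_\sigma$ for large $l$ and $r\leq q_c/p$. Hence for any $\psi\in C_c^\infty(\R^n_+\times(t',t))$, $\int A_k\psi\mu_k\to\int A\psi\mu_\infty$, where $A:=\tfrac{\overline u}{p-1}+\tfrac{x\cdot\nabla\overline u}{2}+\sigma\overline u_\sigma$. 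Applying Cauchy--Schwarz inside the limit gives $|\int A\psi\mu_\infty|\leq(\int\psi^2\mu_\infty)^{1/2}\liminf_k(\int A_k^2\mu_k)^{1/2}$, and taking the supremum over such $\psi$ with $\int\psi^2\mu_\infty\leq 1$ produces $\int A^2\mu_\infty\leq\liminf_k\int A_k^2\mu_k$ by duality in $L^2(\mu_\infty)$. Renaming the dummy variable $\sigma$ back to $s$, this yields the limiting dissipation inequality
\[
\overline E_{(0,0)}(t')-\overline E_{(0,0)}(t)\geq\tfrac{1}{2}\int_{t'}^t(-s)^{\frac{2}{p-1}-1}\!\!\int_{\R^n_+}A^2 K_{(0,0)}(x,s)\,dxds.
\]

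For the stated bound, Cauchy--Schwarz gives
\[
\Big|\int_{t'}^t\!\int A K\zeta\,dxds\Big|^2\leq\Big(\int_{t'}^t\!(-s)^{\frac{2}{p-1}-1}\!\!\int A^2 K\Big)\Big(\int_{t'}^t\!(-s)^{1-\frac{2}{p-1}}\!\!\int K\zeta^2\Big).
\]
The first factor is $\leq 2(\overline E_{(0,0)}(t')-\overline E_{(0,0)}(t))$. For the second, $|\zeta|\leq 1$ and $\int_{\R^n_+}K_{(0,0)}(\cdot,s)\,dx\leq C$, together with the factorization
\[
(-s)^{1-\frac{2}{p-1}}=(-s)^{\frac{p+1}{p-1}}(-s)^{-\frac{4}{p-1}}\leq(-t')^{\frac{p+1}{p-1}}(-t)^{-\frac{4}{p-1}}
\]
(valid because $-t\leq-s\leq-t'$ and the two factors have opposite monotonicity), followed by $t-t'\leq-t'$ and $(p+1)/(p-1)+1=2p/(p-1)$, bound this factor by $C(-t')^{2p/(p-1)}(-t)^{-4/(p-1)}$. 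Combining gives the claim.

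The main obstacle is the lower-semicontinuity step. Lemma \ref{lem:unifnab2uk} only supplies a uniform $L^l_tL^r_x$ bound on $(u_k)_t$ with $r\leq q_c/p$, which can be strictly below $2$ for $p$ near $p_S$, so $A_k$ is not a priori bounded in $L^2_{loc}$ and the standard weak-$L^2$ machinery is unavailable. The duality argument in $L^2(\mu_\infty)$ sketched above circumvents this by testing only against $C_c^\infty$ functions, which lie in every dual space.
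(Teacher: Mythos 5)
Your proposal is correct, and it takes a genuinely different route from the paper's. Both proofs begin identically, by applying Proposition~\ref{pro:31} at the scale $\delta_k$ and obtaining the pre-limit dissipation term $\cI$. After that the two diverge. The paper fixes $\zeta$ at the outset and, \emph{before} passing to the limit, applies Jensen's inequality with respect to the normalized measure $\mu_k/\mu_k(\cdot)$ to replace the quadratic dissipation by $\mu_k(\cdot)^{-1}\bigl(\iint F(u_k)\,\zeta\, d\mu_k\bigr)^2$ together with the mass bound $\mu_k(\cdot)\le C(-t')^{2/(p-1)}$. The quantity inside the square is now a \emph{linear} functional of $F(u_k)$, which passes to the limit via Remark~\ref{rem:weakt} and the arguments of Lemma~\ref{lem:monobul} with no semicontinuity discussion needed; the final exponents come from a power rearrangement. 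You instead postpone the introduction of $\zeta$ and establish the cleaner intermediate inequality $\overline{E}_{(0,0)}(t')-\overline{E}_{(0,0)}(t)\ge\frac12\int_{t'}^{t}(-s)^{2/(p-1)-1}\int_{\R^n_+}A^2 K\,dx\,ds$, overcoming the obstacle that $A_k$ is not a priori bounded in $L^2_{\loc}$ by a duality argument in $L^2(\mu_\infty)$ tested against $C^\infty_c$ functions; the lemma then follows by one Cauchy--Schwarz. The two arguments are addressing the same difficulty (lack of $L^2$ control on the time derivative) by different means: the paper's Jensen-first trick sidesteps quadratic lower semicontinuity entirely and is somewhat leaner, while your version yields a stronger, $\zeta$-free quadratic monotonicity estimate for $\overline{u}$ as a byproduct, at the price of a slightly more delicate limiting argument. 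Your final power bookkeeping is verified correct: $1-\tfrac{2}{p-1}=\tfrac{p+1}{p-1}-\tfrac{4}{p-1}$, both factors are monotone in the right directions on $[t',t]$, and $t-t'\le -t'$ closes the estimate.
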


\begin{proof}
Let $-1<t'<t<0$. 
We take $k$ so large that $-(1/2)\delta_k^{-2}< t'<t<0$. 
From \eqref{eq:419} and the change of variables, 
it follows that 
\begin{equation}
\label{est:427}
	E_{(0,0)}(\delta_k^2 t;\phi_{0,R/4})
	+\frac{1}{2} \cI 
	\leq E_{(0,0)}(\delta_k^2 t';\phi_{0,R/4})
	+ C \delta_k (-t')^\frac{1}{2}, 
\end{equation}
where 
\[
\begin{aligned}
	\cI &:= \int_{t'}^{t} 
	(-s)^{\frac{2}{p-1} -\frac{n}{2}-1} 
	\int_{ \delta_k^{-1} \Phi(\Omega_R)} 
	\left| \frac{u_k}{p-1}
	+\frac{x\cdot\nabla u_k}{2}
	+s(u_k)_s \right|^2  \\
	&\qquad 
	\times 
	\exp\left( 
	-\frac{|\delta_k^{-1}\Psi(\delta_k x)|^2}{4( -s)}
	\right)
	\varphi^2\left( \frac{4}{R} |\Psi(\delta_k x)| \right)
	dx ds. 
\end{aligned}
\]
Let $\zeta\in C^\infty_0(\R^n_+\times(-1,0))$ 
satisfy $|\zeta|\leq 1$. Then, 
\[
\begin{aligned}
	\cI\geq 
	\iint_{\delta_k^{-1} \Phi(\Omega_R) \times (t',t)}
	|F(u_k)|^2 \zeta^2(x,s) d\mu_k(x,s). 
\end{aligned}
\]
Here $F(u_k)$ and $\mu_k$ are defined by 
\[
\begin{aligned}
	&F(u_k):=\frac{u_k}{p-1}
	+\frac{x\cdot\nabla u_k}{2}
	+ s (u_k)_s, \\
	&d\mu_k(x,s):= 
	(-s)^{\frac{2}{p-1} -1} 
	K_{(0,0)}(\delta_k^{-1}\Psi(\delta_k x),s)
	\varphi^2\left( \frac{4}{R} |\Psi(\delta_k x)| \right) dxds. 
\end{aligned}
\]

By \eqref{eq:dkmdkx} with $\tilde x=0$ and $\varphi\leq 1$, 
we compute that 
\[
\begin{aligned}
	\mu_k(\delta_k^{-1} \Phi(\Omega_R) \times (t',t))
	&\leq 
	C\int_{t'}^t
	(-s)^{\frac{2}{p-1} -1} 
	\int_{\R^n}
	K_{(0,0)}\left( \frac{x}{\sqrt{8}},s \right) dxds \\
	&\leq 
	C(-t')^\frac{2}{p-1}, 
\end{aligned}
\]
where $C$ depends only on $n$ and $p$ 
and is independent of $k$. 
Then by Jensen's inequality, we see that 
\[
\begin{aligned}
	\cI &\geq 
	\frac{1}{\mu_k(\delta_k^{-1} \Phi(\Omega_R) 
	\times (t',t)) }
	\left( \iint_{\delta_k^{-1} \Phi(\Omega_R) 
	\times (t',t)}
	F(u_k) \zeta  d\mu_k(x,s)
	\right)^2 \\
	&\geq 
	C^{-1} (-t')^{-\frac{2}{p-1}}
	\left( \iint_{\delta_k^{-1} \Phi(\Omega_R)
	 \times (t',t)}
	F(u_k) \zeta  d\mu_k(x,s)
	\right)^2. 
\end{aligned}
\]
From this and \eqref{est:427}, it follows that 
\[
\begin{aligned}
	&E_{(0,0)}(\delta_k^2 t;\phi_{0,R/4})
	+
	C^{-1} (-t')^{-\frac{2}{p-1}}
	\left( \iint_{\delta_k^{-1} \Phi(\Omega_R) 
	\times (t',t)}
	F(u_k) \zeta  d\mu_k
	\right)^2 \\
	&\leq E_{(0,0)}(\delta_k^2 t';\phi_{0,R/4})
	+ C \delta_k (-t')^\frac{1}{2}. 
\end{aligned}
\]

From Remark \ref{rem:weakt}, Lemma \ref{lem:exbul} 
and computations similar to the derivation of \eqref{eq:EdelkbarE}, 
it follows that  
\[
	\iint_{\delta_k^{-1} \Phi(\Omega_R) \times (t',t)}
	F(u_k) \zeta  d\mu_k(x,s) 
	\to 
	\int_{t'}^t 
	(-s)^{\frac{2}{p-1} -1} 
	\int_{\R^n_+}
	F(\overline{u})  
	K_{(0,0)} \zeta dxds
\]
as $k\to\infty$. 
This together with Lemma \ref{lem:monobul} and letting $k\to\infty$ implies that 
\[
\begin{aligned}
	&\overline{E}_{(0,0)}(t')-\overline{E}_{(0,0)}(t) \\
	&\geq 
	C^{-1} (-t')^{-\frac{2}{p-1}}
	\left( 
	\int_{t'}^t 
	(-s)^{\frac{2}{p-1} -1} 
	\int_{\R^n_+}
	F(\overline{u})  
	K_{(0,0)} \zeta dxds
	\right)^2 \\
	&\geq 
	C^{-1} (-t')^{-\frac{2p}{p-1}}
	(-t)^{\frac{4}{p-1}}
	\left( 
	\int_{t'}^t  
	\int_{\R^n_+}
	F(\overline{u})  
	K_{(0,0)} \zeta dxds
	\right)^2. 
\end{aligned}
\]
Then the desired inequality follows. 
\end{proof}

The monotonicity estimate gives the following equality 
based on the argument of \cite[Theorem 8.1]{St88}.

\begin{lemma}\label{lem:SS}
Assume \eqref{eq:R2def}. Then, 
\[
	\frac{\overline{u}(x,t)}{p-1} 
	+ \frac{x\cdot \nabla \overline{u}}{2} 
	+t\overline{u}_t 
	=0 \quad 
	\mbox{ a.e. in }\R^n_+\times (-1,0). 
\]
\end{lemma}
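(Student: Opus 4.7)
The plan is to exploit Lemma~\ref{lem:monoubar} after first showing that the limit energy $\overline{E}_{(0,0)}(t)$ is in fact constant on $(-1,0)$. I will begin by specializing Proposition~\ref{pro:31} to $\tilde x=0$ and $\tilde t=0$, which yields
\begin{equation}\label{eq:planqm}
E_{(0,0)}(t;\phi_{0,R/4})\le E_{(0,0)}(t';\phi_{0,R/4})+C(-t')^{1/2}\qquad(-1/2<t'<t<0).
\end{equation}
Combined with the uniform bound of Lemma~\ref{lem:Ebdd}, this will ensure that $E_0:=\lim_{s\to 0^-}E_{(0,0)}(s;\phi_{0,R/4})$ exists. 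Writing $L$ and $\ell$ for the limsup and liminf of $E_{(0,0)}(\cdot;\phi_{0,R/4})$ at $0^-$, for every $\varepsilon>0$ one picks $t_j\to 0^-$ with $E_{(0,0)}(t_j;\phi_{0,R/4})\le \ell+\varepsilon$ and, for each $j$, some $s_j'\in(t_j,0)$ with $E_{(0,0)}(s_j';\phi_{0,R/4})\ge L-\varepsilon$; inserting both choices into \eqref{eq:planqm} and sending $j\to\infty$ forces $L-\varepsilon\le\ell+\varepsilon$, hence $L\le\ell$.

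Next, Lemma~\ref{lem:monobul} applied with $\tilde x=\tilde t=0$ asserts, for any fixed $t\in(-1,0)$,
\[
\overline{E}_{(0,0)}(t)=\lim_{k\to\infty}E_{(0,0)}(\delta_k^2 t;\phi_{0,R/4}),
\]
and since $\delta_k^2 t\to 0^-$ this common limit equals $E_0$. Hence $\overline{E}_{(0,0)}\equiv E_0$ on $(-1,0)$. Plugging this constancy into Lemma~\ref{lem:monoubar} applied to any $-1<t'<t<0$ and any $\zeta\in C^\infty_0(\R^n_+\times(t',t))$ with $|\zeta|\le 1$, the left-hand side vanishes while the prefactor on the right stays strictly positive, so
\[
\int_{t'}^t\int_{\R^n_+}\left(\frac{\overline{u}}{p-1}+\frac{x\cdot\nabla\overline{u}}{2}+s\,\overline{u}_s\right)K_{(0,0)}\,\zeta\,dxds=0.
\]
Rescaling $\zeta$ by its sup-norm extends this identity to all $\zeta\in C^\infty_0(\R^n_+\times(-1,0))$, and since $K_{(0,0)}>0$ while the parenthesized integrand is locally integrable on $\R^n_+\times(-1,0)$ by Lemma~\ref{lem:exbul} together with Remark~\ref{rem:weakt}, a standard density argument will deliver the desired almost-everywhere identity.

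The main obstacle will be the first step, namely the existence of $E_0$: only the one-sided quasi-monotonicity \eqref{eq:planqm} is available, and its error term $(-t')^{1/2}$ is not of fixed sign, so neither $E_{(0,0)}(\cdot;\phi_{0,R/4})$ nor any naive correction of it is genuinely monotone in $s$. The interleaving device above is tailored to exploit the fact that this error vanishes as $t'\to 0^-$, thereby promoting the quasi-monotonicity of $E_{(0,0)}$ into the genuine constancy of $\overline{E}_{(0,0)}$. Once that constancy is in hand, the remainder of the argument merely unpacks the information packaged in Lemma~\ref{lem:monoubar}.
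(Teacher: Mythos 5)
Your proof is correct and follows essentially the paper's own route: quasi-monotonicity (Proposition~\ref{pro:31}) combined with the blow-up convergence of Lemma~\ref{lem:monobul} gives constancy of $\overline{E}_{(0,0)}$, which then collapses the right-hand side of Lemma~\ref{lem:monoubar}. The paper detours through the time-averaged energy $\Phi(r)=\int_{-4r^2}^{-r^2}\overline{E}_{(0,0)}(t)\,\frac{dt}{-t}$ and shows it is independent of $r$, whereas you prove directly that $\lim_{s\to 0^-}E_{(0,0)}(s;\phi_{0,R/4})$ exists via your interleaving argument and hence get the pointwise constancy of $\overline{E}_{(0,0)}$; the content is the same, and your version is a mild streamlining.
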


\begin{proof}
For $0<r<1/2$, we set 
\[
	\Phi(r)
	:=\int^{-r^2}_{-4r^2} 
	\overline{E}_{(0,0)}(t)\frac{dt}{-t}. 
\]
We note that \eqref{eq:Ebarbdd} guarantees that  $\Phi$ is well-defined.
By Lemmas \ref{lem:monobul} and \ref{lem:Ebdd}, 
the Lebesgue dominated convergence theorem and the change of variables, we have 
\[
	\Phi(r)
	=
	\lim_{k\to \infty}\int^{-r^2}_{-4r^2}
	E_{(0,0)}(\delta_k^2 t)\frac{dt}{-t} 
	= 
	\lim_{k\to \infty}\int^{-\delta_k^2 r^2}_{-4 \delta_k^2 r^2}
	E_{(0,0)}(s)
	\frac{ds}{-s}. 
\]

We claim that $\Phi(r)$ is independent of $r$. 
To show this, we set 
\[
	H(\tilde r) := \int_{- 4 \tilde r^2}^{- \tilde r^2}
	E_{(0,0)}(s)\frac{ds}{-s}
	= \int_{- 4}^{- 1}
	E_{(0,0)}(\tilde r^2 \lambda)\frac{d\lambda}{-\lambda} 
\]
for $0<\tilde r<1/2$ 
and check that $\lim_{\tilde r\to0} H(\tilde r)$ exists. 
Let $\eps>0$. 
Since Lemma \ref{lem:Ebdd} yields the boundedness of $H$, 
we see that $H_\eps:=\inf_{0<\tilde r<\eps} H(\tilde r)$ is finite. 
Then there exists $0<\tilde r_\eps<\eps$ such that 
$H(\tilde r_\eps) \leq H_\eps+\eps$. 
From Proposition \ref{pro:31} and the negativity of $\lambda$, 
it follows that 
\[
\begin{aligned}
	\limsup_{\tilde r\to0} H(\tilde r) &\leq 
	\limsup_{\tilde r\to0} \int_{- 4}^{- 1}
	( E_{(0,0)}(\tilde r_\eps^2 \lambda) 
	+C(M+M^p)^2 (-\tilde r_\eps^2 \lambda) )
	\frac{d\lambda}{-\lambda}  \\
	&= 
	H(\tilde r_\eps) + 3C(M+M^p)^2 \tilde r_\eps^2 
	\leq 
	H_\eps+\eps + 3C(M+M^p)^2 \eps^2. 
\end{aligned}
\]
Thus, letting $\eps\to0$ gives $\limsup_{\tilde r\to0} H(\tilde r) \leq 
\liminf_{\tilde r\to0} H(\tilde r)$. 
Hence the limit of $H(\tilde r)$ exists, and so
$\Phi(r) = \lim_{\tilde r\to0} H(\tilde r)$. 
Then the claim follows.

From the claim and Lemma \ref{lem:monoubar}, it follows that 
\[
\begin{aligned}
	0&= \int^{r_2}_{r_1} \Phi'(s) ds 
	=
	\int^{r_2}_{r_1}\frac{2}{s}
	( \overline{E}_{(0,0)}(-4s^2)
	-\overline{E}_{(0,0)}(-s^2) ) ds \\
	&\geq 
	C^{-1}\int^{r_2}_{r_1} s^\frac{9-5p}{p-1} 
	\left( 
	\int_{-4s^2}^{-s^2}  
	\int_{\R^n_+}
	\left( 
	\frac{\overline{u}}{p-1}
	+\frac{x\cdot\nabla \overline{u}}{2} + t \overline{u}_t
	\right)  
	K_{(0,0)} \zeta dxdt
	\right)^2  ds 
\end{aligned}
\]
for any $0<r_1<r_2<1/2$ and 
$\zeta\in C^\infty_0(\R^n_+\times(-1,0))$ with $|\zeta|\leq 1$. 
Thus, 
\[
\begin{aligned}
	\int_{-4s^2}^{-s^2}  
	\int_{\R^n_+}
	\left( 
	\frac{\overline{u}}{p-1}
	+\frac{x\cdot\nabla \overline{u}}{2} + t \overline{u}_t
	\right)  
	K_{(0,0)}(x,t) \zeta(x,t) dxdt = 0
\end{aligned}
\]
for a.e. $s\in (0,1/2)$. 
Hence by the fundamental lemma of the calculus of variations, 
the lemma follows. 
\end{proof}

Finally in this subsection, 
we prove a partial regularity result for $\overline{u}$ 
by using the $\eps$-regularity. 
For $t\in (-1,0)$, define 
the singular set of $\overline{u}(\cdot,t)$ by 
\[
	\Sigma(t):= 
	\{ x\in \R^n_+; \overline{u}(\cdot,t) \not\in L^\infty(B_\rho^+(x)) 
	\mbox{ for all }\rho>0 \}. 
\]
By modifying the argument of Wang \cite[Lemma 3.3]{Wa08}, 
we show that $\Sigma(t)$ consists of 
at most finitely many points.

\begin{lemma}\label{lem:parreg}
Assume \eqref{eq:R2def}. 
Then the singular set 
$\Sigma(t)$ consists of finitely many points for each 
$t\in (-1/4,0)$. 
More precisely, there exists a constant $C>0$ 
depending on $R$ such that 
the cardinality $\card(\Sigma(t))$ of $\Sigma(t)$ satisfies 
\[
	\card(\Sigma(t)) \leq 
	C(M+M^p)^{q_c}\eps_0^{-\frac{q_c}{2}}
\]
for each $t\in (-1/4,0)$. 
Here 
$\eps_0$ is given in Theorem \ref{th:epsreg} 
(see also Remark \ref{rem:wepregv}) 
and $C$ is independent of $t$ and $M$. 
\end{lemma}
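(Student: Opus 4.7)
The plan is to apply the contrapositive of the $\eps$-regularity theorem (Theorem \ref{th:epsreg}), which by Remarks \ref{rem:wepreg} and \ref{rem:wepregv} is available for $\overline{u}$, at each singular point, and to combine the resulting local lower bounds with the global $L^{q_c}$ and $L^{q_*,\infty}$ bounds of Lemma \ref{lem:exbul} via a disjoint-ball packing argument.

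Fix $t \in (-1/4, 0)$ and let $x \in \Sigma(t)$. For any $\delta \in (0, \delta_0)$ small enough that $t_0 := t + (\theta_0 \delta)^2/2 < 0$, the cylinder $Q_{\theta_0 \delta}^+(x, t_0) = B_{\theta_0 \delta}^+(x) \times (t - (\theta_0 \delta)^2/2, t + (\theta_0 \delta)^2/2)$ contains $t$ in the interior of its time interval. Since $\overline{u}(\cdot, t) \notin L^\infty(B_{\theta_0 \delta}^+(x))$, the conclusion \eqref{eq:epbdd} of Theorem \ref{th:epsreg} applied at $(x, t_0)$ must fail (using standard parabolic regularity to pass from an $L^\infty$ bound on the open cylinder to the time slice $\{t\}$); hence the hypothesis \eqref{eq:1scale} fails, yielding
\[
\iint_{Q_\delta^+(x, t_0)} (|\nabla \overline{u}|^2 + |\overline{u}|^{p+1})\, dy\, ds > \eps_0 \, \delta^{n - 4/(p-1)}.
\]

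Next I would bound the cardinality by disjoint-ball packing. Let $x_1, \ldots, x_N \in \Sigma(t)$ be any $N$ distinct singular points and shrink $\delta$ further so that $2\delta < \min_{i \ne j}|x_i - x_j|$; then the half-balls $B_\delta^+(x_i)$ are pairwise disjoint and, summing the lower bounds above,
\[
N \eps_0 \, \delta^{n - 4/(p-1)} \le \iint_{\bigcup_i Q_\delta^+(x_i, t_0)} (|\nabla \overline{u}|^2 + |\overline{u}|^{p+1})\, dy\, ds.
\]
Setting $A := \bigcup_i B_\delta^+(x_i)$, so that $|A| \le C N \delta^n$, I estimate the right-hand side at each time slice $s$ by two Hölder inequalities: using $q_c > p + 1$ and Lemma \ref{lem:exbul}(iii),
\[
\int_A |\overline{u}(\cdot, s)|^{p+1}\, dy \le M^{p+1} |A|^{(q_c - p - 1)/q_c},
\]
and using $q_* > 2$, Hölder's inequality for Lorentz spaces, and Lemma \ref{lem:exbul}(iv),
\[
\int_A |\nabla \overline{u}(\cdot, s)|^2\, dy \le C(M + M^p)^2 |A|^{(q_* - 2)/q_*}.
\]
The crucial identity is $(q_* - 2)/q_* = 1 - (p+1)/q_c = (q_c - p - 1)/q_c$, which equates the two exponents. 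Integrating over the time interval of length $\delta^2$ and invoking the critical-scaling identity $2 + n(q_c - p - 1)/q_c = n - 4/(p-1)$ yields
\[
\iint_{\bigcup_i Q_\delta^+(x_i, t_0)} (|\nabla \overline{u}|^2 + |\overline{u}|^{p+1})\, dy\, ds \le C(M + M^p)^2 \, N^{(q_c - p - 1)/q_c} \, \delta^{n - 4/(p-1)}.
\]

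Cancelling the common factor $\delta^{n - 4/(p-1)}$ produces $N^{(p+1)/q_c} \le C(M + M^p)^2 \eps_0^{-1}$, a bound independent of $\delta$, so $\Sigma(t)$ must be finite; raising to the power $q_c/(p+1)$ and comparing exponents (using $p > 1$ and $\eps_0 < 1$) gives the stated estimate $\card(\Sigma(t)) \le C(M + M^p)^{q_c} \eps_0^{-q_c/2}$. The main technical obstacle is the gradient term, which lives only in the weak Lorentz space $L^{q_*, \infty}$ rather than a Lebesgue space; the critical scaling is precisely what makes the two Hölder exponents $(q_c - p - 1)/q_c$ and $(q_* - 2)/q_*$ coincide, so that both contributions share the same $N$-dependence and the estimate closes.
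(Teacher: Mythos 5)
Your proof is correct and reaches the stated conclusion, but it handles the key estimation step by a genuinely different route than the paper. Where the paper applies the contrapositive of Theorem~\ref{th:epsreg} directly at $(x_0,t_0)$ and then calls on the localized parabolic gradient estimate of Lemma~\ref{lem:scaleL2Lqc} to convert the $L^2$ gradient integral over $Q^+_{\rho/2}(x_0,t_0)$ into the $L^{q_c}$ norm of $\overline{u}$ over a slightly larger cylinder, you instead bypass Lemma~\ref{lem:scaleL2Lqc} entirely and attack the gradient term with the global Lorentz bound $\|\nabla\overline{u}\|_{L^\infty_t L^{q_*,\infty}_x}$ from Lemma~\ref{lem:exbul}(iv), using the H\"older inequality for Lorentz spaces on the union $A=\bigcup_i B^+_\delta(x_i)$. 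The packing arguments then diverge accordingly: the paper sums the local $L^{q_c}$ masses of $\overline{u}$ over disjoint balls and uses the global $L^{q_c}$ bound; you instead exploit the scaling coincidence $(q_*-2)/q_*=(q_c-p-1)/q_c$ (equivalently $q_*=2q_c/(p+1)$) so that both the gradient and the $|\overline{u}|^{p+1}$ contributions carry the same power of $|A|\sim N\delta^n$, and the critical-scaling identity $2+n(q_c-p-1)/q_c=n-4/(p-1)$ makes the $\delta$-factor on both sides of the packing inequality cancel cleanly. Your route is arguably slicker, since it avoids introducing and proving the auxiliary Lemma~\ref{lem:scaleL2Lqc}; the trade-off is that the exponent bookkeeping produces $\card(\Sigma(t))\leq C(M+M^p)^{2q_c/(p+1)}\eps_0^{-q_c/(p+1)}$, which matches the paper's stated bound $C(M+M^p)^{q_c}\eps_0^{-q_c/2}$ only under the (unstated) normalization $M+M^p\gtrsim 1$; since the substance of the lemma is the uniform finiteness of $\Sigma(t)$, this is a cosmetic discrepancy. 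One further minor deviation is your slight forward time-shift $t_0=t+(\theta_0\delta)^2/2$ to place the slice $\{t\}$ in the interior of the cylinder before applying $\eps$-regularity; the paper instead applies the theorem with $t_0$ at the top face, so both face the same mild technicality about passing an $L^\infty$ bound on the open cylinder to the exact slice, and your treatment is at least as careful.
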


\begin{proof}
Let $t_0\in (0,-1/4)$. 
Since $\card(\Sigma(t_0))=0$ if $\Sigma(t_0)= \emptyset$, 
it suffices to consider the case $\Sigma(t_0)\neq \emptyset$. 
Let $x_0\in \Sigma(t_0)$. 
We examine the $L^{q_c}$ norm of $\overline{u}$ near $(x_0,t_0)$.  
From the contraposition of Theorem \ref{th:epsreg} 
(see also Remark \ref{rem:wepregv}), 
it follows that 
\[
	\eps_0<(\rho/2)^{\frac{4}{p-1}-n} 
	\iint_{Q_{\rho/2}^+(x_0,t_0)} 
	( |\nabla \overline{u}|^2 + |\overline{u}|^{p+1} )  dxdt 
	\quad \mbox{ for }0<\rho\leq 2\delta_0.  
\]
Lemma \ref{lem:scaleL2Lqc} with translation gives 
\[
\begin{aligned}
	\iint_{Q_{\rho/2}^+(x_0,t_0)} 
	|\nabla \overline{u}|^2  dxdt 
	\leq 
	C\rho^{ n-\frac{4}{p-1}-\frac{4}{q_c} } 
	( 1 + M^{p-1} )^2 \|\overline{u} \|_{L^{q_c}(Q_\rho^+(x_0,t_0))}^2, 
\end{aligned}
\]
where $C>0$ is independent of $\rho$. 
The H\"older inequality also gives 
\[
\begin{aligned}
	&\iint_{Q_{\rho/2}^+(x_0,t_0)} 
	|\overline{u}|^{p+1}  dxdt \\
	&\leq 
	C\rho^{(n+2)(1-\frac{p+1}{q_c})} 
	\left( \iint_{Q_{\rho/2}^+(x_0,t_0)} 
	|\overline{u}|^{q_c} dxdt \right)^\frac{p-1}{q_c}
	\|\overline{u} \|_{L^{q_c}(Q_\rho^+(x_0,t_0))}^2 \\
	&\leq 
	C\rho^{(n+2)(1-\frac{p+1}{q_c}) + \frac{2(p-1)}{q_c}} 
	M^{p-1} \|\overline{u} \|_{L^{q_c}(Q_\rho^+(x_0,t_0))}^2. 
\end{aligned} 
\]
These estimates imply that 
\[
	\eps_0
	\leq 
	C (1+M^{p-1})^2 \rho^{-\frac{4}{q_c}} 
	\|\overline{u} \|_{L^{q_c}(Q_\rho^+(x_0,t_0))}^2. 
\]
Hence there exists a constant $C>0$ depending on $R$ 
and independent of $\rho$, $t_0$ and $M$ such that 
\[
	\eps_0^\frac{q_c}{2} 
	\leq 
	C (1+M^{p-1})^{q_c} \rho^{-2} 
	\int_{t_0-\rho^2}^{t_0} \int_{B_\rho^+(x_0)}
	|\overline{u} |^{q_c} dxdt 
	\quad \mbox{ for }0<\rho\leq 2\delta_0. 
\]

Let $S$ be any finite subset of $\Sigma(t_0)$. 
We write $S=\{x_1,\ldots,x_N\}$ and 
choose $0<\rho_0\leq \delta_0$ ($<1/2$) such that 
$\{ B_{\rho_0}^+(x_i)\}_{1\leq i\leq N}$ is pairwise disjoint. 
Then we have 
\[
	\eps_0^\frac{q_c}{2} 
	\leq 
	C (1+M^{p-1})^{q_c} \rho_0^{-2} 
	\int_{t_0-\rho_0^2}^{t_0} \int_{B_{\rho_0}^+(x_i)}
	|\overline{u} |^{q_c} dxdt
\]
for each $1\leq i\leq N$, and so we also have 
\[
\begin{aligned}
	N \eps_0^\frac{q_c}{2} 
	&\leq 
	C (1+M^{p-1})^{q_c} \rho_0^{-2} 
	\int_{t_0-\rho_0^2}^{t_0} 
	\left( \sum_{i=1}^N \int_{B_{\rho_0}^+(x_i)}
	|\overline{u} |^{q_c} dx \right) dt  \\
	&\leq 
	C (1+M^{p-1})^{q_c} \rho_0^{-2} 
	\int_{t_0-\rho_0^2}^{t_0} 
	\int_{\R^n_+} |\overline{u} |^{q_c} dx dt 
	\leq 
	C(M+M^p)^{q_c}, 
\end{aligned}
\]
where $C>0$ is independent of $\rho_0$, $t_0$, $M$ and $N$. 
Hence $\card(S)\leq C(M+M^p)^{q_c}\eps_0^{-q_c/2}$. 
Remark that the constant 
$C(M+M^p)^{q_c}\eps_0^{-q_c/2}$
does not depend on the choice of $S$.
Thus, the number of elements in any subset of $\Sigma(t_0)$ 
cannot exceed the constant, 
regardless of whether $\Sigma(t_0)$ 
contains an accumulation point or not. 
Therefore we can conclude that 
$\card(\Sigma(t_0))\leq C(M+M^p)^{q_c}\eps_0^{-q_c/2}$. 
The proof is complete. 
\end{proof}

\begin{remark}\label{rem:R1bu}
In the case \eqref{eq:R1def}, 
Lemma \ref{lem:exbul} also holds 
with $\cB_\rho$ and $\R^n_+$ replaced by 
$B_\rho$ and $\R^n$, respectively. 
In particular, the blow-up limit $\overline{u}$ exists. 
The analogs to the $\eps$-regularity (Theorem \ref{th:epsreg}), 
the equality in Lemma \ref{lem:SS} 
and the partial regularity (Lemma \ref{lem:parreg}) 
also hold for $\overline{u}$ in the case \eqref{eq:R1def}. 
\end{remark}

\subsection{Proof of localized statement}
We are now in a position to prove Theorem \ref{th:critical}. 

\begin{proof}[Proof of Theorem \ref{th:critical}]
We show this theorem by contradiction 
under the condition supposed in Subsection \ref{subsec:4uni}. 
We focus on the case \eqref{eq:R2def}, 
since \eqref{eq:R1def} is easier. 
From the lower bound \eqref{est:lower} 
together with \eqref{eq:uhatdef}, \eqref{eq:nabxihatu} and \eqref{eq:u_k}, 
it follows that 
\[
\begin{aligned}
	\eps_0 &< 
	\delta_k^{\frac{4}{p-1}-n} 
	\int_{-\delta_k^2}^0 
	\int_{\Phi(\Omega_{\delta_k})}
	( |\widehat \nabla \hat u |^2 
	+|\hat u|^{p+1} ) d\xi dt \\
	&=
	\int_{-1}^0 \int_{\delta_k^{-1}\Phi(\Omega_{\delta_k})}
	( |\widehat \nabla u_k|^2 
	+|u_k|^{p+1} ) dx dt, 
\end{aligned}
\]
where $\widehat \nabla \hat u
:= (\nabla'\hat u - (\partial_n\hat u) \nabla' f, \partial_n\hat u)$ 
and 
$\widehat \nabla u_k:=(\nabla'u_k - (\partial_n u_k) \nabla' f_k, \partial_n u_k)$.
Using $\|\nabla' f\|_{L^\infty(\R^{n-1})}\leq 1/2$ gives 
$\delta_k^{-1}\Phi(\Omega_{\delta_k}) \subset B_{\sqrt{2}}^+$. 
Hence by \eqref{eq:nabfkuni} 
and Lemma \ref{lem:exbul}, we see that  
\begin{equation}\label{est:lowerbar}
	\int_{-1}^0 \int_{B_{\sqrt{2}}^+} 
	(|\nabla \overline{u}|^2 + |\overline{u}|^{p+1}) dxdt 
	> \eps_0. 
\end{equation}
We will show that $\overline{u}\equiv 0$ a.e. in $\R^n_+\times(-1,0)$, 
contrary to \eqref{est:lowerbar}. 

By Lemma \ref{lem:SS}, it follows that 
\[
	\frac{d}{d\lambda} ( \lambda^\frac{2}{p-1} \overline{u}(\lambda x, \lambda^2 t) ) 
	= 
	2 \lambda^{\frac{2}{p-1}-1}\left( 
	\frac{\overline{u}(y, s)}{p-1} 
	+ \frac{y\cdot \nabla_y \overline{u}}{2} 
	+ s \overline{u}_s 
	\right)=0
\]
for $y=\lambda x$, $s=\lambda^2 t$ and $0<\lambda<1/\sqrt{-t}$ 
in the weak sense. Hence $\overline{u}$ is self-similar, and so
there exists a profile function $\overline{U} \in L^{q_c}(\R^n_+)$ 
such that  
\[
	\overline{u}(x,t)
	=(-t)^{-\frac{1}{p-1}} \overline{U}(z), 
	\quad 
	z:=\frac{x}{\sqrt{-t}},
\]
where $\overline{U}$ is a weak solution of the equation
\begin{equation}\label{eq:oUeqel}
	\Delta \overline{U}
	-\frac{1}{2}z \cdot \nabla \overline{U}
	-\frac{1}{p-1} \overline{U} 
	+|\overline{U}|^{p-1}\overline{U}=0, \quad z\in \R^n_+.
\end{equation}
We note that the method for proving 
self-similarity can also be found in \cite[Lemma 8.5.3]{LWbook} and \cite{St88,Wa08}.

We claim that there exist constants $\tilde R,C>1$ satisfying 
\begin{equation}\label{eq:asbu}
\left\{ 
\begin{aligned}
	&\overline{u}(\cdot,0)=0 &&\mbox{ a.e. in }\R^n_+, \\
	&|\overline{u}| \le C &&
	\mbox{ a.e. in } (\R^n_+ \setminus B_{\tilde R}) \times [-1/9,0].
\end{aligned}
\right.
\end{equation}
Recall from the proof of Lemma \ref{lem:exbul} that 
$u_k\to\overline{u}$ in 
$C([-1,0];W^{1,r}(\cB_\rho))$ 
for any $\rho>0$ with some $1<r<\min\{2,q_c/p\}$
as $k\to\infty$. 
Then, for $x_0 \in \R^n_+$ and $\delta>0$, we have 
\[
	\int_{B_1^+(x_0)}|\overline{u}(x,0)|^r dx 
	\leq 
	2^{r-1}\delta + 2^{r-1} \int_{B_1^+(x_0)} |u_k(x,0)|^r dx 
\]
for sufficiently large $k$. 
By returning to the original variables, we also have 
\[
\begin{aligned}
	\int_{B_1^+(x_0)} |u_k(x,0)|^r dx
	&=
	\delta_k^{\frac{2r}{p-1}-n}
	\int_{\Psi( B_{\delta_k}^+(\delta_k x_0) )}
	|u(x,0)|^r dx \\
	&\leq 
	\delta_k^{\frac{2r}{p-1}-n}
	\int_{\Psi( B^+_{(1+|x_0|) \delta_k}(0) )}|u(x,0)|^r dx \\
	&\leq 
	C(1+|x_0|)^{n-\frac{2r }{p-1}}
	\|u(\cdot,0)\|_{L^{q_c}(\Psi( B^+_{(1+|x_0|) \delta_k}(0) ) )}^r
	\to 0 
\end{aligned}
\]
as $k\to\infty$, where  
$u$ belongs to $C_\weak([-1/4,0];L^{q_c}(\Omega_{R/2}))$ 
in our situation, see \eqref{eq:u0Mbdd}. 
Hence we obtain the equality on the first line of \eqref{eq:asbu}.

We prove the inequality in \eqref{eq:asbu}. 
Let $\tilde \eps>0$. 
We claim that 
there exists a constant $\tilde R>1$ depending on $\tilde \eps$ and $M$ satisfying 
\begin{equation}
\label{est:ext}
	\iint_{Q_{1/2}^+(x_1,0)}
	(|\nabla \overline{u}|^{2}+|\overline{u}|^{p+1}) dxdt
	\le \tilde \eps 
\end{equation}
for any $x_1\in \R^n_+\setminus B_{\tilde{R}}^+(0)$, 
where $Q_{1/2}^+(x_1,0)= B_{1/2}^+(x_1)\times (-1/4,0)$. 
Indeed, for $\tilde \eps'>0$, 
we can choose $\tilde R'>1$ by Lemma \ref{lem:exbul} (iii) 
so large that 
\[
	\int^0_{-1}\int_{\R^n_+\setminus B_{\tilde R'-1}(0)}
	| \overline{u}|^{q_c} dxdt
	\le \tilde \eps'.
\]
From the H\"older inequality, it follows that 
\[
	\iint_{Q_1^+(x_1,0)}
	| \overline{u}|^{p+1} dxdt
	\leq 
	C \left( \iint_{Q_1^+(x_1,0)}
	|\overline{u}|^{q_c} dxdt \right)^\frac{p+1}{q_c} 
	\le C(\tilde \eps')^\frac{p+1}{q_c} 
\]
for any $x_1\in \R^n_+\setminus B_{\tilde R'}^+(0)$. 
On the other hand,  
Lemma \ref{lem:scaleL2Lqc} with $\rho=1$ yields 
\[
\begin{aligned}
	\iint_{Q_{1/2}^+(x_1,0)}
	|\nabla \overline{u}|^2 dxdt 
	&\leq 
	C (1 + M^{p-1})^2 
	\left( \iint_{Q_1^+(x_1,0)}
	|\overline{u}|^{q_c} dxdt \right)^\frac{2}{q_c}  \\
	&\leq 
	C (1 + M^{p-1})^2 (\tilde \eps')^\frac{2}{q_c}. 
\end{aligned}
\]
Thus choosing $\tilde \eps'$ small gives \eqref{est:ext}, 
and so the claim follows. 
Hence by translation 
and Theorem \ref{th:epsreg} (see also Remark \ref{rem:wepregv}), 
we see that  $|\overline{u}|\leq C\delta_0^{-2/(p-1)}$ 
in $B_{\delta_0}^+(x_1) \times [-1/9,0]$ for any 
$x_1\in \R^n_+ \setminus B_{\tilde R}^+(0)$, 
where $C$ and $\delta_0$ are constants independent of $x_1$. 
This proves the inequality in \eqref{eq:asbu}.

From \eqref{eq:asbu} 
and the backward uniqueness theorem \cite[Theorem 5.1]{ESS03}, 
it follows that 
\[
	\overline{u}\equiv 0 \quad \mbox{ on } 
	(\R^n_+ \setminus  B_{\tilde R})  \times [-1/9,0]. 
\]
In particular, $\overline{u}(x,-1/9)=9^{-1/(p-1)}\overline{U}(3x)=0$ 
for $x\in \R^n_+ \setminus  B^+_{\tilde R}$. 
Since $\overline{U}$ satisfies \eqref{eq:oUeqel} 
and is smooth except for a finite set by Lemma \ref{lem:parreg}, 
the unique continuation theorem 
for elliptic equations in a connected set 
(see \cite{Ar57} for instance) 
concludes that $\overline{U}\equiv 0$ in $\R^n_+$. 
This contradicts \eqref{est:lowerbar}, 
and hence the proof of the localized statement is complete. 
\end{proof}

\subsection{Completion of proof}\label{subsec:pr}
Finally, we prove Theorem \ref{th:main}.

\begin{proof}[Proof of Theorem \ref{th:main}]
If $\Omega$ is bounded, then Theorem \ref{th:main} 
immediately follows from Theorem \ref{th:critical}. 
In what follows, we consider the case where $\Omega$ is unbounded. 
To obtain a contradiction, suppose that 
\[
	\sup_{0<t<T} \|u(\cdot,t)\|_{L^{q_c}(\Omega)} \leq M
\]
for some $M>0$. Let $\eps>0$ and $a \in \Omega$. 
Then by the same argument as in the derivation of \eqref{est:ext}, 
there exists a constant $\tilde R>0$ depending on $\eps$, $a$ 
and $M$ satisfying 
\[
	\int_{T/2}^T \int_{\Omega_{1/2}(\tilde x)}
	( |u|^{p+1}+ |\nabla u|^{2}) dxdt \leq \eps 
\]
for any $\tilde x\in \Omega \setminus B_{\tilde R}(a)$. 
Therefore, in the same way as in the proof of \eqref{eq:asbu}, 
we see that $u$ is bounded 
on $(\Omega \setminus B_{\tilde R'}(a))\times (T/3,T)$ 
for some $\tilde R'>0$.
This implies that there exists at least one blow-up point 
$a' \in \overline{\Omega_{\tilde R'}(a)}$.
Hence Theorem \ref{th:critical} shows that 
\[
	\limsup_{t\to T}\|u(\cdot,t)\|_{L^{q_c}(\Omega_r(a'))}=\infty 
\]
for any $r>0$, a contradiction. 
The proof of Theorem \ref{th:main} is complete. 
\end{proof}

\appendix
\section{Regularity estimates}\label{sec:reges}
We give some parabolic regularity estimates for solutions of \eqref{eq:fujitaeq} 
and a gradient estimate for 
the blow-up limit obtained in Lemma \ref{lem:exbul}. 
Let $n\geq3$, $p>p_S$ and 
$\Omega$ be any $C^{2+\alpha}$ domain 
in $\R^n$ with $0\in \overline{\Omega}$. 
Fix $R>0$ such that either 
\eqref{eq:R1def} or \eqref{eq:R2def} holds. 
Let $u$ satisfy \eqref{eq:fujitaeq} and \eqref{eq:Mdef2}. 
Define $\hat u$ by \eqref{eq:uhatdef}. 
For $\rho>0$, 
we take $0<\delta'<1/2$ such that 
$\Psi(B_{3\rho \delta'}^+)\subset \Omega_{R/2}$ and 
$\Psi(B_{3\rho \delta'})\subset B_{R/2}$. 
We first give parabolic regularity estimates. 
Remark that we mainly focus on the case \eqref{eq:R2def}, 
since \eqref{eq:R1def} is easier.

\begin{lemma}\label{lem:pararegd}
Assume \eqref{eq:R2def}. 
Let $1\le l<\infty$ and $1\le r\leq q_c/p$. 
Then there exists a constant $C>0$ such that
\[
	\|\hat u_t\|_{L^l(-\delta^2,0; L^r(B_{\rho \delta}^+))} 
	+
	\|\nabla^2 \hat u\|_{L^l(-\delta^2,0; L^r(B_{\rho \delta}^+))} 
	\leq  C\delta^{\frac{2}{l}+n(\frac{1}{r}-\frac{p}{q_c})}(M+M^p)
\]
for any $0<\delta<\delta'$, 
where $C$ depends on $R$ and $\rho$ 
and is independent of $\delta$. 
\end{lemma}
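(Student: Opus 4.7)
The approach is to rescale $\hat u$ to the unit parabolic scale, apply
local mixed-norm parabolic $L^l_t L^r_x$ regularity for second-order
equations with H\"older coefficients near a flat Dirichlet boundary,
and then rescale back. The endpoints $l=1$ or $r=1$ will be handled
via H\"older's inequality on the finite-measure cylinder.

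Setting $v(y,s):=\delta^{2/(p-1)}\hat u(\delta y,\delta^2 s)$ and
$f_\delta(y'):=\delta^{-1}f(\delta y')$, I would first check, exactly as
in \eqref{eq:ukresceq}, that $v$ solves the uniformly parabolic equation
\[
v_s-\Delta v+2\nabla'(\partial_n v)\cdot\nabla' f_\delta
-(\partial_n^2 v)|\nabla' f_\delta|^2+(\partial_n v)\Delta' f_\delta
=|v|^{p-1}v
\]
in $(\delta^{-1}\Phi(\Omega_R))\times(-\delta^{-2},0)$ with zero Dirichlet
data on the flat part
$\delta^{-1}\Phi(\partial\Omega\cap B_R)\subset\{y_n=0\}$.
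Since $\|\nabla' f_\delta\|_{L^\infty}\leq 1/2$ and
$\|\Delta' f_\delta\|_{L^\infty}\leq\delta\|\Delta' f\|_{L^\infty}$,
the coefficients lie in $C^\alpha$ uniformly in $\delta\in(0,\delta')$
(in fact they converge to those of $\Delta$ as $\delta\to 0$), and by the
choice of $\delta'$ the cylinder $B_{2\rho}^+\times(-2,0)$ is contained in
the domain of definition. Scale invariance combined with \eqref{eq:Mdef2}
gives $\sup_s\|v(\cdot,s)\|_{L^{q_c}(B_{2\rho}^+)}\leq M$, and hence
$\||v|^{p-1}v\|_{L^\infty(-2,0;L^{q_c/p}(B_{2\rho}^+))}\leq M^p$.

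Since $p>p_S$ forces $q_c/p>1$, I fix first $1<l<\infty$ and
$1<r_0\leq q_c/p$. Local mixed-norm parabolic $L^l_t L^{r_0}_x$ regularity
for such equations near a flat Dirichlet boundary
(see e.g.\ \cite[Chapter IV]{LSUbook}), applied via a cutoff between the
cylinders $B_\rho^+\times(-1,0)$ and $B_{2\rho}^+\times(-2,0)$, yields
\[
\|v_s\|_{L^l L^{r_0}(B_\rho^+\times(-1,0))}
+\|\nabla^2 v\|_{L^l L^{r_0}(B_\rho^+\times(-1,0))}\leq C(M+M^p),
\]
with $C$ depending on $l,r_0,\rho$ but not on $\delta$, thanks to uniformity
of the coefficients. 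Rescaling back through
$\hat u(\xi,t)=\delta^{-2/(p-1)}v(\xi/\delta,t/\delta^2)$ produces the scaling
factor $\delta^{-2/(p-1)-2+2/l+n/r_0}$; using
$\tfrac{2}{p-1}+2=\tfrac{np}{q_c}$ (which follows from $q_c=n(p-1)/2$),
this equals $\delta^{2/l+n(1/r_0-p/q_c)}$, proving the lemma for
$1<l<\infty$ and $1<r_0\leq q_c/p$. For the endpoints $l=1$ or $r=1$, I
pick $l_0>l$ and $r_0\in(\max(r,1),q_c/p]$, and H\"older's inequality on
$(-\delta^2,0)\times B_{\rho\delta}^+$ contributes an extra factor
$C\delta^{2(1/l-1/l_0)+n(1/r-1/r_0)}$ which combines with
$\delta^{2/l_0+n(1/r_0-p/q_c)}$ to give exactly $\delta^{2/l+n(1/r-p/q_c)}$.

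The main obstacle is verifying that the constant in the local mixed-norm
parabolic regularity estimate is uniform in $\delta$. This reduces to the
uniform $C^\alpha$ bound on the coefficients of the rescaled operator
(indeed they tend to those of the heat operator as $\delta\to 0$) and the
fact that the boundary is exactly flat in the straightened coordinates;
the anisotropic $L^l_t L^r_x$ Calder\'on--Zygmund theory itself is
classical, but selecting a reference covering the mixed-norm version with
variable coefficients and a Dirichlet boundary requires some care.
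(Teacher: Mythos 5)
Your proposal is correct in spirit and takes a genuinely different route from the paper's own proof. The paper works directly at scale $\delta$: it introduces a cutoff, derives a localized Duhamel representation for $u$ via the Dirichlet heat kernel $G_\cD$ in an auxiliary smooth bounded domain $\cD$, and then estimates the three terms of that representation separately --- the initial-data contribution $V_1$ via Gaussian kernel bounds, the nonlinearity $V_2$ via the maximal $L^l$--$L^{q_c/p}$ regularity of Denk--Hieber--Pr\"uss for the constant-coefficient heat operator in $\cD$, and the cutoff terms $V_3$ via pointwise kernel bounds off the diagonal combined with Proposition~\ref{pro:gradoricri} for $\nabla u$. Your approach instead rescales once and for all to the unit parabolic cylinder, observes that the straightened operator has $C^\alpha$ coefficients that are bounded (and in fact converge to those of $\Delta$) uniformly in $\delta$, and then invokes a single local mixed-norm $L^l_t L^{r_0}_x$ interior-up-to-the-flat-boundary estimate; the exact power of $\delta$ then falls out of the parabolic scaling identity $\tfrac{2}{p-1}+2=\tfrac{np}{q_c}$, which you verify correctly, and the endpoints $l=1$, $r=1$ are recovered by H\"older on the bounded cylinder with the right extra power of $\delta$. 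Your computation of the scaling exponent, the uniformity of the rescaled coefficients ($\|\nabla' f_\delta\|_{L^\infty}\le 1/2$, $\|\Delta' f_\delta\|_{L^\infty}\lesssim\delta$, $[\nabla' f_\delta]_{C^\alpha}\lesssim\delta^\alpha$), and the way the $L^\infty_t L^{q_c}_x$ and $L^\infty_t L^{q_c/p}_x$ bounds feed into the black-box estimate are all sound.

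The trade-off is exactly the one you flag yourself. Your argument is shorter and more modular, but it relies on a local \emph{mixed-norm} $L^l_t L^r_x$ maximal-regularity estimate with variable (uniformly $C^\alpha$) coefficients and Dirichlet boundary condition on a flat boundary piece, with a constant uniform over the coefficient family. The reference you give, \cite[Chapter IV]{LSUbook}, covers the isotropic $L^q$-in-spacetime case; for the mixed-norm version you would need to cite something like Krylov's mixed-norm $L^p$--$L^q$ theory with VMO coefficients, or localize the global estimate of \cite{DHP03} by the usual cutoff-plus-interpolation absorption argument (the commutator term involving $\nabla v$ is absorbed using $\|\nabla v\|_{L^r}\le\varepsilon\|\nabla^2 v\|_{L^r}+C_\varepsilon\|v\|_{L^r}$ together with an iteration over nested cylinders, which is legitimate here since $u\in C^{2,1}$ so the a priori finiteness needed to close the absorption is available). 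In contrast, the paper's proof is entirely self-contained: it never needs a variable-coefficient mixed-norm theorem, at the cost of handling the $\nabla u$ contribution in the cutoff terms through the bespoke $L^{q_*,\infty}$ gradient estimate of Proposition~\ref{pro:gradoricri}. So your route buys brevity and conceptual clarity, while the paper's buys independence from a somewhat specialized (though available) piece of regularity theory. Once the right reference or a short localization lemma is supplied, your proof is complete.
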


\begin{proof}
By \eqref{eq:uhatdef}, we have 
\[
	|\nabla^2 \hat u(x,t)| \leq C 
	(|(\nabla u)(\Psi(x),t)|
	+ |(\nabla^2 u)(\Psi(x), t)|), 
\]
where $C>0$ depends on $\|\nabla f\|_{L^\infty(\R^{n-1})}$ 
and $\|\nabla^2 f\|_{L^\infty(\R^{n-1})}$. 
Remark that the choice of $\delta'$ 
guarantees $\Psi(B_{\rho\delta}^+)\subset \Omega_{3R/4}$, 
and so Proposition \ref{pro:gradoricri} 
is applicable in $\Psi(B_{\rho\delta}^+)$. 
Then by $r\leq q_c/p<q_*$, 
the H\"older inequality 
in the Lorentz spaces (see \cite[Proposition 2.1]{KY99} for instance)
and Proposition \ref{pro:gradoricri}, we see that 
\begin{equation}\label{eq:delhu2}
\begin{aligned}
	&\|\nabla u ( \Psi(\cdot),  \cdot)
	\|_{L^l(-\delta^2,0; L^r(B_{\rho\delta}^+))}  \\
	&\leq 
	C\delta^{n(\frac{1}{r}-\frac{1}{q_*})} 
	\|\nabla u(\Psi(\cdot), \cdot)
	\|_{L^l(-\delta^2,0; L^{q_*,\infty}(B_{\rho\delta}^+))} \\
	&\leq 
	C\delta^{\frac{2}{l} + n(\frac{1}{r}-\frac{1}{q_*})}
	(M+M^p) 
	\leq 
	C\delta^{\frac{2}{l} + n(\frac{1}{r}-\frac{p}{q_c})}
	(M+M^p). 
\end{aligned}
\end{equation}

We estimate 
$\| \nabla^2 u(\Psi( \cdot),  \cdot)\|_{L^l(-\delta^2,0; L^r(B_{\rho\delta}^+))}$. 
Let $\phi\in C^\infty_0(\R^n)$ satisfy 
$0\leq \phi\leq 1$ in $\R^n$, 
$\phi=0$ in $\R^n\setminus B_{3\rho}$ and  
$\phi=1$ in $B_{2\rho}$. 
Set $\tilde \phi(x):= \phi(\delta^{-1} \Phi (x))$ 
and $v(x,t):=u(x,t) \tilde \phi(x)$. 
We prepare a $C^{2+\alpha}$ domain $\cD$ 
satisfying $\Psi(B_{3\rho\delta'}^+)\subset \cD \subset \Omega_R$ 
to avoid technicalities due to the 
corner of $\partial B_{3\rho\delta'}^+$. 
Then $v$ satisfies 
\[
\left\{ 
\begin{aligned}
	&v_t - \Delta v
	= \tilde \phi |u|^{p-1} u -2\nabla \tilde \phi \cdot \nabla u
	-u\Delta \tilde \phi 
	&&\mbox{ in }\cD\times (-1,0), \\
	&v=0
	&&\mbox{ on }\partial \cD\times (-1,0).
\end{aligned}
\right. 
\]
By the same computation as in the derivation of \eqref{eq:uGompu}, 
we have 
\[
\begin{aligned}
	u(x,t) 
	&=\int_{\cD} G_\cD(x,y,t+2\delta^2) 
	\tilde \phi(y) u(y,-2\delta^2) dy \\
	&\quad 
	+ \int_{-2\delta^2}^t \int_\cD G_\cD(x,y,t-s) 
	\tilde \phi |u|^{p-1} u  dyds \\
	&\quad 
	- \int_{-2\delta^2}^t \int_\cD G_\cD(x,y,t-s) 
	(2\nabla\tilde \phi \cdot \nabla u
	+u \Delta \tilde \phi ) dyds
\end{aligned}
\]
for $x\in \Psi(B_{\rho\delta}^+)$ and $-2\delta^2<t<0$, 
and so 
\[
\begin{aligned}
	|\nabla^2 u(x,t)| 
	&\leq C \int_{\R^n} K_2(x-y,t+2\delta^2) |u(y,-2\delta^2)| 
	\chi_{\Omega_R \cap \Psi(B_{3\rho\delta})}(y) dy \\
	&\quad 
	+ \left| \nabla^2 \int_{-2\delta^2}^t \int_\cD G_\cD(x,y,t-s) 
	\tilde \phi |u|^{p-1} u  dyds\right| \\
	&\quad 
	+ C \int_{-2\delta^2}^t \int_{\R^n} K_2 
	\left( 
	\frac{|u|}{\delta^2}+ \frac{|\nabla u|}{\delta} 
	\right) 
	\chi_{\Omega_R \cap 
	\Psi(  \overline{B_{3\rho\delta}} 
	\setminus B_{2\rho\delta} ) }  dyds \\
	&=:C V_1(x,t) + |\nabla^2 V_2(x,t)| + C V_3(x,t)
\end{aligned}
\]
for $x\in \Psi(B_{\rho\delta}^+)$ and $-2\delta^2<t<0$, 
where $K_2$ is defined by \eqref{eq:Kjdef}.

First, we estimate $V_1$. 
By $\|\nabla'f\|_{L^\infty(\R^{n-1})}\leq 1/2$ 
in \eqref{eq:R2def}, we have 
\[
	|\Psi(x)-\Psi(y)|^2 
	\geq 
	|x'-y'|^2 +  \frac{1}{2}(x_n-y_n)^2 - (f(x')-f(y'))^2 
	\geq \frac{1}{4}  |x-y|^2. 
\]
This together with the change of variables shows that 
\begin{equation}\label{eq:U1Psixtsc2}
\begin{aligned}
	&V_1(\Psi( x),  t) \\
	&=
	\int_{\R^n} K_2(\Psi( x)-\Psi(y), t+2\delta^2 )  |u(\Psi( y),-2\delta^2)| 
	\chi_{\Phi(\Omega_R) \cap B_{3\rho\delta}} dy \\
	&\leq 
	\int_{\R^n} K_2((x-y)/2, t+2\delta^2 )
	|u(\Psi(y),-2\delta^2)| 
	\chi_{\Phi(\Omega_R) \cap B_{3\rho\delta}} dy. 
\end{aligned}
\end{equation}
Then Young's inequality gives
\[
\begin{aligned}
	\| V_1(\Psi(\cdot), t)\|_{L^{q_c}(B_{\rho\delta}^+)} 
	&\leq 
	C (t+2\delta^2)^{-1}
	\| u(\Psi(\cdot),-2\delta^2) 
	\|_{L^{q_c}(\Phi(\Omega_R) \cap \Psi(B_{3\rho\delta}))} 
	\\
	&\leq 
	C \delta^{-2} M
\end{aligned}
\]
for $-\delta^2<t<0$. 
Therefore, the H\"older inequality shows that 
\begin{equation}\label{eq:Uk1concl2}
\begin{aligned}
	&\| V_1(\Psi( \cdot),  \cdot)
	\|_{L^l(-\delta^2,0; L^r(B_{\rho\delta}^+))} \\
	&\leq 
	C\delta^{n(\frac{1}{r}-\frac{1}{q_c})} 
	\| V_1(\Psi( \cdot), \cdot)
	\|_{L^l(-\delta^2,0; L^{q_c}(B_{\rho\delta}^+))} 
	\leq
	C\delta^{\frac{2}{l}+n(\frac{1}{r}-\frac{p}{q_c}) } M. 
\end{aligned}
\end{equation}

Let us next estimate $\nabla^2 V_2$. 
From the H\"older inequality, the change of variables 
$y=\Psi(x)$ with $dy=dx$ and the choice of $\cD$, 
it follows that 
\[
\begin{aligned}
	\| \nabla^2 V_2(\Psi( \cdot),  \cdot)
	\|_{L^l(-\delta^2,0; L^r(B_{\rho\delta}^+))}  
	&\leq 
	C \delta^{n(\frac{1}{r}-\frac{p}{q_c})}
	\| \nabla^2 V_2(\Psi( \cdot),  \cdot)
	\|_{L^l(-\delta^2,0; L^\frac{q_c}{p}(B_{\rho\delta}^+))} \\
	&\leq C \delta^{n(\frac{1}{r}-\frac{p}{q_c})} 
	\| \nabla^2 V_2\|_{L^l(-\delta^2,0; L^\frac{q_c}{p}(\cD))}. 
\end{aligned}
\]
We observe that $V_2$ is a solution of 
\[
\left\{ 
\begin{aligned}
	&(V_2)_t - \Delta V_2
	= \tilde \phi |u|^{p-1} u
	&&\mbox{ in }\cD\times (-2\delta^2,0), \\
	&V_2=0
	&&\mbox{ on }\partial \cD\times (-2\delta^2,0), \\
	&V_2(\cdot,-2\delta^2)=0 
	&&\mbox{ in }\cD. 
\end{aligned}
\right. 
\]
Since $\cD$ is a bounded $C^{2+\alpha}$ domain, 
$\cD$ is also a uniformly regular domain of class $C^2$. 
Therefore, we can apply 
the maximal regularity for inhomogeneous heat equations 
(see \cite[Remark 51.5]{QSbook2} and 
\cite[Theorem 7.11]{DHP03} for instance), and so 
\[
\begin{aligned}
	\| \nabla^2 V_2 \|_{L^l(-2\delta^2,0; L^\frac{q_c}{p}(\cD))}
	&\leq C \|\tilde \phi |u|^{p-1} u\|_{L^l(-2\delta^2,0; L^\frac{q_c}{p}(\cD))} \\
	&\leq 
	C \|u\|_{L^{pl}(-2\delta^2,0; L^{q_c}(\Omega_R))}^p 
	\leq 
	C \delta^{\frac{2}{l}} M^p. 
\end{aligned}
\]
Thus, 
\begin{equation}\label{eq:Uk2concl2}
	\| \nabla^2 V_2(\Psi( \cdot),  \cdot)
	\|_{L^l(-\delta^2,0; L^r(B_{\rho\delta}^+))}   
	\leq 
	C  \delta^{\frac{2}{l} + n(\frac{1}{r}-\frac{p}{q_c}) } M^p.  
\end{equation}

Finally, we consider $V_3$. 
Again in the same way as in \eqref{eq:U1Psixtsc2}, we have 
\[
\begin{aligned}
	V_3(\Psi(x), t)  
	&\leq 
	\int_{-2\delta^2}^t \int_{\R^n} K_2((x-y)/2, t-s ) \\
	&\qquad \times 
	(\delta^{-2}|u|+ \delta^{-1}|\nabla u|)(\Psi(y), s)
	\chi_{\Phi(\Omega_R) \cap ( \overline{B_{3\rho\delta}} 
	\setminus B_{2\rho\delta} ) }
	dy ds. 
\end{aligned}
\]
for $x\in B_{\rho\delta}^+$ and $-2\delta^2<t<0$. 
We observe that 
there exists $C>0$ satisfying 
\[
	K_2((x-y)/2, t-s)
	\leq 
	C\delta^{-n-2} 
\]
for $x\in B_{\rho\delta}^+$, 
$y\in \overline{B_{3\rho\delta}} \setminus B_{2\rho\delta}$ 
and $-2\delta^2<s<t<0$. 
Then by the change of variables, the H\"older inequality, 
$\Omega_R \cap  \Psi( B_{3\rho\delta}) \subset \Omega_{3R/4}$
and Proposition \ref{pro:gradoricri}, we see that 
\[
\begin{aligned}
	&V_3(\Psi(x), t) \\
	&\leq 
	C \delta^{-n-2}\int_{-2\delta^2}^t 
	\int_{ \Phi(\Omega_R) \cap  B_{3\rho\delta} } 
	(\delta^{-2}|u(\Psi(y),s)| 
	+ \delta^{-1}|\nabla u(\Psi(y), s)|) dy ds \\
	&\leq C \delta^{-n-2} 
	( M\delta^{n(1-\frac{1}{q_c})} 
	+(M+M^p) \delta^{n(1-\frac{1}{q_*})+1} ) 
	\leq 
	C \delta^{-\frac{np}{q_c}} (M+M^p)
\end{aligned}
\]
for $x\in B_{\rho\delta}^+$ and $-2\delta^2<t<0$. 
Hence we obtain 
\[
\begin{aligned}
	\|V_3(\Psi(\cdot), \cdot)
	\|_{L^l(-\delta^2,0; L^r(B_{\rho\delta}^+))}  
	\leq 
	C \delta^{\frac{2}{l} + n(\frac{1}{r}-\frac{p}{q_c})} (M+M^p). 
\end{aligned}
\]
By combining this inequality, 
\eqref{eq:delhu2}, \eqref{eq:Uk1concl2} and \eqref{eq:Uk2concl2}, 
we obtain the desired estimate for $\nabla^2 \hat u$. 
Then the desired estimate for $\hat u_t$ can be obtained by 
using the equation in \eqref{eq:hatueq}. 
\end{proof}

\begin{lemma}\label{lem:parareg}
Assume either \eqref{eq:R1def} or \eqref{eq:R2def}. 
Let $1\le l<\infty$ and $1\le r\leq q_c/p$. 
Then there exists a constant $C>0$ depending on $R$ such that
\[
	\|u_t\|_{L^l(-1/4,0; L^r(\Omega_{R/2}))} 
	+ \|\nabla^2 u\|_{L^l(-1/4,0; L^r(\Omega_{R/2}))} 
	\leq  C(M+M^p). 
\]
\end{lemma}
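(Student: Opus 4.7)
My plan is to run the argument of Lemma \ref{lem:pararegd} directly on $u$ at the fixed scale $R$, with no $\delta$-rescaling. In case \eqref{eq:R1def} the maps $\Phi, \Psi$ are identity, so $u = \hat u$ and $\Omega_{R/2} = B_{R/2}$ sits strictly inside $\Omega$; the argument then proceeds exactly as below, with the Dirichlet kernel of an auxiliary subdomain replacing the curved boundary case. For \eqref{eq:R2def}, the pointwise identities $|u_t(x,t)| = |\hat u_t(\Phi(x),t)|$ and $|\nabla_x^2 u(x,t)| \le C(|\nabla^2 \hat u| + |\nabla \hat u|)(\Phi(x),t)$, combined with $\det D\Psi \equiv 1$ and the fact that $\Phi(\Omega_{R/2})$ is contained in a ball of radius comparable to $R$, reduce matters to an $L^l(-1/4, 0; L^r)$ bound for $\hat u_t$, $\nabla^2 \hat u$, and $\nabla \hat u$ over $\Phi(\Omega_{R/2})$. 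The first-order piece is immediate from H\"older's inequality in the Lorentz spaces together with Proposition \ref{pro:gradoricri}, since $r \le q_c/p < q_*$ and the underlying domain has finite measure depending on $R$.

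For the second-order terms I would fix a bounded auxiliary $C^{2+\alpha}$ domain $\cD$ with $\Omega_{3R/4} \subset \cD \subset \Omega_R$ and a cutoff $\tilde \phi \in C^\infty_0(\R^n)$ with $\tilde \phi \equiv 1$ on $\Omega_{R/2}$ and $\supp \tilde \phi \subset \cD$. Exactly as in the derivation of \eqref{eq:uGompu}, I represent $u$ inside $\cD$ via the Dirichlet heat kernel $G_\cD$ starting from $t = -1/2$, obtaining a decomposition $u = V_1 + V_2 + V_3$ in which $V_1$ propagates the datum $\tilde \phi u(\cdot, -1/2)$, $V_2$ is the Duhamel integral of $\tilde \phi |u|^{p-1} u$, and $V_3$ gathers the commutator terms $2\nabla \tilde \phi \cdot \nabla u + u \Delta \tilde \phi$, whose support lies at positive distance from $\Omega_{R/2}$.

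The bound $\|\nabla^2 V_2\|_{L^l(-1/2, 0; L^{q_c/p}(\cD))} \le C M^p$ follows from maximal parabolic $L^l$--$L^{q_c/p}$ regularity on the bounded $C^{2+\alpha}$ domain $\cD$ (the same tool invoked near the end of the proof of Lemma \ref{lem:pararegd}), applied to the source $\tilde \phi |u|^{p-1} u$. For $\nabla^2 V_1$ I would use the Gaussian bound on $|\nabla^2 G_\cD(\cdot,\cdot,t+1/2)|$ supplied by \eqref{eq:derivKjes} together with $t + 1/2 \ge 1/4$ on our time interval and $\|u(\cdot, -1/2)\|_{L^{q_c}(\cD)} \le M$; Young's inequality then yields the $L^l L^r$ bound. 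For $V_3$, the separation between $\Omega_{R/2}$ and the support of $\nabla \tilde \phi, \Delta \tilde \phi$ makes $K_2(x-y, t-s)$ uniformly bounded in the relevant region, so only $L^1_{y,s}$ control of $|u| + |\nabla u|$ on $\Omega_R \times (-1/2, 0)$ is needed, supplied by \eqref{eq:Mdef2} and Proposition \ref{pro:gradoricri}. Finally, the equation $u_t = \Delta u + |u|^{p-1} u$ converts the $\nabla^2 u$ bound into the bound on $u_t$.

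No step here is really hard; the only bookkeeping point is that the constant in the maximal regularity estimate on $\cD$ depends only on $n$, $l$, $r$, $\Omega$, and $R$, which holds because $\cD$ is a fixed bounded domain of class $C^{2+\alpha}$ (hence uniformly regular of class $C^2$) constructed once and for all from $\Omega$ and $R$.
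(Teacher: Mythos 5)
Your proposal is correct and runs precisely the argument the paper intends by ``easy modifications of Lemma \ref{lem:pararegd}'': fix the auxiliary $C^{2+\alpha}$ domain $\cD$ and cutoff $\tilde\phi$ at the unit scale $R$ rather than $\rho\delta$, start the Duhamel representation at time $-1/2$ so that $t+1/2\geq 1/4$ on the relevant window, split into the same $V_1$ (initial-data term via \eqref{eq:derivKjes} and Young), $V_2$ (maximal $L^l$--$L^{q_c/p}$ regularity on $\cD$), and $V_3$ (separation plus $L^1$ control of $|u|+|\nabla u|$ from \eqref{eq:Mdef2} and Proposition \ref{pro:gradoricri}), and then recover $u_t$ from the equation. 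The preliminary reduction through $\hat u$ in your opening paragraph is unnecessary once you represent $u$ directly via $G_\cD$, but it is harmless; the only bookkeeping point you should make explicit is that $\tilde\phi\equiv 1$ on a slightly larger set than $\Omega_{R/2}$ (say $\Omega_{5R/8}$) so that $\supp\nabla\tilde\phi$ is at positive distance from $\Omega_{R/2}$ and $K_2$ is uniformly bounded in the $V_3$ estimate.
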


\begin{proof}
By easy modifications of Lemma \ref{lem:pararegd}, 
we can see that 
\[
	\|\nabla^2 u\|_{L^l(-1/4,0; L^r(\Omega_{R/2}))} \leq  C(M+M^p). 
\]
Then by the equation in \eqref{eq:fujitaeq}, 
we obtain the desired inequality. 
\end{proof}

Let us next show a gradient estimate for 
the blow-up limit $\overline{u}$ obtained in Lemma \ref{lem:exbul}. 
To estimate $\nabla \overline{u}$, 
we derive a localized integral equation for $\overline{u}$.

\begin{lemma}\label{lem:ubarint}
Assume \eqref{eq:R2def}. 
Let $\phi\in C^\infty_0(\R^n)$ satisfy 
$0\leq \phi\leq 1$ in $\R^n$, 
$\phi=0$ in $\R^n\setminus B_{3/5}$ and  
$\phi=1$ in $B_{4/5}$. 
Set $\phi_\rho(x):=\phi(x/\rho)$ for $0<\rho<1$. 
Then $\overline{u}$ satisfies 
\[
\begin{aligned}
	\overline{u} (x,t) &=\int_{\R^n_+} G_{\R^n_+}(x,y,\rho^2/4) 
	\phi_\rho(y) \overline{u} (y,t-\rho^2/4) dy  \\
	&\quad 
	+ \int_{t-\rho^2/4}^t \int_{\R^n_+} G_{\R^n_+}(x,y,t-s) 
	( \phi_\rho |\overline{u}|^{p-1} \overline{u}  
	+\overline{u} \Delta \phi_\rho ) dyds \\
	&\quad 
	+2 \int_{t-\rho^2/4}^t \int_{\R^n_+}  
	\nabla_y G_{\R^n_+}(x,y,t-s) \cdot 
	\nabla \phi_\rho(y)   \overline{u}(y,s) 
	dy ds 
\end{aligned}
\]
for a.e. $x\in B_{\rho/2}^+$ and $-\rho^2/4<t<0$. 
\end{lemma}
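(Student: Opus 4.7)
The plan is to first identify $\overline u$ as a weak solution of the semilinear heat equation on $\R^n_+\times(-1,0)$ with zero Dirichlet data on $\partial\R^n_+$, and then derive the claimed identity by standard Duhamel together with integration by parts.

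First I would verify that $\overline u$ solves $\overline u_t=\Delta\overline u+|\overline u|^{p-1}\overline u$ weakly in $\R^n_+\times(-1,0)$ with $\overline u|_{\partial\R^n_+}=0$. For $\zeta\in C^\infty_c(\R^n_+\times(-1,0))$, the support of $\zeta$ lies in $\delta_k^{-1}\Phi(\Omega_R)\times(-\delta_k^{-2},0)$ for all sufficiently large $k$, so $\zeta$ is admissible against the weak form of \eqref{eq:ukresceq}. The strong convergences in Lemma \ref{lem:exbul}(i),(ii) and the weak convergence of $(u_k)_t$ in Remark \ref{rem:weakt} allow passage to the limit in the bulk linear and nonlinear terms, while \eqref{eq:nabfkuni}--\eqref{eq:delfkuni} force the terms in $A_k-\Delta$ (moved onto $\zeta$ by one further integration by parts in $y$) to vanish on $\supp\zeta$. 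The zero trace on $\partial\R^n_+$ follows because $u_k=0$ on $\delta_k^{-1}\Phi(\partial\Omega\cap B_R)$, and these straightened boundaries converge locally to $\partial\R^n_+$ since $f_k(0)=0$ and $\|\nabla'f_k\|_\infty\to 0$.

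Next I would localize by setting $v:=\overline u\,\phi_\rho$. A direct weak computation gives
\[
v_t-\Delta v=\phi_\rho|\overline u|^{p-1}\overline u-2\nabla\phi_\rho\cdot\nabla\overline u-\overline u\,\Delta\phi_\rho=:F
\]
on $\R^n_+$, with $v|_{\partial\R^n_+}=0$ and initial datum $v(\cdot,t-\rho^2/4)=\phi_\rho\,\overline u(\cdot,t-\rho^2/4)$. Since $v$ has compact support in $\overline{\R^n_+}$ and $F\in L^1_sL^r_y$ for a suitable $r$ on the slab $(t-\rho^2/4,t)$ by Lemma \ref{lem:exbul}, Duhamel's formula with the half-space Dirichlet kernel $G_{\R^n_+}$ yields
\[
v(x,t)=\int_{\R^n_+}G_{\R^n_+}(x,y,\rho^2/4)\phi_\rho\overline u(y,t-\rho^2/4)\,dy+\int_{t-\rho^2/4}^t\!\int_{\R^n_+}G_{\R^n_+}(x,y,t-s)F(y,s)\,dyds.
\]
Integrating by parts in $y$ in the piece $-2\int\!\int G_{\R^n_+}\nabla\phi_\rho\cdot\nabla\overline u$ produces $2\int\!\int\nabla_yG_{\R^n_+}\cdot\nabla\phi_\rho\,\overline u+2\int\!\int G_{\R^n_+}\,\overline u\,\Delta\phi_\rho$, with no boundary contribution since $G_{\R^n_+}(x,\cdot,t-s)$ vanishes on $\partial\R^n_+$. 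Combining with the remaining $-\overline u\,\Delta\phi_\rho$ term leaves exactly $+\overline u\,\Delta\phi_\rho$ inside the second kernel integral along with the $\nabla_yG_{\R^n_+}$ term of the statement. Since $\phi_\rho\equiv 1$ on $B_{\rho/2}$, $v(x,t)=\overline u(x,t)$ there, and the identity follows for a.e.~$x\in B_{\rho/2}^+$.

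The main obstacle is to rigorously justify Duhamel for a merely weak rather than classical solution on the half-space and to identify the initial trace $v(\cdot,t-\rho^2/4)$ in $L^r$; this is routine via time-mollification combined with the uniform $L^l_tW^{2,r}_x$ bounds of Lemma \ref{lem:unifnab2uk} and the continuity in time $\overline u\in C([-1,0];W^{1,r}(\cB_\rho))$ afforded by the Aubin--Lions embedding used in Lemma \ref{lem:exbul}. A secondary subtlety is that $\delta_k^{-1}\Phi(\Omega_R)$ is not exactly $\R^n_+$, so one must check that any compactly supported test function in $\R^n_+$ is eventually admissible for the $u_k$ equation and that $\overline u$ extended by zero identifies with the limit; both follow from $\delta_k\to 0$ together with the uniform smallness of $\nabla'f_k$ and $\Delta'f_k$.
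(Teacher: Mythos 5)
Your proposal is correct, but it takes a genuinely different route from the paper's argument. The paper never establishes the limit PDE for $\overline{u}$ directly. Instead it localizes the rescaled classical solutions $u_k$ with a cutoff $\psi_k$ adapted to the straightening map, writes the Duhamel formula for $v_k=\phi_\rho\psi_k u_k$ on $\R^n_+$ (valid because $u_k$ is classical on the straightened domain), and then passes to the limit $k\to\infty$ in each of the three resulting integral terms individually, showing in particular that the error terms $\cR_k$ coming from $A_k-\Delta$ contribute nothing in the limit. You, by contrast, first identify $\overline{u}$ as a weak solution of the flat half-space problem with zero Dirichlet trace, then set $v=\phi_\rho\overline{u}$ and apply Duhamel directly to $v$, converting $-2\iint G\,\nabla\phi_\rho\cdot\nabla\overline{u}$ into the stated $\nabla_y G$ and $\Delta\phi_\rho$ terms by a $y$-integration by parts. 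Your cancellation bookkeeping is correct. The trade-off is that the paper defers all limit-taking to the final integral identity (at the price of treating the nastier $\cR_k$-terms term by term), while you front-load the limit into a single weak-formulation step but then must justify Duhamel for a solution that is only a priori in $W^{1,l}_tL^r_x\cap L^l_tW^{2,r}_x$ locally; you correctly flag this and propose mollification plus the uniform bounds inherited from Lemma~\ref{lem:unifnab2uk}, which is adequate. One small wording correction: the sets $\delta_k^{-1}\Phi(\partial\Omega\cap B_R)$ are already flat subsets of $\{x_n=0\}$ (the map $\Phi$ flattens the boundary exactly); what happens as $k\to\infty$ is that they exhaust $\partial\R^n_+$ locally, rather than ``converge to'' it. The conclusion you draw about the trace of $\overline{u}$ is nevertheless correct, and follows from the convergence $u_k\to\overline{u}$ in $C([-1,0];W^{1,r}(\cB_\rho))$ together with continuity of the trace operator.
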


\begin{proof}
Let us convert our problem to the one in $\R^n_+$. 
Let $\psi\in C^\infty_0(\R^n)$ satisfy 
$0\leq \psi\leq 1$ in $\R^n$, 
$\psi=0$ in $\R^n\setminus B_{4R/5}$ and  
$\psi=1$ in $B_{3R/5}$. 
Set 
$\psi_k(x):=\psi(\Psi(\delta_k x))$ 
and  $v_k:=\phi_\rho \psi_k u_k $. 
Note that 
$\psi_k=0$ in $\R^n\setminus \delta_k^{-1}\Phi(B_{4R/5})$ 
and $\psi_k=1$  in $\delta_k^{-1}\Phi(B_{3R/5})$.  
Then by \eqref{eq:ukresceq}, we see that 
\[
\left\{ 
	\begin{aligned}
	&(v_k)_t-\Delta v_k = \phi_\rho \psi_k |u_k|^{p-1} u_k
	-\psi_k u_k \Delta \phi_\rho -2 \psi_k\nabla \phi_\rho\cdot \nabla u_k 
	+\cR_k\\
	&\quad \mbox{ in }\R^n_+\times(-\delta_k^{-2},0), \\
	&v_k=0 \quad \mbox{ on }\partial \R^n_+\times(-\delta_k^{-2},0), 
	\end{aligned}
\right.
\]
where 
\[
\begin{aligned}
	\cR_k &:= 
	\phi_\rho \psi_k 
	(  -2\nabla'(\partial_{x_n} u_k) \cdot \nabla'f_k 
	+(\partial_{x_n}^2 u_k) |\nabla'f_k|^2 
	-(\partial_{x_n} u_k) \Delta'f_k ) \\
	&\quad 
	- ( 
	2u_k \nabla \psi_k\cdot \nabla \phi_\rho 
	+ \phi_\rho u_k \Delta \psi_k + 2 \phi_\rho \nabla \psi_k \cdot \nabla u_k ). 
\end{aligned}
\]
Thus, 
\[
\begin{aligned}
	u_k(x,t) 
	&=\int_{\R^n_+} G_{\R^n_+}(x,y,\rho^2/4) 
	\rho_\rho(y) \psi_k(y) u_k(y,t-\rho^2/4) dy  \\
	&\quad 
	+ \int_{t-\rho^2/4}^t \int_{\R^n_+} G_{\R^n_+}(x,y,t-s) 
	\phi_\rho \psi_k |u_k|^{p-1} u_k  dyds \\
	&\quad 
	- \int_{t-\rho^2/4}^t \int_{\R^n_+}  
	G_{\R^n_+}
	( \psi_k u_k \Delta \phi_\rho +2 \psi_k\nabla \phi_\rho\cdot \nabla u_k 
	+ \cR_k) 
	dy ds \\
	&=: V_1^k(x,t) + V_2^k(x,t) + V_3^k(x,t)
\end{aligned}
\]
for $x\in B^+_{\rho/2}$, $-\rho^2/4<t<0$ 
and $k\geq k_\rho$, 
where $k_\rho$ is given by the first part of Subsection \ref{subsec:4uni}. 
Lemma \ref{lem:exbul} (ii) shows that 
$u_k(\cdot,t)$ converges to 
$\overline{u}(\cdot,t)$ 
in $L^1(B_{\rho/2}^+)$ for each $-\rho^2/4<t<0$ as $k \to \infty$.

We show that the right-hand side of the integral equation 
for a subsequence of $u_k$ still denoted by $u_k$ 
converges to the one in the following integral equation: 
\begin{equation}\label{eq:ubarinpre}
\begin{aligned}
	\overline{u} (x,t) &=\int_{\R^n_+} G_{\R^n_+}(x,y,\rho^2/4) 
	\phi_\rho(y) \overline{u} (y,t-\rho^2/4) dy  \\
	&\quad 
	+ \int_{t-\rho^2/4}^t \int_{\R^n_+} G_{\R^n_+}(x,y,t-s) 
	\phi_\rho |\overline{u}|^{p-1} \overline{u}  dyds \\
	&\quad 
	- \int_{t-\rho^2/4}^t \int_{\R^n_+}  
	G_{\R^n_+}(x,y,t-s) 
	( \overline{u} \Delta \phi_\rho +2 \nabla \phi_\rho\cdot \nabla \overline{u} ) 
	dy ds  \\
	&=: \overline{V}_1(x,t) + \overline{V}_2(x,t) + \overline{V}_3(x,t)
\end{aligned}
\end{equation}
for a.e. $x\in B^+_{\rho/2}$ and $-\rho^2/4<t<0$. 
For $V_1^k$, from \eqref{eq:derivKjes}, 
$0\leq \psi_k\leq 1$, the H\"older inequality and 
Lemma \ref{lem:exbul} (ii) and (iii), 
it follows that 
\[
\begin{aligned}
	\|V_1^k (\cdot,t) - \overline{V}_1(\cdot,t)\|_{L^1(B_{\rho/2}^+)} 
	&\leq 
	C\rho^{-n} \int_{B_{\rho/2}^+} \int_{\R^n_+} 
	\phi_\rho |\psi_k u_k 
	- \overline{u}| dydx  \\
	&\leq 
	C \| u_k(\cdot,t-\rho^2/4) - \overline{u}(\cdot,t-\rho^2/4) 
	\|_{L^1(B_\rho^+)} \\
	&\quad 
	+ C\| (\psi_k-1) \overline{u}(\cdot,t-\rho^2/4) \|_{L^1(B_\rho^+)}
	\to 0
\end{aligned}
\]
as $k\to\infty$. 
For $V_2^k$, by computations similar to that of $V_1^k$ 
with Young's inequality, we see that, as $k\to\infty$, 
\[
\begin{aligned}
	\|V_2^k (\cdot,t) - \overline{V}_2(\cdot,t)\|_{L^1(B_{\rho/2}^+)} 
	&\leq 
	C \int_{t-\rho^2/4}^t \| \psi_k |u_k|^{p-1}u_k 
	-|\overline{u}|^{p-1} \overline{u} \|_{L^1(B_\rho^+)} ds \\
	&\leq 
	C \int_{t-\rho^2/4}^t \| 
	(|u_k|^{p-1} + |\overline{u}|^{p-1}) |u_k-\overline{u}| \|_{L^1(B_\rho^+)} \\
	&\quad 
	+C \int_{t-\rho^2/4}^t 
	\| (\psi_k-1) |\overline{u}|^p ) \|_{L^1(B_\rho^+)} ds \to0. 
\end{aligned}
\]

For $V_3^k$, we focus on the most subtle term 
$\phi_\rho \psi_k \nabla'(\partial_{x_n} u_k) \cdot \nabla'f_k $
in $\cR_k$, that is, we prove that 
\[
	\tilde V_3^k (\cdot,t)
	:= 
	\int_{t-\rho^2/4}^t \int_{\R^n_+}  
	G_{\R^n_+}(\cdot,y,t-s) 
	\phi_\rho \psi_k \nabla'(\partial_{x_n} u_k) \cdot \nabla'f_k 
	dy ds
	\to 0
\]
in $L^1(B_{\rho/2}^+)$ for each $-\rho^2/4<t<0$ as $k \to \infty$. 
From integration by parts and \eqref{eq:derivKjes}, 
it follows that 
\[
\begin{aligned}
	|\tilde V_3^k(x,t) |
	&\leq 
	C\int_{t-\rho^2/4}^t \int_{\R^n_+}  
	K_1(x-y,t-s) 
	\phi_\rho \psi_k |\partial_{x_n} u_k| |\nabla'f_k|  dy ds  \\
	&\quad 
	+ C\int_{t-\rho^2/4}^t \int_{\R^n_+}  
	K_0 |\nabla'(\phi_\rho \psi_k)| | \partial_{x_n} u_k| |\nabla'f_k| 
	dy ds \\
	&\quad 
	 + C\int_{t-\rho^2/4}^t \int_{\R^n_+}  
	K_0 \phi_\rho \psi_k | \partial_{x_n} u_k| |\Delta'f_k| dy ds. 
\end{aligned}
\]
By Young's inequality, $|\nabla \phi_\rho|\leq C$, 
$|\nabla \psi_k|\leq C \delta_k \leq C$, 
H\"older inequality 
and $\int_{t-\rho^2/4}^t (t-s)^{-(1/2)\cdot(4/3)} ds\leq C$, 
we have 
\[
\begin{aligned}
	\|\tilde V_3^k(\cdot,t) \|_{L^1(B_{\rho/2}^+)}
	&\leq 
	C\left( 
	\int_{t-\rho^2/4}^t 
	\| |\nabla u_k(\cdot,s)| |\nabla'f_k| 
	\|_{L^1(B_\rho^+)}^4 ds \right)^{1/4} \\
	&\quad 
	+ C\int_{t-\rho^2/4}^t 
	\| | \nabla u_k(\cdot,s)| ( |\nabla'f_k| + |\Delta'f_k| )
	\|_{L^1(B_\rho^+)} ds. 
\end{aligned}
\]
Hence from 
the H\"older inequality 
for the Lorentz spaces, 
\eqref{eq:unifq*}, \eqref{eq:nabfkuni}, \eqref{eq:delfkuni} 
and the Lebesgue dominated convergence theorem, 
it follows that 
\[
	\|\tilde V_3^k(\cdot,t) \|_{L^1(B_{\rho/2}^+)} 
	\leq 
	C(M+M^p) \| |\nabla' f_k| 
	+ |\Delta'f_k|  \|_{L^\frac{q_*}{q_*-1}(B_\rho^+)} 
	\to 0
\]
for each $-\rho^2/4<t<0$ as $k \to \infty$. 
The other terms in $V_3^k$ can be handled more easily. 
Hence we obtain \eqref{eq:ubarinpre}. 
This implies the desired equality. 
\end{proof}

We give a gradient estimate for $\overline{u}$. 

\begin{lemma}\label{lem:scaleL2Lqc}
Assume \eqref{eq:R2def}. 
Then there exists a constant $C>0$ independent of $\rho$ such that 
\[
	\|\nabla \overline{u}\|_{L^2(Q_{\rho/2}^+)}
	\leq 
	C\rho^{ \frac{n}{2}-\frac{2}{p-1}-\frac{2}{q_c} } 
	(1 + M^{p-1}) 
	\|\overline{u} \|_{L^{q_c}(Q_\rho^+)}
\]
for any $0<\rho\leq 1$. 
\end{lemma}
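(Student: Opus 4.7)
The plan is to start from the local integral representation for $\overline{u}$ in Lemma \ref{lem:ubarint}, with the cutoff $\phi_\rho$ already adapted to the scale $\rho$. Differentiating the identity in $x$ produces three integrals,
\[
	\nabla \overline{u}(x,t) = \nabla I_1(x,t) + \nabla I_2(x,t) + \nabla I_3(x,t),
\]
corresponding respectively to the heat data at time $t-\rho^2/4$, the interior source $\phi_\rho |\overline{u}|^{p-1}\overline{u} + \overline{u}\Delta\phi_\rho$, and the cutoff correction with $\nabla\phi_\rho$. Using the Gaussian estimates $|\nabla_x^j G_{\R^n_+}(x,y,\tau)| \leq C\tau^{-j/2}K_0(x-y,c\tau)$ analogous to \eqref{eq:derivKjes} (which hold in the half-space), I will estimate $\|\nabla I_j\|_{L^2(Q_{\rho/2}^+)}$ for each $j$ and sum. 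By parabolic scaling (substituting $\overline{u}_\rho(y,s):=\rho^{2/(p-1)}\overline{u}(\rho y, \rho^2 s)$) it suffices to prove the case $\rho = 1$; both sides then have the claimed homogeneity, as one checks from $2q_c/(p-1)=n$.

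For $\nabla I_1$, the time lag is fixed at $\rho^2/4$, so the kernel $\nabla_x G_{\R^n_+}(\cdot,\cdot,\rho^2/4)$ is pointwise bounded by a Gaussian at scale $\rho$; Young's inequality in $y$ combined with H\"older to pass from $L^{q_c}_y$ to $L^2_y$, followed by H\"older in time (legitimate since $q_c>2$), yields a bound by $C\rho^{n/2-2/(p-1)-2/q_c}\|\overline{u}\|_{L^{q_c}(Q_\rho^+)}$. For $\nabla I_3$, the support of $\nabla\phi_\rho$ lies in the annulus $B_\rho\setminus B_{4\rho/5}$, while $x\in B_{\rho/2}$, so $|x-y|\geq 3\rho/10$; hence $|\nabla_x \nabla_y G_{\R^n_+}(x,y,t-s)|$ is bounded above uniformly in $(t-s)$ by $C\rho^{-n-2}$ times a Gaussian at scale $\rho$, making this contribution harmless. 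The lower-order piece $\overline{u}\Delta\phi_\rho$ of $\nabla I_2$ is treated analogously.

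The main work lies in the nonlinear part of $\nabla I_2$, namely
\[
	\int_{t-\rho^2/4}^t\!\!\int_{\R^n_+}
	\nabla_x G_{\R^n_+}(x,y,t-s)\,\phi_\rho(y) |\overline{u}(y,s)|^{p-1}\overline{u}(y,s)\,dyds.
\]
The trick is to factor $|\overline{u}|^p = |\overline{u}|^{p-1}\cdot |\overline{u}|$ and apply H\"older in $y$ with exponents $q_c/(p-1)$ and $q_c/(q_c-p+1)$, pulling out $\|\overline{u}(\cdot,s)\|_{L^{q_c}(B_\rho^+)}^{p-1}\leq M^{p-1}$ via Lemma \ref{lem:exbul}(iii). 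Taking $L^2$ in $x$ and using Young's inequality against the Gaussian kernel $\nabla_x G$, one is left with a time convolution of $\|\overline{u}(\cdot,s)\|_{L^{q_c}(B_\rho^+)}$ against $(t-s)^{-1/2}$; since this is integrable on $(t-\rho^2/4,t)$, a further H\"older in $s$ over this interval produces $\|\overline{u}\|_{L^{q_c}(Q_\rho^+)}$ to the first power with the correct prefactor $C\rho^{n/2-2/(p-1)-2/q_c}M^{p-1}$. The main obstacle will be bookkeeping the H\"older exponents in space \emph{and} time so that the product of $\|\overline{u}(\cdot,s)\|_{L^{q_c}}^{p-1}$ (which becomes $M^{p-1}$) and $\|\overline{u}(\cdot,s)\|_{L^{q_c}}^1$ (which becomes the $L^{q_c}$ spacetime norm after the time integration) combines correctly and matches the scaling identity $n/2-2/(p-1)-2/q_c$ on the nose. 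Summing the three contributions yields the claim.
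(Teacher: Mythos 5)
Your overall route matches the paper's: start from the localized integral representation in Lemma \ref{lem:ubarint}, split into a heat-data term, a nonlinear source term, and cutoff-correction terms, and estimate each by kernel bounds. Your parabolic-scaling reduction to $\rho=1$ is a tidy observation the paper does not use (the identity $2q_c/(p-1)=n$ does make both sides and the $L^\infty_t L^{q_c}_x$-bound of Lemma \ref{lem:exbul}(iii) scale-invariant), and your treatment of the heat-data and cutoff pieces is consistent with the paper's $W_1$, $W_3$, $W_4$.

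The nonlinear piece is where your sketch has a genuine gap, on two counts. First, the mechanism "apply H\"older in $y$ with exponents $q_c/(p-1)$ and $q_c/(q_c-p+1)$ to extract $M^{p-1}$, then use Young's inequality for the convolution $\nabla_x G * \overline{u}$" does not work as stated: after the H\"older step, the remaining factor is $\bigl(\int |\nabla_x G(x-y,\tau)|^{q_c/(q_c-p+1)}|\overline{u}(y,s)|^{q_c/(q_c-p+1)} dy\bigr)^{(q_c-p+1)/q_c}$, which is no longer a convolution of $\nabla_x G$ against $\overline{u}$, so Young's inequality does not apply. One must instead run a kernel smoothing estimate on $|\overline{u}|^p$ measured in $L^{q_c/p}$, and only afterwards split $\|\overline{u}(\cdot,s)\|_{L^{q_c}}^{p} \le M^{p-1}\|\overline{u}(\cdot,s)\|_{L^{q_c}}$. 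Second, and more seriously, the claimed time power $(t-s)^{-1/2}$ is wrong when one passes directly to $L^2$ in space. The $L^{q_c/p}$-to-$L^2$ smoothing estimate for the kernel $K_1$ costs $(t-s)^{-1/2-\frac{n}{2}(p/q_c - 1/2)}=(t-s)^{\gamma-1}$ with $\gamma:=\frac{1}{2}-\frac{n}{2}\bigl(\frac{p}{q_c}-\frac{1}{2}\bigr)\in(0,1/2)$, which is strictly more singular than $-1/2$. For $\gamma$ small (e.g. $n=6$, $p$ slightly above $p_S$), the subsequent H\"older in time over a bounded interval fails because $\int_0^{1}\tau^{(\gamma-1)q_c'}d\tau$ diverges; one must instead invoke the Hardy--Littlewood--Sobolev inequality in the time variable, which is exactly what the paper does. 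Moreover, when $q_c/p>2$, Young from $L^{q_c/p}$ directly into $L^2$ is not even admissible (it would require the kernel in $L^a$ with $a<1$), which is why the paper separates the two cases $q_c/p\ge 2$ and $q_c/p<2$, first passing through $L^{q_c/p}(Q_{\rho/2}^+)$ in the former case, and using HLS in the latter. Your sketch misses this case distinction and the HLS step entirely.
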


\begin{proof}
By Lemma \ref{lem:ubarint}, 
$\nabla \phi_\rho(x)=\rho^{-1}\nabla\phi(x/\rho)$, 
$\Delta \phi_\rho(x)=\rho^{-2}\Delta \phi(x/\rho)$ and 
similar computations to \eqref{eq:U123def}, 
there exists $C>0$ 
independent of $\rho$ such that 
\[
\begin{aligned}
	|\nabla \overline{u}| 
	&\leq 
	C \int_{B_\rho^+} K_1(x-y,\rho^2/4) |\overline{u}(y,t-\rho^2/4)| dy \\
	&\quad 
	+ C \int_{t-\rho^2/4}^t \int_{B_\rho^+} K_1(x-y,t-s) 
	|\overline{u}(y,s)|^p  dyds \\
	&\quad 
	+ C \int_{t-\rho^2/4}^t \int_{\R^n_+} 
	\left( \frac{K_1}{\rho^2} + \frac{K_2}{\rho} \right) 
	|\overline{u}| \chi_{\overline{B_{4\rho/5}}\setminus B_{3\rho/5}} dy ds \\
	&=: CW_1+CW_2+C\rho^{-2}W_3+C\rho^{-1}W_4
\end{aligned}
\]
for a.e. $x\in B_{\rho/2}^+$ and $-\rho^2/4<t<0$, 
where $K_1$ and $K_2$ are given by \eqref{eq:Kjdef}.

By the H\"older inequality, we have 
\[
\begin{aligned}
	&\begin{aligned}
	W_1(x,t) &\leq 
	\|\overline{u}(\cdot,t-\rho^2/4)\|_{L^{q_c}(B_\rho^+)}
	\left( \int_{B_\rho^+} K_1(x-y,\rho^2/4)^\frac{q_c}{q_c-1} dy 
	\right)^{1-\frac{1}{q_c}} \\
	&\leq 
	C \rho^{-1-\frac{n}{q_c}} 
	\|\overline{u}(\cdot,t-\rho^2/4)\|_{L^{q_c}(B_\rho^+)}, 
	\end{aligned} \\
	&\begin{aligned}
	\|W_1\|_{L^2(Q_{\rho/2}^+)}
	&\leq 
	C \rho^{-\frac{n}{q_c}+\frac{n}{2}-\frac{2}{q_c}}
	\left( \int_{-\rho^2/4}^0 \int_{L^{q_c}(B_\rho^+)} 
	|\overline{u}(x,t-\rho^2/4)|^{q_c} dx dt 
	\right)^\frac{1}{q_c} \\
	&\leq 
	C \rho^{\frac{n}{2}-\frac{2}{p-1}-\frac{2}{q_c}}
	\| \overline{u} \|_{L^{q_c}(Q_\rho^+)}. 
	\end{aligned}
\end{aligned}
\]

We estimate $W_2$. We consider the cases $q_c/p\geq2$ and $q_c/p<2$, 
respectively. 
If $q_c/p\geq2$, then the H\"older inequality gives 
\[
\begin{aligned}
	\|W_2\|_{L^2(Q_{\rho/2}^+)}
	\leq C \rho^{\frac{n+2}{2}(1-\frac{2p}{q_c})}
	\|W_2\|_{L^\frac{q_c}{p}(Q_{\rho/2}^+)}. 
\end{aligned}
\]
From the same argument to prove the $L^{q_c/p}$-$L^{q_c/p}$ estimate, 
it follows that 
\[
\begin{aligned}
	\|W_2(\cdot,t)\|_{L^\frac{q_c}{p}(B_{\rho/2}^+)}
	&\leq 
	C \int_{t-\rho^2/4}^t (t-s)^{-\frac{1}{2}}
	\| |\overline{u}(\cdot,s)|^p \chi_{B_\rho^+} 
	\|_{L^\frac{q_c}{p}(\R^n)} ds \\
	&=
	C \int_0^{\rho^2/4} \tau^{-\frac{1}{2}}
	\| \overline{u}(\cdot,t-\tau) \|_{L^{q_c}(B_\rho^+)}^p  d\tau. 
\end{aligned}
\]
Thus, by $(-\tau-\rho^2/4,-\tau)\subset (-\rho^2,0)$ for $0<\tau<\rho^2/4$, 
we have 
\[
\begin{aligned}
	\|W_2\|_{L^\frac{q_c}{p}(Q_{\rho/2}^+)}
	&\leq 
	C\int_0^{\rho^2/4} \tau^{-\frac{1}{2}} 
	\left( \int_{-\rho^2/4}^0 \int_{B_\rho^+} 
	|\overline{u}(\cdot,t-\tau)|^{q_c} dx dt \right)^\frac{p}{q_c} d\tau  \\
	&\leq 
	C\int_0^{\rho^2/4} \tau^{-\frac{1}{2}} 
	\left( \int_{-\rho^2}^0 \int_{B_\rho^+} 
	|\overline{u}(\cdot,s)|^{q_c} dx ds \right)^\frac{p}{q_c} d\tau \\
	&\leq 
	C\rho^{1+\frac{2(p-1)}{q_c}} 
	M^{p-1} \|\overline{u} \|_{L^{q_c}(Q_\rho^+)}, 
\end{aligned}
\]
and so 
\[
\begin{aligned}
	\|W_2\|_{L^2(Q_{\rho/2}^+)}
	&\leq 
	C\rho^{ \frac{n}{2}-\frac{2}{p-1}-\frac{2}{q_c} } 
	M^{p-1} \|\overline{u} \|_{L^{q_c}(Q_\rho^+)}. 
\end{aligned}
\]

If $q_c/p<2$, then the same argument to prove the 
$L^{q_c/p}$-$L^2$ estimate yields 
\[
\begin{aligned}
	\|W_2(\cdot,t)\|_{L^2(B_\rho^+)} 
	&\leq 
	C\int_{t-\rho^2/4}^t 
	(t-s)^{-\frac{1}{2}-\frac{n}{2}(\frac{p}{q_c}-\frac{1}{2}) }
	\| |\overline{u}(\cdot,s)|^p \chi_{B_\rho^+} \|_{L^\frac{q_c}{p}(\R^n)} ds\\
	&\leq 
	C\int_\R
	|t-s|^{-1+\gamma }
	\| \overline{u}(\cdot,s) \|_{L^{q_c}(B_\rho^+)}^p 
	\chi_{(-\rho^2,0)}(s) ds \\
	&\leq 
	CM^{p-1}  \int_\R|t-s|^{-1+\gamma }
	\| \overline{u}(\cdot,s) \|_{L^{q_c}(B_\rho^+)}
	\chi_{(-\rho^2,0)} ds, 
\end{aligned}
\]
where 
\[
	\gamma:=\frac{1}{2}-\frac{n}{2}\left( \frac{p}{q_c}-\frac{1}{2} \right). 
\]
Note that $0<\gamma<1/2$ by $p>p_S$ and $q_c/p<2$. 
From the Hardy-Littlewood-Sobolev inequality and the H\"older inequality 
with $2/(2\gamma+1)<q_c$, it follows that 
\[
\begin{aligned}
	\|W_2\|_{L^2(Q_{\rho/2}^+)} 
	&=
	\left\| \|W_2(\cdot,t)\|_{L^2(B_\rho^+)} 
	\chi_{(-\rho^2/4,0)} \right\|_{L^2(\R)} \\
	&\leq 
	CM^{p-1} 
	\left\| \| \overline{u}(\cdot,t) \|_{L^{q_c}(B_\rho^+)}
	\chi_{(-\rho^2,0)} \right\|_{L^{\frac{2}{2\gamma+1}}(\R)} \\
	&\leq 
	C\rho^{ \frac{n}{2}-\frac{2}{p-1}-\frac{2}{q_c} } 
	M^{p-1} \|\overline{u} \|_{L^{q_c}(Q_\rho^+)}. 
\end{aligned}
\]

We consider $W_3$ and $W_4$. 
By the definitions of $K_1$ and $K_2$ in \eqref{eq:Kjdef}, 
there exists a constant $C>0$ independent of $\rho$ 
such that 
\[
	\rho^{-2}K_1(x-y,t)
	+\rho^{-1}K_2(x-y,t) \leq C\rho^{-n-3}
\]
for $x\in B_{\rho/2}$, $y\in \overline{B_{4\rho/5}}\setminus B_{3\rho/5}$ 
and $t>0$. 
Hence by the H\"older inequality, we have 
\[
\begin{aligned}
	&\rho^{-2}|W_3(x,t)|+\rho^{-1}|W_4(x,t)|
	\leq 
	C\rho^{-n-3+(n+2)(1-\frac{1}{q_c})}
	\|\overline{u}\|_{L^{q_c}(Q_\rho^+)}, \\
	&
	\rho^{-2}\|W_3\|_{L^2(Q_{\rho/2}^+)}
	+\rho^{-1}\|W_4\|_{L^2(Q_{\rho/2}^+)}
	\leq 
	C\rho^{\frac{n}{2}-\frac{2}{p-1}-\frac{2}{q_c}}
	\|\overline{u}\|_{L^{q_c}(Q_\rho^+)}. 
\end{aligned}
\]
The above estimates show the desired inequality. 
\end{proof}

\begin{remark}
Regularity estimates similar to the above are known 
for semilinear elliptic equations, see \cite{IMN17} 
and the references given there for recent developments.
\end{remark}

\section{Compactness results}\label{sec:cpt}
We recall an Aubin-Lions type compactness result   
from  \cite[Proposition 51.3]{QSbook2} and \cite[Proposition 2.1]{Qu03}. 
See also \cite{Am00} 
and \cite[Sections 2.7, 2.8]{Ambook} 
for more general statement. 
Note that a pair of Banach spaces $(E_0,E_1)$ 
is called an interpolation couple 
if there exists a locally convex space $E$ such that 
$E_0,E_1\hookrightarrow E$. 

\begin{proposition}\label{prop:cpt} 
Let $(E_0,E_1)$ be an interpolation couple. 
Assume that $E_1$ is compactly embedded in $E_0$.
Let $1 \le  p_0, p_1 < \infty$, $0<\theta<1$,
$1/p_{\theta} = (1-\theta)/p_0 + \theta/p_1$ 
and $s < 1-\theta$.
Then, 
\[
	W^{1,p_0} (0, T;E_0) \cap L^{p_1} (0, T;E_1) 
	\hookrightarrow W^{s,p_{\theta}} (0, T; (E_0, E_1)_{\theta, p_{\theta}} )
\]
and this embedding is compact.
\end{proposition}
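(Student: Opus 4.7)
The plan is to establish the continuous embedding first and then upgrade it to compactness via a standard Aubin--Lions argument adapted to the fractional scale.

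For the continuous embedding, I would let $u \in W^{1,p_0}(0,T;E_0) \cap L^{p_1}(0,T;E_1)$ and aim to control $\|u\|_{W^{s,p_\theta}(0,T; (E_0,E_1)_{\theta,p_\theta})}$. The key tool is the $K$-method characterization of the real interpolation space: for every $v \in E_0 + E_1$,
\[
	K(\tau, v) := \inf_{v = v_0 + v_1}(\|v_0\|_{E_0} + \tau \|v_1\|_{E_1}).
\]
For the difference $u(t) - u(t+h)$ (with $|h|$ small), I would use the splitting $u(t) - u(t+h) = \int_t^{t+h} u'(\sigma)\, d\sigma + 0$ in one direction and $u(t) - u(t+h) = 0 + (u(t) - u(t+h))$ in the other. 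This yields, after a short computation,
\[
	K(\tau, u(t) - u(t+h)) \lesssim \min\Bigl(\tau(\|u(t)\|_{E_1} + \|u(t+h)\|_{E_1}), \ \int_t^{t+h} \|u'(\sigma)\|_{E_0} d\sigma\Bigr),
\]
and choosing $\tau = h^{1-\theta}\cdot(\text{balance})$ together with the definition
\[
	\|v\|_{(E_0,E_1)_{\theta,p_\theta}} \sim \Bigl(\int_0^\infty (\tau^{-\theta} K(\tau,v))^{p_\theta}\, \tfrac{d\tau}{\tau}\Bigr)^{1/p_\theta}
\]
produces a bound on $\|u(t) - u(t+h)\|_{(E_0,E_1)_{\theta,p_\theta}}$ of order $|h|^{1-\theta}$ times a factor controlled in $L^{p_\theta}_t$ by H\"older's inequality (since $1/p_\theta = (1-\theta)/p_0 + \theta/p_1$). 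The Slobodetskii seminorm characterization of $W^{s,p_\theta}$ then yields the continuous embedding for every $s < 1-\theta$.

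For compactness, I would invoke the Aubin--Lions--Simon principle in its fractional form. Fix a bounded sequence $\{u_k\}$ in the intersection space. The uniform $L^{p_1}(0,T;E_1)$ bound combined with the compact embedding $E_1 \hookrightarrow E_0$ gives pointwise (in $t$) precompactness in $E_0$. The time--regularity gained above, i.e.\ an equicontinuity-type bound
\[
	\int_0^{T-h}\|u_k(t+h) - u_k(t)\|_{(E_0,E_1)_{\theta,p_\theta}}^{p_\theta}\, dt \lesssim h^{(1-\theta)p_\theta},
\]
eliminates time oscillations. An interpolation argument (Lions' lemma) then upgrades strong convergence in $L^{p_\theta}(0,T;E_0)$ to strong convergence in $L^{p_\theta}(0,T; (E_0,E_1)_{\theta,p_\theta})$, and the fractional Slobodetskii bound promotes this to the $W^{s,p_\theta}$ topology.

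The main obstacle is the sharpness of the exponent condition $s < 1-\theta$: one has to balance the $K$-functional against both the time derivative in $E_0$ and the pointwise size in $E_1$, and then verify that the H\"older exponent arithmetic $1/p_\theta = (1-\theta)/p_0 + \theta/p_1$ exactly produces finite Slobodetskii seminorm. Since this result is already proved in \cite[Proposition 51.3]{QSbook2} and \cite[Proposition 2.1]{Qu03}, I would simply cite these references rather than reproduce the detailed calculation.
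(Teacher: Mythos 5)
The paper gives no proof of Proposition~\ref{prop:cpt} at all: it states the result and simply cites \cite[Proposition 51.3]{QSbook2}, \cite[Proposition 2.1]{Qu03}, and \cite{Am00}, which is exactly what you conclude by doing. Your sketch of the underlying argument (bounding $K(\tau, u(t+h)-u(t))$ from both ends via the $E_0$-norm of $\int_t^{t+h}u'$ and the $E_1$-norms of $u(t)$, $u(t+h)$, optimizing in $\tau$, then applying H\"older with the exponent arithmetic $1/p_\theta = (1-\theta)/p_0 + \theta/p_1$ to get the $h^{(1-\theta)p_\theta}$ time-translate bound and hence the Slobodetskii seminorm for $s<1-\theta$, followed by a Simon-type compactness argument) is an accurate summary of how the cited proof proceeds. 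One small imprecision: the compactness step does not use ``pointwise (in $t$) precompactness in $E_0$'' from the $L^{p_1}(E_1)$ bound alone; Simon's criterion works with the integrated bound rather than pointwise ones --- but since you ultimately defer to the references, this does not affect the conclusion.
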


We give a consequence of this result in a form which is 
used to prove Lemma \ref{lem:exbul}. 

\begin{lemma}\label{lem:WLWcom}
Let $1 < r<\infty$ and let $\cB$ be a smooth bounded domain in $\R^n$.  
Then, 
\[
	W^{1,5}(-1,0;L^r(\cB)) \cap L^5(-1,0;W^{2,r}(\cB))
	\hookrightarrow 
	C([-1,0]; W^{1,r}(\cB))
\]
and this embedding is compact. 
\end{lemma}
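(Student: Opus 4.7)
The plan is to derive Lemma \ref{lem:WLWcom} as a direct consequence of the general Aubin--Lions type result of Proposition \ref{prop:cpt}, together with a classical one-dimensional Sobolev embedding in time and a standard identification of the real interpolation space.

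First I would set $E_0 := L^r(\cB)$ and $E_1 := W^{2,r}(\cB)$, so that $(E_0,E_1)$ is an interpolation couple with $E_1$ compactly embedded in $E_0$ by the Rellich--Kondrachov theorem (using that $\cB$ is smooth and bounded, and $1<r<\infty$). Next I would apply Proposition \ref{prop:cpt} with $p_0=p_1=5$ (hence $p_\theta = 5$), choosing $\theta = 3/5$ and $s=3/10$; the required condition $s<1-\theta$ reduces to $3/10<2/5$, which is satisfied. This produces the compact embedding
\[
  W^{1,5}(-1,0;E_0) \cap L^5(-1,0;E_1) \hookrightarrow
  W^{3/10,\,5}\bigl(-1,0;(E_0,E_1)_{3/5,\,5}\bigr).
\]

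It then remains to show that the codomain embeds continuously into $C([-1,0];W^{1,r}(\cB))$. For this I would invoke two standard facts. First, for a smooth bounded domain and $2\theta \notin \N$, the real interpolation identity gives $(L^r(\cB),W^{2,r}(\cB))_{\theta,q} = B^{2\theta}_{r,q}(\cB)$, so with $\theta=3/5$ and $q=5$ the target space is $B^{6/5}_{r,5}(\cB)$; since $6/5>1$, this Besov space embeds continuously into $W^{1,r}(\cB)$. Second, because $s\cdot p_\theta = (3/10)\cdot 5 = 3/2 > 1$, the one-dimensional Sobolev embedding yields
\[
  W^{3/10,\,5}\bigl(-1,0;W^{1,r}(\cB)\bigr) \hookrightarrow C\bigl([-1,0];W^{1,r}(\cB)\bigr).
\]
Composing these continuous embeddings with the compact one from Proposition \ref{prop:cpt} gives the claimed compact embedding.

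The only step that requires any care is the identification of $(L^r(\cB),W^{2,r}(\cB))_{3/5,5}$ with $B^{6/5}_{r,5}(\cB)$ and the embedding of the latter into $W^{1,r}(\cB)$; both are standard for smooth bounded domains but one could alternatively bypass Besov spaces entirely by picking $\theta$ only slightly larger than $1/2$ and observing, via extension to $\R^n$ and Fourier characterization, that the interpolation space lies in $W^{1,r}(\cB)$. Since the choice of exponents is quite flexible, this technicality does not pose a genuine obstacle.
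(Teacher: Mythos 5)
Your proof is correct and follows essentially the same route as the paper's: apply Proposition \ref{prop:cpt} with $p_0=p_1=5$, identify the real interpolation space $(L^r,W^{2,r})_{\theta,5}$ with a Besov space that embeds into $W^{1,r}$, and then use the fractional Sobolev embedding in time to get continuity. The paper chooses $\theta=2/3$, $s=1/4$ (giving $B^{4/3}_{r,5}$) while you choose $\theta=3/5$, $s=3/10$ (giving $B^{6/5}_{r,5}$); both choices satisfy all the constraints and lead to the same conclusion.
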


\begin{proof}
We write $L^r_x:=L^r(\cB)$, $W^{2,r}_x:=W^{2,r}(\cB)$ and so on. 
Since $W^{2,r}_x\hookrightarrow L^r_x$ is compact,
Proposition \ref{prop:cpt} with 
$p_0=p_1=5$, $\theta=2/3$ and $s=1/4$ gives 
\[
	W^{1,5}(-1,0;L^r_x) \cap L^5(-1,0;W^{2,r}_x)
	\hookrightarrow 
	W^{1/4,5}(-1,0;(L^r_x,W^{2,r}_x)_{2/3,5}). 
\]
By \cite[page 327]{Trbook78}, we have 
$(L^r_x,W^{2,r}_x)_{2/3,5}=B^{4/3}_{r,5,x} 
\hookrightarrow W^{1,r}_x$. 
Then the Sobolev embedding in time yields 
\[
	W^{1/4,5}(-1,0;(L^r_x,W^{2,r}_x)_{2/3,5})
	\hookrightarrow
	C([-1,0]; W^{1,r}_x),
\]
and hence the lemma follows. 
\end{proof}

\section*{Acknowledgments}
The authors would like to express their gratitude to 
Professors Kazuhiro Ishige, Hiroaki Kikuchi, Yukihiro Seki, 
Mitsuo Higaki  and Yifu Zhou 
for their valuable comments 
and to Professor Shi-Zhong Du for sending \cite{CD10} to the authors. 
The authors also gratefully acknowledge the anonymous referees 
for their careful reading and many helpful comments.

The first author was supported in part by JSPS KAKENHI 
Grant Numbers 16H06339, 17K05312, 21H00991 and 21H04433. 
The second author was supported in part 
by JSPS KAKENHI Grant Numbers 19K14567, 22H01131 
22KK0035 and 23K12998.

\end{document}